\crefname{procf}{alg.}{algs.}
\Crefname{procf}{Algorithm}{Algorithms}
\ttfamily\color{mapleinput},
\ttfamily\color{mapleinput},
\definecolor{mapleinput}{rgb}{0.5,0.0,0.0}
\definecolor{maplemath}{rgb}{0.0,0.0,1.0}
\newenvironment{MapleMath}{%
\color{maplemath}\upshape\rmfamily%
\setlength{\abovedisplayskip}{0ex}%
\setlength{\abovedisplayshortskip}{\abovedisplayskip}%
\setlength{\belowdisplayskip}{\medskipamount}%
\setlength{\belowdisplayshortskip}{0ex}%
\csname gather*\endcsname}{\csname endgather*\endcsname%
{\hrule height 0pt}%
\ignorespacesafterend}
\DeclareMathOperator{\Schubert}{Schubert}
\theoremstyle{plain}
\newtheorem{theorem}{Theorem}[section]
\newtheorem{lemma}[theorem]{Lemma}
\newtheorem{corollary}[theorem]{Corollary}
\newtheorem{proposition}[theorem]{Proposition}
\newtheorem{conjecture}[theorem]{Conjecture}
\theoremstyle{definition}
\newtheorem{example}[theorem]{Example}
\newtheorem{definition}[theorem]{Definition}
\theoremstyle{remark}
\newtheorem{remark}[theorem]{Remark}
\numberwithin{equation}{section}
\newcommand{\Q}{\mathbbm{Q}}
\newcommand{\R}{\mathbbm{R}}
\newcommand{\C}{\mathbbm{C}}
\newcommand{\Z}{\mathbbm{Z}}
\newcommand{\sc}{\mathsf{c}}
\newcommand{\cc}{c}
\newcommand{\asyO}[1]{\mathcal{O}(#1)}
\newcommand{\defas}{\mathrel{\mathop:}=}
\newcommand{\abs}[1]{\left|#1\right|}
\newcommand{\Period}{\mathcal{P}}
\newcommand{\PeriodC}{\widetilde{\Period}}
\newcommand{\Martin}{\operatorname{\mathsf{M}}}
\newcommand{\Tutte}[1]{\mathsf{T}_{#1}}
\newcommand{\Flow}[1]{\mathsf{F}_{#1}}
\DeclareMathOperator{\rk}{rk}
\newcommand{\loops}{\ell}
\newcommand{\Graph}[2][1.0]{\vcenter{\hbox{\includegraphics[scale=#1]{graphs/#2}}}}
\newcommand{\nuc}{\mathscr{N}} 
\newcommand{\UM}[2]{\mathsf{U}^{#1}_{#2}}
\newcommand{\Fano}{F_7}
\newcommand{\Vamos}{V_8}
\newcommand{\FP}{\mathsf{N}} 
\newcommand{\FPv}{\mathsf{N}^\circ} 
\newcommand{\els}{\mathsf{els}}
\newcommand{\LCF}{\mathcal{Z}} 
\newcommand{\LF}{\mathcal{F}} 
\newcommand{\LP}{\mathcal{P}} 
\newcommand{\chains}{\mathscr{C}} 
\newcommand{\SM}[1]{\mathsf{SM}\left(#1\right)} 
\DeclareMathOperator{\cyc}{cyc} 
\DeclareMathOperator{\cl}{cl} 
\newcommand{\PU}{\mathsf{N}} 
\newcommand{\PR}{\mathsf{E}} 
\newcommand{\LPM}[1]{\mathsf{LM}\left(#1\right)} 
\newcommand{\bigO}[1]{\mathcal{O}\left(#1\right)}
\newcommand{\admsq}[1]{\square(#1)} 
\DeclareMathOperator{\trM}{tr} 
\newcommand{\Maple}{\href{http://www.maplesoft.com/products/Maple/}{\textsf{\textup{Maple}}}}
\newcommand{\MapleNote}{\footnote{Maple is a trademark of Waterloo Maple Inc.}}
\newcommand{\MapleTM}{\href{http://www.maplesoft.com/products/Maple/}{\textsf{\textup{Maple}}\texttrademark}}
\newcommand{\nauty}{\href{http://pallini.di.uniroma1.it/}{\texttt{\textup{nauty}}}}
\newcommand{\Sage}{\href{http://www.sagemath.org/}{Sage}}
\newcommand{\HoG}[1]{\href{https://houseofgraphs.org/graphs/#1}{#1}}
\newcommand{\oeis}[1]{\href{http://oeis.org/#1}{$\mathsf{#1}$}}
\newcommand{\Filename}[1]{{\upshape\ttfamily #1}}
\title{Graph theoretic properties of Speyer's matroid polynomial $g_M(t)$}
\newcommand{\email}[1]{\href{mailto:#1}{#1}}
\author{%
    \thanks{Mathematical Institute, University of Oxford, OX2 6GG, UK, \email{erik.panzer@maths.ox.ac.uk}}
    Erik Panzer
    \orcidlink{0000-0002-9897-5812}
}
\begin{document}

\maketitle

\begin{abstract}
    We prove relations between the number of $k$-connected components of a graph, Crapo's invariant $\beta(M)$ of a matroid, and Speyer's polynomial $g_M(t)$. These yield a simple interpretation of $g_M'(-1)$ when $M$ is graphic or cographic.
    Furthermore, we improve Ferroni's algorithm to compute $g_M(t)$ and provide an implementation and an extensive data set.
    These calculations reveal a large number of graph theoretic constraints on the second derivative $g_M''(-1)$, which we thus advertise as an intriguing new invariant of graphs. We also propose a relation between the flow polynomial and $g_M''(0)$ for cubic graphs.
\end{abstract}

\section{Introduction}
Isomorphism invariants such as the classical Tutte polynomial $\Tutte{G}(x,y)=\sum_{i,j} t_{i,j}(G)x^iy^j$ are fundamental tools to study graphs and, more generally, matroids \cite{BrylawskiOxley:TutteApp}.
The linear coefficient $\beta(G)=t_{1,0}(G)\in\Z$ in $x$ of $\Tutte{G}(x,y)$ was studied by Crapo \cite{Crapo:HigherInvariant} and captures basic structural features; for example $\beta(G)=0$ if $G$ is not biconnected or $G$ is a self-loop, $\beta(G)=1$ if $G$ is series-parallel, and $\beta(G)>1$ otherwise. In matroid language, $\beta(M)$ is a signed sum over the ranks of all subsets of the ground set (edges):
\begin{equation}\label{eq:Crapo}
    \beta(M)=(-1)^{\rk(M)} \sum_{A\subseteq M} (-1)^{\abs{A}} \rk(A).
\end{equation}

\subsection{Graphs: Crapo and Elser}
Our first results are new graph theoretical interpretations of this invariant. Let $\cc(M)$ denote the number of connected components of a matroid, that is, the largest integer $k$ such that $M$ can be decomposed as a direct sum $M=M_1\oplus\ldots\oplus M_k$ of non-empty matroids $M_i$. In the case where $M$ is the cycle matroid of a graph, $\cc(M)$ counts the number of blocks (biconnected components), that is, the equivalence classes of edges such that two edges are equivalent if they are contained in a cycle.

Using an identity of Elser \cite{Elser:ClusterSAW}, we identify $\beta(G)$ in \Cref{lem:beta=els0} with the \emph{zeroth Elser number} of a graph---answering a question raised in \cite{DBHLMNVW:Elser}---and apply this to prove:
\begin{theorem}\label{thm:betark=bic}
    For any graphic or cographic matroid $M$ with at least two edges,
    \begin{equation}\label{eq:betark=bic}
        \beta(M)\rk(M) = (-1)^{\rk(M)}\sum_{A\subseteq M} (-1)^{\abs{A}} \cc(A).
    \end{equation}
\end{theorem}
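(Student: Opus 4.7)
The plan has three steps: reduce to connected matroids; use the block--cut tree identity and Crapo's formula~\eqref{eq:Crapo} to rewrite the right-hand side of~\eqref{eq:betark=bic} as a vertex-summed signed sum; and then apply Lemma~\ref{lem:beta=els0} in a vertex-pointed form. The cographic case will follow by matroid duality. For the reduction, suppose $M = M_1 \oplus M_2$ with both $\abs{M_i} \geq 1$. Then $\beta(M) = 0$ by multiplicativity, so the left-hand side vanishes; and $\cc(A_1 \sqcup A_2) = \cc(A_1) + \cc(A_2)$ factors the right-hand side into pieces each containing a vanishing factor $\sum_{A_i \subseteq M_i}(-1)^{\abs{A_i}} = 0$. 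So I may assume $M$ is connected, which for graphic $M = M(G)$ means $G$ is biconnected with $n := \abs{V(G)}$ vertices.

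For any finite graph $H$, the block--cut tree yields the identity $\cc(M(H)) = \sum_{v \in V(H)} b_v(H) - \abs{V(H)} + c_g(H)$, where $b_v(H)$ counts the blocks through $v$ and $c_g(H)$ is the number of connected components. Applied to $(V(A), A)$ (extending $b_v(A) := 0$ for $v$ isolated) and using $\rk(A) = \abs{V(A)} - c_g(V(A), A)$, this gives $\cc(A) = \sum_{v \in V(G)} b_v(A) - \rk(A)$. Substituting into~\eqref{eq:betark=bic} and applying~\eqref{eq:Crapo} to evaluate $\sum_A(-1)^{\abs{A}}\rk(A) = (-1)^{\rk(M)}\beta(M)$, the theorem becomes the equivalent vertex-summed identity
\[
\sum_{v \in V(G)} \sum_{A \subseteq E(G)}(-1)^{\abs{A}} b_v(A) = (-1)^{\rk(M)}\, n\, \beta(M).
\]
It suffices to establish, for each $v \in V(G)$, the per-vertex identity $\sum_A(-1)^{\abs{A}} b_v(A) = (-1)^{\rk(M)}\beta(M)$. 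Writing $b_v(A) = \abs{\{B\text{ block of }A : v \in V(B)\}}$ and interchanging the order of summation, for each biconnected edge set $B \ni v$ one must compute the inner sum $\sum_{A : B \text{ is a block of } A}(-1)^{\abs{A}}$ by organising each such $A$ as $B$ together with ``branches'' attached at the cut-vertices of $V(B)$; sign cancellation inside the branches collapses this to a signed count of Elser-type configurations at $v$, which Lemma~\ref{lem:beta=els0}, used in its vertex-pointed form, identifies with $(-1)^{\rk(M)}\beta(M)$.

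For the cographic case $M = M^*(G)$, matroid self-duality gives $\beta(M^*) = \beta(M)$, $\rk(M^*) = \abs{E} - \rk(M)$, and $\cc(M^*|A) = \cc(M(G)/(E\setminus A))$. The substitution $B := E\setminus A$ converts the right-hand side of~\eqref{eq:betark=bic} for $M^*$ into a signed contraction sum $\sum_B(-1)^{\abs{B}}\cc(M(G)/B)$, which is handled by a dual version of the preceding argument using a cocycle (bond) decomposition in place of the block--cut tree. The main obstacle is the sign-cancellation step in the middle paragraph: rigorously describing the branch configurations for each fixed pair $(B, v)$, and verifying that the surviving contributions coincide precisely with the pointed zeroth Elser number, so that Lemma~\ref{lem:beta=els0} produces the desired $(-1)^{\rk(M)}\beta(M)$.
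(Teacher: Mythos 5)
Your overall skeleton is sound and the reduction steps are correct: the direct-sum reduction to connected $M$ works (both sides vanish), the block--cut identity $\cc(A)=\sum_{v\in V(G)} b_v(A)-\rk(A)$ is correct, and subtracting off $\sum_A(-1)^{\abs{A}}\rk(A)=(-1)^{\rk(M)}\beta(M)$ via~\eqref{eq:Crapo} correctly reduces the theorem to the per-vertex claim $\sum_A(-1)^{\abs{A}}b_v(A)=(-1)^{\rk(M)}\beta(M)$, which I verified numerically in several small cases. This per-vertex form is in fact slightly stronger than what the paper isolates, and the idea of fixing a biconnected $B\ni v$ and analysing the outer sum over $A$ is a genuinely different decomposition from the paper's (which first restricts to nuclei via an additivity lemma, then decomposes each nucleus by the components of $N\setminus v$, and applies Elser's pointed lemma to each component).

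However, there is a real gap, which you yourself flag: the middle step is not a proof. Fixing $B$ and $v$, the inner sum $\sum_{A:\,B\text{ is a block of }A}(-1)^{\abs{A}}$ is $(-1)^{\abs{B}}$ times an alternating sum over edge sets $A'\subseteq E(G)\setminus E(B)$ subject to the global constraint that no two vertices of $V(B)$ are joined by a path in $A'$; this quantity depends non-trivially on how $G$ sits around $B$, and it does not collapse for each fixed $B$ to a clean Elser-type count. The cancellation you need must happen across the double sum over $B$ and $A$ jointly. Carrying this out is precisely the content of the proof, and ``sign cancellation inside the branches collapses this to a signed count of Elser-type configurations at $v$'' is an assertion, not an argument. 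Compare the paper's proof: it also needs Elser's pointed lemma at exactly this stage, but it sets up the double sum so that Elser applies cleanly (per nucleus, per vertex $v$, per component $C$ of $N\setminus v$, with the factor $\sum_{H\subseteq[v,C]}(-1)^{\abs{H}}$ handled separately), and it explicitly tracks the correction term that gives $\abs{V(G)}-1-i$ at the end. To make your route rigorous you would need a comparable bookkeeping; the cographic case likewise requires the explicit rank/corank re-indexing that the paper carries out in its dual argument, not just the observation that duality ``converts'' the sum.
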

\begin{table}
    \centering
    \begin{tabular}{rccccccc}
    \toprule
    $A$ & $\emptyset$ & $\phantom{-}\Graph[0.35]{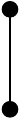}$ & $\Graph[0.35]{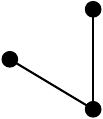}$ & $\Graph[0.35]{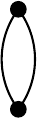}$ & $\Graph[0.35]{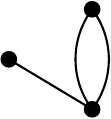}$ & $\Graph[0.35]{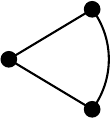}$ & $\Graph[0.35]{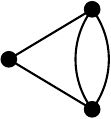}$ \\
    \midrule
    copies           & 1 & $\phantom{-}4$           & 5 & 1 & 2 & 2 & 1 \\
    $\cc(A)$         & 0 & $\phantom{-}1$           & 2 & 1 & 2 & 1 & 1 \\
    $(-1)^{\abs{A}}$ & 1 & $-1$ & 1 & 1 & \llap{$-$}1 & \llap{$-$}1 & 1 \\
    \bottomrule
    \end{tabular}
    \caption{All 16 edge subsets $A$ of the graph $\Graph[0.25]{duncA}$, grouped by isomorphism type, and their contributions to \eqref{eq:betark=bic}. They sum to $0-4+10+1-4-2+1=2$.}%
    \label{tab:bic-dunce}%
\end{table}
For the series-parallel graph $G=\Graph[0.25]{duncA}$, we illustrate the contributions to the right-hand side in \cref{tab:bic-dunce}. They sum to $\beta(G)\rk(G)=1\cdot 2$ on the left-hand side.

For matroids that are not (co)graphic, both sides of \eqref{eq:betark=bic} are well defined, but the identity typically fails.
For example, let $\UM{n}{r}$ denote the uniform matroid of rank $r$ on $n$ elements.
Then $\UM{4}{2}$ gives $\beta(\UM{4}{2})\rk(\UM{4}{2})=2\cdot 2$ on the left, whereas the right-hand side is
\begin{align*}
    &\cc(\emptyset)-4\cc(\UM{1}{1})+6\cc(\UM{2}{2})-4\cc(\UM{3}{2})+\cc(\UM{4}{2})
    =0-4+6\cdot 2-4+1
    = 5.
\end{align*}
Similarly, for most connected matroids that are neither graphic nor cographic, the left-hand side of \eqref{eq:betark=bic} is strictly smaller than the right-hand side.
With {\Sage} \cite{sagemath} we tested all connected matroids with 8 elements or less and found merely four exceptions: the Fano matroid where $\beta(\Fano)\rk(\Fano)=3\cdot 3$ in fact exceeds the value $8$ on the right, its series- and parallel extensions $\Fano\oplus_2 \UM{3}{2}$ and $\Fano\oplus_2 \UM{3}{1}$ (again the left side exceeds the right by one), and the matroid $S_8$ from \cite[p.~648]{Oxley:MatroidTheory} for which both sides are equal (to $16$).

\Cref{thm:betark=bic} can also be stated as follows:
Let $\loops(A)=\abs{A}-\rk(A)$ denote the corank (cyclomatic number) of any set of edges.
Then for every graph without self-loops,
\begin{equation*}
    \sum_{A\subseteq E(G)} (-1)^{\loops(A)} \beta(A) \rk(A) = \cc(G)
\end{equation*}
where we set $\cc(G)=\cc(M)$ for the cycle matroid of $G$. For a biconnected graph, $\cc(G)=1$. More generally, a connected graph has $\cc(G)=1+\sum_v(\abs{\pi_0(G\setminus v)}-1)$ blocks \cite{Harary:Elementary}, using $\pi_0$ to denote the set of connected components of a graph; the sum runs over all vertices $v$. We prove a generalization of this relation. A graph is \emph{$k$-(vertex-)connected} if it has more than $k$ vertices and if $G\setminus S$ is connected for any subset $S$ of less than $k$ vertices.
\begin{theorem}\label{thm:betark-k=cc}
    Let $G$ be a $k$-connected graph without self-loops, where $k\geq 1$. Then
    \begin{equation}\label{eq:betark-k=cc}
        (-1)^{k+1}\sum_{A\subseteq E(G)} (-1)^{\loops(A)} \beta(A) \binom{\rk(A)}{k} = 1+\sum_{\substack{S\subset V(G) \\ \abs{S}=k}} \Big(\abs{\pi_0(G\setminus S)} -1 \Big).
    \end{equation}
\end{theorem}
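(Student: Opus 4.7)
The plan is to derive Theorem~\ref{thm:betark-k=cc} by iteratively applying Crapo's formula~\eqref{eq:Crapo} to the LHS, then matching the resulting algebraic expression to the vertex-combinatorial RHS via the Elser-nuclei interpretation of Lemma~\ref{lem:beta=els0} combined with the $k$-connectivity hypothesis.

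First, I would substitute $\beta(A)=(-1)^{\rk(A)}\sum_{B\subseteq A}(-1)^{|B|}\rk(B)$ into the LHS of~\eqref{eq:betark-k=cc}, cancel signs via $(-1)^{\loops(A)+\rk(A)}=(-1)^{|A|}$, and swap summations, parameterizing by $C:=A\setminus B$:
\begin{equation*}
    \sum_{A\subseteq E(G)}(-1)^{\loops(A)}\beta(A)\binom{\rk(A)}{k}=\sum_{B\subseteq E(G)}\rk(B)\sum_{C\subseteq E(G)\setminus B}(-1)^{|C|}\binom{\rk_G(B)+\rk_{G/B}(C)}{k}.
\end{equation*}
Vandermonde's identity decouples the two ranks and reduces the inner sum to a linear combination of higher Crapo-type invariants $T_l(M):=\sum_C(-1)^{|C|}\binom{\rk(C)}{l}$ of contracted matroids $M=M(G)/B$, with $T_0(M)=0$ for non-empty $M$ and $T_1(M)=(-1)^{\rk(M)}\beta(M)$ by Crapo. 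For the higher $T_l$, I would invert Crapo via the Boolean M\"obius function to the identity $\rk(A)=\sum_{B\subseteq A}(-1)^{\loops(B)}\beta(B)$, substitute into $\binom{\rk(C)}{l}$ expanded as a polynomial in $\rk(C)$, and iterate the swap-of-summation trick. The net effect is to express the LHS as a signed sum over tuples $(B, D_1, \ldots, D_{k-1})$ of edge subsets, weighted by products of the form $\rk(B)\beta(D_1)\cdots\beta(D_{k-1})$ of matroid invariants of intermediate minors.

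Next, I would apply Lemma~\ref{lem:beta=els0} to each $\beta$ factor in the expansion, replacing it by the corresponding Elser-nuclei sum over connected subgraphs. This rephrases the LHS as a purely graph-theoretic double sum over edge configurations and vertex markers. The final step invokes the $k$-connectivity hypothesis: since $G\setminus S$ is connected for every vertex subset $S$ with $|S|<k$, the corresponding Elser contributions cancel, and the surviving contributions come from $k$-vertex subsets. Each such $S$ contributes $(|\pi_0(G\setminus S)|-1)$ with a global sign of $(-1)^{k+1}$ matching the RHS of~\eqref{eq:betark-k=cc}, while the constant $+1$ arises from the boundary term $B=E(G)$, where the contraction is the empty matroid.

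The principal obstacle will be the combinatorial matching in this last step: tracking signs and multiplicities so that the algebraic expansion aligns with the vertex-subset sum on the RHS. A cleaner alternative route would be to formulate and prove a higher-$k$ analogue of Lemma~\ref{lem:beta=els0} directly---an Elser-type identity from which both sides of~\eqref{eq:betark-k=cc} emerge transparently. The $k=1$ case of the plan reduces to Theorem~\ref{thm:betark=bic} composed with Harary's block formula $\cc(G)=1+\sum_v(|\pi_0(G\setminus v)|-1)$, providing internal consistency.
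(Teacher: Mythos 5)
Your plan is not how the paper proceeds, and the middle steps contain genuine gaps. The paper's key observation---which your plan misses---is that $\beta(A)$ vanishes unless $A$ is biconnected on its own vertex set, whence $\rk(A)=|V(A)|-1$ for every contributing $A$. This converts $\binom{\rk A}{k}$ into a binomial in the \emph{vertex} count of $A$, and expanding that binomial over $k$-element subsets $S\subset V(G)$ introduces the vertex sets of the right-hand side directly, without contraction, Vandermonde, or iteration. Only after this step does the paper expand $\beta(A)$ once via~\eqref{eq:Crapo}; a parity argument then forces $B=E(G\setminus Y)$ (where $Y=S\setminus V(B)$), and $k$-connectivity enters concretely to guarantee that each $v\in Y$ has a neighbour outside $Y$ and each $v\in S\cap V(B)$ has one outside $Y$, which is what makes the inner sum collapse to $(-1)^{|Y|}$.

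Your plan goes in the opposite direction, and the iteration step does not hold up. Expanding $\binom{\rk(C)}{l}$ as a degree-$l$ polynomial in $\rk(C)$ and substituting the inverse Crapo identity $\rk(C)=\sum_{D\subseteq C}(-1)^{\loops(D)}\beta(D)$ into powers $\rk(C)^j$ for $j\ge 2$ produces $j$-fold products $\beta(D_1)\cdots\beta(D_j)$ over arbitrary, overlapping tuples of subsets of $C$, not the clean signed sums over a structured family of minors that your next step presumes; the claimed ``net effect'' of arriving at weights $\rk(B)\beta(D_1)\cdots\beta(D_{k-1})$ is asserted, not derived. The subsequent appeal to \Cref{lem:beta=els0} inside contracted graphs $G/B$ is also delicate (contractions create loops and identify vertices, so the nucleus/vertex-cover machinery does not propagate cleanly), and the invocation of $k$-connectivity to ``cancel'' the $|S|<k$ contributions is exactly where the combinatorial content should sit, yet no mechanism is offered. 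The restriction to biconnected $A$ is the one observation that makes the proof tractable, and without it the iterative expansion does not visibly converge to the right-hand side.
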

Remarkably, this identity links a purely matroid-theoretic invariant on the left-hand side, to a graph-theoretic invariant (making explicit reference to vertices) on the right-hand side.
The case $k=1$ is essentially equivalent to \Cref{thm:betark=bic}. Note that for a graph that is $K$-connected, the right-hand side of \eqref{eq:betark-k=cc} equals $1$ for every choice of $1\leq k<K$.
\begin{example}
    Consider $G$ the wheel graph with 4 spokes. For the sum on the left, we only need to consider subgraphs $A$ that are biconnected, because otherwise $\beta(A)=0$. \Cref{tab:ex-ws4} shows all such $A$ and their corresponding contributions to the sum. Their total
    \begin{equation*}
        (-1)^{k+1}\left(
            8\binom{1}{k}
            -4\binom{2}{k}
            -\binom{3}{k}
            +\binom{4}{k}
        \right)
    \end{equation*}
    evaluates to $1$ for $k=1$ and $k=2$, reflecting the fact that $G$ is 3-connected. At $k=3$, the total takes the value $3=1+2$, which accounts for the two 3-vertex cuts of $G$.
\end{example}
\begin{table}
    \centering
    \begin{tabular}{rcccccccccc}
    \toprule
         $A$ &          $\Graph[0.3]{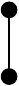}$ & $\Graph[0.3]{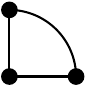}$ & $\Graph[0.3]{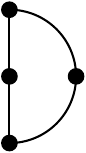}\cong\Graph[0.3]{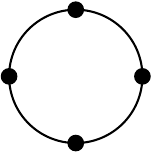}$ & $\Graph[0.3]{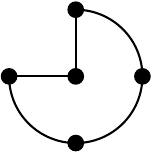}$ & $\Graph[0.3]{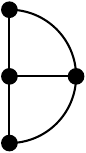}$ & $\Graph[0.3]{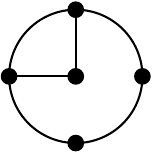}\cong\Graph[0.3]{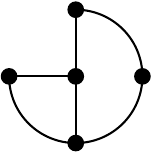}$ & $\Graph[0.3]{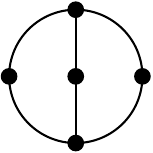}$ & $\Graph[0.3]{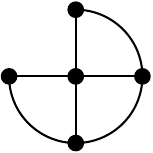}$ & $\Graph[0.3]{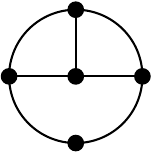}$ & $\Graph[0.3]{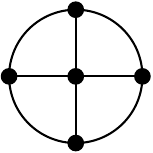}$\\
    \midrule
         copies &            8 & 4 & 4+1 & 4 & 4 & 4+8 & 2 & 4 & 4 & 1\\
         $\beta(A)$ &        1 & 1 & 1 & 1 & 1 & 1 & 1 & 1 & 2 & 3 \\
         $(-1)^{\loops(A)}$ &  1 & \llap{$-$}1 & \llap{$-$}1 & \llap{$-$}1 & 1 & 1 & 1 & \llap{$-$}1 & \llap{$-$}1 & 1\\
         $\rk(A)$ &          1 & 2 & 3 & 4 & 3 & 4 & 4 & 4 & 4 & 4\\
    \bottomrule
    \end{tabular}
    \caption{Contributions to the left-hand side of \Cref{thm:betark-k=cc} for the 4-wheel $G=\Graph[0.2]{ws4A}$.}%
    \label{tab:ex-ws4}%
\end{table}
\begin{remark}
For a geometric interpretation of the case $k=1$, note that $\abs{M}-\cc(M)$ is the dimension of the matroid basis polytope
\cite[Theorem~6]{Shapley:Cores}.                             
Expanding $\beta(A)$ with \eqref{eq:Crapo}, we thus obtain a formula for dimension of the spanning tree polytope \cite{Chopra:STpolyhedron} of a graph:
\begin{equation*}
    \abs{E(G)}-\cc(G)=\sum_{A\subseteq B\subseteq E(G)} (-1)^{\abs{B\setminus A}} \loops(A) \rk(B).
\end{equation*}
\end{remark}

\subsection{Matroids: Speyer and Ferroni}
Our second set of results links these identities to Speyer's polynomial $g_M(t)\in t\Z[t]$. This invariant was constructed from $K$-theory in \cite{Speyer:MatroidKtheory} for all realizable matroids, and extended to all matroids in \cite{FinkSpeyer:K-classes}. It has non-negative coefficients \cite[Theorem~E]{BergetFink:ActiPair} and further intriguing properties, but combinatorial interpretations of its evaluations are rare:
\begin{itemize}
\item
For every matroid $M$ without loops or coloops, \cite{Speyer:MatroidKtheory,FinkSpeyer:K-classes} show that
    \begin{equation}\label{eq:gm10}
        g_M(-1)=(-1)^{\cc(M)}
        \qquad\text{and}\qquad
        g'_M(0)=\beta(M).
    \end{equation}
\item The coefficient 
of the highest power $t^{\rk(M)}$ of $g_M(t)$ is studied in \cite{FinkShawSpeyer:Omega}, especially for matroids whose rank is very small or close to half the number of edges.
\item If $M$ is a Schubert matroid, then the coefficients $\FP_i(M)\in\Z$ in the expansion
\begin{equation}\label{eq:gexpand1}
    g_M(t)=t\sum_{i=0}^{\rk(M)-1} \FP_i(M) \cdot (1+t)^i
\end{equation}
are non-negative and count certain lattice paths with $i$ diagonal steps \cite{Ferroni:SchubertDelannoySpeyer}.
\end{itemize}
In terms of the expansion \eqref{eq:gexpand1}, the first identity of \eqref{eq:gm10} amounts to $\FP_0(M)=(-1)^{\cc(M)-1}$ for any matroid without loops or coloops.

We add to these a useful formula for the next coefficient $\FP_1(M)$, which we state equivalently in terms of the first derivative $g'_M(-1)=(-1)^{\cc(M)-1}-\FP_1(M)$:
\begin{proposition}\label{prop:N1asbeta}
    For every matroid $M$ without loops or coloops,
    \begin{equation*}
        g'_M(-1)=(-1)^{\cc(M)-1}\sum_{A\subseteq M} (-1)^{\loops(A)} \beta(A) \rk(A)
        .
    \end{equation*}
\end{proposition}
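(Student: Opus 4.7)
The plan is to reformulate the claim using the expansion \eqref{eq:gexpand1}. Evaluating at $t=-1$ and differentiating, only the summands with $i\in\{0,1\}$ survive, giving $g_M(-1)=-\FP_0(M)$ and $g'_M(-1)=\FP_0(M)-\FP_1(M)$. Combined with the first identity of \eqref{eq:gm10}, this yields $\FP_0(M)=(-1)^{\cc(M)-1}$ and reduces the proposition to an explicit formula for $\FP_1(M)$ in terms of $\beta$-invariants of restrictions.

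The first concrete step is to apply Crapo's formula \eqref{eq:Crapo} to each $\beta(A)$ appearing on the right-hand side, replacing it by $(-1)^{\rk(A)}\sum_{B\subseteq A}(-1)^{|B|}\rk(B)$. Collecting signs using $\loops(A)+\rk(A)=|A|$ and swapping the order of summation, I would then apply Crapo's formula a second time to the inner sum $\sum_{A\supseteq B}(-1)^{|A|}\rk(A)$, viewed as a Crapo-type sum on the contraction $M/B$ after invoking the rank decomposition $\rk(B\cup C)=\rk(B)+\rk_{M/B}(C)$ and the vanishing $\sum_{C\subseteq M\setminus B}(-1)^{|C|}=0$ for $B\subsetneq M$. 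The case $B=M$ is treated separately. The result is a reduction of the right-hand side to the compact form
\begin{equation*}
\rk(M)^2+\sum_{B\subsetneq M}(-1)^{\rk(M/B)}\rk(B)\,\beta(M/B).
\end{equation*}

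The remaining task---and the main obstacle---is to identify this sum over contractions with $(-1)^{\cc(M)-1}g'_M(-1)$. The most direct approach is to invoke an explicit formula expressing $g_M(t)$ as a sum over subsets weighted by $\beta$-invariants of contractions (for instance via Ferroni's algorithm, mentioned in the abstract) and simply read off its derivative at $t=-1$. An alternative is to exploit valuativity of $g_M$ \cite{FinkSpeyer:K-classes} to reduce the verification to a spanning family such as Schubert matroids, where \cite{Ferroni:SchubertDelannoySpeyer} supplies a closed combinatorial formula and the identity becomes a finite check. Either route localises the remaining combinatorial work, but pinning down the right intermediate identity relating $\FP_1(M)$ to the contraction-sum above is itself the bulk of the proof.
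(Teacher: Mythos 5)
Your setup is sound and matches the paper's: expanding \eqref{eq:gexpand1} and evaluating at $t=-1$ gives $g_M(-1)=-\FP_0(M)$ and $g_M'(-1)=\FP_0(M)-\FP_1(M)$, and \eqref{eq:gm10} pins down $\FP_0(M)=(-1)^{\cc(M)-1}$, so the proposition is equivalent to the claim
\begin{equation*}
    (-1)^{\cc(M)-1}\FP_1(M)=1-\sum_{B\subseteq M}(-1)^{\loops(B)}\beta(B)\rk(B).
\end{equation*}
Your double application of Crapo's formula, using $\rk(B\cup C)=\rk(B)+\rk_{M/B}(C)$ to reach the contraction form $\rk(M)^2+\sum_{B\subsetneq M}(-1)^{\rk(M/B)}\rk(B)\beta(M/B)$, is also correct.

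However, the remaining step—which you flag as ``the main obstacle'' and ``the bulk of the proof''—is indeed the entire content of the proposition, and the two routes you sketch do not close it. There is no pre-existing formula expressing $g_M(t)$ as a $\beta$-weighted sum over subsets or contractions; such a formula (for the linear coefficient of the $(1+t)$-expansion) is precisely what must be proved. The valuativity route is closer to what works, but it requires three nontrivial ingredients you have not supplied: an explicit computation of $\FP_1(M)$ for Schubert matroids, which the paper extracts from the lattice path picture as the count of admissible interior squares (\Cref{lem:N1(LPM)=boxes} and \Cref{lem:N1(SM)}); a product formula for the M\"obius function of the chain lattice (\Cref{lem:MF(chain)}) combined with the interval isomorphism $[B,M]_{\LCF_M}\cong\LCF_{M/B}$ (\Cref{lem:cyc-interval}) to collapse Ferroni's sum over chains of cyclic flats to a sum over pairs $A\leq B$ of cyclic flats (\Cref{lem:N1(M)=cyc}); and Crapo's lattice-closure theorem (\Cref{thm:lattice-closure}), applied twice with the operators $\cl$ and $\cyc$, to pass from the lattice of cyclic flats to the Boolean lattice of all subsets (\Cref{lem:N1(M)=bool}). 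Each of these is where the actual work lies, and your rewriting in terms of contractions—though valid—does not obviously interface with any of them. In short: the reduction to $\FP_1$ and the recognition that a Schubert/valuativity argument should settle things are both on target, but the combinatorics of admissible squares and the M\"obius-theoretic machinery on the lattice of cyclic flats are the missing core of the proof.
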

\begin{remark}
Expanding $\beta(A)$ according to \eqref{eq:Crapo}, \Cref{prop:N1asbeta} expresses $g_M'(-1)$ as the alternating double sum of $(-1)^{\abs{A}}\rk(B)\rk(A)$ over nested pairs $B\subseteq A\subseteq M$. Therefore, $g_M'(-1)$ can be obtained as an evaluation of a derivative from other matroid polynomials; e.g.\ the \emph{M\"{o}bius polynomial} \cite[Definition~4]{Jurrius:RelMoeCob} or the \emph{2nd chain Tutte polynomial} \cite[\S5.2]{Wakefield:ChainTutte}.
\end{remark}
\begin{example}\label{ex:g'(UM)}
Take a uniform matroid $M=\UM{n}{r}$ without loops or coloops ($1\leq r<n$). Then every $A\subseteq M$ is itself uniform: $A\cong\UM{i}{r}$ when $i\defas\abs{A}>r$, and $A\cong\UM{i}{i}=(\UM{1}{1})^{\oplus i}$ when $i\leq r$. The latter is disconnected with $\beta(A)=0$, unless $i=1$ where $\beta(A)=1$. Using $\beta(\UM{i}{r})=\binom{i-2}{r-1}$ from \cite[Proposition~7]{Crapo:HigherInvariant} in the cases $i>r$, some basic algebra shows
\begin{equation*}
    g'_{\UM{n}{r}}(-1)
    = \sum_{e\in\UM{n}{r}} 1 + \sum_{i=r+1}^n \binom{n}{i} (-1)^{i-r} \binom{i-2}{r-1} r
    =n-r(n-r).
\end{equation*}
\end{example}

Specializing to (co)graphic matroids, combining \Cref{prop:N1asbeta} with \Cref{thm:betark=bic} proves a simple interpretation of $g'_M(-1)$:
\begin{theorem}\label{thm:gprime=cc}
    For every graphic or cographic matroid $M$ without any loops or coloops,
    \begin{equation*}
        g_M'(-1)=(-1)^{\cc(M)-1} \cc(M)
        .
    \end{equation*}
\end{theorem}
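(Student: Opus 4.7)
The plan is to combine \Cref{prop:N1asbeta} with \Cref{thm:betark=bic}. The proposition reduces the claim to showing
\[
\sum_{A\subseteq M}(-1)^{\loops(A)}\beta(A)\rk(A)=\cc(M)
\]
for every graphic or cographic $M$ without loops or coloops; for graphs this is already the equivalent form of \Cref{thm:betark=bic} highlighted in the remark following that theorem, so the work is in extending it to the cographic case.

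The first step is to observe that \eqref{eq:betark=bic} applies not only to $M$ but to every submatroid $A\subseteq M$, because both the graphic and the cographic classes are closed under restriction. For graphic $M=M(G)$, the restriction $M|A$ is the cycle matroid of the edge-subgraph on $A$; for cographic $M$, the dual $(M|A)^{*}=M^{*}/(M\setminus A)$ is a contraction of the graphic matroid $M^{*}$, hence graphic, so $M|A$ is again cographic. The few submatroids with $\abs{A}\leq 1$, which fall outside the hypothesis of \Cref{thm:betark=bic}, are checked by hand and contribute trivially.

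The second step is a short Möbius inversion on the Boolean lattice of subsets of $M$. I would substitute $\beta(A)\rk(A)=(-1)^{\rk(A)}\sum_{B\subseteq A}(-1)^{\abs{B}}\cc(B)$, use $(-1)^{\loops(A)+\rk(A)}=(-1)^{\abs{A}}$, and swap the order of summation. The resulting inner sum $\sum_{A\supseteq B}(-1)^{\abs{A}}$ vanishes by the standard binomial identity unless $B=M$, so only the term $B=M$ survives and gives $\cc(M)$. Feeding the identity back into \Cref{prop:N1asbeta} produces the claim $g'_M(-1)=(-1)^{\cc(M)-1}\cc(M)$.

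There is no genuine obstacle: the only new ingredient beyond the two previously established results is the elementary fact that restriction preserves the (co)graphic property, which promotes the graph-only equivalent form of \Cref{thm:betark=bic} to all graphic or cographic matroids and makes the Möbius-inversion argument work uniformly in both cases.
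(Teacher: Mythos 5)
Your proof is correct and essentially matches the paper's: the paper deduces \Cref{thm:gprime=cc} by combining \Cref{prop:N1asbeta} with \Cref{thm:cc-AB}, and \Cref{thm:cc-AB} is nothing but the M\"obius-inverted form of \Cref{thm:betark=bic} obtained exactly as you describe, by applying \Cref{thm:betark=bic} to every submatroid $A\subseteq M$ and summing with signs over the Boolean lattice. The one minor variation is how the cographic case is justified: you observe that the cographic class is closed under restriction, so \Cref{thm:betark=bic} applies directly to each $A\subseteq M$, whereas the paper proves \Cref{thm:cc-AB} for cographic matroids by dualizing the already-established graphic identity; both arguments are valid and interchangeable.
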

Cycle matroids of biconnected graphs thus satisfy $g'_M(-1)=1$ and $\FP_1(M)=0$. These identities for (co)graphs do not extend to arbitrary matroids; e.g.\ the calculation in \Cref{ex:g'(UM)} proves that uniform matroids $\UM{n}{r}$ with $1\leq r\leq n-1$ have $g'_M(-1)=1$ if and only if $r=1$ or $r=n-1$. We can thus deduce from \Cref{thm:gprime=cc} that all uniform matroids with $2\leq r\leq n-2$ are \emph{not} graphic or cographic. This result is well-known, but it seems remarkable that it can be detected by such a simple, algebraic criterion.
\begin{corollary}\label{cor:N1-graphic-criterion}
    If a connected matroid $M$ with at least two elements has $g'_M(-1)\neq 1$, then $M$ is not (co)graphic.
\end{corollary}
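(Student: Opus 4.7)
The plan is to prove the contrapositive of the corollary: assuming $M$ is a connected (co)graphic matroid with at least two elements, I would show $g'_M(-1)=1$ by reducing to \Cref{thm:gprime=cc}.

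First I would verify that the hypotheses of \Cref{thm:gprime=cc} are met. A connected matroid on at least two elements cannot contain a loop or a coloop, because any loop or coloop is a direct summand (a copy of $\UM{1}{0}$ or $\UM{1}{1}$) of the matroid; its presence would contradict connectedness unless $M$ itself were a single-element matroid. Since $M$ is assumed to have at least two elements, $M$ has no loops and no coloops. Moreover, connectedness in the matroid sense gives $\cc(M)=1$.

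Now I can directly apply \Cref{thm:gprime=cc}, which yields
\begin{equation*}
    g'_M(-1) = (-1)^{\cc(M)-1}\cc(M) = (-1)^0 \cdot 1 = 1.
\end{equation*}
Taking the contrapositive of the implication "(co)graphic and connected with $\geq 2$ elements $\Rightarrow g'_M(-1)=1$" gives exactly the statement of the corollary.

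There is no substantive obstacle here: the work has already been done in \Cref{thm:gprime=cc}, and the corollary is essentially a packaging of that theorem in a form convenient for applications (for example, the ruling out of $\UM{n}{r}$ with $2\leq r\leq n-2$ as being (co)graphic, as noted in the discussion following the theorem). The only subtlety worth flagging explicitly is the remark on loops and coloops, since \Cref{thm:gprime=cc} requires their absence while the corollary's hypothesis instead phrases things in terms of connectedness plus size.
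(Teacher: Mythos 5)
Your proof is correct and takes exactly the approach the paper intends: the corollary is the contrapositive of \Cref{thm:gprime=cc} specialized to connected matroids, and the only point requiring care—that connectedness with $\geq 2$ elements forces the absence of loops and coloops—is exactly the subtlety you flag and resolve correctly. The paper does not spell out a separate proof, and your argument matches what is implicit in the surrounding text.
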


Similarly one calculates that $g'_M(-1)=0$ and $\FP_1(M)=1$ for the regular matroid $R_{10}$, which again reproves that it is not (co)graphic.\footnote{The Speyer polynomial is
$g_{R_{10}}^{}(t)=t(10+35t+45t^2+20t^3+t^4)$.
}
For further examples see \cite[Table~5.1]{Ferroni:SchubertDelannoySpeyer}, where $\FP_1(M)$ can be read off as the linear coefficient of $t$ in the rightmost column headed ``$\widetilde{g}(t-1)$''; e.g.\ $\FP_1(\Fano)=-1$ and $\FP_1(\Vamos)=4$ for the Fano and V\'{a}mos matroid.
The converse of \Cref{cor:N1-graphic-criterion} is false: there exist connected matroids that are not (co)graphic but have $\FP_1(M)=0$. There is precisely one such case in \cite[Table~5.1]{Ferroni:SchubertDelannoySpeyer} (the non-Fano matroid), but more examples can be constructed from $2$-sums, like $\Fano\oplus_2 R_{10}$.

\begin{remark}\label{rem:FP1-valuative}
Since $g_M(t)$ is covaluative \cite{Speyer:MatroidKtheory}, it follows that $\FPv_1(M)\defas(-1)^{\cc(M)}\FP_1(M)\in\Z$ is a valuative matroid invariant. It can take negative values, e.g.\ $\FPv_1(V_8)=-4$, but our results above show that for all graphic or cographic matroids $M$, its value
\begin{equation*}
    \FPv_1(M)=\cc(M)-1\geq 0
\end{equation*}
is non-negative. The study of similar valuative constraints on subclasses of matroids was proposed in \cite{FerroniFink:PolMat}, where a \emph{polytope of all matroids} is defined. The hyperplane $\{\FPv_1= 0\}$ cuts this polytope such that all (co)graphic matroids lie on the side $\FPv_1\geq 0$. On the other side ($\FPv_1<0$), we find, for example, all non-(co)graphic uniform matroids $\UM{n}{r}$ with $2\leq r\leq n-2$. For indeed, we saw in \Cref{ex:g'(UM)} that then
\begin{equation*}
    \FPv_1(\UM{n}{r}) = g'_{\UM{n}{r}}(-1)-1
    =-(r-1)(n-r-1)
    <0.
\end{equation*}
\end{remark}
\begin{remark}
    Following \cite[\S7]{FerroniFink:PolMat}, we ask: Which matroids minimize or maximize $\FPv_1(M)$ at a given rank and size? We did not study this question closely, but neither (co)graphic nor uniform matroids are candidates: \cite[Table~5.1]{Ferroni:SchubertDelannoySpeyer} has an example with $\FPv_1(M)=20$ at rank $5$ (larger than the (co)graphic matroids), and calculations for projective geometries indicate that the negative values for uniform matroids are not minimal.
\end{remark}
\begin{remark}
    The properties of $g_M(t)$ proved in \cite{Speyer:MatroidKtheory} also imply that $\FPv_1(M^\star)=\FPv_1(M)$ for the dual matroid $M^\star$,
    $\FPv_1(A\oplus B)=\FPv_1(A)+\FPv_1(B)+1$, and $\FPv_1(A\oplus_2 B)=\FPv_1(A)+\FPv_1(B)$ for $1$- and $2$-sums. We suspect that a similar relation reduces $\FPv_1$ for $3$-sums. Seymour's decomposition theorem \cite{Seymour:DecompositionRegular} combined with $\FPv_1(G)=\cc(G)-1$ and $\FPv_1(R_{10})=-1$ should thus yield a simple characterization of $\FPv_1(M)$ for all regular matroids, in terms of the number $\cc(M)$ of components and the number of $R_{10}$ factors.
\end{remark}

For all biconnected graphs $G$ with at least two edges, we know that $g_M(-1)=-1$, and now also that $g'_M(-1)=1$, are trivial invariants---for $M$ the cycle matroid of $G$. In terms of the coefficients $\FP_i(G)\defas \FP_i(M)$ in the expansion \eqref{eq:gexpand1}, this amounts to
\begin{equation*}
    \FP_0(G)=1 \quad\text{and}\quad \FP_1(G)=0.
\end{equation*}
In the final part of this paper, we thus study the second derivative, $g''_M(-1)=-2\FP_2(G)$, as an interesting invariant of graphs in its own right. From the properties of Speyer's polynomial shown in \cite{Speyer:MatroidKtheory}, and the vanishing $\FP_1(G)=0$ for biconnected $G$, we know that:
\begin{itemize}
    \item $\FP_2(G)=0$ if $G$ has a (self-)loop or a bridge (coloop), or if $G$ is series-parallel,
    \item $\FP_2(G)$ is invariant under series-parallel operations,
    \item $\FP_2(G)=\FP_2(G^\star)$ for planar graphs $G$ with dual $G^\star$,
    \item $\FP_2(G_1\oplus_2 G_2)=\FP_2(G_1)+\FP_2(G_2)$ for the 2-sum of two biconnected graphs.
\end{itemize}
The last property follows from $g_{A\oplus_2 B}(t)=g_A(t)g_B(t)/t$. Also, note $g_{A\oplus B}(t)=g_A(t) g_B(t)$. Since every graph $G$ decomposes, via 1- and 2-sums, into a unique collection of 3-connected components $G_i$ and series-parallel pieces (cycles and bonds) \cite{CunninghamEdmonds:Decomposition}, we find that
\begin{equation}\label{eq:N2-into-3con}
    \FP_2(G)=(-1)^{\cc(G)-1}\left(\binom{\cc(G)-1}{2}+\sum_i \FP_2(G_i)\right).
\end{equation}
Therefore, it suffices to focus on graphs $G$ that are 3-connected, i.e.\ connected graphs with at least four vertices and no 1- or 2-vertex cuts.

In order to compute $g_M(t)$ and $\FP_2(G)$ efficiently, we propose several improvements to Ferroni's algorithm \cite[\S5]{Ferroni:SchubertDelannoySpeyer}. The resulting recursive method is described as \Cref{alg:gRec} in \cref{sec:algrec}, and we provide an open source implementation at
\begin{center}
    \url{\programurl}
\end{center}
This is efficient enough for quite large graphs, e.g.\ the circulant $C^{18}_{1,8}$ (see \cref{fig:xladders}) gives
\begin{align*}
    g_{C^{18}_{1,8}}(t) &=
    135t^{17}+10896t^{16}+228042t^{15}+2253582t^{14}+13169952t^{13}+50942898t^{12} \\ &
    +139071708t^{11}+278452740t^{10}+418235148t^9+476553546t^8+412635306t^7 \\ &
    +269412876t^6+130133556t^5+44892552t^4+10385851t^3+1426796t^2+86189t
\end{align*}
and thus $\FP_2(C_{1,8}^{18})=247$ on a single 1 GHz CPU within one day. This demonstrates a significant improvement, since the original algorithm from \cite{Ferroni:SchubertDelannoySpeyer} involves a sum over all chains of cyclic flats.
The graph $C^{18}_{1,8}$ has \numprint{870137905746} such chains, rendering the original algorithm impractical.
\begin{figure}
    \centering\renewcommand{\arraystretch}{1.3}
    \begin{tabular}{cccccc}
    $\Graph[0.5]{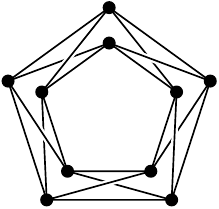}$ & $\Graph[0.5]{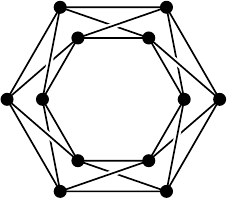}$ & $\Graph[0.5]{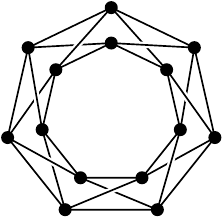}$ & $\Graph[0.5]{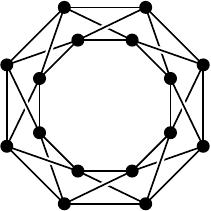}$ & $\Graph[0.5]{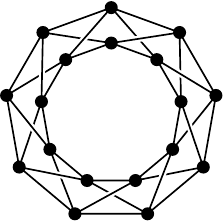}$ & $\Graph[0.5]{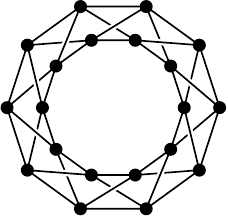}$ \\[2mm]
    $C^{10}_{1,4}$ & $C^{12}_{1,5}$ & $C^{14}_{1,6}$ & $C^{16}_{1,7}$ & $C^{18}_{1,8}$ & $C^{20}_{1,9}$ \\
    \end{tabular}
    \caption{Drawings of the circulant graphs $C^{2n}_{1,n-1}$ on $2n$ vertices.}%
    \label{fig:xladders}%
\end{figure}

\subsection{Experiments: Speyer's polynomial in graph theory}
In \Cref{sec:data} we report on calculations performed with this method. We computed Speyer polynomials for more than 3 million graphs; this open data set \cite{Data:gSpeyer} is available at
\begin{center}
    \url{\dataurl}
\end{center}
Supported by this data,\footnote{When in subsequent examples we state \texttt{graph6} encodings, these refer to the graph label in those files.} we observe several intriguing structural properties of the graph invariant $\FP_2(G)$. In contrast to the identities of $\FP_2(G)$ mentioned earlier, the following new properties are \emph{not} obvious from known properties of Speyer's polynomial:
\begin{conjecture}\label{con:planar=1}
    If a $3$-connected graph $G$ is planar, then $\FP_2(G)=1$.
\end{conjecture}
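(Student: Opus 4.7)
The plan is to proceed by induction on the number of edges, using the structural theorem that every simple 3-connected graph on more than four vertices admits an edge $e$ whose deletion $G\setminus e$ or contraction $G/e$ is again simple and 3-connected (Tutte's wheel theorem, or for planar graphs the closely related results of Barnette-Grünbaum). Both deletion and contraction preserve planarity, so the induction stays inside the class of 3-connected planar graphs. The 2-sum additivity and series-parallel invariance of $\FP_2$ recorded in the excerpt handle the reduction from general biconnected planar graphs to the 3-connected case. It then suffices to establish the base case of wheels $W_n$ (the smallest being $W_3=K_4$) and an inductive step showing that the wheel-theorem operation preserves $\FP_2$.

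For the base case, one would aim for a closed-form evaluation of $g_{W_n}(t)$. The cycle matroid of $W_n$ enjoys a $\Z/n$-symmetry and a natural partition of ground-set elements into rim- and spoke-edges, so Ferroni's chain-of-cyclic-flats sum admits a combinatorial parametrization indexed by arc-sequences on the rim. Even absent a closed form, the algorithm provided in the paper verifies the conjecture for small $n$, and the uniform assertion $\FP_2(W_n)=1$ might then be proved separately by exploiting the planar self-duality $W_n\cong W_n^\star$ together with $\FP_2(G)=\FP_2(G^\star)$, which in favorable circumstances pins the value on this symmetric family.

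The inductive step is where I expect the real difficulty: we must prove $\FP_2(G)=\FP_2(G')$ whenever $G'\in\{G\setminus e, G/e\}$ is simple, 3-connected, and planar. There is no known simple deletion--contraction identity for $g_M(t)$, so a new relation has to be developed. My attempt would exploit the covaluativity of $g_M$ \cite{Speyer:MatroidKtheory}: decompose the matroid polytope of $M(G)$ into pieces containing the polytope of $M(G')$ together with controlled correction terms indexed by flats of $M(G)$ involving the edge $e$, and extract from the resulting identity the coefficient of $(1+t)^2$. The already-established constraints $\FP_0=1$ and $\FP_1=0$ on biconnected graphs should annihilate most contributions, leaving a residual term that must be shown to cancel precisely when $G$ (and hence $G'$) is planar. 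Capturing planarity in a matroid-theoretic form that interacts cleanly with Speyer's polynomial---perhaps through the tropical linear space or the $K$-theoretic interpretation of $g_M$ for cycle matroids of polyhedral graphs---is the essential new ingredient that such a proof would require, and is the step I expect to be the main obstacle.
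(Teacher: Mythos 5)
This statement is a \emph{conjecture} in the paper, and the paper does not prove it unconditionally. What the paper offers in \S4.3 is a reduction: it shows that Conjecture~\ref{con:planar=1} follows from the star--triangle special case \eqref{eq:N2star-triangle} of the 3-sum Conjecture~\ref{con:3sum}, using the proof of Steinitz's theorem in Gr\"unbaum's book. That proof reduces any simple 3-connected planar graph to $K_4$ by a sequence of star--triangle (and triangle--star) moves, each followed by series/parallel simplification, staying inside the class of simple 3-connected planar graphs. Since $\FP_2$ is invariant under series/parallel simplification, and since each star--triangle move is a 3-sum with $K_4$ whose correction term $p_1 p_2 - 1$ in Conjecture~\ref{con:3sum} vanishes because planarity forbids a $K_{3,r}$ minor for $r\geq 3$ (forcing $\abs{\pi_0(G\setminus S)}=2$ at every 3-vertex cut $S$), the value $\FP_2=1$ propagates from $K_4$ to $G$.

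Your approach is structurally different and has a genuine gap exactly where you suspect it. You propose to use Tutte's wheel theorem to delete or contract a single edge $e$ while staying simple, 3-connected, and planar, and then argue $\FP_2(G)=\FP_2(G')$. But $g_M(t)$ satisfies no deletion--contraction recursion, and the single-edge move does not arise from a 3-sum, twist, or any other operation for which the paper (even conjecturally) controls $\FP_2$. The step you label ``the main obstacle'' is indeed the whole content, and without it the induction does not close: proving invariance of $\FP_2$ under the wheel-theorem move for 3-connected planar graphs is essentially equivalent to the conjecture itself, so the proposal is circular as stated. Your base case is fine (the closed form $g_{W_r}(t)=(1+t)^r-1-t-t^2$ from Speyer gives $\FP_2(W_r)=1$), but the self-duality $W_r\cong W_r^\star$ argument does not pin the value, since $\FP_2(G)=\FP_2(G^\star)$ is consistent with any value on a self-dual graph. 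What the paper's route buys you is a move (star--triangle) that \emph{is} a 3-sum, for which Conjecture~\ref{con:3sum} supplies an explicit formula, together with the planarity-specific vanishing of the correction term; that is a far more tractable target than an entirely new deletion--contraction theory for $g_M(t)$. Of course, the paper's route is itself conditional on Conjecture~\ref{con:3sum}, so neither argument constitutes a proof.
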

This condition can detect non-planarity of some graphs, for example one computes that $\FP_2(K_5)=\FP_2(K_{3,3})=0$ for the Kuratowski graphs $K_5$ and $K_{3,3}$.
From \eqref{eq:N2-into-3con} we see that the invariant $\FPv_2(M)\defas(-1)^{\cc(M)-1}\FP_2(M)$ satisfies $\FPv_2(G)\geq 0$ for all planar graphs (not necessarily 3-connected). Similarly to \Cref{rem:FP1-valuative}, we may thus view $\FPv_2(M)\geq 0$ as a valuative constraint that is \emph{necessary} for a matroid $M$ to be (graphic and) planar.
\begin{remark}
    With $\FP_2(G)$ becoming fixed, it might be interesting to study and interpret the next invariant $\FP_3(G)$ especially for planar graphs.
\end{remark}
\begin{conjecture}\label{con:3sum}
    Let $G_1$ and $G_2$ denote two 3-connected graphs, each having a triangle $T_i\subset G_i$, and let $G_1\oplus_3 G_2$ denote a corresponding 3-sum. Let $p_i=\abs{\pi_0(G_i\setminus V(T_i))}$ be the number of connected components left over when the triangle vertices are deleted. Then
    \begin{equation*}
        \FP_2(G_1 \oplus_3 G_2)=\FP_2(G_1)+\FP_2(G_2)-p_1\cdot p_2.
    \end{equation*}
\end{conjecture}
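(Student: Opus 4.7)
The plan is to derive a formula for $g_{M_1\oplus_3 M_2}(t)$ under matroid 3-sums and then extract $\FP_2$ from it. As a starting point, the known identities $g_{A\oplus B}(t)=g_A(t)g_B(t)$ and $g_{A\oplus_2 B}(t)=g_A(t)g_B(t)/t$, together with $g_M(-1)=(-1)^{\cc(M)}$ and \Cref{thm:gprime=cc}, already force any 3-sum formula to take the shape
\begin{equation*}
    g_{M_1\oplus_3 M_2}(t)=-\frac{g_{M_1}(t)\,g_{M_2}(t)}{t^2}+R_{M_1,M_2}(t),
\end{equation*}
with a correction polynomial satisfying $R_{M_1,M_2}(-1)=0$ and $R_{M_1,M_2}'(-1)=1$ whenever $M_1,M_2$ are 3-connected graphic matroids. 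I would first try to establish such an identity in full matroid generality, using either the $K$-theoretic description of $g_M$ from \cite{FinkSpeyer:K-classes} combined with the structural description of matroid 3-sums, or the cyclic-flat expansion underlying Ferroni's algorithm \cite{Ferroni:SchubertDelannoySpeyer}, in which chains of cyclic flats of $M_1\oplus_3 M_2$ ought to decompose into compatible pairs of chains on $M_1$ and $M_2$ glued through the common triangle $T$.

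With a closed form for $R$ in hand, extracting $\FP_2$ becomes a direct manipulation in the variable $s=1+t$. Writing $g_{M_i}(t)=tP_i(t)$ with $P_i(t)=1+\FP_2(M_i)s^2+O(s^3)$, a short computation shows that $-g_{M_1}g_{M_2}/t^3$ expands as $1+s+(1+\FP_2(M_1)+\FP_2(M_2))s^2+O(s^3)$, so that the desired identity $\FP_2(M_1\oplus_3 M_2)=\FP_2(M_1)+\FP_2(M_2)-p_1 p_2$ is equivalent to the statement that $R_{M_1,M_2}(t)/t$ expands as $-s-(1+p_1 p_2)s^2+O(s^3)$ in the graphic case. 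The task therefore reduces to proving that the quadratic coefficient of $R$ at $t=-1$ equals $p_1 p_2$ for graphic 3-sums.

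For this last step the plan is to relate $R$ to the component-counting identity of \Cref{thm:betark-k=cc}. At $k=3$, the right-hand side of \eqref{eq:betark-k=cc} involves the sum $\sum_{\abs{S}=3}(\abs{\pi_0(G\setminus S)}-1)$; in $G_1\oplus_3 G_2$, the distinguished triangle cut $S=V(T)$ contributes $p_1+p_2-1$, while 3-vertex cuts $S$ with some vertices from each side should yield cross-terms that assemble into $p_1\cdot p_2$. The main obstacle is Step 1: no multiplicativity statement for $g_M$ under 3-sums is currently known, and the appearance of the genuinely non-matroidal quantity $p_1 p_2$ signals that $R$ must depend on auxiliary invariants beyond $g_{M_1}$ and $g_{M_2}$ alone, such as the Speyer polynomials of the minors $M_i\setminus T$ or $M_i/T$, or valuations on the matroid base polytope attached to $T$. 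As a preliminary sanity check and source of examples for guessing $R$, I would use the implementation at \url{\programurl} to compute $g_M(t)$ on a systematic family of 3-sums of small 3-connected graphs with varied $p_1,p_2$.
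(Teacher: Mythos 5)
This statement is \Cref{con:3sum}, which the paper presents as a \emph{conjecture}, not a theorem. The paper offers no proof; the support consists of verification against the computed dataset (every 3-sum decomposition of every graph in \cref{tab:dataset} was checked), together with the observation that its star-triangle special case \eqref{eq:N2star-triangle} would, via the proof of Steinitz's theorem, imply \Cref{con:planar=1}. There is therefore no paper argument to compare yours against.

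Your submission is, by its own admission, a research plan rather than a proof, and you correctly identify the gap: the entire content lies in ``Step 1,'' establishing a 3-sum formula for $g_M(t)$, which does not currently exist. The algebraic bookkeeping in your second paragraph is correct (given $\FP_0=1$, $\FP_1=0$ for connected graphic matroids, matching coefficients of $s=1+t$ does reduce the conjecture to showing the $s^2$-coefficient of $R_{M_1,M_2}(t)/t$ equals $-(1+p_1p_2)$), and the boundary conditions $R(-1)=0$, $R'(-1)=1$ follow from $g_M(-1)=(-1)^{\cc(M)}$ and \Cref{thm:gprime=cc} as you say. But this is exactly the kind of rearrangement that shifts all the difficulty into the unknown $R$. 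You are also right to flag that $R$ cannot be a function of $g_{M_1}$ and $g_{M_2}$ alone: the paper's own \Cref{rem:K5plusK5} already exhibits $\beta(K_5\oplus_3 K_5)=24\neq 36=\beta(K_5)^2$ and notes $g_{K_5\oplus_3 K_5}$ is irreducible, and the appearance of the non-matroidal data $p_1p_2$ (component counts after deleting triangle vertices) makes this decisive. Your proposed route through \Cref{thm:betark-k=cc} at $k=3$ is a reasonable heuristic for where the $p_1p_2$ cross-term could come from, since the triangle cut $S=V(T)$ contributes $p_1+p_2-1$ to the right-hand side of \eqref{eq:betark-k=cc}, but note that \Cref{thm:betark-k=cc} relates $\beta$-data (hence $g'_M(-1)$-type information via \Cref{prop:N1asbeta}), not $g''_M(-1)$; no analogue of \Cref{prop:N1asbeta} for $\FP_2$ is proved in the paper, so this bridge would itself need to be built. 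In short: your sanity checks are sound, your identification of the obstruction matches what the paper's own remarks suggest, but this remains an open problem.
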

A 3-sum \cite[\S9.3]{Oxley:MatroidTheory} of two graphs $G_i$ along triangles $T_i\subset G_i$ (see \cref{fig:3sum-twist}) is a graph $G$ obtained from gluing $G_1$ to $G_2$ by identifying $T_1$ with $T_2$ and then deleting the triangle edges.\footnote{Since the 3-sum depends on the choice of triangles, and furthermore on the choice of how they are identified, two given graphs may give rise to several, non-isomorphic 3-sums.}
In a 3-sum, the identified vertices $S\subset V(G)$ form a 3-vertex cut, such that the two parts of the edge partition $E(G)=A_1\sqcup A_2$ into $A_i=E(G_i)\setminus E(T_i)$ meet only in $S$. Conversely, given any bipartition $E(G)=A_1\sqcup A_2$ such that both parts meet only in a 3-vertex cut $S$, we can realize $G=G_1\oplus_3 G_2$ as a 3-sum of the graphs $G_i=A_i\sqcup T$ obtained from $A_i$ by adding 3 extra edges $T=\binom{S}{2}$ that form a triangle on $S$.
\begin{figure}
    \centering
    $\Graph[0.5]{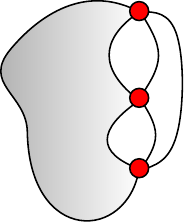} \oplus_3 \Graph[0.5]{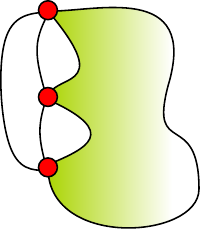} = \Graph[0.5]{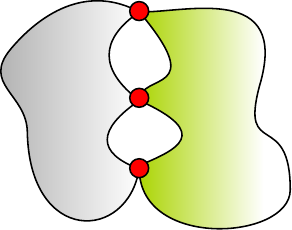}$
    \qquad\qquad
    $\Graph[0.43]{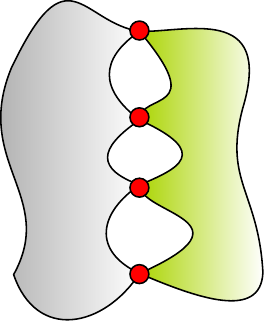} \ \leftrightarrow\  \Graph[0.4]{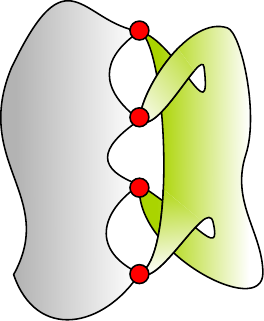}$
    \caption{A 3-sum of two graphs (left) and a twist of a graph along a 4-vertex cut (right).}%
    \label{fig:3sum-twist}%
\end{figure}
\begin{example}
    Every 3-valent vertex $v$ in a graph $G$ gives rise to a 3-sum representation $G=K_4\oplus_3 G_v$, where $G_v=(G\setminus \{v\})\sqcup T$ is the star-triangle (or $Y$--$\Delta$) transform of $G$. Let $S$ denote the 3 neighbours of $v$. Since $\FP_2(K_4)=1$, \Cref{con:3sum} specializes to
\begin{equation}\label{eq:N2star-triangle}
    \FP_2(G)=\FP_2(G_v)+1-\abs{\pi_0(G_v\setminus S)}
    =\FP_2(G_v)-\left(\abs{\pi_0(G\setminus S)}-2\right).
\end{equation}
Hence, a star--triangle reduction typically leaves $\FP_2(G)=\FP_2(G_v)$ invariant. Only when $G\setminus S=(G_v\setminus S)\sqcup \{v\}$ has more than two components, $\FP_2(G)$ decreases.
\end{example}
\begin{remark}\label{rem:K5plusK5}
In contrast to 1- and 2-sums, the polynomial $g_M(t)$ does \emph{not} factorize for 3-sums. For example, apart from the trivial zero $g_M(0)=0$, the polynomial
\begin{equation*}
    g_{K_5\oplus_3 K_5}(t)=14t^6+81t^5+183t^4+202t^3+109t^2+24t
    \quad\text{of}\quad
    \Graph[0.5]{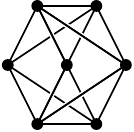}=K_5\oplus_3 K_5
\end{equation*}
is irreducbile over $\Q$ and not simply related to the square of $g_{K_5}(t)=5t^4+15t^3+15t^2+6t$. Note that $24=\beta(K_5\oplus_3 K_5)\neq 36=\beta(K_5)^2$. Nevertheless, we see that $\FP_2(K_5\oplus K_5)=-1$ and $\FP_2(K_5)=0$ are in agreement with \Cref{con:3sum} since $p_1=p_2=1$.
\end{remark}

By \Cref{con:3sum}, the computation of $\FP_2(G)$ can be reduced to 4-connected graphs. Even in this highly connected case, there remain many relations:
\begin{conjecture}\label{con:twist}
    If two 3-connected graphs $G$ and $G'$ are related by a twist along a minimal\footnote{Minimality means that no proper subset of those four vertices already suffices to cut the graph.} 4-vertex cut (see \cref{fig:3sum-twist}), then $\FP_2(G)=\FP_2(G')$.
\end{conjecture}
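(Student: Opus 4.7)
The plan is to reduce the problem to a matroid-theoretic identity and attack it via the covaluative nature of $g_M(t)$ established in \cite{Speyer:MatroidKtheory}. Let $S\subset V(G)$ be the minimal 4-vertex cut, $E(G)=A_1\sqcup A_2$ the associated edge bipartition, and $G_i$ the subgraph on edges $A_i$ together with the vertices in $S$. The twisted graph $G'$ is obtained by regluing $G_1$ and $G_2$ along $S$ via a nontrivial involution $\sigma$ of $S$. Since the cycle matroids $M(G)$ and $M(G')$ are generally non-isomorphic, one cannot invoke matroid-invariance directly; the argument must be specific to the coefficient $\FP_2$.

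The first step is to express $M(G)$ and $M(G')$ through common building blocks. Introduce auxiliary matroids $N_i$ obtained from $M(G_i)$ by completing $S$ to a $K_4$, in analogy with the triangle completion used for 3-sums. Then $M(G)$ and $M(G')$ differ only in how the ``boundary'' elements along $S$ are identified between $N_1$ and $N_2$. The second step uses that $\FPv_2(M)$ is a valuative matroid invariant, in parallel with \Cref{rem:FP1-valuative} for $\FPv_1$. The task then becomes to exhibit a valuative identity between the matroid polytopes $P(M(G))$ and $P(M(G'))$ under which contributions to $\FPv_2$ cancel, so that all dependence on the gluing permutation $\sigma$ drops out---mirroring the situation $\FPv_1(A\oplus_2 B)=\FPv_1(A)+\FPv_1(B)$ for 2-sums.

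The main obstacle, and the reason this identity remains conjectural, is the absence of any factorization of $g_M(t)$ under 3-sums, let alone 4-sums (see \Cref{rem:K5plusK5}). Hence one cannot hope for the equality $g_{M(G)}(t)=g_{M(G')}(t)$ in general: only the single coefficient $\FP_2$ should agree. Any successful proof must therefore isolate the contribution to the coefficient of $(1+t)^2$ in the expansion \eqref{eq:gexpand1} and show that precisely this contribution is twist-invariant, even though higher coefficients may differ. This is a genuinely delicate point, since Speyer's polynomial provides little leverage for extracting individual coefficients in closed form.

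A complementary, more combinatorial route would be to exploit \Cref{alg:gRec}: because the recursion is driven by cyclic flats, one could attempt to construct an explicit bijection between the cyclic-flat data of $M(G)$ and $M(G')$ that preserves the recursive contribution to $\FP_2$, even while it fails to preserve contributions to $\FP_i$ for $i\geq 3$. The data set used to formulate the conjecture should also serve to test candidate formulas at intermediate stages, and to narrow down which structural features of $N_1,N_2$ control $\FP_2$. Either route---algebraic via covaluativity or combinatorial via the recursion---ultimately demands a new understanding of how $g_M(t)$ interacts with matroid sums of connectivity three or more, which is the principal substantive gap to be bridged.
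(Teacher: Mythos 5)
This statement is stated in the paper only as a \emph{conjecture}, supported by computational evidence on the data set of \Cref{sec:data}; the paper offers no proof, and indeed emphasizes (\Cref{rem:K5plusK5}) that $g_M(t)$ does not factorize over 3-sums, let alone exhibit any known structure along 4-vertex cuts. Your proposal is honest about this: it is not a proof but a survey of possible attack routes (covaluativity and a hoped-for cancellation in the $(1+t)^2$ coefficient, versus a cyclic-flat bijection compatible with \Cref{alg:gRec}), and you correctly name the substantive gap---that $\FP_2$ alone should be twist-invariant even though the higher coefficients provably are not. Since there is no proof in the paper to compare against, the only meaningful assessment is whether your sketch is a plausible research program, and it is; but none of its steps is carried out, and neither route as described yields even a partial result (e.g.\ for a restricted class of twists). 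One concrete way to sharpen the valuative route, hinted at by the paper's own evidence, would be to first try to prove the 4-edge-twist analogue \Cref{con:edgetwist} for the coefficient $\FP_2$ (equivalently, $g''(0)$), since the paper does establish the linear-coefficient case via \Cref{lem:flow-twist} and Kochol's decomposition formula; that smaller target already isolates the relevant combinatorics without the full generality of arbitrary 4-vertex cuts.
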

The twist relation was introduced, in the special case of 4-regular graphs, in \cite{Schnetz:Census}. It is defined as follows: Suppose that $S\subset V(G)$ is a 4-vertex cut and let $E(G)=A\sqcup B$ be a bipartition of the edges such that $A$ and $B$ meet only at the vertices in $S$. Pick a labeling $S=\{v_1,v_2,v_3,v_4\}$ of the vertices in $S$. Then the corresponding twist $G'$ of $G$ has the same vertices as $G$, but as indicated in \cref{fig:3sum-twist}, the edges are given by
\begin{equation*}
    E(G')=A\sqcup \bigg(B\Big|_{\substack{v_1\leftrightarrow v_2\\ v_3\leftrightarrow v_4}}\bigg),
\end{equation*}
that is, by re-attaching the edges in $B$ according to a double transposition of $S$. Note that a single graph may have many non-isomorphic twists (choice of $S$ and labelling).
\begin{example}
    A non-trivial twist $G'$ of the graph $G=K_5\oplus_3 K_5$ is shown in \cref{fig:K5K5-twist}. Since $g_M(t)$ is invariant under parallel operations, we have $g_{G'}(t)=g_{G''}(t)$ for the simplifcation $G''$ of $G'$. The Speyer polynomials of these graphs are
    \begin{align*}
        g_{G}(t)  &= 14t^6+81t^5+183t^4+202t^3+109t^2+24t \quad\text{and}\\
        g_{G'}(t) &= 4t^6+35t^5+99t^4+126t^3+75t^2+18t,
    \end{align*}
    so that indeed $\FP_2(G)=\FP_2(G')=-1$ are equal as predicted by \Cref{con:twist}.
\end{example}
\begin{figure}
    \centering
    $G=\Graph[0.7]{K5K5pretwist} \cong \Graph[0.6]{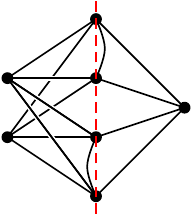} \quad\mapsto\quad G'=\Graph[0.6]{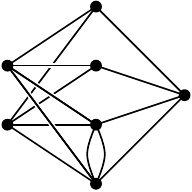} \quad\mapsto\quad G''=\Graph[0.7]{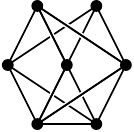}$
    \caption{The graph $G=K_5\oplus_3 K_5$ with an edge bipartition, the corresponding twist $G'$ for the labelling $v_1,v_3,v_2,v_4$ of the cut vertices from top to bottom, and its simplification $G''$ (reducing the parallel pair of edges).}%
    \label{fig:K5K5-twist}%
\end{figure}

Taken together, the series-parallel invariance, the identity $\FP_2(G)=1$ for planar graphs, the 3-sum (e.g.\ star-triangle) reduction, and the twist identity, constitute a stringent set of constraints that determines a huge number of relations between the invariants $\FP_2(G)$ of different graphs. We hope that its intriguing properties will stimulate further research and lead to a better understanding of $\FP_2(G)$ and its role in graph theory.

In our final group of observations, we depart from the focus on $\FP_2(G)$ and the expansion of the polynomial $g_M(t)$ around $t=-1$, and instead report new properties of the first two coefficients $g_G'(0)=\beta(G)$ and $g_G''(0)$ of the expansion around $t=0$. The examples above show that these coefficients do \emph{not} satisfy the general 3-sum and twist identities. However, if we restrict these operations, we find that they do.
\begin{conjecture}\label{con:3edge}
    Let $G$ be a 3-(vertex-)connected graph with a 3-edge cut $C$, that is, a set of 3 edges such that $G\setminus C=S\sqcup T$ has two connected components. Let $A=G/T$ and $B=G/S$ denote the graphs obtained by contracting one side of the cut (replacing the contracted side by a 3-valent vertex). Then
    \begin{equation*}
        g_G(t) \equiv \frac{g_A(t)g_B(t)}{t} \mod t^3\Z[t].
    \end{equation*}
\end{conjecture}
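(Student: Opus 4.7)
The congruence amounts to matching coefficients of $t$ and $t^2$ on both sides. Write $g_M(t) = \beta(M)\,t + \gamma(M)\,t^2 + O(t^3)$ with $\gamma(M):=\tfrac{1}{2}g_M''(0)$. Since both $g_A$ and $g_B$ vanish at $t=0$, the ratio expands as
\[ \frac{g_A(t)g_B(t)}{t} = \beta(A)\beta(B)\,t + \bigl(\beta(A)\gamma(B)+\gamma(A)\beta(B)\bigr)\,t^2 + O(t^3), \]
so the conjecture is equivalent to the two numerical identities $\beta(G) = \beta(A)\beta(B)$ (identity~(i)) and $\gamma(G) = \beta(A)\gamma(B) + \gamma(A)\beta(B)$ (identity~(ii)). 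A sanity check: for $G=K_4$ with the 3-edge cut at a vertex, $A$ consists of three parallel edges (so $\beta(A)=1$) and $B=K_4$, so (i) reads $\beta(K_4)=1\cdot\beta(K_4)$; for the prism $G=K_3\square K_2$ with its vertical 3-edge cut, $A=B=K_4$ and (i) demands $\beta(G)=\beta(K_4)^2=4$.

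Write $E_S$, $E_T$ for the edges of $G$ with both endpoints in $S$, $T$ respectively, so that $E(G)=E_S\sqcup C\sqcup E_T$, $E(A)=E_S\sqcup C$, and $E(B)=E_T\sqcup C$. Since $G$ is 3-connected and $C$ is a 3-edge bond, $C$ is a 3-element cocircuit of $M(G)$, $M(A)$, and $M(B)$, and $M(A)=M(G)/E_T$, $M(B)=M(G)/E_S$ are contractions of $M(G)$; dually, $M(G)^\star$ is a classical matroid 3-sum of $M(A)^\star$ and $M(B)^\star$ along a shared triangle. For identity~(i), I would apply Crapo's formula \eqref{eq:Crapo} separately to $\beta(G)$, $\beta(A)$, $\beta(B)$, and partition the ambient sums over subsets $X\subseteq E(G)$ according to $X=X_S\sqcup X_C\sqcup X_T$. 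Using the contraction relations $\rk_{M(A)}(X_S\cup X_C)=\rk_{M(G)}(X_S\cup X_C\cup E_T)-\rk_{M(G)}(E_T)$ (and symmetrically for $B$), combined with the self-duality $\beta(M)=\beta(M^\star)$, one should be able to reorganize the resulting triple sum so that it factorizes as $\beta(A)\beta(B)$.

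For identity~(ii) the most promising route is covaluativity: since $g_M(t)$ is covaluative, so is each coefficient $\gamma(M)$, and the matroid polytope $P(M(G))$ admits a subdivision compatible with the 3-sum structure. Applying the covaluative identity for $\gamma$ to this subdivision should reduce~(ii) to boundary contributions localized near the cocircuit $C$, which then have to be shown to cancel. Alternatively, one can try to generalize \Cref{prop:N1asbeta} from $g_M'(-1)$ to the $t^2$-coefficient $\gamma(M)$ in the expansion at $t=0$, obtaining an alternating double sum over nested pairs of subsets of $E(G)$; partitioning along $E(G)=E_S\sqcup C\sqcup E_T$ then permits a term-by-term comparison with the analogous sums for $A$ and $B$.

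The principal obstacle is identity~(ii). As \Cref{rem:K5plusK5} makes explicit, $g_M(t)$ does \emph{not} factor across general matroid 3-sums, so the second-order agreement demanded by the conjecture rests on a delicate cancellation of cross-terms that is visible only modulo $t^3$. Isolating this cancellation---and clarifying why the 3-edge-cut (cocircuit) case behaves more restrictively than the triangle (circuit) case of \Cref{con:3sum}---is likely the crux. A cleaner alternative would be to exhibit an explicit error polynomial $R(t)\in\Z[t]$ with $g_G(t)=g_A(t)g_B(t)/t+t^3R(t)$ and to characterize $R(t)$ combinatorially in terms of the interaction of $A$, $B$, and $C$; proving (i) and (ii) would then follow at once.
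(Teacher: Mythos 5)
The statement you are proving is labeled a \emph{conjecture} in the paper, and the paper in fact offers no proof of it: it is supported only by computational evidence on the author's data set. So there is no ``paper's own proof'' to compare against. That said, your reduction of the congruence to the two scalar identities $\beta(G)=\beta(A)\beta(B)$ (your~(i)) and $g_G''(0)=\beta(A)g_B''(0)+g_A''(0)\beta(B)$ (your~(ii)) is exactly the reformulation the paper itself uses, and your sanity checks for $K_4$ and the prism are correct.

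Where you part company with the paper, and where the gaps lie: for~(i), the paper does not try to prove $\beta(G)=\beta(A)\beta(B)$ directly from Crapo's formula; it cites Sekine--Zhang's factorization $\Flow{G}(q)=\Flow{A}(q)\Flow{B}(q)/\bigl((q-1)(q-2)\bigr)$ of the flow polynomial across a 3-edge bond and then differentiates once at $q=1$ (using $\Flow{G}(q)=(-1)^{\loops(G)}(1-q)\beta(G)+\asyO{(1-q)^2}$). Your alternative via partitioning the Crapo sums over $E(G)=E_S\sqcup C\sqcup E_T$ is plausible but not carried out, and there is little reason to prefer it to the one-line flow-polynomial argument, which moreover is the route the paper indicates. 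For~(ii), which you correctly identify as the crux, neither of your two suggestions is developed into an argument. The covaluativity/subdivision route is not obviously tractable: a 3-edge cut is a cocircuit, not a flacet of the matroid polytope, and it is not at all clear what polytopal subdivision the 3-edge-cut structure gives rise to. The route via generalizing \Cref{prop:N1asbeta} to the $t^2$-coefficient at $t=0$ also stalls: that proposition controls the expansion of $g_M$ at $t=-1$ (the coefficient $\FP_1$), whereas your identity~(ii) concerns the coefficient at $t=0$, and the paper gives no analogue of \Cref{lem:N1(M)=bool} for $g_M''(0)$. In sum: the proposal correctly frames the problem and is consistent with the paper's discussion, but it proves neither~(i) nor~(ii); the first is known by a cleaner path you did not take, and the second remains genuinely open, exactly as the paper states.
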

\begin{figure}
    \centering
    $G= \Graph[0.8]{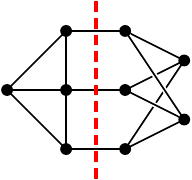} \qquad\mapsto\qquad A=\Graph[0.8]{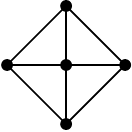}\qquad B=\Graph[0.8]{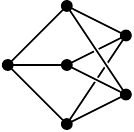}$
    \caption{Splitting of a graph ($G=\texttt{HoCQXZo}$ in \texttt{graph6} format) along a 3-edge cut.}%
    \label{fig:3edge-cut}%
\end{figure}
For the linear coefficent \eqref{eq:gm10}, this amounts to the factorization $\beta(G)=\beta(A)\beta(B)$, which is known \cite{SekineZhang:DecFlow}. The remaining conjecture thus amounts to the identity
\begin{equation*}
    g''_G(0)=\beta(A)g''_B(0)+g''_A(0)\beta(B)
\end{equation*}
for the quadratic coefficient. For example, the graph $G$ in \cref{fig:3edge-cut} splits into $A$ the wheel with four spokes and the complete bipartite graph $B\cong K_{3,3}$. The polynomials are
\begin{align*}
    g_G(t) &= 7t^6+40t^5+92t^4+106t^3+61t^2+15t, \\
    g_A(t) &= t^4+4t^3+5t^2+3t, \\
    g_B(t) &=4t^4+12t^3+12t^2+5t,
\end{align*}
such that indeed, the coefficients of $t$ and $t^2$ satisfy $15=3\cdot 5$ and $61=3\cdot 12+5\cdot 5$.

The graph transformation in the following conjecture is a variant of the twist, but for edges instead of vertices:
Consider a graph $G_1$ with a minimal 4-edge cut $C=\{e_1,\ldots,e_4\}$, so that $G\setminus C=A\sqcup B$ has two connected components and $e_i=\{a_i,b_i\}$ for some vertices $a_i$ in $A$ and $b_i$ in $B$. Then crossing the edges of $C$ according to a double transposition, $e_1$ with $e_2$ and $e_3$ with $e_4$, as illustrated in \cref{fig:edge-twist}, yields a potentially different graph
\begin{equation*}
    G_2=(G\setminus C)\cup\left\{\{a_1,b_2\},\{a_2,b_1\},\{a_3,b_4\},\{a_4,b_3\}\right\}.
\end{equation*}
We call such a graph $G_2$ a \emph{4-edge twist} of $G_1$. Note that a single graph $G_1$ can have many different 4-edge cuts $C$, and at any fixed 4-edge cut, we can produce up to 3 potentially non-isomorphic twists (by permuting the edge labels---there are 3 double transpositions). If all $a_i$ are distinct, then the 4-edge twist is also a (4-vertex-)twist as in \cref{fig:3sum-twist}.
\begin{conjecture}\label{con:edgetwist}
    For every 4-edge twist $G_2$ of a graph $G_1$, $g_{G_1}(t)\equiv g_{G_2}(t) \mod t^3\Z[t]$.
\end{conjecture}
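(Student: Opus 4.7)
My plan is to reduce the congruence $g_{G_1}(t) \equiv g_{G_2}(t) \mod t^3\Z[t]$ to matching the constant, linear, and quadratic coefficients. The constant term vanishes for both polynomials since $g_M \in t\Z[t]$. By \eqref{eq:gm10} the linear coefficient is $\beta(M)$, so the first task is $\beta(G_1) = \beta(G_2)$, and the second is matching $[t^2] g_{G_1} = [t^2] g_{G_2}$.

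The key structural tool is the natural bijection $\phi\colon 2^{E(G_1)} \to 2^{E(G_2)}$ from the identification $E(G_i) = A \sqcup B \sqcup C$ via $e_i \leftrightarrow e_i'$. A direct analysis shows that whenever $X \subseteq E(G_1)$ satisfies $|X \cap C| \leq 1$, one has $\rk_{G_1}(X) = \rk_{G_2}(\phi(X))$: if no cut edge is used, both ranks come from the same subgraph in $A \sqcup B$; if one cut edge is used, it merges one $A$-component with one $B$-component, and both sides of the cut are fixed by the twist, so the rank is $\rk(X \cap A) + \rk(X \cap B) + 1$ in either case. Applied to Crapo's formula \eqref{eq:Crapo}, this cancels all contributions with $|X \cap C| \leq 1$ between $G_1$ and $G_2$, reducing the $\beta$-equality to subsets with $|X \cap C| \in \{2,3,4\}$. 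For these, I would perform a case analysis that exploits the double-transposition $(e_1 \, e_2)(e_3 \, e_4)$ structure to pair contributions and verify cancellation term-by-term, mirroring the Whitney-twist argument for 2-sums but adapted to the specific 4-edge combinatorics.

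The main obstacle is the quadratic coefficient, because the paper provides no explicit combinatorial formula for $[t^2] g_M(t)$. I would derive one by expanding Ferroni's chain-of-cyclic-flats formula \cite{Ferroni:SchubertDelannoySpeyer} and isolating the order-$t^2$ contribution (coming from short chains), or alternatively by using the $K$-theoretic definition of \cite{FinkSpeyer:K-classes} together with a matroid-polytope subdivision adapted to the cut. Once such a formula is in hand, the same subset-pairing technique should reduce the matching to a local analysis at $C$. The crucial technical claim to establish is that any contribution that genuinely requires three or more edges of $C$ to behave differently in $G_1$ versus $G_2$ must contribute only to $t^k$ with $k \geq 3$, hence vanishes modulo $t^3$. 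Settling \Cref{con:3edge} first would be a natural stepping stone: it has the same mechanism in a simpler 3-edge-cut setting and should test the formula and the cancellation technique in a cleaner environment before the 4-edge case is attempted.
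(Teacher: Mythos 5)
You should note first that this statement is presented in the paper as a \emph{conjecture}, supported only by computation. The paper does not supply a full proof, and your proposal does not either, so the relevant comparison is about how much of the claim each side actually settles and by what means.

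For the linear coefficient $\beta(G_1)=\beta(G_2)$, the paper's route is genuinely different from yours and considerably slicker: rather than expanding Crapo's formula over subsets and casing on $\abs{X\cap C}$, the paper invokes Kochol's decomposition of the flow polynomial along a 4-edge cut (the $4\times 4$ matrix identity \eqref{eq:flow-4cut} in \Cref{lem:flow-twist}). Because the four ``plug'' graphs $A_1,\dots,A_4$ appearing in that decomposition are each fixed by the double transposition $\sigma=(a_1a_2)(a_3a_4)$, the entire flow polynomial is preserved: $\Flow{G_1}(q)=\Flow{G_2}(q)$, a strictly stronger statement than $\beta(G_1)=\beta(G_2)$. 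Your Crapo-expansion plan would need the full case analysis on $\abs{X\cap C}\in\{2,3,4\}$ to be carried out (the rank-preservation claim for $\abs{X\cap C}\le1$ is correct, but the remaining cases do \emph{not} preserve ranks term-by-term and require a genuine pairing/cancellation argument you have not exhibited), and even if completed it would only yield $\beta$, not the whole flow polynomial.

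For the quadratic coefficient, you have correctly put your finger on the gap: there is no available combinatorial formula for $[t^2]g_M$ analogous to \Cref{prop:N1asbeta}, and without one the cancellation technique has nothing to bite on. But this is precisely why the statement is a conjecture rather than a theorem; the paper does not resolve it either and explicitly isolates $g''_{G_1}(0)=g''_{G_2}(0)$ as the open remainder. Your suggested stepping stone through \Cref{con:3edge} is reasonable in spirit, but note that \Cref{con:3edge} is itself a conjecture in the paper (again with only the linear part proven, via the Sekine--Zhang flow-polynomial factorization \eqref{eq:flow-3cut}), so it is not available as an established lemma to build on.

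In short: the paper proves only the $t^1$ coefficient, does so via flow-polynomial decomposition rather than Crapo combinatorics, and leaves the $t^2$ coefficient open; your proposal reaches neither part of the claim, though your diagnosis of the $t^2$ obstruction is accurate.
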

\begin{figure}
    \centering
    $\Graph[0.44]{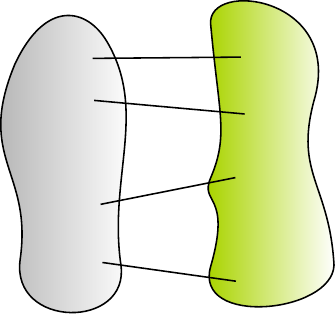} \mapsto \Graph[0.44]{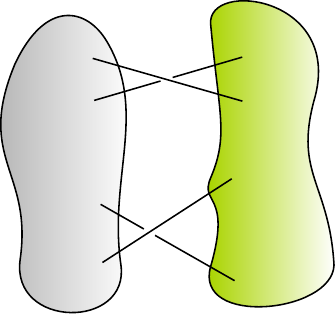}$\qquad
    $G_1=\Graph{getwist} \quad G_2=\Graph{getwist2}$
    \caption{The general structure of a 4-edge twist (left) and a concrete example (right). In \texttt{graph6} format, $G_1=\texttt{I?@TPrK\{O}$ and $G_2=\texttt{I?ClaZOwW}$. This twist $G_2$ arises from labelling the cut edges as $e_1,e_3,e_4,e_2$ going from top to bottom.}%
    \label{fig:edge-twist}%
\end{figure}
For the linear coefficient, this amounts to the identity $\beta(G_1)=\beta(G_2)$, which in fact follows from \cite{Kochol:DecFlow} (see \Cref{lem:flow-twist}). The remaining conjecture is the identity of the quadratic coefficients, so that $g''_{G_1}(0)=g''_{G_2}(0)$. For example, the graphs in \cref{fig:edge-twist} have
\begin{align*}
    g_{G_1}(t)&=5t^8+51t^7+212t^6+474t^5+621t^4+480t^3+204t^2+38t, \\
    g_{G_2}(t)&=4t^8+43t^7+190t^6+446t^5+604t^4+476t^3+204t^2+38t.
\end{align*}

We close this series of observations with a new relation between $g_M(t)$ and the Tutte polynomial. Recall that for a matroid without loops or coloops, $g'_M(0)=\beta(M)=t_{1,0}(M)=t_{0,1}(M)$ is equal to the linear coefficients of the Tutte polynomial
\begin{equation*}
    \Tutte{M}(x,y)=\sum_{i,j} t_{i,j}(M) x^i y^j=\sum_{A\subseteq M}(x-1)^{\rk(M)-\rk(A)}(y-1)^{\loops(A)}.
\end{equation*}
Our data shows that there is no further linear relation between coefficients of $g_M$ and $\Tutte{M}$ that holds for all graphs (let alone all matroids). But if we specialize further, namely to cubic (i.e.\ 3-regular) graphs, then we do observe a new relation:
\begin{conjecture}\label{con:g-Tutte}
    For a connected cubic graph $G$ with $n$ vertices, we have the identity
    \begin{equation*}
        g_G''(0)= 2n t_{0,1}(G)-4t_{0,2}(G).
    \end{equation*}
\end{conjecture}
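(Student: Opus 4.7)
The plan is to derive explicit signed-subset-sum expressions for both sides of the conjectured identity and match them using the special structure of cubic graphs. For the Tutte coefficients, the subset-rank expansion gives
\begin{equation*}
t_{0,j}(G)=(-1)^{\rk(G)+j}\sum_{A\subseteq E(G)}(-1)^{|A|}\binom{\loops(A)}{j},
\end{equation*}
so that the right-hand side $2nt_{0,1}(G)-4t_{0,2}(G)$ is manifestly an alternating polynomial in $|A|$, $\rk(A)$ and $n$. For the left-hand side, I would seek an analogous nested subset sum for $g_G''(0)$, following the pattern used to derive the expression for $g_M'(-1)$ in \Cref{prop:N1asbeta}; the $2$nd chain Tutte polynomial and M\"{o}bius polynomial mentioned in the remark after that proposition are natural candidates from which to isolate the coefficient of $t^2$ at $t=0$. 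Equivalently, since $g_M(t)=\sum_i\FP_i(M)\,t(1+t)^i$, one has $g_M''(0)/2=\sum_i i\,\FP_i(M)$, reducing the task to a signed expression for $\sum_i i\FP_i$ in terms of subsets of the ground set.

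Once both sides are expressed as subset sums, cubicity enters through three inputs: the identity $n=2|E(G)|/3$ rewrites $n$ in terms of the matroid size; $\rk(A)=n-c(V,A)$ relates matroid ranks to components of spanning subgraphs; and the handshake $\sum_v\deg_A(v)=2|A|$ combined with $\deg_G(v)=3$ for all vertices allows fibering the subset sums over incident vertex--edge pairs. With these, the combinatorial identity to be checked should reduce to a finite per-vertex identity which is uniform across vertices (since every vertex has the same local structure), and hence can be verified by a short calculation. The factor $2n$ on the right-hand side would arise as a uniform per-vertex contribution, while the $-4t_{0,2}$ correction should account precisely for the over-counting that occurs on subsets with $\loops(A)\geq 2$.

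The primary obstacle is the first step: Speyer's polynomial is defined $K$-theoretically in \cite{Speyer:MatroidKtheory,FinkSpeyer:K-classes}, and neither Ferroni's chain-of-cyclic-flats formula \cite{Ferroni:SchubertDelannoySpeyer} nor the recursion of \Cref{alg:gRec} gives a clean expression for the Taylor coefficient of $g_M(t)$ at $t=0$. A further complication is that cubicity is not a matroid property, so purely valuative or matroid-decomposition arguments cannot conclude: the vertex count $n$ must be carried along explicitly. A plausible fallback route is to proceed by induction on $|V(G)|$ for $3$-connected cubic graphs, reducing over $3$-edge cuts as in \Cref{con:3edge} (which preserves cubicity) and over $4$-edge twists as in \Cref{con:edgetwist}, since both operations change $n$ and the Tutte polynomial in predictable ways; because \Cref{con:3edge} and \Cref{con:edgetwist} are themselves conjectural, this strategy would most naturally be pursued by proving a single joint statement about $g_M(t)\bmod t^3$ for cubic graphs, with small cases such as $K_4$, $K_{3,3}$ and the triangular prism serving as a base that can be verified with the author's implementation.
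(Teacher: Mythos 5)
This statement is labelled a \emph{conjecture} in the paper (\Cref{con:g-Tutte}), and the paper provides no proof of it --- only supporting evidence. Specifically, the paper (i) verifies the identity computationally for all \numprint{86118} cubic graphs in the data set, (ii) reformulates the claim as $g_G(t)=R_G(t)+\asyO{t^3}$ with $R_G(t)=\Flow{G}(1+2t)\big/\big(2(2t-1)^{\loops(G)-1}\big)$ and shows this is \emph{compatible} with \Cref{con:3edge} and \Cref{con:edgetwist} (both themselves conjectural, via \eqref{eq:flow-3cut} and \Cref{lem:flow-twist}), and (iii) checks consistency against the closed form \Cref{con:prism-moebius} for prisms. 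There is therefore no paper proof to compare your attempt against.

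As for your sketch: the algebraic preliminaries are all correct. The subset expansion $t_{0,j}(G)=(-1)^{\rk(G)+j}\sum_A(-1)^{|A|}\binom{\loops(A)}{j}$ follows from setting $x=0$ in \eqref{eq:Tutte} and extracting $y^j$, and the identity $g_M''(0)=2\sum_i i\,\FP_i(M)$ follows by differentiating \eqref{eq:gexpand1} twice at $t=0$. You also correctly diagnose the genuine obstacles: the paper supplies a clean nested-subset formula only for $\FP_1$ (\Cref{lem:N1(M)=bool}), not for $\sum_i i\FP_i$ or for any higher Taylor coefficient of $g$ at $0$; the Delannoy-path formula \eqref{eq:g-from-delannoy} pins down $g_{\LPM{P}}''(0)=2c_2(P)$ only for Schubert matroids, and the chain sum \eqref{eq:FP=chains} needed to lift this to general $M$ is not under the same control as the $\FP_0,\FP_1$ cases; and cubicity is a vertex-level condition, not a matroid one, so any argument must carry $n$ explicitly, exactly as you say. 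Your fallback route --- reduce over $3$-edge cuts and $4$-edge twists --- is precisely the reduction the paper uses as a consistency check, and you rightly note that it cannot close the loop because \Cref{con:3edge} and \Cref{con:edgetwist} are open. So your proposal is a reasonable and honestly hedged plan of attack, but it is, like the paper's discussion, short of a proof; if you pursue the fallback route, you would in effect need to first prove the joint statement about $g_M(t)\bmod t^3$ under $3$-edge cuts and $4$-edge twists, and then reduce to a finite base of $4$-edge-connected cubic graphs, which is a nontrivial structural step you have not yet addressed.
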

Thus, for cubic graphs, the flow polynomial $\Flow{G}(q)=(-1)^{\loops(G)}\Tutte{G}(0,1-q)$ determines the value of the second derivative $g''_G(0)$. To our knowledge, this is the first relation between the Tutte- and $g$-polynomials that goes beyond \eqref{eq:gm10}. This gives a partial answer---albeit only for cubic graphs---to a question from \cite[Remark~3.7]{Ferroni:SchubertDelannoySpeyer}.

Aside from the structural properties above, we discuss in \Cref{sec:data} also closed formulas for the Speyer polynomials (or just the invariants $\FP_2$) of several families of graphs, including the complete graphs (braid matroids). We suggest candidates for graphs that extremize $\FP_2$, and we observe currently unexplained factorizations and zeroes of $g_M(t)$.
In \Cref{sec:Feynman}, we discuss the connection between $\FP_2(G)$ and Feynman integrals in particle physics---which was, in fact, the origin and driver of the discoveries and research presented in this paper. That discussion touches on other invariants of graphs that behave similar to, and perhaps determine, $\FP_2(G)$.


\subsection{Outline}
The three sections of this paper are independent of each other: \Cref{sec:graphs} covers the graph theory (\Cref{thm:betark=bic,thm:betark-k=cc}), \Cref{sec:speyerg} covers the matroid theory (\Cref{prop:N1asbeta} and \Cref{alg:gRec}), and \Cref{sec:data} summarizes observations from the calculated data, which provide the support for the conjectures stated above.

\subsection{Acknowledgments}

I thank Nicholas Proudfoot for discussions about matroid invariants, Luis Ferroni and David Speyer for their beautiful papers and interest in this work, and Francis Brown and Simone Hu for bringing Steinitz's theorem and \cite{BergetFink:ActiPair} to my attention.

Erik Panzer is funded as a Royal Society University Research Fellow through grant {URF{\textbackslash}R1{\textbackslash}201473}.

For the purpose of Open Access, the author has applied a CC BY public copyright licence to any Author Accepted Manuscript (AAM) version arising from this submission.

\section{Graphs, Elser's identity, and Crapo's invariant}\label{sec:graphs}
We assume throughout that graphs $G=(V,E)$ are finite and undirected. We do allow multiedges (several edges with the same endpoints) and self-loops (edges with both ends at the same vertex).
A subset of vertices $S\subseteq V$ is a \emph{vertex cover} if each edge of $G$ has at least one endpoint in $S$. 
The following terminology was introduced in \cite{Elser:ClusterSAW}:
\begin{definition}
    A \emph{nucleus} of $G=(V,E)$ is a connected non-empty subgraph $N=(V',E')$ such that $V'=V(N)$ is a vertex cover of $G$.
    The set of all nuclei of $G$ is denoted $\nuc(G)$.
\end{definition}
We use the following result of Elser \cite[Lemma~1]{Elser:ClusterSAW}. The proof given below follows \cite{Grinberg:Elser}.
\begin{lemma}\label{lem:Elser}
    Let $G$ be a graph with at least one edge, and let $v$ denote any vertex. Then
    \begin{equation*}
        \sum_{\substack{N\in\nuc(G)\\ v \in N}} (-1)^{\abs{E(N)}} = 0.
    \end{equation*}
\end{lemma}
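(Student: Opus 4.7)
I would prove \Cref{lem:Elser} by constructing a sign-reversing involution $\iota$ on the set $\nuc_v(G)\defas\{N \in \nuc(G) : v \in V(N)\}$ such that $|E(\iota(N))|$ differs from $|E(N)|$ by exactly one; the contributions of paired nuclei then cancel, so the sum vanishes.

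The natural toggle moves that change $|E(N)|$ by exactly one are of four types: (a) adding an edge of $E(G) \setminus E(N)$ whose both endpoints lie in $V(N)$; (b) removing a non-bridge edge of $E(N)$; (c) attaching a new vertex $u \in V(G) \setminus V(N)$ to some $w \in V(N)$ via a single $G$-edge $\{u,w\}$; and (d) removing a leaf $u \neq v$ of $N$, provided that $V(N) \setminus \{u\}$ is still a vertex cover of $G$. Each such move preserves connectedness of the subgraph and the vertex-cover condition, and thus produces another element of $\nuc_v(G)$. The plan is to fix a total order on $E(G)$ (and possibly on $V(G)$), to designate a canonical toggle move for every $N$ using this ordering, and to define $\iota(N)$ to be the result of that move.

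Two things must be checked. First, every $N \in \nuc_v(G)$ must admit at least one valid move: this is where the assumption $E(G) \neq \emptyset$ enters, for $V(N)$ is a vertex cover of the non-empty graph $G$, and a short case analysis on any edge of $G$ having an endpoint in $V(N)$ yields a move of type (a)--(d). Second, and this is the main obstacle, one must verify that the canonical move is stable under the involution so that $\iota(\iota(N)) = N$. A naive ``smallest toggleable edge'' rule fails: adding an edge $e$ to $N$ can turn previously-bridge edges on the newly-created cycle of $N + e$ into non-bridges, thereby introducing new and possibly smaller toggleable edges in $\iota(N)$. Following Grinberg, this is resolved by a more refined canonical choice, for instance by first picking a canonical spanning tree of $G[V(N)]$ according to the edge order and then selecting the toggle relative to this tree, so that toggling does not disturb the canonical choice. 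Verifying this stability constitutes the technical heart of the proof; once established, paired nuclei have opposite sign and the sum in \Cref{lem:Elser} telescopes to zero.
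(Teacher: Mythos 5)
Your proposal identifies the right \emph{shape} of argument (a sign-reversing involution), but the sketch has a genuine, self-acknowledged gap: you never produce a canonical toggle rule with the crucial stability property $\iota(\iota(N))=N$, and in fact you explicitly flag this as ``the technical heart of the proof'' and leave it open. The vague suggestion of selecting toggles ``relative to a canonical spanning tree of $G[V(N)]$'' does not obviously work either: a non-tree edge of $G[V(N)]$ need not be a non-bridge of $N$ (since $N$ need not contain the whole tree), and moves of types (c)--(d) change $V(N)$ and hence the canonical tree itself, so the rule is not manifestly stable under $\iota$. As written, the proposal is a program, not a proof.

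The paper's argument (which is what the cited Grinberg source actually does, and is considerably slicker than what you attribute to him) sidesteps all of this by \emph{not} involuting nuclei directly. It starts from the trivial identity $\sum_{A\subseteq E(G)}(-1)^{\abs{A}}=0$ over all edge subsets, observes that the nuclei containing $v$ are exactly those $A$ whose $v$-component $A_v$ equals $A$, and thus it suffices to kill the complementary sum over those $A$ with $A_v\notin\nuc(G)$. For such $A$, by definition some edge $e$ of $G$ has \emph{no} endpoint in $V(A_v)$; toggling the smallest such $e$ is a sign-reversing involution that is trivially stable, precisely because $e$ is disjoint from $A_v$ and so toggling it leaves $A_v$ (and hence the choice of $e$) unchanged. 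This clean separation — doing the involution on the \emph{non}-nucleus side, where the toggle never touches the component of $v$ — is exactly what dissolves the stability problem you correctly identified but did not resolve. If you want to salvage a direct involution on nuclei, you would have to find a canonical rule immune to the cascading-toggleable-edges issue you describe; I am not aware of a simple one, and the indirect route is the standard way around it.
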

\begin{proof}
    Any subset of edges $A\subseteq E(G)$ defines a graph on the set $V(G)$ of all vertices. In this graph, let $A_v$ denote the connected component which contains $v$. If $A_v$ is a nucleus of $G$, then it contains all of $A$: any edge in $A\setminus A_v$ would have to have an endpoint in $A_v$, thus be connected to $v$ in $A$ and hence belong to $A_v$. The nuclei $N$ containing $v$ thus arise as $A_v$ for exactly one choice of $A$, namely $A=E(N)$. Therefore,
    \begin{equation*}
    0 = \sum_{A\subseteq E(G)} (-1)^{\abs{A}} = \sum_{\substack{N\in\nuc(G)\\ v \in N}} (-1)^{\abs{E(N)}} + \sum_{\substack{A\subseteq E(G) \\ A_v\notin \nuc(G)}} (-1)^{\abs{A}}
    \end{equation*}
    and it suffices to show the vanishing of the rightmost sum over $\mathscr{A}=\{A\colon A_v\notin\nuc(G)\}$. This is easily achieved by a fixed-point free involution on $\mathscr{A}$: Fix any total order of the edges of $G$. Then for any $A\in\mathscr{A}$, let $e$ denote the smallest edge with no endpoint in $A_v$ (there is at least one such edge, since $A_v$ is not a nucleus), and set $A'=A-e$ if $e\in A$ and $A'=A+e$ otherwise. This keeps the component $(A')_v=A_v$ containing $v$ unchanged, and thus $A''=A$ with $\abs{A'}=\abs{A}\pm 1$ such that $(-1)^{\abs{A'}}=-(-1)^{\abs{A}}$.    
\end{proof}
Summing over all vertices $v$, \cref{lem:Elser} gives rise to \cite[Theorem~2]{Elser:ClusterSAW}: In every graph $G$ with at least one edge,
\begin{equation}\label{eq:Elser1}
    \sum_{N\in\nuc(G)} (-1)^{\abs{E(N)}} \abs{V(N)} = 0.
\end{equation}
To apply this identity, we relate alternating sums over all edge subsets to sums over nuclei, as follows. For any graph $G$, let $\pi_0(G)=\{C_1,\ldots,C_k\}$ denote its connected components in the sense of graph theory (not to be confused with the connected components in the sense of matroid theory; the latter partition the edge set of $G$ into 2-connected blocks).
\begin{lemma}\label{lem:additive}
    Let $F$ be any function on graphs (with values in any abelian group) that is additive over connected components, that is $F(G)=\sum_{C\in\pi_0(G)} F(C)$ for every graph $G$. Then, for every graph $G$, we have the identity
    \begin{equation*}
        \sum_{A\subseteq E(G)} (-1)^{\abs{A}} F(A)
        = \sum_{N\in \nuc(G)} (-1)^{\abs{E(N)}} F(N),
    \end{equation*}
    where on the left-hand side the edge set $A$ is viewed in $F(A)$ as a subgraph of $G$ containing all vertices $V(A)=V(G)$.
\end{lemma}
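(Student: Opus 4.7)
The plan is to expand the left-hand side using the additivity of $F$, namely $F(A)=\sum_{C\in\pi_0(A)}F(C)$ when $A$ is viewed as the subgraph $(V(G),A)$, and then interchange the order of summation to obtain
\begin{equation*}
\sum_{A\subseteq E(G)} (-1)^{\abs{A}} F(A) = \sum_{C} F(C) \sum_{\substack{A\subseteq E(G) \\ C\in\pi_0(A)}}(-1)^{\abs{A}},
\end{equation*}
where the outer sum runs over all non-empty connected subgraphs $C=(V(C),E(C))$ with $V(C)\subseteq V(G)$ and $E(C)$ a subset of those edges of $G$ whose endpoints lie in $V(C)$.

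Fix such a $C$, set $B\defas V(G)\setminus V(C)$, and write $E_B$ for the edges of $G$ with both endpoints in $B$. I would argue that $C$ appears as a connected component of $(V(G),A)$ precisely when $A$ decomposes as the disjoint union $A=E(C)\sqcup A_B$ with $A_B\subseteq E_B$ arbitrary: indeed, $A$ must contain all of $E(C)$ and no other edge inside $V(C)$ (otherwise the component of $A$ surrounding $V(C)$ would strictly contain $E(C)$), and no edge of $A$ may cross between $V(C)$ and $B$ (otherwise the component would absorb vertices of $B$). Consequently,
\begin{equation*}
\sum_{\substack{A\subseteq E(G) \\ C\in\pi_0(A)}}(-1)^{\abs{A}} = (-1)^{\abs{E(C)}} \sum_{A_B\subseteq E_B}(-1)^{\abs{A_B}},
\end{equation*}
and the inner alternating sum vanishes unless $E_B=\emptyset$, i.e.\ unless $B$ is an independent set in $G$, which is equivalent to $V(C)$ being a vertex cover of $G$. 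The surviving $C$ are therefore exactly the elements of $\nuc(G)$, and summing their contributions recovers the right-hand side.

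No step here is genuinely deep; the main point of care is the characterization of when $C$ is a component of $(V(G),A)$, after which the vertex-cover condition emerges automatically from the standard sign cancellation over $A_B$. The only corner case worth double-checking is an isolated vertex $v$ of $G$: such $v$ is a component of every $A$, yet $\{v\}$ is a vertex cover only when $E(G)=\emptyset$, which is consistent because $\sum_{A\subseteq E(G)}(-1)^{\abs{A}}=0$ whenever $E(G)\neq\emptyset$ and so the corresponding $C$ contributes zero on the left as well.
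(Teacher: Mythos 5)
Your proof is correct and follows essentially the same route as the paper's: expand $F(A)=\sum_{C\in\pi_0(A)}F(C)$, swap the sums, and for each connected subgraph $C$ observe that $C\in\pi_0(A)$ precisely when $A=E(C)\sqcup B$ with $B$ drawn from the edges disjoint from $V(C)$, so the inner alternating sum vanishes unless $V(C)$ is a vertex cover. You merely spell out the characterization of when $C$ is a component and the isolated-vertex corner case, both of which the paper leaves implicit.
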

\begin{proof}
    Expanding $F(A)=\sum_{C\in\pi_0(A)}F(C)$ and swapping the sums over $C$ and $A$, setting $B=A\setminus E(C)$ the left-hand side can be written as
    \begin{equation*}
        \sum_{C\subseteq G}(-1)^{\abs{E(C)}} F(C)\sum_{B\subseteq E(G\setminus V(C))} (-1)^{\abs{B}}
    \end{equation*}
    where $C$ ranges over all connected subgraphs of $G$ and $B$ is a subset of edges with no endpoints in $V(C)$. If $C$ is a nucleus, such edges do not exist and the sum over $B$ is equal to $(-1)^{\abs{\emptyset}}=1$. If $C$ is not a nucleus, the sum over $B$ is zero.
\end{proof}
This simple observation gives an independent proof of \eqref{eq:Elser1}: taking $F(G)=\abs{V(G)}$ equates \eqref{eq:Elser1} with $V(A)=V(G)$ times $\sum_A(-1)^{\abs{A}}=0$.
\begin{corollary}\label{lem:beta=els0}
    For every graph with at least one edge, Crapo's $\beta$ invariant is
    \begin{equation*}
        \beta(G)=-(-1)^{\rk(G)} \sum_{N\in\nuc(G)} (-1)^{\abs{E(N)}}.
    \end{equation*}
\end{corollary}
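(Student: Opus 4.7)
The plan is to reduce Crapo's definition \eqref{eq:Crapo} of $\beta(G)$ to the nucleus sum by combining \Cref{lem:additive} with Elser's identity \eqref{eq:Elser1}, both of which are established in this excerpt. The key observation is that in the cycle matroid, the rank function $\rk(A) = |V(G)| - |\pi_0(A)|$ (with $A$ viewed as a spanning subgraph of $G$) decomposes additively over connected components as $\rk(A) = \sum_{C \in \pi_0(A)} (|V(C)| - 1)$, where isolated-vertex components contribute $0$. Thus $F(H) \defas |V(H)| - 1$ for connected graphs $H$, extended additively, satisfies the hypothesis of \Cref{lem:additive} and agrees with $\rk$.

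Applied to this $F$, \Cref{lem:additive} rewrites the defining alternating sum in \eqref{eq:Crapo}:
\begin{equation*}
    \sum_{A \subseteq E(G)} (-1)^{|A|} \rk(A) = \sum_{N \in \nuc(G)} (-1)^{|E(N)|} \bigl( |V(N)| - 1 \bigr).
\end{equation*}
Splitting the right-hand side, the term proportional to $|V(N)|$ vanishes by Elser's identity \eqref{eq:Elser1} (which is $\sum_{N \in \nuc(G)} (-1)^{|E(N)|} |V(N)| = 0$), leaving only $-\sum_{N \in \nuc(G)} (-1)^{|E(N)|}$. Multiplying through by the sign $(-1)^{\rk(G)}$ prescribed in \eqref{eq:Crapo} yields exactly the claimed formula.

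The main (and essentially only) point to verify carefully is the additivity of $\rk$ under the convention in \Cref{lem:additive} that $F(A)$ is computed with $A$ regarded as a spanning subgraph. The inclusion of isolated vertices as components each contributing $|V(C)|-1 = 0$ is precisely what makes $F(A) = |V(G)| - |\pi_0(A)| = \rk(A)$, so no correction term appears. Everything else is bookkeeping: once \Cref{lem:additive} has converted the edge-subset sum into a nucleus sum, and \eqref{eq:Elser1} has eliminated the $|V(N)|$ contribution, the corollary drops out in one line.
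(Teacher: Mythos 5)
Your proof is correct and follows essentially the same route as the paper: apply \Cref{lem:additive} with $F=\rk$ to rewrite \eqref{eq:Crapo} as a nucleus sum of $(-1)^{|E(N)|}(|V(N)|-1)$, then invoke Elser's identity \eqref{eq:Elser1} to eliminate the $|V(N)|$ term. Your extra care in confirming that isolated-vertex components contribute zero (so that $F$ is genuinely additive and agrees with $\rk$) is a useful explicit check but does not change the argument.
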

\begin{proof}
    Apply \cref{lem:additive} to the additive function $F(G)=\rk(G)$. Then \eqref{eq:Crapo} becomes
    \begin{equation}\label{eq:beta-nuc-rk}
        \beta(G)=(-1)^{\rk(G)} \sum_{N\in\nuc(G)} (-1)^{\abs{E(N)}} (\abs{V(N)}-1),
    \end{equation}
    since for a connected graph $N$, $\rk(N)=\abs{V(N)}-1$. The claim follows from \eqref{eq:Elser1}.
\end{proof}
In the notation of \cite{DBHLMNVW:Elser}, minus the expression on the right-hand side of \cref{lem:beta=els0} is called the \emph{zeroth Elser number} $\els_0(G)$, and so our result says $\els_0(G)=-\beta(G)$. Since $\beta(G)$ is the coefficient of $x$ in the Tutte polynomial
\begin{equation}\label{eq:Tutte}
    \Tutte{G}(x,y)=\sum_{A\subseteq E(G)} (x-1)^{\rk(G)-\rk(A)} (y-1)^{\loops(A)},
\end{equation}
it inherits the contraction-deletion recursion $\beta(G)=\beta(G/e)+\beta(G\setminus e)$ whenever $e$ is neither a bridge nor a self-loop. This explains the $k=0$ case of \cite[Theorem~8.1]{DBHLMNVW:Elser}, answers the questions about $\els_0(G)$ on \cite[p.~26]{DBHLMNVW:Elser}, and reduces \cite[Theorem~7.5]{DBHLMNVW:Elser} to \cite[Theorem~II]{Crapo:HigherInvariant}.

We now state and prove the main result of this section (\cref{thm:betark=bic} from the introduction). A graph is biconnected if it is connected and every pair of edges is contained in a cycle. A \emph{block} is a maximal biconnected subgraph. We call a block \emph{trivial} if it has no edges (the trivial blocks of a graph are precisely its isolated vertices). We denote $\cc(G)$ the number of non-trivial blocks of $G$, with the convention that each self-loop constitutes a (non-trivial) block in itself. This ensures that $\cc(G)$ is equal to the number $\cc(M)$ of connected components of the cycle matroid $M$ of $G$.
\begin{theorem}\label{thm:betark=cc}
    For every graph $G$ that has at least two edges,
    \begin{equation}\label{eq:betark=cc}
        \beta(G)\rk(G) = (-1)^{\rk(G)}\sum_{A\subseteq E(G)} (-1)^{\abs{A}} \cc(A).
    \end{equation}
\end{theorem}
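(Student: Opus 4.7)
The plan is to apply \Cref{lem:additive} twice, to two carefully chosen additive functions, and combine the results with \Cref{lem:beta=els0,lem:Elser}. Both sides of \eqref{eq:betark=cc} vanish when $G$ is disconnected with two or more edge-bearing components, and also when $G$ has a self-loop (by a loop-toggling involution reducing the right-hand side to an alternating sum over $E(G\setminus\ell)$, which is empty of content since $\abs{E(G)}\geq 2$), so we may assume $G$ is loopless and connected; then $\rk(G)=\abs{V(G)}-1$.

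The first key identity is
\begin{equation*}
    \cc(A)=\sum_{v\in V(G)} b(v,A)-\rk(A),
\end{equation*}
where $b(v,A)$ denotes the number of blocks of $A$ containing $v$ (with $b(v,A)=0$ if $v$ is isolated in $A$). It follows by summing the block--cut-tree relation $\sum_{u\in V(C)}b(u,C)=\cc(C)+\rk(C)$ over the components $C$ of $A$. Substituting into the right-hand side of \eqref{eq:betark=cc} and evaluating $\sum_A(-1)^{\abs{A}}\rk(A)=(-1)^{\rk(G)}\beta(G)$ via Crapo's formula \eqref{eq:Crapo}, the theorem reduces to the per-vertex identity
\begin{equation*}
    T(v)\defas\sum_{A\subseteq E(G)}(-1)^{\abs{A}}b(v,A)=(-1)^{\rk(G)}\beta(G),\quad v\in V(G);
\end{equation*}
indeed, summing this over the $\abs{V(G)}=\rk(G)+1$ vertices and subtracting the $(-1)^{\rk(G)}\beta(G)$ coming from the $\rk(A)$-term recovers $\rk(G)\cdot(-1)^{\rk(G)}\beta(G)$. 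Since $b(v,\cdot)$ is additive over components, \Cref{lem:additive} writes $T(v)=\sum_{N\in\nuc(G)}(-1)^{\abs{E(N)}}b(v,N)$; for a loopless connected $N$ with $v\in V(N)$ we have $b(v,N)=\abs{\pi_0(N\setminus v)}$, while $b(v,N)=0$ otherwise. Combined with $\sum_{N\ni v}(-1)^{\abs{E(N)}}=0$ from \Cref{lem:Elser} and $\sum_{N\in\nuc(G)}(-1)^{\abs{E(N)}}=-(-1)^{\rk(G)}\beta(G)$ from \Cref{lem:beta=els0}, the per-vertex identity is equivalent to
\begin{equation*}
    \sum_{N\in\nuc(G)}(-1)^{\abs{E(N)}}\abs{\pi_0(N\setminus v)}=0,
\end{equation*}
where the summand is $\abs{\pi_0(N)}=1$ when $v\notin V(N)$.

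The crux, and the main obstacle, is proving this vanishing. The key trick is the observation that $F(H)\defas\abs{\pi_0(H\setminus v)}$ is itself additive over $\pi_0(H)$: a component of $H$ not containing $v$ contributes $1$, while the component $C_v$ containing $v$ contributes $\abs{\pi_0(C_v\setminus v)}$. A second application of \Cref{lem:additive} then rewrites the sum over nuclei as $\sum_{A\subseteq E(G)}(-1)^{\abs{A}}\abs{\pi_0(A\setminus v)}$. Since removing the vertex $v$ deletes every edge incident to $v$, the summand depends only on $A\cap E(G\setminus v)$, so writing $A=A'\sqcup A''$ with $A'\subseteq E(G\setminus v)$ and $A''\subseteq E_v$ (the edges of $G$ at $v$) factorises the sum as
\begin{equation*}
    \left(\sum_{A'\subseteq E(G\setminus v)}(-1)^{\abs{A'}}\abs{\pi_0(A')}\right)\cdot\left(\sum_{A''\subseteq E_v}(-1)^{\abs{A''}}\right)=0,
\end{equation*}
because $E_v\neq\emptyset$: every vertex of a connected graph with at least two edges has positive degree. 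Identifying this second additive function tied to $v$ is the crucial step; once it is in hand, the factorisation and hence the vanishing are automatic, and the theorem follows.
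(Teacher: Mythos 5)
Your proof is correct, and it takes a genuinely different route from the paper's, although both arguments rest on the same three ingredients (\Cref{lem:Elser}, \Cref{lem:additive}, and \Cref{lem:beta=els0}). The paper works directly on the nucleus sum after one application of \Cref{lem:additive}: it decomposes each nucleus $N$ containing a vertex $v$ into a component $C$ of $N\setminus v$, the complementary core $B=N\setminus V(C)$, and the linking edges $H$, then localizes by invoking \Cref{lem:Elser} for the sum over $B$ to collapse everything to the case where $C$ itself is a nucleus. Your proof instead isolates a \emph{per-vertex} identity $T(v)=\sum_A(-1)^{\abs{A}}b(v,A)=(-1)^{\rk(G)}\beta(G)$, reduces it (via \Cref{lem:Elser,lem:beta=els0}) to the vanishing of $\sum_{N\in\nuc(G)}(-1)^{\abs{E(N)}}\abs{\pi_0(N\setminus v)}$, and then -- this is the new idea -- observes that $H\mapsto\abs{\pi_0(H\setminus v)}$ is itself additive over components, so a \emph{second} application of \Cref{lem:additive} converts the nucleus sum back into an edge-subset sum. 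The latter factorizes over $E_v$ versus $E(G\setminus v)$ and vanishes at a glance. This is a cleaner endgame than the paper's decomposition of nuclei, at the cost of one more pass through \Cref{lem:additive}; the intermediate per-vertex identity $T(v)=(-1)^{\rk(G)}\beta(G)$ (independent of $v$) is also a nice byproduct. One small incompleteness: your reduction disposes of self-loops and of graphs with two or more edge-bearing components, but not explicitly of isolated vertices attached to a single edge-bearing component; one should add a sentence that isolated vertices leave both sides of \eqref{eq:betark=cc} unchanged (the cycle matroid and the block counts $\cc(A)$ ignore them), after which ``connected and loopless'' is indeed the only case left.
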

\begin{proof}
    Firstly, we may assume that $G$ has no self-loops. For if $G$ has a self loop $e$, then $\cc(A)=\cc(A\setminus e)+1$ whenever $e\in A$, wherefore the sum collapses to
    \begin{equation*}
    \sum_{A\subseteq E(G)\setminus\{e\}} (-1)^{\abs{A}} \left(\cc(A)-[\cc(A)+1]\right)
    =-\sum_{A\subseteq E(G)\setminus\{e\}} (-1)^{\abs{A}} = 0
    \end{equation*}
    since $\abs{E(G)\setminus\{e\}}\geq 1$. The presence of a self-loop also forces $\beta(G)=0$ to vanish \cite{Crapo:HigherInvariant}, hence \eqref{eq:betark=cc} holds. Henceforth, we assume that $G$ has no self-loops.

    Since blocks are connected subgraphs, the map $A\mapsto \cc(A)$ is additive over the connected components of the subgraph $A$. Hence we can apply \cref{lem:additive} and restrict the sum in \eqref{eq:betark=cc} to edge sets $A=E(N)$ of nuclei $N\in\nuc(G)$ only. We may furthermore assume that $N$ has at least one edge, since otherwise it is a trivial block with $\cc(N)=0$.

    Thus consider a connected subgraph $N$ of $G$ which has at least one edge. Such $N$ has no isolated vertices, so all of its blocks are non-trivial and contain at least two vertices. The number of blocks that contain a given vertex $v$ is therefore $\abs{\pi_0(N\setminus v)}$, and $v$ is a cut vertex of $N$ precisely when this number exceeds one. The total number of blocks can be counted in terms of theses degrees of the cut vertices in the block-cut tree \cite{Harary:Elementary}:
    \begin{equation*}
        \cc(N)=1+\sum_{v\in V(N)} \left(\abs{\pi_0(N\setminus v)}-1\right) = 1-\abs{V(N)}+\sum_{v\in V(N)} \sum_{C\in \pi_0(N\setminus v)} 1.
    \end{equation*}
    Summing over all nuclei, using \eqref{eq:beta-nuc-rk} we have rewritten the right-hand side of \eqref{eq:betark=cc} as
    \begin{equation*}
        -\beta(G)+(-1)^{\rk(G)}\sum_{N\in\nuc(G)} \sum_{v\in V(N)} \sum_{C\in \pi_0(N\setminus v)} (-1)^{\abs{E(N)}}.
    \end{equation*}
    For any component $C$ of $N\setminus v$, the complement $B\defas N\setminus V(C)$ is, by construction, a connected graph containing $v$. Let $[v,C]\subseteq E(G)$ denote the edges of $G$ with one endpoint at $v$ and the other endpoint in $V(C)$, so that
    \begin{equation*}
        E(N)=E(C)\sqcup E(B)\sqcup ([v,C]\cap E(N)).
    \end{equation*}
    By construction, $N$ must contain at least one edge in $[v,C]$, for otherwise $C$ would not be connected (in $N$) to $v$.
    Also note that $N$ is a nucleus of $G$ if and only if $B$ is a nucleus of $G\setminus V(C)$: there are no constraints from edges with an endpoint in $V(C)$ as those are absent (deleted) in $G\setminus V(C)$ and trivially covered by $V(N)=V(C)\sqcup V(B)$; the edges remaining in $G\setminus V(C)$ are covered by $V(N)$ precisely if they have an endpoint in $V(B)$.

    With these observations, we can further rewrite the right-hand side of \eqref{eq:betark=cc} as
    \begin{equation*}
        -\beta(G)+(-1)^{\rk(G)}\sum_{v\in V(G)} \sum_{\substack{C\subseteq G\setminus v \\ \abs{\pi_0(C)}=1}} (-1)^{\abs{E(C)}} \sum_{\substack{H\subseteq [v,C]\\\abs{H}>0}} (-1)^{\abs{H}} \sum_{\substack{B\in\nuc(G\setminus V(C))\\ v \in B}} (-1)^{\abs{E(B)}}.
    \end{equation*}
    The sum over $H\defas [v,C]\cap E(N)$ is zero if $[v,C]=\emptyset$ and $\sum_{i=1}^k\binom{k}{i}(-1)^i=(1-1)^k-1=-1$ for $k=\abs{[v,C]}>0$. By \cref{lem:Elser}, the sum over $B$ gives zero, unless the graph $G\setminus V(C)$ has no edges. Hence only those terms remain where $C$ is itself a nucleus of $G$:
    \begin{equation*}
        (-1)^{\rk(G)}\sum_{A\subseteq E(G)} (-1)^{\abs{A}} \cc(A)
        =
        -\beta(G)-(-1)^{\rk(G)} \sum_{C\in\nuc(G)} (-1)^{\abs{E(C)}} \sum_{\substack{v\in V(G)\setminus V(C)  \\ [v,C]\neq\emptyset}} 1.
    \end{equation*}
    A vertex $v\notin V(C)$ is either an isolated vertex of $G$ (then $[v,C]=\emptyset$), or it has edges but then those must all be in $[v,C]\neq\emptyset$ since $V(C)$ is a vertex cover. Therefore, the sum over $v$ gives $\abs{V(G)}-\abs{V(C)}-i$ if we let $i$ denote the number of isolated vertices in $G$. By \eqref{eq:Elser1}, summing the term with $\abs{V(C)}$ over all nuclei $C$ gives zero, and for the remaining terms we can apply \cref{lem:beta=els0} to conclude that
    \begin{equation*}
        (-1)^{\rk(G)}\sum_{A\subseteq E(G)} (-1)^{\abs{A}} \cc(A)
        =
        \beta(G)\cdot\left( -1 + \abs{V(G)}-i \right).
    \end{equation*}
    Since $\rk(G)=\abs{V(G)}-\abs{\pi_0(G)}$, this concludes the proof whenever $G$ has exactly one connected component that contains edges, for then $\abs{\pi_0(G)}=1+i$. If $G$ has multiple connected components that contain edges, then the cycle matroid of $G$ is disconnected and $\beta(G)=0$ by \cite{Crapo:HigherInvariant}, so both sides of \eqref{eq:betark=cc} vanish.
\end{proof}
\begin{theorem}\label{thm:cc-AB}
    For any graphic or cographic matroid, the number of connected components is
    \begin{equation}\label{eq:cc-AB}
        \cc(M) = \abs{M}-\sum_{A\subseteq B\subseteq M} (-1)^{\abs{B}-\abs{A}} \loops(A) \rk(B).
    \end{equation}
\end{theorem}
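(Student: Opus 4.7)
The plan is to derive this identity as an algebraic corollary of \Cref{thm:betark=bic}, formally extending to matroids the manipulation already sketched in the remark following that theorem for graphs without self-loops.

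First I would reduce to the loopless case. Removing a loop $e$ leaves $\abs{M}-\cc(M)$ unchanged. A case analysis of the double sum according to whether $e\in A$ and/or $e\in B$ shows that the right-hand side is also unchanged: the terms with $e\in B\setminus A$ cancel those with $e\notin B$, while the remaining $e\in A\cap B$ terms reproduce the analogous sum for $M\setminus e$ plus an extra piece $\sum_{A'\subseteq B'\subseteq M\setminus e}(-1)^{\abs{B'}-\abs{A'}}\rk(B')$, which vanishes because the inner sum over $A'$ is zero unless $B'=\emptyset$, where $\rk(B')=0$.

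For loopless graphic or cographic $M$, every restriction $M|_B$ is again graphic or cographic and loopless, so \Cref{thm:betark=bic} applies to every $M|_B$ with $\abs{B}\geq 2$ (the cases $\abs{B}\leq 1$ being verified directly). M\"obius inversion on the Boolean lattice of subsets of $M$ then produces the reformulation
\begin{equation*}
    \cc(M)=\sum_{A\subseteq M}(-1)^{\loops(A)}\beta(A)\rk(A).
\end{equation*}
Substituting $\beta(A)=(-1)^{\rk(A)}\sum_{B\subseteq A}(-1)^{\abs{B}}\rk(B)$ from \eqref{eq:Crapo} and using the identity $(-1)^{\loops(A)+\rk(A)}=(-1)^{\abs{A}}$ rearranges this into
\begin{equation*}
    \cc(M)=\sum_{B\subseteq A\subseteq M}(-1)^{\abs{A}-\abs{B}}\rk(A)\rk(B).
\end{equation*}

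To finish, I would write $\loops(A)=\abs{A}-\rk(A)$ in the theorem's double sum and relabel variables so that $B\subseteq A$. The $\rk(A)\rk(B)$ piece then equals $\cc(M)$ by the identity just derived, while the $\abs{B}\rk(A)$ piece reduces, after swapping the order of summation and evaluating $\sum_{B\subseteq A}(-1)^{\abs{A}-\abs{B}}\abs{B}$ (which vanishes unless $\abs{A}=1$, where it equals $1$), to $\sum_{e\in M}\rk(\{e\})=\abs{M}$ by looplessness. Combining yields the claimed identity. The main obstacle is the preliminary loop reduction; once that is in place, the remainder is straightforward algebra.
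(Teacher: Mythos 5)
Your reduction to the loopless case and all the inclusion--exclusion algebra are correct, and in fact they amount to exactly the manipulation the paper uses (the first part of its proof of this theorem also establishes the equivalence, via inclusion--exclusion, between \eqref{eq:cc-AB} and the identity $\beta(B)\rk(B)=(-1)^{\rk(B)}\sum_{A\subseteq B}(-1)^{\abs{A}}\cc(A)$ for all submatroids $B$). For the \emph{graphic} case your argument therefore matches the paper's, modulo that the paper avoids the preliminary loop reduction because \cref{thm:betark=cc} already covers graphs with self-loops.

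The problem is the \emph{cographic} case, where your proposal is circular with respect to the paper's actual logical structure. You invoke \cref{thm:betark=bic} for every restriction $M|_B$ of a cographic matroid $M$; restrictions of cographic matroids are indeed cographic, so the invocation is formally allowed. But the paper only proves \cref{thm:betark=bic} directly for graphs (that is the content of \cref{thm:betark=cc}); the cographic case of \cref{thm:betark=bic} is proved only \emph{after} \cref{thm:cc-AB}, in the corollary immediately following it, precisely by running the inclusion--exclusion equivalence backwards. In other words, the cographic half of the input you take for granted is, in the paper, a consequence of the theorem you are trying to prove. The genuine new content in the paper's proof of the cographic case is a duality argument: writing $M^\star$ for the graphic dual, using $\loops^\star(A)=\rk(M)-\rk(A^\sc)$ and $\rk^\star(B)=\loops(M)-\loops(B^\sc)$, re-indexing $(A,B)\mapsto(B^\sc,A^\sc)$ in the double sum for $M^\star$, and showing that the extra terms in $\rk(M)$ and $\loops(M)$ telescope to zero by inclusion--exclusion. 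That duality step is what your proposal is missing; without it (or some independent proof of the cographic case of \cref{thm:betark=bic}), the argument does not actually establish \eqref{eq:cc-AB} for cographic matroids.
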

\begin{proof}
    Rewriting the corank $\loops(A)=\abs{A}-\rk(A)$ and recognizing \eqref{eq:Crapo}, the sum over $A$ evaluates to
    \begin{equation*}
        \sum_{A\subseteq B} (-1)^{\abs{A}} \loops(A)
        = \sum_{A\subseteq B} (-1)^{\abs{A}} \abs{A}-(-1)^{\rk(B)}\beta(B)
        = -\delta_{\abs{B},1} - (-1)^{\rk(B)}\beta(B)
    \end{equation*}
    where $\delta_{\abs{B},1}=1$ if $\abs{B}=1$ and $\delta_{\abs{B},1}=0$ otherwise. For a single edge $B=\{e\}$ with $\rk(B)=1=\beta(B)$ (i.e.\ not a self-loop), this combination is zero (since $\loops(A)=0$). For a self-loop, the sum over $A$ gives $-1$; but this does not contribute to \eqref{eq:cc-AB} because $\rk(B)=0$ in this case. Therefore,
    \begin{equation}\label{eq:LARB=betaR}
        -\sum_{A\subseteq B\subseteq M} (-1)^{\abs{B}-\abs{A}} \loops(A) \rk(B)
        =\sum_{\substack{B\subseteq M \\ \abs{B}\neq 1}} (-1)^{\abs{B}} (-1)^{\rk(B)} \beta(B) \rk(B).
    \end{equation}
    Now assume that $M$ is graphic. Then so is every submatroid $B\subseteq M$ and we can invoke \cref{thm:betark=cc} for all $\abs{B}\neq 1$ (the theorem also applies to the trivial case $B=\emptyset$).
    For any single edge $\abs{B}=1$, self-loop or not, we have $\sum_{A\subseteq B} (-1)^{\abs{A}}\cc(A)=-1$. The sum over $\abs{B}\neq 1$ thus combines with the number of edges $\abs{M}$ on the right-hand side of \eqref{eq:cc-AB} to
    \begin{equation*}
        \sum_{B\subseteq M} (-1)^{\abs{B}} \left( \sum_{A\subseteq B} (-1)^{\abs{A}} \cc(A) \right)
        =\sum_{A\subseteq M} \cc(A) \sum_{B\subseteq M, B\supseteq A} (-1)^{\abs{B\setminus A}}
        = \cc(M)
    \end{equation*}
    by inclusion-exclusion. This proves \eqref{eq:cc-AB} for graphic matroids. Now suppose $M$ is cographic. Then the identity holds for its (graphic) dual matroid $M^\star$, which has the same $\cc(M)=\cc(M^\star)$ and $\abs{M}=\abs{M^\star}$. The ranks $r^\star$ and coranks $\loops^\star$ of subsets $A$ of the dual $M^\star$ relate to the coranks and ranks of the complementary sets $A^\sc=M\setminus A$ in $M$ as
    \begin{equation*}
        \loops^\star(A)=\rk(M)-\rk(A^\sc),\quad \rk^\star(B)=\loops(M)-\loops(B^\sc)
        .
    \end{equation*}
    Hence re-indexing the summation variables $(A,B)\mapsto (B^\sc,A^\sc)$, we have the identity
    \begin{align*}
        \abs{M}-\cc(M)
        =\abs{M^\star}-\cc(M^\star)
        &=\sum_{A\subseteq B\subseteq M^\star} (-1)^{\abs{B}-\abs{A}} \loops^\star(A) \rk^\star(B)  \\
        &=\sum_{A\subseteq B\subseteq M} (-1)^{\abs{B}-\abs{A}} (\rk(M)-\rk(B)) (\loops(M)-\loops(A)). 
    \end{align*}
    Expanding the summand, the parts with $\rk(M)$ or $\loops(M)$ sum to zero:
    \begin{equation*}
        \sum_{A\subseteq M} \rk(M)(\loops(M)-\loops(A)) \sum_{B\subseteq M,B\supseteq A}(-1)^{\abs{B\setminus A}}
        =\rk(M)(\loops(M)-\loops(A))\Big|_{A=M} = 0
    \end{equation*}
    by inclusion-exclusion, and similarly the sum over $A$ in $\sum_{A\subseteq B\subseteq M} (-1)^{\abs{B}-\abs{A}} \rk(B)\loops(M)$ localizes to $B=\emptyset$ where $\rk(B)=0$. Therefore, only the sum of $(-1)^{\abs{B}-\abs{A}}\loops(A)\rk(B)$ remains, which concludes the proof that \eqref{eq:cc-AB} also holds for cographic matroids.
\end{proof}
This completes the proof of \cref{thm:betark=bic} as stated in the introduction:
\begin{corollary}
    The identity \eqref{eq:betark=cc} from \cref{thm:betark=cc} also holds for cographic matroids with at least two edges.
\end{corollary}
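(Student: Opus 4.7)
The plan is to derive the identity for a cographic matroid $M$ from \cref{thm:cc-AB}, which is already established for cographic matroids, by applying it to every subset of $M$ and summing with alternating signs $(-1)^{\abs{B}}$.

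First I would verify that every restriction of a cographic matroid is cographic: if $M=N^\star$ for a graphic matroid $N$, then for any $B\subseteq M$ the dual of the restriction is $(M|B)^\star=N/(M\setminus B)$, a contraction of the graphic matroid $N$, which is again graphic; hence $M|B$ is cographic. This ensures that \cref{thm:cc-AB} applies uniformly to every $B\subseteq M$, giving $\cc(B)=\abs{B}-\sum_{A\subseteq C\subseteq B}(-1)^{\abs{C}-\abs{A}}\loops(A)\rk(C)$.

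Next I would form the alternating sum $S\defas\sum_{B\subseteq M}(-1)^{\abs{B}}\cc(B)$ and substitute the preceding display. The linear term contributes $\sum_{B\subseteq M}(-1)^{\abs{B}}\abs{B}$, which vanishes for $\abs{M}\geq 2$. For the double-sum term, I would swap summation order so that $B$ is the innermost variable; the inner sum $\sum_{B\colon C\subseteq B\subseteq M}(-1)^{\abs{B}}$ equals $(-1)^{\abs{C}}\cdot 0^{\abs{M}-\abs{C}}$ and vanishes unless $C=M$, collapsing everything to a single sum over $A\subseteq M$. Splitting $\loops(A)=\abs{A}-\rk(A)$ and again using $\sum_A(-1)^{\abs{A}}\abs{A}=0$, together with Crapo's formula \eqref{eq:Crapo} to evaluate $\sum_A(-1)^{\abs{A}}\rk(A)=(-1)^{\rk(M)}\beta(M)$, I obtain $S=(-1)^{\rk(M)}\beta(M)\rk(M)$. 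Multiplying through by $(-1)^{\rk(M)}$ yields exactly \eqref{eq:betark=cc}.

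The argument is essentially routine inclusion-exclusion; the main obstacle is the careful sign bookkeeping in the second step, while the one conceptual point is the initial observation that subsets of cographic matroids are cographic, which is what makes \cref{thm:cc-AB} available uniformly across all $B\subseteq M$ (a property the cographic class inherits precisely because graphic matroids are closed under contraction).
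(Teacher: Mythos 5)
Your proof is correct and realizes precisely the ``equivalence under inclusion--exclusion'' that the paper's one-line proof refers to: the paper points back to the chain of manipulations in the proof of \cref{thm:cc-AB}, while you make them explicit by summing \eqref{eq:cc-AB} over all $B\subseteq M$ with sign $(-1)^{\abs{B}}$ and collapsing. You also supply the needed observation (left implicit in the paper) that restrictions of cographic matroids are cographic, so \cref{thm:cc-AB} is indeed available for every $B\subseteq M$.
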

\begin{proof}
    The first part of the proof of \cref{thm:cc-AB} shows that the identity of \cref{thm:betark=cc} is equivalent, under inclusion-exclusion, to \cref{thm:cc-AB}.
\end{proof}
We now generalize the above relations to higher connectivity (\cref{thm:betark-k=cc} from the introduction). For a graph $G$ without self-loops, all edges have $\beta(e)=1=\rk(e)$. Hence, including the summands with $\abs{B}=1$ in \eqref{eq:LARB=betaR} adds precisely $\abs{E(G)}$, so that \eqref{eq:cc-AB} can be written as
\begin{equation*}
    \cc(G)=\sum_{A\subseteq E(G)} (-1)^{\loops(A)} \beta(A) \rk(A).
\end{equation*}
If $G$ is biconnected, then this sum is equal to $1$; otherwise $\cc_1(G)=1+\sum_v(\abs{\pi_0(G\setminus v)}-1)$ counts the 1-vertex cuts of $G$, weighted by the number of extra components their deletion creates. We generalize $\cc(G)=\cc_1(G)$ as follows:
\begin{definition}\label{def:cci}
    For any graph $G$ and any positive integer $i$, define the sum
    \begin{equation*}\label{eq:cci}
        \cc_i(G)\defas (-1)^{i-1}\sum_{A\subseteq E(G)} (-1)^{\loops(A)} \beta(A) \binom{\rk A}{i}
    \end{equation*}
    where the binomial coefficient is understood to vanish when $i>\rk(A)$.
\end{definition}
The integers $\cc_i(G)$ measure the connectivity of $G$. We call a graph $k$-connected if it has more than $k$ vertices, and deleting any set of less than $k$ vertices leaves a connected graph.
\begin{theorem}\label{thm:cck=S}
    For any $k\geq 1$ and any $k$-connected graph $G$ without self-loops, we have the identities $\cc_1(G)=\ldots=\cc_{k-1}(G)=1$ and
    \begin{equation}\label{eq:cck=S}
    \cc_k(G)=1+ \sum_{\substack{S\subset V(G)\\ \abs{S}=k}} \Big( \pi_0(G\setminus S)-1 \Big).
    \end{equation}
\end{theorem}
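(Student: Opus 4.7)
I would prove both assertions by induction on $k$, with the base case $k=1$ being exactly the identity $\cc_1(G)=\cc(G)=1+\sum_v(|\pi_0(G\setminus v)|-1)$ established in the discussion preceding \Cref{def:cci} (via \Cref{thm:cc-AB}). The key idea for the inductive step is to evaluate the difference $\cc_k(G)-\cc_{k-1}(G)$ directly, showing it equals $\sum_{|T|=k}(|\pi_0(G\setminus T)|-1)$, so that telescoping with $\cc_{k-1}(G)=1$ (which the inductive hypothesis delivers for a $k$-connected graph) yields the claim.

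\emph{Step 1 (algebraic preparation).} Since $\beta(A)=0$ unless the cycle matroid of $A$ is connected, only biconnected $A$ contribute to $\cc_k(G)$, and there $\rk(A)=|V(A)|-1$. The alternating binomial identity $\binom{n-1}{k}=\sum_{j=0}^{k}(-1)^{k-j}\binom{n}{j}$ rewrites
\[
    \cc_k(G) = -\sum_{j=0}^{k}(-1)^{j} d_j(G), \qquad d_j(G) \defas \sum_{A\subseteq E(G)} (-1)^{\loops(A)}\beta(A)\binom{|V(A)|}{j}.
\]
This telescopes to $\cc_k(G)-\cc_{k-1}(G)=(-1)^{k+1}d_k(G)$, reducing the task to evaluating $d_k(G)$.

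\emph{Step 2 (combinatorial evaluation).} I interpret $\binom{|V(A)|}{k}$ as a count of $k$-subsets $S\subseteq V(A)$, swap the order of summation, and apply inclusion--exclusion to the condition $S\subseteq V(A)$:
\[
    d_k(G) = \sum_{\substack{S\subseteq V(G)\\|S|=k}}\sum_{T\subseteq S}(-1)^{|T|}F(G\setminus T),\qquad F(H) \defas \sum_{A\subseteq E(H)}(-1)^{\loops(A)}\beta(A).
\]
A short M\"{o}bius-style manipulation---expand $\beta$ via \eqref{eq:Crapo} and use that $\sum_{A\supseteq B}(-1)^{|A|}$ vanishes unless $B=E(H)$---collapses $F(H)=\rk(H)$. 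Substituting $\rk(G\setminus T)=|V(G)|-|T|-|\pi_0(G\setminus T)|$ and invoking $k$-connectivity (so $|\pi_0(G\setminus T)|=1$ for $|T|<k$), standard binomial identities cancel the contributions from cuts of size smaller than $k$ against $\binom{|V(G)|}{k}$, leaving $d_k(G)=(-1)^{k-1}\sum_{|T|=k}(|\pi_0(G\setminus T)|-1)$ for $k\geq 2$.

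\emph{Step 3 (conclusion).} Combining the two steps yields $\cc_k(G)-\cc_{k-1}(G)=\sum_{|T|=k}(|\pi_0(G\setminus T)|-1)$; the inductive hypothesis $\cc_{k-1}(G)=1$ completes the argument, and the chain $\cc_1(G)=\cdots=\cc_{k-1}(G)=1$ follows automatically. The main obstacle I anticipate is the identity $F(H)=\rk(H)$: while its proof is a short M\"{o}bius calculation, it is the unexpected algebraic bridge between Crapo's $\beta$ and the graph's rank that makes the whole inclusion--exclusion collapse; without it the alternating sum over $A$ would resist clean treatment.
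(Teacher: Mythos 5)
Your proof is correct, and while it establishes the same difference formula $\cc_k(G)-\cc_{k-1}(G)=\sum_{|S|=k}(|\pi_0(G\setminus S)|-1)$ as the paper's proof, it does so by a noticeably cleaner route. The paper obtains the difference via Pascal's rule $\binom{\rk(A)+1}{i}=\binom{\rk(A)}{i}+\binom{\rk(A)}{i-1}$, then fixes $S$, expands $\beta(A)$ via Crapo's formula, and performs a delicate direct analysis of the resulting alternating sum over $A\supseteq B$ with $V(A)\supseteq S$: it reparametrizes by a partition $S=X\sqcup Y$, argues that only induced subgraphs $B=E(G\setminus Y)$ survive, and needs a fixed-point-free involution to show $\sum_A(-1)^{|A|}=(-1)^{|Y|}$. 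Crucially, the paper must invoke $k$-connectivity \emph{twice during} this analysis — once to justify that the reparametrization by $Y\subseteq S$ is a bijection (no vertex in $S\setminus Y$ has all its neighbours in $Y$), and again to run the involution (no vertex in $Y$ has all its neighbours in $Y$). Your approach sidesteps both points: you push the $[S\subseteq V(A)]$ constraint through inclusion--exclusion over $T\subseteq S$ exactly, and then the entire inner alternating sum collapses in one stroke via $F(H)=\sum_{A\subseteq E(H)}(-1)^{\loops(A)}\beta(A)=\rk(H)$, which is just M\"{o}bius inversion applied to Crapo's definition \eqref{eq:Crapo} and is precisely the computation the paper uses for the sum $\cc_0(G)=-\rk(G)$ at the very end of its own proof. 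As a result, $k$-connectivity enters your argument only once, at the very end, to assert $|\pi_0(G\setminus T)|=1$ for $|T|<k$, after which the elementary binomial sum $\sum_{r=0}^{k}\binom{k}{r}(-1)^r(|V(G)|-1-r)$ vanishes for $k\geq 2$. (Incidentally, your Step 2 also covers $k=1$ directly — the elementary sum then equals $1$ and combines with $\cc_0(G)=1-|V(G)|$ to give the base case — so the appeal to \cite{Harary:Elementary} could have been avoided, as the paper itself notes in its closing paragraph.) The tradeoff is that the paper's argument makes the combinatorial mechanism (induced subgraphs, contributing partitions) visible and reusable, whereas yours hides it inside the algebraic identity $F(H)=\rk(H)$; both are valid, and yours is shorter and less error-prone.
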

In other words, for a $k$-connected graph, the sum $\cc_k(G)$ from \cref{def:cci} counts $k$-vertex cuts, weighted by the number of extra components that their deletion creates.
\begin{proof}
    Let $V(A)\subseteq V(G)$ denote the vertices incident to at least one edge of $A$. Since $\beta(A)=0$ whenever $\cc(A)>1$, the summand in \cref{def:cci} is zero unless $A$ defines a biconnected  subgraph on $V(A)$. In this case, $\rk(A)=V(A)-1$, so that we can interpret $\binom{\rk(A)+1}{i}=\binom{\rk(A)}{i}+\binom{\rk(A)}{i-1}$ as choosing $i$ vertices out of $V(A)$:
    \begin{equation*}
        \cc_i(G)-\cc_{i-1}(G)
        = (-1)^{i-1}\sum_{A\subseteq E(G)} (-1)^{\loops(A)} \beta(A) \binom{\abs{V(A)}}{i}.
    \end{equation*}
    Expanding the binomial into a sum over all subsets $S$ of $i$ vertices, swapping the sums over $S$ and $A$, and expanding $\beta(A)$ according to \eqref{eq:Crapo}, yields
    \begin{equation*}
        \cc_i(G)-\cc_{i-1}(G)
        = (-1)^{i-1}\sum_{\substack{S\subset V(G) \\ \abs{S}=i}} \sum_{B\subseteq E(G)} \rk(B) \sum_{\substack{A\supseteq B\\ V(A)\supseteq S}} (-1)^{\abs{A\setminus B}}.
    \end{equation*}
    For fixed $S$ and $B$, consider the partition $S=X\sqcup Y$ with $X=S\cap V(B)$. Then $A\supseteq B$ reduces the condition $V(A)\supseteq S$ down to $V(A)\supseteq Y$. Any edge $e\in E(G)\setminus B$ with neither endpoint in $Y$ may be added or removed from $A$ without affecting these conditions; however this switch swaps the sign $(-1)^{\abs{B\setminus A}}$ and forces the sum over $A$ to vanish. Therefore, the sum over $B$ receives contributions only from induced subgraphs, where $B=E(G\setminus Y)$ consists of \emph{all} edges with no endpoint in $Y$. Denoting by $E_Y\subseteq E(G)$ those edges with at least one endpoint in $Y$, and relabeling $A\setminus B$ as $A$, we have
    \begin{equation*}
        \cc_i(G)-\cc_{i-1}(G)
        = (-1)^{i-1}\sum_{\substack{S\subset V(G) \\ \abs{S}=i}} \sum_{Y\subseteq S} \rk(G\setminus Y) \sum_{\substack{A\subseteq E_Y\\ V(A)\supseteq Y}} (-1)^{\abs{A}}.
    \end{equation*}
    In fact, in order to have $V(B)=X=S\setminus Y$, the sum should only be over such partitions of $S$ such that each vertex $v\in X$ has at least one neighbour in $V(G)\setminus Y$. However, this is automatically the case for all $i\leq k$, because the existence of a vertex $x\in X\neq\emptyset$ with all neighbours in $Y$ would imply that $Y$ is a vertex cut of size $\abs{Y}=i-\abs{X}<i\leq k$, contradicting the assumption that $G$ is $k$-connected.

    Similarly, if any vertex $v\in Y$ had all its neighbours contained in $Y$, then $Y\setminus\{v\}$ would be a vertex cut of size less than $i\leq k$. Thus, we may assume that each $v\in Y$ has at least one edge to a vertex in $V(G)\setminus Y$. Pick one such edge $e_v$ (by construction, $e\in E_Y$), then the summation domain $\mathcal{A}=\{A\colon A\subseteq E_Y, V(A)\supseteq Y\}=\mathcal{A}_1\sqcup\mathcal{A}_2\sqcup\mathcal{A}_3$ splits into $\mathcal{A}_1=\{e_v\notin A\}$, $\mathcal{A}_2=\{e_v\in A, A\setminus\{e_v\}\in\mathcal{A}\}$, and $\mathcal{A}_3=\{e_v\in A, A\setminus\{e_v\}\notin\mathcal{A}\}$. The bijection $A\mapsto A\sqcup\{e_v\}$ between $\mathcal{A}_1$ and $\mathcal{A}_2$ shows that the corresponding contributions to the sum of $(-1)^{\abs{A}}$ cancel. Since $v$ is the only endpoint of $e_v$ in $Y$, in the remaining sum over $\mathcal{A}_3$ we have $V(A\setminus\{e_v\})=V(A)\setminus \{v\}\supseteq Y\setminus\{v\}$. Thus, by induction,
    \begin{equation*}
        \sum_{\substack{A\subseteq E_Y\\ V(A)\supseteq Y}} (-1)^{\abs{A}}
        =-\sum_{\substack{A\subseteq E_{Y\setminus v}\\ V(A)\supseteq Y\setminus\{v\}}} (-1)^{\abs{A}}
        =\ldots
        =(-1)^{\abs{Y}}.
    \end{equation*}
    Combining this result for the sum over $A$ with the identity $\rk(B)=\rk(G\setminus Y)=\abs{V(G\setminus Y)}-\abs{\pi_0(G\setminus Y)}$, we have shown
    \begin{equation*}
        \cc_i(G)-\cc_{i-1}(G)
        = (-1)^{i-1}\sum_{\substack{S\subset V(G) \\ \abs{S}=i}} \sum_{Y\subseteq S} (\abs{V(G)}-\abs{Y}-\abs{\pi_0(G\setminus Y)}) (-1)^{\abs{Y}}.
    \end{equation*}
    Because $G$ has no vertex cuts of size less than $k\geq i$, we know that $\abs{\pi_0(G\setminus Y)}=1$ unless perhaps for the summand with $Y=S$. We can therefore write the right-hand side as
    \begin{equation*}
        (-1)^{i-1}\sum_{\substack{S\subset V(G) \\ \abs{S}=i}} \sum_{Y\subseteq S} (\abs{V(G)}-\abs{Y}-1) (-1)^{\abs{Y}}
        +\sum_{\substack{S\subset V(G) \\ \abs{S}=i}} (\abs{\pi_0(G\setminus S)}-1).
    \end{equation*}
    The elementary sum over $Y$ depends not on the set $S$, but only its size $i$:
    \begin{equation*}
        \sum_{Y\subseteq S} (\abs{V(G)}-\abs{Y}-1) (-1)^{\abs{Y}}
        =\sum_{r=0}^i \binom{i}{r} (\rk(G)-r) (-1)^r
        =\begin{cases}
            1 & \text{if $i=1$,} \\
            0 & \text{if $i\geq 2$.} \\
        \end{cases}
    \end{equation*}
    In conclusion, for all $1\leq i \leq k$ we have established the identity
    \begin{equation}\label{eq:cci-rec}
        \cc_i(G)
        =\cc_{i-1}(G)
        +\sum_{\substack{S\subset V(G) \\ \abs{S}=i}} (\abs{\pi_0(G\setminus S)}-1)
        +\begin{cases}
            \abs{V(G)} & \text{if $i=1$,} \\
            0 & \text{if $i\geq 2$.} \\
        \end{cases}
    \end{equation}
    It follows immediately that $\cc_{k-1}(G)=\cc_{k-2}(G)=\ldots=\cc_2(G)=\cc_1(G)$, which reduces \cref{thm:cck=S} for the cases $k\geq 2$ to merely the claim that $\cc_1(G)=1$, which in turn follows from \eqref{eq:cck=S} in the case $k=1$. So we are only left having to prove \eqref{eq:cck=S} for $k=1$, which is precisely the relation between the number $\cc_1(G)=\cc(G)$ of blocks and degrees of vertices in the block-cut tree \cite{Harary:Elementary} that we already used to prove \cref{thm:betark=cc}.

    In fact, we can use \eqref{eq:cci-rec} to give an independent proof even of this base case $k=1$, without referring to \cite{Harary:Elementary}: Setting $i=1$ in \eqref{eq:cci-rec} yields
    \begin{equation*}
        \cc_1(G)=
        \cc_0(G)
        +\abs{V(G)}
        +\sum_{v\in V(G)} (\abs{\pi_0(G\setminus \{v\})}-1)
    \end{equation*}
    so that it only remains to note that for a (1-)connected graph,
    \begin{equation*}
        \cc_0(G)=-\sum_{A\subseteq E(G)} (-1)^{\loops(A)}\beta(A)
        =-\sum_{B\subseteq A\subseteq E(G)} \rk(B) (-1)^{\abs{A\setminus B}}
        =-\rk (G)=1-\abs{V(G)}
    \end{equation*}
    where the second sum over $A$ (for all $A\supseteq B$, with fixed $B$) gives zero unless $B=E(G)$.
\end{proof}

\section{Speyer's invariant from Ferroni's algorithm}\label{sec:speyerg}

We use a combinatorial characterization of the polynomial $g_M(t)$, which was proposed in \cite{Speyer:MatroidKtheory} and confirmed in \cite[Theorem~9.15]{FerroniSchroeter:ValInv} and \cite[Definition~1.2]{Ferroni:SchubertDelannoySpeyer}. Namely, Speyer's polynomial is the unique \emph{covaluative} matroid invariant $M\mapsto g_M(t)\in\Z[t]$ such that:
\begin{enumerate}
    \item If $M$ has a loop or coloop, then $g_M(t)=0$.
    \item If $M$ is series-parallel, then $g_M(t)=t$.
    \item If $M=M_1\oplus M_2$ is a direct sum, then $g_M(t)=g_{M_1}(t)\cdot g_{M_2}(t)$.
\end{enumerate}
A \emph{loop} of $M$ is an edge (element of the ground set) of rank zero; in a graphic matroid this is an edge with both endpoints at the same vertex (``self-loop''). A \emph{coloop} is an edge $e$ with $\rk(M\setminus e)=\rk(M)-1$, i.e.\ an edge contained in \emph{every} basis---also called a \emph{bridge} in a graphic matroid. We call a matroid \emph{series-parallel} if it can be obtained from the 2-cycle $M\cong\UM{2}{1}$ by a sequence of series extensions $M\mapsto M\oplus_2 \UM{3}{2}$ (subdivide an edge) and parallel extensions $M\mapsto M\oplus_2 \UM{3}{1}$ (add an edge parallel to an existing edge).

The covaluative property states that $g_M$ can be computed from any decomposition of the matroid polytope of $M$ into other matroid polytopes; see \cite[\S2.3]{Ferroni:SchubertDelannoySpeyer}. The algorithm for computing $g_M$ elaborated in \cite{Ferroni:SchubertDelannoySpeyer} exploits this in two steps:
\begin{enumerate}
    \item Decompose $M$ into \emph{Schubert matroids}, following \cite{Hampe:IntersectionRing} using the lattice of cyclic flats.
    \item Decompose Schubert matroids into direct sums of series-parallel matroids, using a description in terms of lattice paths.
\end{enumerate}
In the sequel, we discuss both of these steps, in order to develop formulas that are explicit and efficient to evaluate. We apply these to prove \Cref{prop:N1asbeta} from the introduction, and in \Cref{sec:algrec} we present our algorithm for efficient computation of $g_M(t)$.

\subsection{Cyclic flats, chains, and the M\"obius function}
A subset $A\subseteq M$ of a matroid is called a \emph{flat} if $\rk(A\cup\{e\})=\rk(A)+1$ for every edge $e\notin A$. Dually, $A$ is called \emph{cyclic} if $\rk(A\setminus\{e\})=\rk(A)$ for every $e\in A$. Any given $A$ contains a unique largest cyclic subset $\cyc(A)$, and any given $A$ is contained in a unique smallest flat $\cl(A)$.
In the case of a graphic matroid, a subset $A\subseteq E(G)$ is cyclic if each $e\in A$ is contained in a cycle that is contained in $A$, and $A$ is a flat if all edges $e\notin A$ have their endpoints in two different connected components of $A$ (that is, each connected component of $A$ is an \emph{induced} subgraph of $G$). The operation $A\mapsto\cyc(A)$ removes all bridges from $A$, and the closure $A\mapsto\cl(A)$ adds all edges $e\in A$ whose endpoints are in the same connected component of $A$.

The lattice $\LCF$ of cyclic flats as studied in \cite{BoninMier:CyclicFlats} consists of all $A\subseteq M$ that are simultaneously cyclic and flat. Cyclic flats form a poset under the inclusion relation $A\leq B\Leftrightarrow A\subseteq B$, and in fact they form a \emph{lattice}: every pair $A,B\in\LCF$ has a \emph{join} (least upper bound) $A\vee B \in \LCF$ and a \emph{meet} (greatest lower bound) $A\wedge B\in \LCF$. Concretely, these are
\begin{equation*}
    A\vee B=\cl(A\cup B)
    \quad\text{and}\quad
    A\wedge B = \cyc(A\cap B)
    .
\end{equation*}
The minimum $0_\LCF=\cl(\emptyset)$ of $\LCF$ is the set of all loops and the maximum $1_\LCF=\cyc(M)$ is the complement of the coloops. Since $g_M(t)$ is only non-zero for matroids without loops or coloops, in fact we will have $0_\LCF=\emptyset$ and $1_\LCF=M$. \Cref{fig:cyclicflats} shows the Hasse diagrams of the lattices of cyclic flats of several graphs.
\begin{figure}\centering
    $\xymatrix@R=4.3mm@C=-1mm{
    & & & \Graph[0.3]{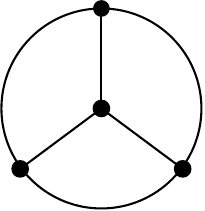} & & & \\
    \Graph[0.3]{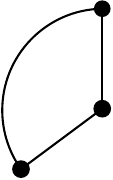} \ar@{-}[urrr] & \quad & \Graph[0.3]{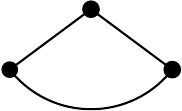} \ar@{-}[ur] & & \Graph[0.3]{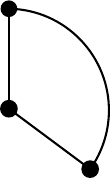} \ar@{-}[ul] & \quad & \Graph[0.3]{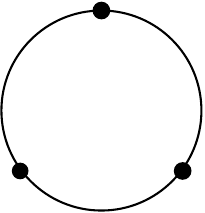} \ar@{-}[ulll] \\
    & & & \emptyset \ar@{-}[ulll] \ar@{-}[ul] \ar@{-}[ur] \ar@{-}[urrr] & & &
    }$
    \qquad
    $\xymatrix@R=3mm@C=4mm{
    & & \Graph[0.3]{ws4A} & & \\
    \Graph[0.3]{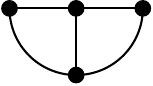} \ar@{-}[urr] & \Graph[0.3]{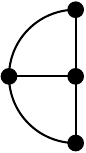} \ar@{-}[ur] & \Graph[0.3]{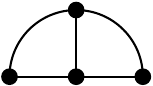} \ar@{-}[u] & \Graph[0.3]{ws4A122} \ar@{-}[ul] & \Graph[0.3]{ws4A4o} \ar@{-}[ull] \\
    \Graph[0.3]{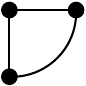} \ar@{-}[u] \ar@{-}[urrr]!/d 4pt/ & \Graph[0.3]{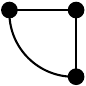} \ar@{-}[ul] \ar@{-}[u] & \Graph[0.3]{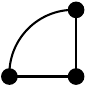} \ar@{-}[ul] \ar@{-}[u] & \Graph[0.3]{ws4A3} \ar@{-}[ul] \ar@{-}[u] &  \\
    & & \emptyset \ar@{-}[ull] \ar@{-}[ul] \ar@{-}[u] \ar@{-}[ur] \ar@{-}@/_1.5pc/[uurr] & &
    }$
    \caption{The Hasse diagrams of the lattices $\LCF$ of cyclic flats for the graphic matroids of the wheel graphs with three and four spokes.}%
    \label{fig:cyclicflats}%
\end{figure}
\begin{definition}
    A chain (of length $r$) in a poset $\mathcal{L}$ is a sequence $C_0<C_1<\ldots<C_r$ (denoted $C_\bullet$) of elements $C_i\in\mathcal{L}$ that are totally ordered by $\mathcal{L}$. A chain $D_\bullet$ is a \emph{refinement} of a chain $C_\bullet$ if each $C_i$ appears somewhere in $D_\bullet$. Given a lattice $\mathcal{L}$, the associated \emph{chain lattice} $\chains(\mathcal{L})$ consists of all chains $C_\bullet$ starting with $C_0=0_{\mathcal{L}}$ and ending in $C_r=1_{\mathcal{L}}$, ordered by refinement, together with an additional element $1_{\chains}$ declared to be the maximum.
\end{definition}
The minimum of the chain lattice is the chain $0_{\chains}=(0_{\mathcal{L}}<1_{\mathcal{L}})$ of length one. So except for the minimum $0_{\chains}$ and the maximum $1_{\chains}$, elements of the chain lattice are in bijection with the \emph{non-empty} chains of $\mathcal{L}^\circ\defas\mathcal{L}\setminus\{0_{\mathcal{L}},1_{\mathcal{L}}\}$, by dropping $C_0=0_{\mathcal{L}}$ and $C_r=1_{\mathcal{L}}$:
\begin{equation*}
    \chains(\mathcal{L})\setminus\{0_{\chains},1_{\chains}\}
    \cong \{ (C_i)_{1\leq i\leq k}\colon 0_{\mathcal{L}}<C_1<\ldots<C_k<1_{\mathcal{L}}\}.
\end{equation*}
This collection of subsets $\{C_1,\ldots,C_k\}$ of $\mathcal{L}^\circ$ forms a simplicial complex, called the \emph{order complex} of $\mathcal{L}$. In the notation of
\cite[Chapter~3]{Stanley:EC1}, 
the order complex $\Delta(\mathcal{L}^\circ)\cong\chains(\mathcal{L})\setminus\{1_{\chains}\}$ also contains the empty simplex of dimension $-1$ corresponding to the chain $0_{\chains}$.
\begin{proposition}[{\cite[Proposition~3.8.6]{Stanley:EC1}}]\label{prop:chi=moebius}
    The M\"{o}bius function of a lattice $\mathcal{L}$ computes the reduced Euler characteristic of the order complex of the poset $\mathcal{L}^\circ=\mathcal{L}\setminus\{0_{\mathcal{L}},1_{\mathcal{L}}\}$:
    \begin{equation*}
        \widetilde{\chi}(\Delta(\mathcal{L}^\circ))=\chi(\Delta(\mathcal{L}^\circ))-1=\mu_{\mathcal{L}}(0_{\mathcal{L}},1_{\mathcal{L}}).
    \end{equation*}
\end{proposition}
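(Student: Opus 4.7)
The proposition is a standard consequence of Philip Hall's formula, so the plan is to prove Hall's theorem via the incidence algebra and then match the alternating sum of chain counts to an alternating sum of face numbers of the order complex.

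First, I would establish Hall's theorem. Working in the incidence algebra of the finite lattice $\mathcal{L}$, split the zeta function as $\zeta = \delta + \eta$, where $\eta(x,y) = 1$ for $x < y$ and vanishes otherwise. The convolution power $\eta^k(x,y)$ counts chains $x = x_0 < x_1 < \cdots < x_k = y$ of length exactly $k$, and in a finite lattice $\eta^k$ vanishes for all sufficiently large $k$. Inverting termwise gives
\begin{equation*}
\mu_{\mathcal{L}} = \zeta^{-1} = (\delta + \eta)^{-1} = \sum_{k \geq 0} (-1)^k \eta^k,
\end{equation*}
and evaluating at $(0_{\mathcal{L}}, 1_{\mathcal{L}})$ yields $\mu_{\mathcal{L}}(0_{\mathcal{L}},1_{\mathcal{L}}) = \sum_{k \geq 1} (-1)^k c_k$, where $c_k$ denotes the number of chains of length $k$ from $0_{\mathcal{L}}$ to $1_{\mathcal{L}}$. (We may assume $0_{\mathcal{L}} \neq 1_{\mathcal{L}}$; the degenerate case is trivial.)

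Next, I would match chains with simplices. A chain $0_{\mathcal{L}} = x_0 < x_1 < \cdots < x_k = 1_{\mathcal{L}}$ in $\mathcal{L}$ of length $k \geq 1$ corresponds, after deleting the two endpoints, to a totally ordered set $x_1 < \cdots < x_{k-1}$ of $k-1$ elements of $\mathcal{L}^\circ$, which is a $(k-2)$-dimensional simplex of $\Delta(\mathcal{L}^\circ)$. With the convention $f_{-1} = 1$ for the empty simplex (arising from $k=1$), this bijection gives $c_k = f_{k-2}$. Substituting back,
\begin{equation*}
\mu_{\mathcal{L}}(0_{\mathcal{L}},1_{\mathcal{L}}) = \sum_{k \geq 1} (-1)^k f_{k-2} = -1 + \sum_{i \geq 0} (-1)^i f_i = \chi(\Delta(\mathcal{L}^\circ)) - 1 = \widetilde{\chi}(\Delta(\mathcal{L}^\circ)),
\end{equation*}
which is the claim.

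There is no serious obstacle here: the statement is really a quick translation between two equivalent combinatorial encodings, and the only mild subtleties are the dimension shift $k \mapsto k-2$ and the proper treatment of the empty simplex. The substantive content is Hall's theorem, which itself follows in a couple of lines from the algebraic fact that $\mu$ is the inverse of $\zeta$ in the incidence algebra.
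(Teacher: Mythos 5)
Your proof is correct, and it is the standard argument via Philip Hall's theorem. Note, however, that the paper itself does not prove this proposition: it is stated as a direct citation of Stanley's \emph{Enumerative Combinatorics} (vol.~1, Proposition~3.8.6), so there is no in-paper proof to compare against. What you have supplied is essentially the textbook derivation that Stanley also gives: deriving $\mu = \sum_{k\geq 0}(-1)^k\eta^k$ from the incidence-algebra factorization $\zeta=\delta+\eta$ and local nilpotence of $\eta$, and then translating alternating chain counts into the (reduced) Euler characteristic of the order complex via the dimension shift $c_k = f_{k-2}$ and the convention $f_{-1}=1$ for the empty face. The algebra all checks out, including the bookkeeping in the final display where the $k=1$ term contributes the $-1$ that converts $\chi$ into $\widetilde\chi$.

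One small imprecision: you remark that the case $0_{\mathcal L}=1_{\mathcal L}$ is "trivial," but in fact the identity \emph{fails} there: $\mu(x,x)=1$ while $\Delta(\emptyset)$ has reduced Euler characteristic $-1$. The correct thing to say is that the proposition implicitly assumes $0_{\mathcal L}\neq 1_{\mathcal L}$ (equivalently, that $\mathcal L$ has at least two elements), which is automatic in the paper's application since $\LCF_M$ has $\emptyset$ and $M$ as distinct extremes for the matroids under consideration.
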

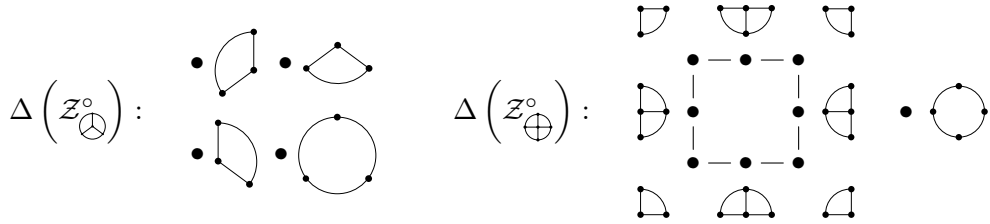
\begin{figure}
    \centering
    $\Delta\left(\LCF_{\Graph[0.1]{w3A}}^\circ\right)\colon\quad
    \vcenter{\xymatrix@=0mm{
    \bullet\ \Graph[0.3]{w3A3}  & \bullet\ \Graph[0.3]{w3A3c} \\
    \bullet\ \Graph[0.3]{w3A3b} & \bullet\ \Graph[0.3]{w3A3d} \\
    }}
    $
    \qquad
    $\Delta\left(\LCF_{\Graph[0.15]{ws4A}}^\circ\right)\colon\quad
    \vcenter{\xymatrix@=0mm{
        \Graph[0.3]{ws4A3b} & & \Graph[0.3]{ws4A122d} & & \Graph[0.3]{ws4A3c} \\
        & \bullet \ar@{-}[r] & \bullet \ar@{-}[r] & \bullet\ar@{-}[d] & \\
        \Graph[0.3]{ws4A122} & \bullet \ar@{-}[u] & & \bullet \ar@{-}[d] & \Graph[0.3]{ws4A122b} \\
        & \bullet \ar@{-}[u] & \bullet \ar@{-}[l] & \bullet \ar@{-}[l] & \\
        \Graph[0.3]{ws4A3}  & & \Graph[0.3]{ws4A122c} & & \Graph[0.3]{ws4A3d} \\
    }}
    \quad
    \bullet\ \Graph[0.3]{ws4A4o}
    $
    \caption{The order complex of the lattice of cyclic flats of the cycle matroid of the wheel graphs with three spokes (left) and four spokes (right).}%
    \label{fig:order-complex}%
\end{figure}
\begin{example}\label{ex:ordercomplex}
    The order complexes of the cyclic flats of the wheel graphs (see \cref{fig:cyclicflats}) with three and four spokes are shown in \cref{fig:order-complex}. For three spokes, the order complex consists of four points, thus
    $\mu_{\LCF}(\emptyset,\Graph[0.14]{w3A})=\widetilde{\chi}(\Delta(\LCF_M^\circ))=4-1=3$.
    With four spokes, the order complex consists of an octagon and one extra point, so
    $\mu_{\LCF}(\emptyset,\Graph[0.2]{ws4A})=\widetilde{\chi}(\Delta(\LCF_M^\circ))=-1+2-1=0$.
\end{example}
More generally, the M\"obius function $(x,y)\mapsto \mu_{\mathcal{L}}(x,y)\in\Z$ of any finite poset $\mathcal{L}$ can be extended to all arguments $x,y\in \mathcal{L}$ such that $x\leq y$. It fulfils $\mu_{\mathcal{L}}(x,x)=1$ for all $x\in\mathcal{L}$ and can be computed as $\mu_{\mathcal{L}}(x,z)=-\sum_{x<y\leq z} \mu_{\mathcal{L}}(y,z)$ for all $x<z$. It follows that
\begin{equation*}
    \sum_{x\leq y\leq z} f(y)\mu_{\mathcal{L}}(y,z)
    =\sum_{x\leq y\leq z} \mu_{\mathcal{L}}(x,y) f(y)
    =\begin{cases}
        f(x) & \text{if $x=z$,} \\
        0    & \text{if $x<z$,} \\
    \end{cases}
\end{equation*}
for all $x\leq z$ and any function $f$ on $\mathcal{L}$ taking values in an abelian group.
\begin{example}
    In the lattice $\LCF$ of cyclic flats of the (cycle matroid of the) wheel graph with three spokes from \cref{fig:cyclicflats}, the pairs $x<z$ and their M\"obius functions are of the form
    \begin{equation*}
        \mu_{\LCF}\left(\Graph[0.3]{w3A3}\,,\,\Graph[0.3]{w3A}\right)
        =\mu_{\LCF}\left(\emptyset,\,\Graph[0.3]{w3A3}\ \right)
        = -1
        \quad\text{and}\quad
        \mu_{\LCF}\left(\emptyset,\,\Graph[0.3]{w3A}\right)
        =3.
    \end{equation*}
\end{example}

The algorithm \cite[\S5]{Ferroni:SchubertDelannoySpeyer} computes the Speyer polynomial $g_M(t)$ of a matroid by decomposing the matroid polytope of $M$ into matroid polytopes of \emph{Schubert matroids}. A Schubert matroid is a matroid whose lattice of cyclic flats is a chain, and given a chain $C_\bullet\in\chains(\LCF_M)$ of cyclic flats of $M$, we denote by $\SM{C_\bullet}$ the Schubert matroid whose lattice of cyclic flats is precisely this chain, so that $\LCF_{\SM{C_\bullet}}=\{C_0,\ldots,C_r\}$.
\begin{theorem}[{\cite[Corollary~5.3]{Ferroni:SchubertDelannoySpeyer}}] \label{thm:Speyer-from-cyclics}
    Given a chain $C_\bullet\in\chains(\LCF_M)$ of cyclic flats, set $\lambda_{C_\bullet}\defas - \mu_{\chains(\LCF_M)}(C_\bullet,1_{\chains})$.
    Then for every matroid $M$ without loops or coloops, 
\begin{equation}\label{eq:Speyer-from-cyclics}
    g_M(t) = (-1)^{\cc(M)-1} \sum_{C_{\bullet}\in \chains(\LCF_M)\setminus\{1_{\chains}\}} \lambda_{C_\bullet} g_{\SM{C_\bullet}}(t).
\end{equation}
\end{theorem}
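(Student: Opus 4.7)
The plan is to combine Hampe's polyhedral decomposition of the matroid base polytope with Möbius inversion on the chain lattice, exploiting the covaluativity of $g_M(t)$.

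First, I would invoke Hampe's theorem \cite{Hampe:IntersectionRing}: the base polytope $P_M$ admits a polyhedral subdivision whose top-dimensional cells are the Schubert matroid polytopes $P_{\SM{C_\bullet}}$ indexed by the maximal chains $C_\bullet \in \chains(\LCF_M)$. Non-maximal chains correspond to faces of this subdivision, and the face poset is naturally identified (after adjoining a formal top element) with $\chains(\LCF_M)$ ordered by refinement, where $1_\chains$ represents the undivided polytope $P_M$ itself.

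Second, by covaluativity of $g_M$, any integer linear relation in the valuation group between indicator classes $[P_{M_i}]$ of matroid polytopes descends to the same relation among the polynomials $g_{M_i}(t)$. So it suffices to expand $[P_M]$ as an integer combination of the classes $[P_{\SM{C_\bullet}}]$ for $C_\bullet \in \chains(\LCF_M)\setminus\{1_\chains\}$. The maximal chains contribute $+1$ each as the top cells, and overlaps on lower-dimensional faces are corrected by inclusion–exclusion. Standard Möbius inversion on the face poset then identifies the coefficient of $[P_{\SM{C_\bullet}}]$ with $-\mu_{\chains(\LCF_M)}(C_\bullet, 1_\chains) = \lambda_{C_\bullet}$, precisely because $1_\chains$ was introduced to encode the undivided polytope.

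Third, I would fix the overall sign $(-1)^{\cc(M)-1}$ by evaluating both sides at $t=-1$. By \eqref{eq:gm10} the left-hand side equals $(-1)^{\cc(M)}$, while each $g_{\SM{C_\bullet}}(-1) = -1$ since every Schubert matroid without loops or coloops is matroid-connected: if $M = A \oplus B$ nontrivially, both $A$ and $B$ would be cyclic flats, which cannot simultaneously sit in a chain as they are disjoint and non-empty. The remaining Möbius-sum identity $\sum_{C_\bullet \neq 1_\chains} \mu(C_\bullet, 1_\chains) = -1$ is a defining property of $\mu$, and matching signs on the two sides forces the global factor $(-1)^{\cc(M)-1}$.

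The main obstacle I anticipate is the sign bookkeeping: the combinatorial structure from Hampe's subdivision yields the right coefficients $\lambda_{C_\bullet}$ transparently, but the extra $(-1)^{\cc(M)-1}$ is invisible at the level of the polyhedral identity and must be extracted by an independent normalisation. A secondary technical point is the careful treatment of the degenerate chain $0_\chains = (0_\LCF < 1_\LCF)$ and its corresponding Schubert matroid $\SM{0_\chains}$, whose contribution must be reconciled with multiplicativity of $g_M$ under direct sums in the case $\cc(M) > 1$.
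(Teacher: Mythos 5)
The paper does not prove this theorem; it is cited directly from Ferroni's work. So I will assess the correctness of your sketch on its own terms.

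The central gap is in the second step. You assert that covaluativity means ``any integer linear relation in the valuation group between indicator classes $[P_{M_i}]$ descends to the same relation among the polynomials $g_{M_i}(t)$.'' That is the defining property of a \emph{valuative} invariant, not a covaluative one. Speyer's $g_M$ is covaluative, which is the dual notion: it respects relations between relative interiors rather than closed indicator functions, and passing between the two introduces a dimension-dependent sign. This is not a pedantic distinction here, because the Schubert polytopes $P_{\SM{C_\bullet}}$ all have dimension $\abs{M}-1$, whereas $P_M$ has dimension $\abs{M}-\cc(M)$. When $\cc(M)>1$ these dimensions differ, and the discrepancy $(\abs{M}-1)-(\abs{M}-\cc(M))=\cc(M)-1$ is exactly the exponent in the sign $(-1)^{\cc(M)-1}$. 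In other words, the sign is not ``invisible at the level of the polyhedral identity''---it is forced by covaluativity and the dimension drop, and any correct proof must produce it there. A concrete check: for $M=U^2_1\oplus U^2_1$ one has $g_M(t)=t^2$, while the naive sum $\sum_{C_\bullet}\lambda_{C_\bullet}\,g_{\SM{C_\bullet}}(t)$ (with $\lambda$-values $-1,1,1$ for the three chains) evaluates to $-t^2$; the factor $(-1)^{\cc(M)-1}=-1$ is essential and comes out of the covaluation, not out of a separate normalisation.

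This makes the third step logically untenable as stated. Evaluating both sides at $t=-1$ is a consistency check and cannot determine a global sign on a polynomial identity unless you have already established the identity up to sign---but your second step, taken at face value, asserts the identity with \emph{no} sign, which contradicts the need to ``extract'' one afterwards. The argument is internally inconsistent on this point. Relatedly, your description of Hampe's result as a ``polyhedral subdivision whose top-dimensional cells'' are the $P_{\SM{C_\bullet}}$ for maximal chains is also not quite right: the Schubert polytopes attached to different maximal chains generally overlap, and the coefficients $\lambda_{C_\bullet}$ take both signs; what one has is a \emph{signed} decomposition of the indicator class in the valuation group, with coefficients given by Möbius functions (and later streamlined in the paper via Proposition~\ref{lem:MF(chain)}), rather than a genuine fan subdivision. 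Your observation that every loopless, coloopless Schubert matroid is connected---so that the factorisation axiom cannot interfere---is correct and a useful point, but it does not repair the sign. To fix the proof you need to state the covaluative property with its dimension-sensitive sign and carry that sign through the Möbius inversion, at which point $(-1)^{\cc(M)-1}$ appears automatically.
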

This can be thought of as a sum over all simplices in the order complex $\Delta(\LCF_M^\circ)\cong \chains(\LCF_M)\setminus\{1_{\chains}\}$ of the poset of the proper, non-empty cyclic flats $\LCF_M^\circ=\LCF_M\setminus\{\emptyset,M\}$.

To evaluate \eqref{eq:Speyer-from-cyclics}, one needs to find the coefficients $\lambda_{C_{\bullet}}\in\Z$. They can be computed as $\lambda_{C_\bullet}=1-\sum_{D_\bullet > C_\bullet} \lambda_{D_\bullet}$, which follows from the defining recursion of the M\"obius function in the lattice of chains $\chains(\LCF_M)$.
Our first contribution is a simple formula for $\lambda_{C_{\bullet}}$ directly in terms of the M\"obius function of the lattice $\LCF_M$. This identity holds in any lattice and is not specific to cyclic flats; a version for the lattice of all flats is implicit in \cite[\S8]{FinkShawSpeyer:Omega}.

Since a lattice is typically much smaller than the lattice of all its chains, this formula provides a much more efficient route to compute $\lambda_{C_\bullet}$ than the defining recursion.
\begin{proposition}\label{lem:MF(chain)}
    For any chain $C_\bullet\in\chains(\mathcal{L})$ of length $r$ in a finite lattice $\mathcal{L}$, we have
    \begin{equation}\label{eq:MF(chain)}
        -\mu_{\chains(\mathcal{L})}(C_\bullet,1_{\chains(\mathcal{L})}) = \prod_{i=1}^{r} \left(-\mu_{\mathcal{L}}(C_{i-1},C_i) \right).
    \end{equation}
\end{proposition}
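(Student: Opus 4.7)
The plan is to exploit the product structure of the interval $[C_\bullet,1_{\chains(\mathcal{L})}]$ in the chain lattice, combined with the multiplicativity of the M\"obius function on direct products of posets, reducing the general claim to the special case $r=1$, which is essentially \Cref{prop:chi=moebius}.

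First I would observe that a chain $D_\bullet\in\chains(\mathcal{L})$ satisfying $C_\bullet\leq D_\bullet<1_{\chains(\mathcal{L})}$, i.e.\ any refinement of $C_\bullet$, decomposes uniquely as a concatenation $D_\bullet^{(1)}\cdots D_\bullet^{(r)}$, where each $D_\bullet^{(i)}$ is a chain in $\mathcal{L}$ from $C_{i-1}$ to $C_i$. Setting $K_i\defas\chains([C_{i-1},C_i])$ and $K_i^\circ\defas K_i\setminus\{1_{K_i}\}$, this gives a poset isomorphism
\[
    [C_\bullet,1_{\chains(\mathcal{L})}]\setminus\{1_{\chains(\mathcal{L})}\}
    \;\cong\;
    \prod_{i=1}^r K_i^\circ,
\]
in which $C_\bullet$ corresponds to the tuple of minima $(0_{K_1},\ldots,0_{K_r})$.

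Next, applying the defining recursion
\[
    \mu_{\chains(\mathcal{L})}(C_\bullet,1_{\chains(\mathcal{L})})
    \;=\;-\sum_{\substack{D_\bullet\in[C_\bullet,1_{\chains(\mathcal{L})}] \\ D_\bullet\neq 1_{\chains(\mathcal{L})}}}\mu_{\chains(\mathcal{L})}(C_\bullet,D_\bullet),
\]
the multiplicativity of the M\"obius function on direct products, and the identity $\sum_{D\in K_i^\circ}\mu_{K_i}(0_{K_i},D)=-\mu_{K_i}(0_{K_i},1_{K_i})$ (which follows from $\sum_{D\in K_i}\mu_{K_i}(0_{K_i},D)=0$), I would obtain
\[
    \mu_{\chains(\mathcal{L})}(C_\bullet,1_{\chains(\mathcal{L})})
    \;=\;
    -\prod_{i=1}^r\bigl(-\mu_{K_i}(0_{K_i},1_{K_i})\bigr).
\]
It then remains to identify each factor $-\mu_{K_i}(0_{K_i},1_{K_i})=-\mu_{\chains([C_{i-1},C_i])}(0_\chains,1_\chains)$ with $-\mu_{\mathcal{L}}(C_{i-1},C_i)$, which is the base case $r=1$ of \eqref{eq:MF(chain)} applied to the sublattice $[C_{i-1},C_i]$ in place of $\mathcal{L}$. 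For that base case I would invoke \Cref{prop:chi=moebius} on both sides: the order complex of $\chains(\mathcal{L})^\circ$ is the barycentric subdivision of $\Delta(\mathcal{L}^\circ)$ (its simplices are chains, under inclusion, of non-empty chains in $\mathcal{L}^\circ$), and since barycentric subdivision preserves the reduced Euler characteristic, both $\mu_{\chains(\mathcal{L})}(0_\chains,1_\chains)$ and $\mu_{\mathcal{L}}(0_\mathcal{L},1_\mathcal{L})$ equal $\widetilde{\chi}(\Delta(\mathcal{L}^\circ))$. Alternatively, the base case admits a direct inclusion-exclusion proof: a chain $D_\bullet\in\chains(\mathcal{L})$ of length $k\geq 1$ has $[0_\chains,D_\bullet]$ isomorphic to the Boolean lattice on its $k-1$ intermediate elements, so $\mu_{\chains(\mathcal{L})}(0_\chains,D_\bullet)=(-1)^{k-1}$, and regrouping the alternating sum by chain length recovers $\widetilde{\chi}(\Delta(\mathcal{L}^\circ))$.

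The main subtlety is the careful handling of the formal top element $1_{\chains(\mathcal{L})}$: the product factorization only holds strictly below this top, so it must be isolated via the defining recursion before multiplicativity can be applied. Once that is done, everything else is routine M\"obius bookkeeping.
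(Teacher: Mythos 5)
Your argument is correct and follows essentially the same route as the paper's proof: isolate the formal top, factor the interval of refinements as a product (you do this once globally for $[C_\bullet,1_{\chains}]\setminus\{1_{\chains}\}$, the paper per refinement $D_\bullet$, but these are the same observation), swap sum and product using multiplicativity of the M\"obius function, and reduce to the base case $\mu_{\chains(\mathcal{L})}(0_{\chains},1_{\chains})=\mu_{\mathcal{L}}(0_{\mathcal{L}},1_{\mathcal{L}})$ via the barycentric-subdivision / Euler-characteristic argument. Your alternative direct treatment of the base case (noting $[0_{\chains},D_\bullet]$ is Boolean on the $k-1$ intermediate elements, so $\mu=(-1)^{k-1}$, and regrouping by chain length) is a nice self-contained addition, but it is the same content as Stanley's Exercise~3.141, which the paper cites.
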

\begin{proof}
    By the recursion $\mu(x,z)=-\sum_{x\leq y<z} \mu(x,y)$, the left-hand side of \eqref{eq:MF(chain)} is equal to a sum over all chains $D_\bullet\in\chains(\mathcal{L})$, $D_\bullet\neq 1_{\chains}$, which refine $C_\bullet$:
    \begin{equation}\label{eq:MF(C,1)=D}
        -\mu_{\chains(\mathcal{L})}(C_\bullet,1_{\chains(\mathcal{L})})
        =
        \sum_{C_\bullet \leq D_\bullet} \mu_{\chains(\mathcal{L})}(C_\bullet,D_\bullet).
    \end{equation}
    Refinement means $C_i=D_{k_i}$ for some sequence $k_1<\ldots<k_r$. In particular, the subsequences $D^{(i)}_\bullet\defas \{D_{k_i}<D_{k_i+1}<\ldots<D_{k_{i+1}}\}$ are refinements of the length one chains $C^{(i)}_\bullet\defas \{C_i<C_{i+1}\}$. In other words, the interval between $C_\bullet$ and $D_\bullet$ is a product
    \begin{equation*}
        [C_\bullet,D_\bullet]_{\chains(\mathcal{L})} 
        \cong \prod_{i=0}^{r-1} [C^{(i)}_\bullet,D^{(i)}_\bullet]_{\chains(\mathcal{L})},
    \end{equation*}
    where we denote intervals as $[C_\bullet,D_\bullet]_{\chains(\mathcal{L})}=\{X_\bullet\in\chains(\mathcal{L})\colon C_\bullet\leq X_\bullet\leq D_\bullet\}$. By \cite[Proposition~3.8.2]{Stanley:EC1}, the M\"obius function factorizes, and we conclude
    \begin{equation*}
        -\mu_{\chains(\mathcal{L})}(C_\bullet,1_{\chains(\mathcal{L})})
        =\prod_{i=0}^{r-1}
        \sum_{C_\bullet^{(i)}\leq D_\bullet^{(i)}} \mu_{\chains(\mathcal{L})}(C_\bullet^{(i)},D_\bullet^{(i)})
        .
    \end{equation*}
    A chain $D_\bullet^{(i)}$ that refines $C_\bullet^{(i)}$ is nothing but a chain of elements that belong to the subset $J_i\defas[C_i,C_{i+1}]_\mathcal{L}=\{X\in\mathcal{L}\colon C_i\leq X\leq C_{i+1}\}$ of $\mathcal{L}$ formed by the interval between $C_i$ and $C_{i+1}$. Hence we can apply \eqref{eq:MF(C,1)=D} in reverse to each factor, so that
    \begin{equation*}
        -\mu_{\chains(\mathcal{L})}(C_\bullet,1_{\chains(\mathcal{L})})
        =\prod_{i=0}^{r-1}\left(
        -\mu_{\chains(J_i)}(0_{\chains(J_i)},1_{\chains(J_i)})
        \right)
        ,
    \end{equation*}
    where $0_{\chains(J_i)}=C_\bullet^{(i)}$ is the length one chain between the minimal and maximal elements of the lattice $J_i$. We conclude by \cite[Exercise~\S3.141]{Stanley:EC1}, which states that
    \begin{equation*}
        \mu_{\mathcal{L}}(0_{\mathcal{L}},1_{\mathcal{L}})
        =\mu_{\chains(\mathcal{L})}(0_{\chains(\mathcal{L})},1_{\chains(\mathcal{L})}).
    \end{equation*}
    Namely, these two M\"obius functions compute the Euler characteristics of the order complexes of the poset $\mathcal{L}^\circ$ and its chain poset $\chains(\mathcal{L})^\circ$ (\Cref{prop:chi=moebius}). The chains of $\mathcal{L}^\circ$ label the simplices in the barycentric subdivision of $\Delta(\mathcal{L}^\circ)$. Therefore, $\Delta(\mathcal{L}^\circ)$ and $\Delta(\chains(\mathcal{L})^\circ)$ are homotopic and in particular share the same Euler characteristic.
\end{proof}
\begin{example}\label{ex:wheels-schubert}
    For the wheel with three spokes, there are only two types of chains:
    \begin{enumerate}
        \item $C_\bullet=\{\emptyset<\Graph[0.15]{w3A}\}$ with $\lambda_{C_\bullet}=-\mu_{\LCF}(\emptyset,\Graph[0.15]{w3A})=-3$,
        \item $C_\bullet=\{\emptyset<\Graph[0.15]{w3A3} < \Graph[0.15]{w3A}\}$ with $\lambda_{C_\bullet}=(-\mu_{\LCF}(\emptyset,\Graph[0.15]{w3A3}\,))\cdot(-\mu_{\LCF}(\Graph[0.15]{w3A3},\Graph[0.15]{w3A}))=1\cdot 1=1$.
    \end{enumerate}
    All four copies of the second type of chain yield isomorphic Schubert matroids, so \cref{thm:Speyer-from-cyclics} gives
    \begin{equation}\label{eq:Schubert-Wheel3}
        g_{\Graph[0.12]{w3A}}(t)=
        4\times g_{\SM{\emptyset\,<\,\Graph[0.12]{w3A3} \,<\, \Graph[0.12]{w3A}}}(t)
        -3\times g_{\SM{\emptyset\,<\, \Graph[0.12]{w3A}}}(t)
        .
    \end{equation}
    Similarly, listing all chains and computing the M\"obius function of the cyclic flats of the wheel with four spokes (see \cref{fig:cyclicflats}), we obtain\footnote{The minimal chain $\emptyset<\Graph[0.12]{ws4A}$ does not contribute because it has $\lambda_{C_\bullet}=-\mu_{\LCF}(\emptyset,\Graph[0.12]{ws4A})=0$, see \cref{ex:ordercomplex}.}
    \begin{equation*}\begin{aligned}
        g_{\Graph[0.2]{ws4A}}(t)
        =&
        -4\times g_{\SM{\emptyset\,<\,\Graph[0.2]{ws4A122} \,<\, \Graph[0.2]{ws4A}}}(t)
        +1\times g_{\SM{\emptyset\,<\,\Graph[0.2]{ws4A4o} \,<\, \Graph[0.2]{ws4A}}}(t)
        \\ &
        -4\times g_{\SM{\emptyset\,<\,\Graph[0.2]{ws4A3}\,<\, \Graph[0.2]{ws4A}}}(t)
        +8\times g_{\SM{\emptyset\,<\,\Graph[0.2]{ws4A3}\,<\, \Graph[0.2]{ws4A122} \,<\, \Graph[0.2]{ws4A}}}(t)
        .
    \end{aligned}\end{equation*}
\end{example}

\subsection{Lattice paths}\label{sec:lattice-paths}
To compute the Speyer polynomial of a Schubert matroid, \cite{Ferroni:SchubertDelannoySpeyer} exploits a representation of Schubert matroids in terms of lattice paths. We review this description below, and then suggest an efficient method (\Cref{alg:gLPM}) to compute Speyer's polynomial for any Schubert matroid.

A word $P\in\{\PU,\PR\}^{\times}$ in two letters $\PU$ (north) and $\PR$ (east) encodes a lattice path starting at the origin $(0,0)$ with steps $\PU=(0,1)$ and $\PR=(1,0)$. If $P$ has $r$ letters $\PU$ and $n$ letters in total, then this path ends at $(n-r,r)$. All such paths are determined by the positions $I=\{i_1<\ldots<i_r\}$ of the letters $\PU$, such that $P=P(I)$ where
\begin{equation*}
    P(I)\defas\PR^{i_1-1}\PU\PR^{i_2-i_1-1}\PU\ldots\PR^{i_r-i_{r-1}-1}\PU\PR^{n-i_r}.
\end{equation*}
This gives a bijection $I\mapsto P(I)$ from subsets $I$ of $\{1,\ldots,n\}$ of size $r$, to lattice paths of length $n$ with $r$ letters $\PU$.
Given another such subset $J=\{j_1<\ldots<j_r\}$, we write $P(J)\leq P(I)$ if and only if $j_k\geq i_k$ for all $1\leq k\leq r$. Visually, this condition dictates that the path $P(J)$ stays on or below the path $P(I)$, but never protrudes above it; see \cref{fig:paths}.
\begin{figure}
    \centering
    \includegraphics[scale=0.8]{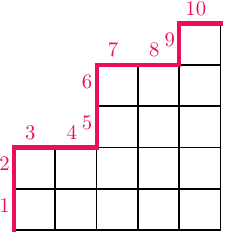} \qquad \includegraphics[scale=0.8]{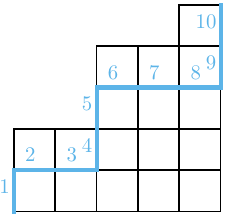}
    \caption{The red lattice path \textcolor{OrangeRed}{$\PU\PU\PR\PR\PU\PU\PR\PR\PU\PR=P(I)$} with $I=\{1,2,5,6,9\}$, passes on or above the blue lattice path \textcolor{CornflowerBlue}{$\PU\PR\PR\PU\PU\PR\PR\PR\PU\PU=P(J)$} with $J=\{1,4,5,9,10\}$. Thus $P(J)\leq P(I)$, so $J$ is one basis (of many) of the matroid $\LPM{P(I)}$.}%
    \label{fig:paths}%
\end{figure}
\begin{definition}
     The \emph{lattice path matroid} $\LPM{P}$ for $P=P(I)\in\{\PU,\PR\}^n$ with $r$ letters $\PU$ is the matroid of rank $r$ on the ground set $\{1,\ldots,n\}$ such that $J=\{j_1,\ldots,j_r\}$ is a basis of $\LPM{P}$ if and only if $P(J)\leq P(I)$.
\end{definition}
\begin{example}\label{eq:uniform-path}
    The path $P=\PU^r\PR^{n-r}=P(I)$ for thus subset $I=\{1,\ldots,r\}$ bounds (from above) the rectangle $[0,n-r]\times[0,r]$. Therefore, $P(J)\leq P(I)$ is true for \emph{every} subset $J$ of $\{1,\ldots,n\}$ with size $r$. Therefore, $\LPM{\PU^r\PR^{n-r}}\cong\UM{n}{r}$ is the uniform matroid.
\end{example}
These lattice path matroids are indeed Schubert matroids: By collecting repeated consecutive letters, any lattice path $P$ can be written uniquely in the form
\begin{equation*}
    P=\PR^{a_0}\PU^{b_1}\PR^{a_1}\PU^{b_2}\PR^{a_2}\ldots \PU^{b_k}\PR^{a_k}\PU^{b_{k+1}}
\end{equation*}
with integers $a_0,b_{k+1}\geq 0$ and $a_1,b_1,\ldots,a_k,b_k\geq 1$. Then by \cite[Lemma~4.3]{BoninMier:LatticePath}, the matroid $\LPM{P}$ has precisely $k+1$ cyclic flats $C_0<\ldots<C_k$, which form a chain. Concretely, $C_i=\{1,\ldots,m_i\}$ corresponds to the initial segment of $P$ of length $m_i$, from the origin to the point where the $i$th sequence $\PU^{b_i}$ of north steps begins; that is $m_i=a_0+\sum_{j\leq i}(a_j+b_j)$. In particular, the loops (coloops) of $\LPM{P}$ are the first $a_0$ (last $b_{k+1}$) edges.
\begin{example}
    For the path $P=\PU\PU\PR\PR\PU\PU\PR\PR\PU\PR=P(\{1,2,5,6,9\})$ from \cref{fig:paths}, we have $k=3$ and $a_0=0,b_1=a_1=b_2=a_2=2,b_3=a_3=1$. Hence the lattice path matroid $\LPM{P}$ has four cyclic flats $\LCF=\{C_0,C_1,C_2,C_3\}$ where $C_0=\emptyset$, $C_1=\{1,2,3,4\}$, $C_2=\{1,\ldots,8\}$, and $C_3=\{1,\ldots,10\}$.
\end{example}
Conversely, consider any Schubert matroid $\SM{C_\bullet}$ with cyclic flats $C_0<\ldots<C_k$ on the ground set $\{1,\ldots,n\}$. For the computation of Speyer's polynomial, we may assume that $\SM{C_\bullet}$ has neither loops nor coloops; that is, $C_0=\emptyset$ and $C_k=\{1,\ldots,n\}$. Such a Schubert matroid is isomorphic to the lattice path matroid $\LPM{P}\cong \SM{C_\bullet}$ with
\begin{equation}\label{eq:path-from-cyc}
    P=\PU^{\rk(C_1)}\PR^{\loops(C_1)}\PU^{\rk(C_2)-\rk(C_1)}\PR^{\loops(C_2)-\loops(C_1)}\ldots\PU^{\rk(C_k)-\rk(C_{k-1})}\PR^{\loops(C_k)-\loops(C_{k-1})}
\end{equation}
where $\rk(C_i)$ and $\loops(C_i)=\abs{C_i}-\rk(C_i)$ denote the rank and corank (nullity), respectively, of a cyclic flat. Combined with \Cref{thm:Speyer-from-cyclics}, we can thus decompose the Speyer polynomial of any matroid (without loops or coloops) into Speyer polynomials of lattice path matroids.
\begin{example}\label{ex:g(wheel)=LPM}
    Depicting a lattice path as in \cref{fig:paths}, the decompositions for the wheels from \cref{ex:wheels-schubert} read
    \begin{align}
        g_{\Graph[0.12]{w3A}}(t) &=
        4\times g_{\LPM{\Graph[0.15]{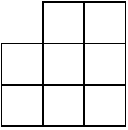}}}(t)
        -3\times g_{\LPM{\Graph[0.15]{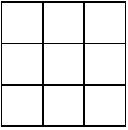}}}(t),
        \label{eq:Path-Wheel3}
        \\
        g_{\Graph[0.2]{ws4A}}(t) &=
        -4\times g_{\LPM{\Graph[0.15]{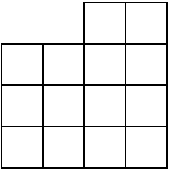}}}(t)
        +1\times g_{\LPM{\Graph[0.15]{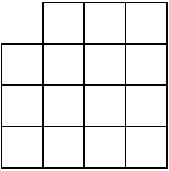}}}(t)
        -4\times g_{\LPM{\Graph[0.15]{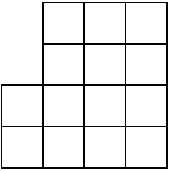}}}(t)
        +8\times g_{\LPM{\Graph[0.15]{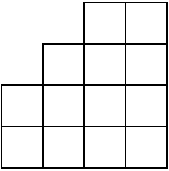}}}(t)
        . \nonumber
    \end{align}
\end{example}
In order to compute the $g$-polynomial of a lattice path matroid, \cite[\S3]{Ferroni:SchubertDelannoySpeyer} exploits the covaluative property once more, to decompose these matroids into sums of series-parallel matroids. The result is a combinatorial description \cite[Theorem~3.4]{Ferroni:SchubertDelannoySpeyer} stating that
\begin{equation}\label{eq:g-from-delannoy}
    g_{\LPM{P}}(t)=\sum_{k=1}^{r} c_k(P) t^k
\end{equation}
where $r$ is the rank of $\LPM{P}$ and $c_k(P)\in\Z_{\geq 0}$ is the number of \emph{admissible Delannoy paths} with $k$ diagonal steps. These paths start at $(1,1)$ and end at $(r,n-r)$ and admit three kinds of steps: $\PU$, $\PR$, plus the diagonal step $(1,1)$. To be \emph{admissible}, a Delannoy path may not protrude above $P$, and in addition the steps $\PU$ and $(1,1)$ are not allowed at any lattice point at which $P$ goes north; see \cite[Figure~3.1]{Ferroni:SchubertDelannoySpeyer}. By considering the three possibilities for the last step of such a Delannoy path, \eqref{eq:g-from-delannoy} yields a recursive description of the $g$-polynomial (\Cref{alg:gLPM}): Let $a\geq0$ denote the number of trailing east steps in $P$, so that $P=Q\PU\PR^a$ for some lattice path $Q$. We may assume that $a\geq1$, for otherwise, $\LPM{P}$ would have a coloop and thus vanishing $g$-polynomial.
If $a=1$, then the last step of the Delannoy path must be north, so
\begin{equation*}
    g_{\LPM{Q\PU\PR}}(t)=g_{\LPM{Q\PR}}(t).
\end{equation*}
If $a\geq2$, then the last step of an admissible Delannoy path can be $\PU$, $\PR$, or $(1,1)$, hence
\begin{equation*}
    g_{\LPM{Q\PU\PR^a}}(t)=g_{\LPM{Q\PR^a}}(t)+g_{\LPM{Q\PU\PR^{a-1}}}(t)+t\cdot g_{\LPM{Q\PR^{a-1}}}(t).
\end{equation*}
Using these recursions, we can reduce to lattice path matroids of rank one. Since we only consider matroids without loops or coloops (otherwise $g_M(t)=0$), this means $P=\PU\PR^{n-1}$ with $n\geq 2$. This is a uniform matroid $\LPM{\PU\PR^{n-1}}\cong\UM{n}{1}$, the cycle matroid of the cycle graph with $n$ edges; hence series-parallel with $g(t)=t$.
\begin{algorithm}
    \caption{gPathMat($P$)}%
    \label{alg:gLPM}%
    \KwIn{path $P=P_1\ldots P_n\in\{\PU,\PR\}^n$ of length $n$}
    \KwOut{Speyer polynomial $g(t)\in\Z[t]$ of the lattice path matroid $\LPM{P}$}
    \If(\tcc*[f]{loops or coloops}){$P_1=\PR$ \KwOr $P_n=\PU$}{
        \Return{$0$}
    }
    $a \gets  \text{number of trailing $\PR$'s in $P$}$\;
    $Q \gets  \text{prefix of $P$ so that $P=Q\PU\PR^a$}$\;
    \If(\tcc*[f]{$\rk(\LPM{P})=1$ (series-parallel)}){$Q=\emptyset$}{
        \Return{$t$}
    }
    \If(\tcc*[f]{last admissible Delannoy step can only be $\PU$}){$a=1$}{
        \Return{\gLPM{$Q\PR$}}
    }
    \Else{
        \Return{$\gLPM{$Q\PR^a$}+\gLPM{$Q\PU\PR^{a-1}$}+t\cdot\gLPM{$Q\PR^{a-1}$}$}
    }
\end{algorithm}

The algorithm proposed in \cite[\S5]{Ferroni:SchubertDelannoySpeyer} evaluates $g_{\LPM{P}}(t)$ via a sum over bases with restricted Tutte-like activities, see \cite[Theorem~4.3]{Ferroni:SchubertDelannoySpeyer}. The downside of this elegant description is that the number of bases of a lattice path matroid can be as big as $\binom{n}{r}$, which is achieved by the uniform matroid $\UM{n}{r}\cong\LPM{\PU^r\PR^{n-r}}$. Explicitly enumerating all admissible Delannoy paths is therefore not efficient. Instead, the recursive \Cref{alg:gLPM}, implemented with dynamic programming, is much more efficient:
\begin{lemma}
    \Cref{alg:gLPM} can be implemented in time and space $\bigO{n^3\log n}$ for paths of length $n$.
\end{lemma}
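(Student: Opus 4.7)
The plan is to bound both the number of distinct subproblems encountered by \gLPM{} under memoization and the arithmetic cost at each state. Write the input path in its block decomposition form $P=\PU^{b_1}\PR^{a_1}\cdots\PU^{b_k}\PR^{a_k}$ with all $b_i,a_i\geq 1$ (otherwise \gLPM{} short-circuits on the loop/coloop check), and set $A_m\defas a_m+a_{m+1}+\cdots+a_k$. My first step is a structural invariant, proved by induction on recursion depth: every path $P'$ passed to a recursive descendant of $\gLPM{P}$ has block form
\begin{equation*}
    P'=\PU^{b_1}\PR^{a_1}\cdots\PU^{b_{m-1}}\PR^{a_{m-1}}\PU^{b'}\PR^{a'}
\end{equation*}
for some $1\leq m\leq k$, $1\leq b'\leq b_m$, and $1\leq a'\leq A_m$. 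Each of the three branches of the recursion must be checked: the call $Q\PU\PR^{a-1}$ simply decrements $a'$; the call $Q\PR^a$ removes the trailing $\PU$, either decrementing $b'$ (if $b'\geq 2$) or collapsing the last two blocks into $(b_{m-1},a_{m-1}+a')$ when $b'=1$; and $Q\PR^{a-1}$ (or $Q\PR$ when $a=1$) combines both effects. In every case the resulting triple still satisfies the prescribed bounds, with $A_{m-1}=a_{m-1}+A_m$ absorbing the merged $a'$.

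The invariant bounds the number of distinct subproblems by the number of admissible triples $(m,b',a')$, which is at most
\begin{equation*}
    \sum_{m=1}^k b_m\cdot A_m
    =\sum_{i=1}^k a_i \sum_{m\leq i} b_m
    \leq r\sum_i a_i
    =r(n-r)
    \leq n^2/4.
\end{equation*}
Hence memoization fits in a table of $\bigO{n^2}$ entries indexed by the triple $(m,b',a')$, each storing a polynomial of degree at most $r\leq n$. Each state performs $\bigO{1}$ table accesses and at most two polynomial additions plus one multiplication by $t$, costing $\bigO{n}$ elementary coefficient operations. Factoring in an $\bigO{\log n}$ cost per basic operation---either for balanced-search-tree lookup of the table key, or equivalently for bit-arithmetic on integer coefficients that may grow to $\bigO{2^n}$ bits handled in machine words---gives total time $\bigO{n^2}\cdot\bigO{n}\cdot\bigO{\log n}=\bigO{n^3\log n}$. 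Space is dominated by the $\bigO{n^2}$ stored polynomials with $\bigO{n}$ coefficients each, again $\bigO{n^3\log n}$ under the same accounting.

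The main obstacle is the clean verification of the structural invariant through the block-collapse branch: when $b'=1$ and the trailing $\PU$ is consumed, the last block merges with the preceding one, the index $m$ decreases by one, and the new $a'$-value must still be bounded by $A_{m-1}$. Checking that all three recursive calls preserve the parametrized family simultaneously---and that the parameters $(m,b',a')$ suffice to index states uniquely, so the same polynomial is not computed twice---is the only non-mechanical part. The subsequent counting and per-state arithmetic bookkeeping is then routine.
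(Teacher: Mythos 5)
Your proof takes essentially the same approach as the paper's---memoize to bound the number of distinct recursive subproblems by $\bigO{n^2}$, note each stores a polynomial with $\bigO{n}$ integer coefficients, and account for per-operation overhead---but your derivation of the $\bigO{n^2}$ subproblem count is a different parametrization worth comparing. The paper observes directly that every recursive call removes only the right-most $\PU$ and/or the right-most $\PR$, so a subproblem is determined by the pair of removal counts $(i,j)$ with $i\leq r$, $j\leq n-r$; this makes the bound immediate, with no induction needed. Your block-decomposition invariant $(m,b',a')$ encodes the same information but requires the inductive verification through the block-collapse branch that you rightly flag as the non-mechanical step. Both correctly give $r(n-r)\leq n^2/4$. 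On the $\log n$ factor both accounts are soft: the paper attributes it to coefficient bit-size, stating $\log_2(3^n)\in\bigO{\log n}$, but that quantity is actually $\bigO{n}$; you write that coefficients ``may grow to $\bigO{2^n}$ bits'' (a value bound of $3^n$ gives $\bigO{n}$ bits, not $2^n$) and then invoke either search-tree lookup or word-arithmetic, neither of which cleanly produces an extra factor of exactly $\log n$ on the $\bigO{n^3}$ coefficient operations. So the $\log n$ in the stated complexity is not rigorously justified in your account, but this is a soft spot it shares with the paper's own proof rather than a defect unique to your argument; the core $\bigO{n^2}$ memoization analysis, which is the substance of the lemma, is sound.
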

\begin{proof}
    The shorter paths $Q\PR^a,Q\PU\PR^{a-1},Q\PR^{a-1}$ in the recursive step arise from $P$ my removing the \emph{right-most} occurrence of the letter $\PU$ and/or the right-most occurrence of the letter $\PR$. The entire recursion tree therefore produces at most $\bigO{n^2}$ many subsequences of $P$, since each subsequence is obtained from the initial word $P$ by removing some number $i$ of the right-most occurrences of the letters $\PU$ and some number $j$ of the right-most occurrences of the letters $\PR$. Since $i,j\leq n$, the function \gLPM{$Q$} is called with at most $n^2$ \emph{different} words $Q$ as argument.

    Using dynamic programming, i.e.\ storing the values of \gLPM{$Q$} in a table once they are computed for the first time, and avoiding any re-computation by looking up the result in that table, the function needs to be executed at most $n^2$ times. Since all occurring $g$-polynomials in the recursion have degree at most $\rk(\LPM{P})$, which is at most $n$, storing all these polynomials requires storing at most $n^3$ integer coefficients. 
    
    From the recursion it is clear that the coefficients are bounded by $3^n$, so a single coefficient requires at most $\log_2(3^n)\in\bigO{\log n}$ bits in space. The only arithmetic operations required are $n^2\cdot n$ additions of such integers. Each of these additions can be done in time $\bigO{\log n}$ proportional to the number of bits.
\end{proof}
\begin{example}\label{ex:g-path-rec}
    Using $g_M(t)=t$ for series-parallel lattice path matroids $M=\LPM{\PU\PR^{n-1}}$ and $M=\LPM{\PU^{n-1}\PR}$ as base cases, the recursions readily give
    \begin{align*}
        g_{\LPM{\Graph[0.15]{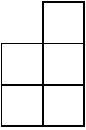}}}(t)
        &=g_{\LPM{\Graph[0.15]{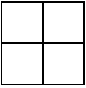}}}(t)
        = \underbrace{g_{\LPM{\Graph[0.15]{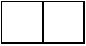}}}(t)}_{t}+\underbrace{g_{\LPM{\Graph[0.15]{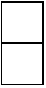}}}(t)}_{t}+t\cdot \underbrace{g_{\LPM{\Graph[0.15]{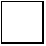}}}(t)}_{t}
        = 2t+t^2,
        \\
        g_{\LPM{\Graph[0.15]{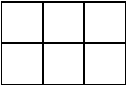}}}(t)
        &= \underbrace{g_{\LPM{\Graph[0.15]{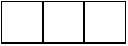}}}(t)}_{t}+\underbrace{g_{\LPM{\Graph[0.15]{UURR}}}(t)}_{2t+t^2}+t\cdot \underbrace{g_{\LPM{\Graph[0.15]{URR}}}(t)}_{t}
        =3t+2t^2,
        \\
        g_{\LPM{\Graph[0.15]{UURURR}}}(t)
        &=\underbrace{g_{\LPM{\Graph[0.15]{UURRR}}}(t)}_{3t+2t^2}+\underbrace{g_{\LPM{\Graph[0.15]{UURUR}}}(t)}_{2t+t^2}+t\cdot \underbrace{g_{\LPM{\Graph[0.15]{UURR}}}(t)}_{2t+t^2}
        =5t+5t^2+t^3,
        \\
        g_{\LPM{\Graph[0.15]{UUURRR}}}(t)
        &=\underbrace{g_{\LPM{\Graph[0.15]{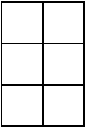}}}(t)}_{3t+2t^2}+\underbrace{g_{\LPM{\Graph[0.15]{UURRR}}}(t)}_{3t+2t^2}+t\cdot \underbrace{g_{\LPM{\Graph[0.15]{UURR}}}(t)}_{2t+t^2}
        =6t+6t^2+t^3.
    \end{align*}
    In the last step we used that $\LPM{\Graph[0.15]{UURRR}}\cong\UM{5}{2}$ is the dual of $\LPM{\Graph[0.15]{UUURR}}\cong\UM{5}{3}$ and thus shares the same $g$-polynomial; instead of this shortcut we could of course also just apply the recursion again. Either way, combining these $g$-polynomials of lattice path matroids with the Schubert decomposition from \eqref{eq:Schubert-Wheel3} in the lattice form \eqref{eq:Path-Wheel3}, we find
    \begin{equation*}
        g_{\Graph[0.12]{w3A}}(t)
        =4\cdot(5t+5t^2+t^3)-3\cdot (6t+6t^2+t^3)
        =2t+2t^2+t^3.
    \end{equation*}
\end{example}

\subsection{First derivative}\label{sec:gdiff}
In this section, we combine the previous observations with relations of various lattices, to prove \Cref{prop:N1asbeta}. This expresses the first derivative of $g_M(t)$ at $t=-1$ in terms of an elementary formula without any reference to chains or cyclic flats:
\begin{equation*}
    g'_M(-1)=(-1)^{\cc(M)-1} \sum_{A\subseteq M} (-1)^{\loops(A)} \beta(A) \rk(A).
\end{equation*}

To prove this identity, we first recall the result of \cite[Theorem~4.3]{Ferroni:SchubertDelannoySpeyer}, which gives a direct combinatorial interpretation of the coefficients $\FP_i(M)$ in the expansion \eqref{eq:gexpand1} of the Speyer polynomial $g_M(t)$ in powers of $(1+t)$, instead of the expansion \eqref{eq:g-from-delannoy} in powers of $t$. Namely, for a lattice path matroid $M=\LPM{P}$ without loops or coloops, we have
\begin{equation}\label{eq:gexpand1main}
    g_{M}(t)=t\sum_{i=0}^{\rk(M)-1} \FP_i(M) \cdot (1+t)^i
\end{equation}
where the integers $\FP_i(M)\in\Z_{\geq 0}$ are non-negative and count the possible configurations of the diagonal steps of admissible Delannoy paths. This identity follows from \eqref{eq:g-from-delannoy} by considering the map which replaces each north-east corner $\ldots\PU\PR\ldots$ in a Delannoy path by a diagonal step. This replacement rule defines a projection from the set of all admissible Delannoy paths, to the subset of all admissible Delannoy paths without north-east corners. Under this projection, the preimage of a (north-east corner free) Delannoy path $Q$ consists of $2^i$ elements, where $i$ is the number of diagonal steps in $Q$; because each diagonal step can either stay or expand into a corner $\PU\PR$. The contribution to \eqref{eq:g-from-delannoy} from the Delannoy paths in the preimage is thus $\sum_{j=0}^i \binom{i}{j} t^j=(1+t)^i$, proving \eqref{eq:gexpand1main}; see also \cite[\S4]{Ferroni:SchubertDelannoySpeyer}. This grouping of Delannoy paths is illustrated in \cref{fig:delannoy}.
\begin{figure}
    \centering
    $\Graph[0.5]{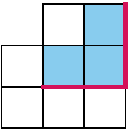}\colon t$ \qquad
    $\Graph[0.5]{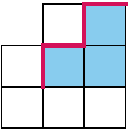},\Graph[0.5]{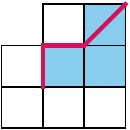},\Graph[0.5]{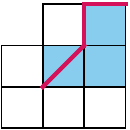} \mapsto \Graph[0.5]{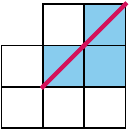}\colon t+t^2+t^2+t^3=t(1+t)^2$ \\
    $\Graph[0.5]{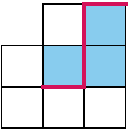} \mapsto \Graph[0.5]{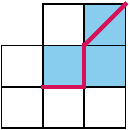}\colon t(1+t)$ \quad
    $\Graph[0.5]{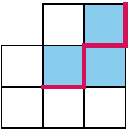} \mapsto \Graph[0.5]{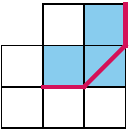}\colon t(1+t)$ \quad
    $\Graph[0.5]{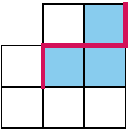} \mapsto \Graph[0.5]{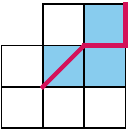}\colon t(1+t)$
    \caption{All 11 admissible Delannoy paths of the matroid $\LPM{\PU\PU\PR\PU\PR\PR}$, grouped by replacing north-east corners with diagonal steps. This collects the $11$ monomials of $g_M(t)=5t+5t^2+t^3$ (\Cref{ex:g-path-rec}) as $g_M(t)=t+3t(1+t)+t(1+t)^2$.}%
    \label{fig:delannoy}%
\end{figure}

We think of diagonal steps $(x,y)\mapsto(x+1,y+1)$ by marking the corresponding lattice square $\square_{x,y}\defas [x,x+1]\times [y,y+1]$. As Delannoy paths start at $(1,1)$, the squares $\square_{x,0}$ on the bottom are excluded. Admissibility furthermore excludes all squares $\square_{x,y}$ that lie immediately to the right of a north step $(x,y)\mapsto(x,y+1)$ of $P$. We denote by $\admsq{P}$ the remaining points $(x,y)$ which correspond to squares $\square_{x,y}$ that \emph{are} admissible. In other words, $\admsq{P}\subset\Z^2$ is the set of \emph{interior} lattice points in the region bounded by $P$, $y\geq 0$, and $x\leq n-r$, where $P$ has $r$ north steps and $n$ steps in total. Then
\begin{equation}\label{eq:FP-squares}
    \FP_i(\LPM{P})=\abs{\left\{(x_1,y_1),\ldots,(x_i,y_i)\in\admsq{P}\colon x_1<\ldots<x_i\ \text{and}\ y_1<\ldots<y_i\right\}},
\end{equation}
because for a prescribed set of diagonal steps, there is exactly one Delannoy path with precisely those diagonal steps and no north-east corners; namely the path
\begin{equation*}
    \PR^{x_1-1}\PU^{y_1-1}(1,1)
    \ldots\PR^{x_i-x_{i-1}-1}\PU^{y_i-y_{i-1}-1}(1,1)\PR^{n-r-x_i-1}\PU^{r-y_i-1}.
\end{equation*}
\begin{corollary}\label{lem:N0(LPM)=1}
    For every Schubert matroid $M$ without loops or coloops, $\FP_0(M)=1$.
\end{corollary}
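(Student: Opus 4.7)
The statement follows directly from the combinatorial formula \eqref{eq:FP-squares}. My plan is to reduce from arbitrary Schubert matroids to lattice path matroids, then read off the $i=0$ case of \eqref{eq:FP-squares}.

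First, I would invoke the identification from \eqref{eq:path-from-cyc}: any Schubert matroid $\SM{C_\bullet}$ without loops or coloops (i.e.\ $C_0=\emptyset$ and $C_k=\{1,\ldots,n\}$) is isomorphic to the lattice path matroid $\LPM{P}$ with
\begin{equation*}
    P=\PU^{\rk(C_1)}\PR^{\loops(C_1)}\cdots\PU^{\rk(C_k)-\rk(C_{k-1})}\PR^{\loops(C_k)-\loops(C_{k-1})}.
\end{equation*}
Since $g_M(t)$ is a matroid invariant, so are its coefficients $\FP_i(M)$ in the expansion \eqref{eq:gexpand1main}, and hence $\FP_0(\SM{C_\bullet})=\FP_0(\LPM{P})$.

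Now I would apply \eqref{eq:FP-squares} with $i=0$. That formula counts tuples $(x_1,y_1),\ldots,(x_i,y_i)$ of points in $\admsq{P}$ with strictly increasing coordinates. For $i=0$ there is a single such tuple---the empty one---so the count is $1$, regardless of the shape of $\admsq{P}$. Hence $\FP_0(\LPM{P})=1$, and by the preceding paragraph $\FP_0(\SM{C_\bullet})=1$.

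There is essentially no obstacle: the entire content is already packaged in \eqref{eq:FP-squares}, which in turn was derived in the paper from \eqref{eq:g-from-delannoy} by the corner-collapsing bijection. As a sanity check, one can compare with \eqref{eq:gm10}: evaluating \eqref{eq:gexpand1main} at $t=-1$ collapses the sum to its $i=0$ term and yields $g_M(-1)=-\FP_0(M)$, so $\FP_0(M)=(-1)^{\cc(M)-1}$ for every matroid without loops or coloops. The corollary we just proved therefore also shows, as a by-product, that every Schubert matroid without loops or coloops satisfies $\cc(M)=1$, i.e.\ is connected.
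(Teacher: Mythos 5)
Your main argument is correct and is essentially the paper's proof restated: the paper simply names the unique Delannoy path with no diagonal steps and no north-east corners, namely $\PR^{n-r-1}\PU^{r-1}$, while you invoke the $i=0$ case of \eqref{eq:FP-squares}, which counts the same single (empty) configuration. The reduction from Schubert matroids to lattice path matroids via \eqref{eq:path-from-cyc} is implicit in the paper and explicit in your write-up; no substantive difference.

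One small caveat on your closing ``by-product'': from $\FP_0(M)=1$ together with $\FP_0(M)=(-1)^{\cc(M)-1}$ you only get that $\cc(M)-1$ is even, i.e.\ that $\cc(M)$ is odd---not that $\cc(M)=1$. The connectedness of a nonempty loop- and coloop-free Schubert matroid is in fact true, but it requires a separate argument (for instance: the lattice of cyclic flats of a direct sum $M_1\oplus M_2$ is the product $\LCF_{M_1}\times\LCF_{M_2}$, and a product of two nontrivial lattices is never a chain, so one summand must be empty). Since this side remark is not used in the proof, it does not affect correctness of the corollary, but as written the deduction is incomplete.
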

\begin{proof}
    There is precisely one Delanny path from $(1,1)$ to $(n-r,r)$ that has no north-east corners and zero diagonal steps, namely $\PR^{n-r-1}\PU^{r-1}$.
\end{proof}
\begin{corollary}\label{lem:N1(LPM)=boxes}
    For every Schubert matroid $M$ without loops or coloops, $\FP_1(M)=\abs{\admsq{P}}$ is the number of admissible squares for the lattice path $P$ such that $M\cong\LPM{P}$.
\end{corollary}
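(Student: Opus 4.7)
The plan is immediate: this corollary is the $i=1$ specialization of equation \eqref{eq:FP-squares}. That equation expresses $\FP_i(\LPM{P})$ as the number of tuples $(x_1,y_1),\ldots,(x_i,y_i)\in\admsq{P}$ with strictly increasing $x$- and $y$-coordinates. At $i=1$ only a single point appears, so both chains of strict inequalities are vacuous and the right-hand side collapses to $\abs{\admsq{P}}$. (The analogous empty-tuple case $i=0$ yields \Cref{lem:N0(LPM)=1}, since the empty configuration is counted exactly once.) So the first and only step is to extract the coefficient of $t(1+t)$ from the expansion \eqref{eq:gexpand1main} and read off \eqref{eq:FP-squares} at $i=1$.

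All of the combinatorial content is already established in the derivation of \eqref{eq:FP-squares}, via the grouping of admissible Delannoy paths by replacement of each north-east corner with a diagonal step. Unfolded concretely for $i=1$, the bijection says that each admissible square $(x_1,y_1)\in\admsq{P}$ corresponds to the unique north-east-corner-free admissible Delannoy path with one diagonal step in $\square_{x_1,y_1}$, namely $\PR^{x_1-1}\PU^{y_1-1}(1,1)\PR^{n-r-x_1-1}\PU^{r-y_1-1}$, and the two defining conditions of $\admsq{P}$ (that $y_1\geq 1$ and that $\square_{x_1,y_1}$ is not immediately to the right of a north step of $P$) are exactly what prevents this Delannoy path from starting below $(1,1)$ or from taking a diagonal step where $P$ itself goes north. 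There is no real obstacle; the only thing to keep in mind is that a one-element tuple imposes no ordering constraint, so the count in \eqref{eq:FP-squares} genuinely does collapse to $\abs{\admsq{P}}$ rather than to a more elaborate expression.
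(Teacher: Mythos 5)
Your proof is correct and follows the paper's approach: both set $i=1$ in \eqref{eq:FP-squares} and observe that a single point in $\admsq{P}$ corresponds to the unique corner-free admissible Delannoy path $\PR^{x-1}\PU^{y-1}(1,1)\PR^{n-r-x-1}\PU^{r-y-1}$. Nothing is missing.
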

\begin{proof}
    For any $(x,y)\in\admsq{P}$, there is exactly one admissible Delannoy path with only this diagonal step and no north-east corners, namely $\PR^{x-1}\PU^{y-1}(1,1)\PR^{n-r-x-1}\PU^{r-y-1}$.
\end{proof}
\begin{example}
    The lattice path $P=\PU\PU\PR\PU\PR\PR$ has three admissible positions for diagonal steps: $\admsq{P}=\{(1,1),(1,2),(2,2)\}$, the highlighted squares in $\Graph[0.25]{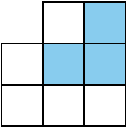}$. We can choose $(1,1)$ and $(2,2)$ together, so $\FP_2(\LPM{P})=1$; and by the corollaries above, $\FP_0(\LPM{P})=1$ and $\FP_1(\LPM{P})=\abs{\admsq{P}}=3$. Hence \eqref{eq:gexpand1main} gives $g_{\LPM{P}}(t)=t+3(1+t)+t(1+t)^2$, which is equal to $5t+5t^2+t^3$ as computed in \Cref{ex:g-path-rec}. See also \cref{fig:delannoy}.
\end{example}
We now extend the definition of $\FP_i(M)$ from Schubert/lattice path matroids to all matroids, such that \eqref{eq:gexpand1main} continues to hold. With the polynomial $g_M$, also these coefficients $\FP_i(M)$ are covaluative matroid invariants. Hence from \Cref{thm:Speyer-from-cyclics}, we have the identity
\begin{equation}\label{eq:FP=chains}
    \FP_i(M)=(-1)^{\cc(M)-1}\sum_{C_\bullet \in\chains(\LCF_M)\setminus\{1_{\chains}\}} \lambda_{C_\bullet} \FP_i(\SM{C_\bullet})
\end{equation}
for any matroid $M$ without loops or coloops. For $i=0$, by \Cref{lem:N0(LPM)=1} the summand simplies to just the M\"obius function $\lambda_{C_\bullet}=-\mu_{\chains}(C_\bullet,1_{\chains})$. By the defining property of the M\"obius function, the sum over $C_\bullet$ thus produces $\mu_{\chains}(1_\chains,1_\chains)=1$ such that
\begin{equation*}
    \FP_0(M)=(-1)^{\cc(M)-1}
\end{equation*}
for \emph{every} matroid without loops or coloops. We have thus (re-)proved the identity $g_M(-1)=-\FP_0(M)=(-1)^{\cc(M)}$ from \eqref{eq:gm10}.
Similarly, we now want to simplify the sum over the chains $C_\bullet$ in \eqref{eq:FP=chains} in the more complicated case $i=1$. To this end, we first express $\FP_1(M)$ of a Schubert matroid in terms of the ranks and sizes of its cyclic flats.
\begin{lemma}\label{lem:N1(SM)}
    For a Schubert matroid on $n$ elements, without loops or coloops, whose cyclic flats are $C_0<\ldots<C_k$, we have the identity
    \begin{equation}\label{eq:N1(SM)}
        \FP_1(\SM{C_\bullet})=1-n+\sum_{i=1}^{k} \rk(C_i)\cdot (\loops(C_i)-\loops(C_{i-1})).
    \end{equation}
\end{lemma}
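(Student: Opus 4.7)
The plan is to apply \Cref{lem:N1(LPM)=boxes} directly: since $\SM{C_\bullet}$ has no loops or coloops, $\FP_1(\SM{C_\bullet}) = \abs{\admsq{P}}$ where $P$ is the associated lattice path. By \eqref{eq:path-from-cyc}, $P = \PU^{b_1}\PR^{a_1}\cdots\PU^{b_k}\PR^{a_k}$ with $b_i \defas \rk(C_i)-\rk(C_{i-1})$ and $a_i \defas \loops(C_i)-\loops(C_{i-1})$, each $\geq 1$, and I set $R_i \defas \rk(C_i)$, $L_i \defas \loops(C_i)$ so that $P$ has corners at the points $(L_i,R_i)$. It then suffices to count admissible squares of $P$ and show the count equals the right-hand side of \eqref{eq:N1(SM)}.

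I would carry out this count column-by-column. The columns $x \in \{0,\ldots,n-r-1\}$ partition into $k$ groups, where group $i$ consists of the $a_i$ columns $\{L_{i-1},\ldots,L_i-1\}$ above which $P$ has its $i$-th horizontal segment at height $R_i$. In each column of group $i$, the square $\square_{x,y}$ lies in the region bounded by $P$, $y=0$, $x=n-r$ exactly when $y \leq R_i - 1$, and combined with the lower bound $y \geq 1$ forced by the Delannoy endpoints, this gives $R_i - 1$ candidate $y$-values per column. The first column $x = L_{i-1}$ of group $i$ is special: it sits beneath $P$'s $i$-th run of north steps, so excluding north-step starting points removes $y \in \{R_{i-1},\ldots,R_i-1\}$ from this column, leaving $R_{i-1} - 1$ admissible values when $i \geq 2$, and $0$ when $i = 1$ (in which case $L_0 = 0$ is anyway outside the admissible range $x \geq 1$).

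Summing these contributions yields
$$\abs{\admsq{P}} = \sum_{i=2}^{k}(R_{i-1} - 1) + \sum_{i=1}^{k}(a_i - 1)(R_i - 1),$$
which I would then simplify algebraically using $\sum_i a_i = L_k = n-r$ and $R_k = r$. Expanding the second sum and collecting terms, one telescope-like cancellation removes $\sum_{i=1}^{k-1} R_i$ and a second removes $k$, giving $\abs{\admsq{P}} = 1 - n + \sum_{i=1}^{k} R_i a_i$. Substituting back $R_i = \rk(C_i)$ and $a_i = \loops(C_i) - \loops(C_{i-1})$ recovers the claimed formula \eqref{eq:N1(SM)}.

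The main obstacle I anticipate is the careful bookkeeping of the admissibility conditions, particularly separating the first column $x = L_{i-1}$ of each horizontal group (where both the region bound $y \leq R_i - 1$ and the north-step exclusion apply) from the remaining $a_i - 1$ columns (where only the region bound is in effect), and properly handling the boundary case $i = 1$ where $L_0 = 0$ is already outside the admissible $x$-range. Once those edge cases are correctly accounted for, the rest is a short algebraic simplification.
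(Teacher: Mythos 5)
Your proof is correct and follows essentially the same approach as the paper: both start from \Cref{lem:N1(LPM)=boxes}, reduce to counting $\abs{\admsq{P}}$ for the lattice path \eqref{eq:path-from-cyc}, and organize by the $k$ horizontal runs of $P$. The only difference is arithmetic bookkeeping: the paper counts total squares under $P$ (which is $\sum_i\rk(C_i)(\loops(C_i)-\loops(C_{i-1}))$) and subtracts the $n-1$ excluded squares all at once ($n-r$ bottom-row squares plus $r$ squares right of north steps, minus one for the double count at $\square_{0,0}$), avoiding the column-by-column case split and telescoping cancellation that your direct count requires.
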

\begin{proof}
    Let $P$ be the lattice path \eqref{eq:path-from-cyc} such that $M\defas\SM{C_\bullet}\cong\LPM{P}$ and thus $\FP_1(M)=\abs{\admsq{P}}$ by \Cref{lem:N1(LPM)=boxes}. The excluded squares are precisely the $\loops(M)=n-\rk(M)$ squares in the bottom row and the $\rk(M)$ squares to the right of the north steps of $P$. This counts the square $[0,1]\times[0,1]$ twice, so the total number of non-admissible squares is $n-1$. Thus $\FP_1(M)$ equals $1-n$ plus the total number of lattice squares $[x,x+1]\times[y,y+1]$ underneath $P$. These squares are partitioned into rectangles by the intervals $[\loops(C_{i-1}),\loops(C_i)]$ containing $[x,x+1]$, proving the claim (see \cref{fig:N1-counting}).
\end{proof}
\begin{figure}
    \centering
    \includegraphics[scale=1]{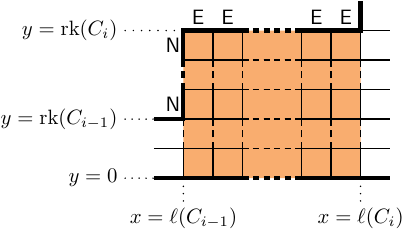}
    \caption{The portion of lattice squares that lie in the rectangle underneath the section $\ldots\PU^{\rk(C_i)-\rk(C_{i-1})}\PR^{\loops(C_i)-\loops(C_{i-1})}\ldots$ of the lattice path from \eqref{eq:path-from-cyc}.}%
    \label{fig:N1-counting}%
\end{figure}
We now combine \Cref{lem:N1(SM)} with \eqref{eq:FP=chains} to express $\FP_1(M)$ of any matroid as a sum over pairs of cyclic flats. For this we use the following well-known result about the lattice of cyclic flats, see \cite[Corollay~1]{FHGHW:CycBin} or \cite[Remark~1]{Eberhardt:TutteCycFlat}:
\begin{lemma}\label{lem:cyc-interval}
    For any two nested cyclic flats $A<B$ of a matroid $M$, the map $C\mapsto C/A$ provides an isomorphism of lattices, from the interval $[A,B]_{\LCF_M}$ of cyclic flats $C$ of $M$ between $A$ and $B$ is isomorphic, to the lattice of cyclic flats of $B/A$.
\end{lemma}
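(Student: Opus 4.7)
The plan is to produce the isomorphism as the obvious set-map $\phi\colon C\mapsto C\setminus A$, identifying $C/A$ with its underlying set inside the ground set $B\setminus A$ of the minor $N\defas(M|B)/A=(M/A)|(B\setminus A)$, together with the candidate inverse $\psi\colon D\mapsto D\cup A$. These two maps are trivially mutually inverse on power sets and both preserve inclusion, so the proof reduces to verifying (i) that $\phi$ sends $[A,B]_{\LCF_M}$ into $\LCF_N$, and (ii) that $\psi$ sends $\LCF_N$ into $[A,B]_{\LCF_M}$. Once both are in hand, an inclusion-preserving bijection with inclusion-preserving inverse between finite lattices is automatically a lattice isomorphism, since joins and meets are characterized by the partial order.

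For part (i), I would translate everything through the rank identity $\rk_N(X)=\rk_M(X\cup A)-\rk_M(A)$ valid for $X\subseteq B\setminus A$. Flatness of $C\setminus A$ in $N$ amounts to the strict inequality $\rk_N((C\setminus A)\cup\{e\})>\rk_N(C\setminus A)$ for every $e\in B\setminus C$, which the rank identity reduces to $\rk_M(C\cup\{e\})>\rk_M(C)$, i.e.\ flatness of $C$ in $M$. Cyclicity of $C\setminus A$ in $N$ translates, via the same identity, to $\rk_M(C\setminus\{e\})=\rk_M(C)$ for $e\in C\setminus A$, which is immediate from $C$ being cyclic in $M$.

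The main obstacle is part (ii), where both hypotheses (that $A$ is cyclic and $B$ is a flat) must be invoked separately. For the flat property of $D\cup A$, the key input is that $B$ is a flat of $M$, hence $B\setminus A$ is a flat of $M/A$; this confines the $M/A$-closure of $D$ to lie inside $B\setminus A$, and combined with $D$ being a flat of the restriction $(M/A)|(B\setminus A)=N$, this forces $D$ to already be closed in $M/A$, whence $D\cup A$ is a flat of $M$ via the standard bijection between flats of $M/A$ and flats of $M$ containing $A$. For the cyclic property, the hypothesis that $A$ is cyclic guarantees every $e\in A$ lies in an $M$-circuit contained in $A\subseteq D\cup A$; and every $e\in D$ lies in an $N$-circuit contained in $D$, which lifts through the standard description of contraction circuits as minimal traces $C\setminus A$ of $M$-circuits $C\subseteq B$ not contained in $A$, to an $M$-circuit containing $e$ and contained in $D\cup A$. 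With (i) and (ii) established, the lattice isomorphism follows at once from the order-theoretic observation in the first paragraph.
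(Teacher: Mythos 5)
Your proof is correct, and it fills a gap the paper leaves open: the paper does not actually prove \cref{lem:cyc-interval}; it cites \cite[Corollary~1]{FHGHW:CycBin} and \cite[Remark~1]{Eberhardt:TutteCycFlat} as standard references and moves on. So there is no internal argument to compare against. Your direct verification is sound: the set map $C\mapsto C\setminus A$ with inverse $D\mapsto D\cup A$ is inclusion-preserving both ways, so establishing that the two maps land in the right lattices is all that's needed for a lattice isomorphism. Part (i) is a clean computation via $\rk_N(X)=\rk_M(X\cup A)-\rk_M(A)$. Part (ii) correctly uses that $B\setminus A$ is a flat of $M/A$ to trap $\cl_{M/A}(D)$ inside the ground set of $N$ (giving that $D$ is a flat of $M/A$, hence $D\cup A$ a flat of $M$), and correctly splits the cyclicity check by handling $e\in A$ via cyclicity of $A$ and $e\in D$ via lifting an $N$-circuit to an $M$-circuit inside $D\cup A$ using that circuits of $M/A$ are the minimal non-empty traces $C\setminus A$ of $M$-circuits $C$. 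This is a complete, self-contained proof of a statement the paper treats as a black-box citation.
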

\begin{proposition}\label{lem:N1(M)=cyc}
    For any matroid $M$ on $n$ elements without loops or coloops, we have
    \begin{equation}\label{eq:N1(M)=cyc}
        (-1)^{\cc(M)-1}\FP_1(M)=1-n+\sum_{\substack{A,B\in\LCF_M\\ A\leq B}} \mu_{\LCF_M}(A,B) \loops(A)\rk(B).
    \end{equation}
\end{proposition}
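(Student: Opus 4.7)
The plan is to start from identity \eqref{eq:FP=chains} with $i=1$, substitute the formula \eqref{eq:N1(SM)} for $\FP_1(\SM{C_\bullet})$, and then massage the resulting double sum with M\"obius inversion until all reference to chains disappears. The factorisation in Proposition \ref{lem:MF(chain)} is the key tool that lets us carry out this reduction.

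First, I would use the already-established identity $\sum_{C_\bullet\in\chains(\LCF_M)\setminus\{1_\chains\}} \lambda_{C_\bullet}=1$ (which is just the $\FP_0$ case rewritten) to handle the constant $(1-n)$ part of \eqref{eq:N1(SM)}, contributing simply $(1-n)$ to the right-hand side of \eqref{eq:N1(M)=cyc}. The main work is then to simplify
\begin{equation*}
    T\defas\sum_{C_\bullet}\lambda_{C_\bullet}\sum_{i=1}^{r}\rk(C_i)\bigl(\loops(C_i)-\loops(C_{i-1})\bigr).
\end{equation*}
I would exchange the order of summation, fixing a pair $A<B$ of cyclic flats and summing over chains $C_\bullet$ in which $A=C_{i-1}$ and $B=C_i$ appear as consecutive elements for some $i$. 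This yields
\begin{equation*}
    T=\sum_{\substack{A,B\in\LCF_M\\ A<B}}\rk(B)\bigl(\loops(B)-\loops(A)\bigr)\cdot S(A,B),
    \qquad
    S(A,B)\defas\sum_{\substack{C_\bullet\\ A,B\text{ consecutive}}}\lambda_{C_\bullet}.
\end{equation*}

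Any chain contributing to $S(A,B)$ splits uniquely into a chain $\emptyset=D_0<\cdots<D_s=A$ in $[\emptyset,A]_{\LCF_M}$, the edge $A<B$, and a chain $B=E_0<\cdots<E_t=M$ in $[B,M]_{\LCF_M}$. By Proposition \ref{lem:MF(chain)}, $\lambda_{C_\bullet}$ factorises accordingly, so $S(A,B)=(-\mu_\LCF(A,B))\cdot\Sigma_1(A)\cdot\Sigma_2(B)$ where each $\Sigma$-factor is a sum of products $\prod(-\mu_\LCF(\cdot,\cdot))$ over all chains between the prescribed endpoints inside a sub-interval of $\LCF_M$. Applying exactly the same M\"obius argument used to prove $\sum\lambda_{C_\bullet}=1$ to the chain lattice of each sub-interval $[\emptyset,A]_{\LCF_M}$ (respectively $[B,M]_{\LCF_M}$), I get $\Sigma_1(A)=\Sigma_2(B)=1$ uniformly (the degenerate cases $A=\emptyset$ or $B=M$ give empty products, also equal to $1$). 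Thus $S(A,B)=-\mu_\LCF(A,B)$.

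Substituting back and collecting, the $\loops(B)\rk(B)$ part of the summand vanishes: indeed, $\sum_{A\leq B}\mu_\LCF(A,B)=\delta_{B,\emptyset}$ by M\"obius inversion applied to the constant function $1$, and $\rk(\emptyset)=0$ kills the remaining term. Completing the $A<B$ sum to $A\leq B$ just adds the diagonal terms $\mu_\LCF(A,A)\loops(A)\rk(A)=\loops(A)\rk(A)$, which is exactly what the vanishing of the $\loops(B)\rk(B)$ piece produces with the opposite sign. The net result is the claimed formula. The only subtle step is the verification that $\Sigma_1(A)=\Sigma_2(B)=1$ in the degenerate endpoint cases, together with the careful bookkeeping that converts the strict sum over $A<B$ weighted by $\loops(B)-\loops(A)$ into the full sum over $A\leq B$ weighted by $\loops(A)$ alone; this is where a misplaced sign or a miscounted boundary term would derail the argument.
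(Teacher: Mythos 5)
Your argument matches the paper's proof step for step: substitute \eqref{eq:N1(SM)} into \eqref{eq:FP=chains}, group the chain sum by the consecutive pair $A<B$, use the factorisation in Proposition~\ref{lem:MF(chain)} to show that the total weight of chains containing $A$ immediately below $B$ is $-\mu_{\LCF_M}(A,B)$, and then let the M\"obius recursion $\sum_{A\leq B}\mu_{\LCF_M}(A,B)=\delta_{B,\emptyset}$ kill the $\loops(B)\rk(B)$ piece while absorbing the diagonal correction. The only cosmetic difference is that you evaluate $\Sigma_1(A)$ and $\Sigma_2(B)$ by running the ``sum of $\lambda$ equals $1$'' argument directly on the sub-intervals $[\emptyset,A]_{\LCF_M}$ and $[B,M]_{\LCF_M}$, whereas the paper routes through Lemma~\ref{lem:cyc-interval} to identify these intervals with $\LCF_A$ and $\LCF_{M/B}$; both are valid since the M\"obius function of an interval is intrinsic to the interval, so this is a presentational shortcut rather than a different proof.
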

\begin{proof}
    Since $M$ has neither loops nor coloops, the same applies to all cyclic flats of $M$, and thus also all Schubert matroids $\SM{C_\bullet}$ appearing in \eqref{eq:FP=chains} are free of loops and coloops. Hence we may simply substitute \eqref{eq:N1(SM)} into \eqref{eq:FP=chains}.
    
    The contribution $1-n$ in front of the sum in \eqref{eq:N1(SM)} is independent of $C_\bullet$, so its sum is $(1-n)\sum_{C_\bullet\neq 1_\chains} \lambda_{C_\bullet}=(1-n)\cdot 1$ as in the previous calculation of $\FP_0(M)$.

    For the contributions from the sum over $i$ in \eqref{eq:N1(SM)}, we collect $\sum_{C_\bullet\neq 1_\chains}\sum_i$ by the pairs $A<B$ of consecutive cyclic flats $A=C_{i-1}$ and $B=C_{i}$ picked from the chain. Then
    \begin{equation*}
        \sum_{\substack{C_\bullet\in\chains(\LCF_M) \\ C_\bullet\neq 1_\chains}} \sum_{i\geq1} \rk(C_i)\cdot(\loops(C_i)-\loops(C_{i-1}))
        = \sum_{\substack{A,B\in\LCF_M \\ A<B}} \rk(B)(\loops(B)-\loops(A)) \lambda_{A,B}
    \end{equation*}
    where $\lambda_{A,B}=\sum_{C_\bullet}\lambda_{C_\bullet}$ sums over the subset of chains $C_\bullet$ that contain $A$ immediately followed by $B$. Such chains consist of a chain $C_\bullet'=\{C_0<\ldots<A\}$ from $\emptyset=C_0$ to $A$, followed by a chain $C_{\bullet}''=\{B<\ldots<M\}$ from $B$ to $M$. By \Cref{lem:cyc-interval}, the latter chains are in bijection with the chains of cyclic flats of the quotient matroid $M/B$. This lattice isomorphism $[B,M]_{\LCF_M}\cong \LCF_{M/B}$ furthermore implies that
    \begin{equation*}
        \mu_{\LCF_M}(C_i,C_{i+1})=\mu_{\LCF_{M/B}}(C_i/B,C_{i+1}/B)
    \end{equation*}
    for any nested pair $C_i<C_{i+1}$ of cyclic flats of $M$ that contain $B\leq C_i$. Together with \Cref{lem:MF(chain)}, we thus find
    \begin{equation*}
        \lambda_{A,B} = \sum_{\substack{C'_\bullet\in\chains(\LCF_A)\\C'_\bullet\neq 1_\chains}} \left(-\mu_{\chains(\LCF_A)}(C'_\bullet,1_\chains)\right)
        \cdot \left(-\mu_{\LCF_M}(A,B)\right)
        \cdot
        \sum_{\substack{C''_\bullet\in\chains(\LCF_{M/B})\\C''_\bullet\neq 1_\chains}} \left(-\mu_{\chains(\LCF_{M/B})}(C''_\bullet,1_\chains)\right)
    \end{equation*}
    which is equal to $-\mu_{\LCF_M}(A,B)$, because the sum over $C_\bullet'$ gives $\mu_{\chains(\LCF_A)}(1_\chains,1_\chains)=1$ and similarly the sum over $C_\bullet''$. So far, we have shown that
    \begin{equation*}
        (-1)^{\cc(M)-1}\FP_1(M)=1-n+\sum_{\substack{A,B\in\LCF_M\\ A<B}} \mu_{\LCF_M}(A,B)(\loops(A)-\loops(B))\rk(B).
    \end{equation*}
    We can extend the summation range to include equality $A=B$, since then the summand vanishes. The first term of $(\loops(A)-\loops(B))\rk(B)=\loops(A)\rk(B)-\loops(B)\rk(B)$ gives the claimed formula. The other term contributes
    \begin{equation*}
        \sum_{B\in\LCF_M} \loops(B)\rk(B) \sum_{\substack{A\in\LCF_M \\ A\leq B}}\mu_{\LCF_M}(A,B)
        =\sum_{B\in\LCF_M} \loops(B)\rk(B) \begin{cases}
            0, & \text{if $B\neq 0_{\LCF_M}$,} \\
            1, & \text{if $B= 0_{\LCF_M}$,} \\
        \end{cases}
    \end{equation*}
    by the definition of the M\"obius function. The minimal element of the lattice of cyclic flats is $0_{\LCF_M}=\emptyset$, because $M$ has no loops, so this sum reduces to $\loops(\emptyset)\rk(\emptyset)=0$.
\end{proof}

This formula can be rewritten in many ways; in particular, we can write it as a sum over the Boolean lattice $\LP_M=2^M$ consisting of \emph{all} subsets (of the ground set) of $M$. The resulting expression has many more terms, but it foregoes the non-trivial M\"obius function of the lattice of cyclic flats with the simple M\"obius function
\begin{equation*}
    \mu_{\LP_M}(A,B)=(-1)^{\abs{B}-\abs{A}}
\end{equation*}
of the Boolean lattice. To achieve this rewriting, we use an elementary relation between the M\"obius function of a lattice and a sublattice, see \cite[Theorem~1]{Crapo:MoeInvLat} or \cite[\S3.1.4]{KungRotaYan:RotaWay}:
\begin{theorem}\label{thm:lattice-closure}
    Let $\mathcal{L}$ be a finite lattice and $\cl\colon \mathcal{L}\rightarrow\mathcal{L}$ a \emph{closure operator}, i.e.\ a projection $A\mapsto\cl(A)=\cl(\cl(A))$ onto a sublattice $\mathcal{F}=\{A\in\mathcal{L}\colon \cl(A)=A\}$ that is increasing, $A\leq\cl(A)$, and order-preserving, $A\leq B\Rightarrow \cl(A)\leq\cl(B)$.
    
    Then for any elements $A\leq B$ of $\mathcal{L}$ with $B=\cl(B)$ (in other words, $B\in\mathcal{F}$), we have
    \begin{equation}\label{eq:lattice-closure}
        \sum_{\substack{X\in\mathcal{L} \\ A\leq X, \cl(X)=B}} \mu_{\mathcal{L}}(A,X)
        = \begin{cases}
            0, &\text{if $A\notin \mathcal{F}$,} \\
            \mu_{\mathcal{F}}(A,B), &\text{if $A\in\mathcal{F}$.} \\
        \end{cases}
    \end{equation}
\end{theorem}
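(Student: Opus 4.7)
The plan is to apply the fundamental identity $\sum_{A\leq X\leq B}\mu_{\mathcal{L}}(A,X)=\delta_{A,B}$ in the ambient lattice $\mathcal{L}$, and then regroup the left-hand side according to the closure $\cl(X)\in\mathcal{F}$, so that Möbius inversion \emph{inside} the sublattice $\mathcal{F}$ can be used to isolate $S_{A,B}\defas\sum_{X:\,A\leq X,\,\cl(X)=B}\mu_{\mathcal{L}}(A,X)$.

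The crucial bridge between $\mathcal{L}$ and $\mathcal{F}$ is the equivalence
\begin{equation*}
    X\leq B \iff \cl(X)\leq B \qquad\text{whenever } B\in\mathcal{F},
\end{equation*}
which follows immediately from the two assumed properties of $\cl$: the implication $\Rightarrow$ uses order-preservation (so $\cl(X)\leq\cl(B)=B$), and $\Leftarrow$ uses $X\leq\cl(X)$. Together with $\cl(X)\geq\cl(A)$, this equivalence shows that the summands of $\sum_{A\leq X\leq B}\mu_{\mathcal{L}}(A,X)$ are partitioned, through $C=\cl(X)$, over those $C\in\mathcal{F}$ with $\cl(A)\leq C\leq B$.

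With this partition in hand, I distinguish two cases. If $A\in\mathcal{F}$, then $\cl(A)=A$ and the identity $\sum_{A\leq X\leq B}\mu_{\mathcal{L}}(A,X)=\delta_{A,B}$ becomes
\begin{equation*}
    \delta_{A,B}=\sum_{\substack{C\in\mathcal{F}\\ A\leq C\leq B}} S_{A,C},
\end{equation*}
and Möbius inversion in $\mathcal{F}$ on the interval $[A,B]$ yields $S_{A,B}=\mu_{\mathcal{F}}(A,B)$. If $A\notin\mathcal{F}$, then $A\neq B$ (since $B\in\mathcal{F}$), so $\delta_{A,B}=0$ and the analogous identity reads $0=\sum_{C\in\mathcal{F},\,\cl(A)\leq C\leq B} S_{A,C}$ for every $B\in\mathcal{F}$ with $B\geq A$ (equivalently $B\geq\cl(A)$). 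A second application of Möbius inversion in $\mathcal{F}$, this time on $[\cl(A),B]$, then forces $S_{A,B}=0$.

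The only real obstacle is checking carefully that the range of the regrouped sum is exactly $\{C\in\mathcal{F}:\cl(A)\leq C\leq B\}$: the upper bound comes from the key equivalence above, while the lower bound $C\geq\cl(A)$ comes from $\cl(X)\geq\cl(A)$ (order-preservation), and this lower bound is achieved because $X=\cl(A)$ itself satisfies $A\leq X$ and $\cl(X)=\cl(A)$. Once this range is pinned down, both cases reduce to a single application of Möbius inversion in the sublattice $\mathcal{F}$, and no further computation is required.
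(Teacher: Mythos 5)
Your proof is correct. Note that the paper itself does not prove this statement; it cites \cite[Theorem~1]{Crapo:MoeInvLat} and \cite[\S3.1.4]{KungRotaYan:RotaWay} as references, so there is no in-paper proof to compare with. Your argument is the standard double Möbius inversion that one would find in those references: you reduce $\sum_{A\leq X\leq B}\mu_{\mathcal{L}}(A,X)=\delta_{A,B}$ to a summation formula over the interval $[\cl(A),B]_{\mathcal{F}}$ for the quantity $S_{A,C}$, and then invert inside $\mathcal{F}$.

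A few points worth tightening. The crucial regrouping step should state explicitly why $\{X\colon A\le X\le B,\ \cl(X)=C\}$ coincides with $\{X\colon A\le X,\ \cl(X)=C\}$ for each $C$ in the range $\cl(A)\le C\le B$: this is because any such $X$ already satisfies $X\le\cl(X)=C\le B$, so the constraint $X\le B$ is automatic; your ``key equivalence'' establishes the same thing but this shorter version is cleaner. Second, when you say the range is ``exactly'' $\{C\in\mathcal{F}\colon \cl(A)\le C\le B\}$ and argue the lower bound is achieved by $X=\cl(A)$, the achievability is not actually needed — even if some $C$ in the range had no preimage, the corresponding $S_{A,C}$ would be $0$ and the partition identity would be unaffected; what is needed is that all $X$ with $A\le X\le B$ have $\cl(X)$ in this range, which you do establish. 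Finally, your argument nowhere uses that $\mathcal{F}$ is a \emph{sublattice} of $\mathcal{L}$ (only that it is a finite subposet, on which Möbius inversion applies); this is in fact the right level of generality, since the lattice $\LF_M$ of flats to which the theorem is applied in the paper is generally not a sublattice of the Boolean lattice (joins differ), so your proof quietly repairs a slight overstatement in the hypotheses.
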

\begin{proposition}\label{lem:N1(M)=bool}
    For any matroid $M$ on $n$ elements, without any loops or coloops,
    \begin{equation}\label{eq:N1(M)=bool}
        (-1)^{\cc(M)-1}\FP_1(M)=1-n+\sum_{A\subseteq B\subseteq M} (-1)^{\abs{B}-\abs{A}} \loops(A)\rk(B).
    \end{equation}
\end{proposition}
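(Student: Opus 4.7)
The plan is to derive the identity from \Cref{lem:N1(M)=cyc} by showing that
\begin{equation*}
    \sum_{\substack{A, B \in \LCF_M \\ A \leq B}} \mu_{\LCF_M}(A, B) \loops(A) \rk(B)
    = \sum_{A \subseteq B \subseteq M} (-1)^{\abs{B} - \abs{A}} \loops(A) \rk(B).
\end{equation*}
I would prove this by applying \Cref{thm:lattice-closure} twice: once for the flat closure on the Boolean lattice $\LP_M$, and once in its dual (interior-operator) form applied to the cyclic interior $\cyc$ acting on the lattice $\mathcal{F}_M$ of flats of $M$.

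First, since $\rk(B)=\rk(\cl(B))$, I would group the Boolean sum by the flat $F\defas\cl(B)$. \Cref{thm:lattice-closure} applied to the flat closure $\cl\colon\LP_M\to\LP_M$ (whose image is $\mathcal{F}_M$) collapses the inner sum $\sum_{B\colon A\subseteq B,\ \cl(B)=F}(-1)^{\abs{B}-\abs{A}}$ to $0$ unless $A$ itself is a flat, in which case it equals $\mu_{\mathcal{F}_M}(A,F)$. This rewrites the right-hand side as
\begin{equation*}
    \sum_{A\leq F\text{ in }\mathcal{F}_M} \mu_{\mathcal{F}_M}(A,F)\loops(A)\rk(F).
\end{equation*}

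Second, I would use the identity $\loops(A)=\loops(\cyc(A))$ (removing a coloop shrinks $\abs{A}$ and $\rk(A)$ by the same amount) to group the flats $A$ by $A'\defas\cyc(A)\in\LCF_M$. The dual of \Cref{thm:lattice-closure}---the standard interior-operator analogue, obtained by applying the closure version to the opposite poset---applied to $\cyc$ as an interior operator on $\mathcal{F}_M$ with image $\LCF_M$ then gives, for $A'\in\LCF_M$ and $F\in\mathcal{F}_M$ with $A'\leq F$,
\begin{equation*}
    \sum_{\substack{A\in\mathcal{F}_M \\ A\leq F,\ \cyc(A)=A'}}\mu_{\mathcal{F}_M}(A,F)=\begin{cases}\mu_{\LCF_M}(A',F) & \text{if }F\in\LCF_M,\\ 0 & \text{otherwise.}\end{cases}
\end{equation*}
This collapses the remaining sum to $\sum_{A'\leq F\in\LCF_M}\mu_{\LCF_M}(A',F)\loops(A')\rk(F)$, which is exactly the left-hand side.

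The main technical point will be verifying that $\cyc$ really is an interior operator on $\mathcal{F}_M$ with image exactly $\LCF_M$. Decreasing, idempotency, and order-preservation follow immediately from the description $\cyc(A)=A\setminus\{\text{coloops of }M|A\}$. The non-trivial step is that $\cyc(F)$ remains a flat whenever $F$ is one: extending any basis of $M|\cyc(F)$ by the coloops of $M|F$ yields a basis of $M|F$, so adding any $e\in F\setminus\cyc(F)$ (itself a coloop of $M|F$), or---using the flat property of $F$---any $e\notin F$, to $\cyc(F)$ strictly raises the rank.
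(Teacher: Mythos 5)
Your proposal is correct and follows essentially the same route as the paper: both pass through the intermediate lattice of flats $\mathcal{F}_M$, using \Cref{thm:lattice-closure} with $\cl$ and the observation $\rk(B)=\rk(\cl(B))$ to go from the Boolean lattice to $\mathcal{F}_M$, and then the dual (interior-operator) version with $\cyc$ and $\loops(A)=\loops(\cyc(A))$ to descend to $\LCF_M$. The only addition you make is an explicit verification that $\cyc$ is an interior operator on $\mathcal{F}_M$ with image $\LCF_M$, which the paper takes for granted.
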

\begin{proof}
    More generally, we show that the expression
    \begin{equation*}
        \Phi(\mathcal{L})\defas \sum_{\substack{A,B\in \mathcal{L}\\A\leq B}} \mu_{\mathcal{L}}(A,B) \loops(A)\rk(B)
    \end{equation*}
    takes the same value for $\mathcal{L}$ any of the following lattices: cyclic flats $\LCF_M$, all flats $\LF_M$, and all subsets $\LP_M$. These lattices are related by closure operators. The matroid closure
    \begin{equation*}
        \cl\colon\LP_M\longrightarrow \LF_M,\quad
        A\mapsto \cl(A)=\{e\in M\colon \rk(A\cup\{e\})=\rk(A)\}
    \end{equation*}
    projects the Boolean lattice $\LP_M$ onto the lattice $\LF_M$ of flats. Since $\rk(B)=\rk(\cl(B))$, we can collect the double lattice sum over $A\leq B$ by $B'=\cl(B)$ to find
    \begin{equation*}
        \Phi(\LP_M)=\sum_{\substack{A\in\LP_M,B'\in\LF_M\\ A\leq B'}} \loops(A)\rk(B')
        \sum_{\substack{B\in\LP_M\\ A\leq B, \cl(B)=B'}} \mu_{\LP_M}(A,B)
        =\Phi(\LF_M),
    \end{equation*}
    since by \Cref{thm:lattice-closure} the sum over $B$ produces $\mu_{\LF_M}(A,B')$ and restricts the sum over $A$ to flats. Similarly, we have a projection from flats to cyclic flats, given by the operator
    \begin{equation*}
        \cyc\colon \LF_M\longrightarrow \LCF_M,\quad
        A\mapsto\cyc(A)=\{e\in A\colon \rk(A\setminus\{e\})=\rk(A) \}.
    \end{equation*}
    This is a \emph{downward} closure operator, fulfilling all the axioms of a closure operator but with $\cyc(A)\leq A$ instead of $A\leq\cl(A)$. The corresponding version of \eqref{eq:lattice-closure} is that for all $A\in\mathcal{F}\defas\{A\in\mathcal{L}\colon \cyc(A)=A\}$ and $B\in\mathcal{L}$ with $A\leq B$,
    \begin{equation*}
        \sum_{\substack{X\in\mathcal{L} \\ \cyc(X)=A, X\leq B}} \mu_{\mathcal{L}}(X,B)
        = \begin{cases}
            0, &\text{if $B\notin\mathcal{F}$,} \\
            \mu_{\mathcal{F}}(A,B), &\text{if $B\in\mathcal{F}$.} \\
        \end{cases}
    \end{equation*}
    Since $\loops(A)=\loops(A')$ for $A'=\cyc(A)$, we can collect the sum over $A$ in $\Phi$ by $A'$ to conclude
    \begin{equation*}
        \Phi(\LF_M)
        = \sum_{\substack{A'\in\LCF_M,B\in\LF_M \\ A'\leq B}} \loops(A')\rk(B) \sum_{\substack{A\in\LF_M\\ \cyc(A)=A',A\leq B}} \mu_{\LF_M}(A,B)
        = \Phi(\LCF_M),
    \end{equation*}
    since the sum over $A$ produces $\mu_{\LCF_M}(A',B)$ and restricts the sum over $B$ to cyclic flats $B=\cyc(B)$. This finishes the proof that $\Phi(\LP_M)=\Phi(\LCF_M)$, so \eqref{eq:N1(M)=cyc} and \eqref{eq:N1(M)=bool} agree.
\end{proof}
The sum of $(-1)^{\abs{A}}\loops(A)$ can be expressed in terms of Crapo's beta invariant, as in the proof of \Cref{thm:cc-AB}. The result of \Cref{lem:N1(M)=bool} can therefore also be stated as
\begin{equation*}
    (-1)^{\cc(M)-1}\FP_1(M)=1-\sum_{B\subseteq M} (-1)^{\loops(B)}\beta(B)\rk(B).
\end{equation*}
This proves \Cref{prop:N1asbeta}, and combined with \Cref{thm:cc-AB}, we obtain \Cref{thm:gprime=cc}.

\subsection{Recursive computation}
\label{sec:algrec}

We exploit the recursive structure of \Cref{thm:Speyer-from-cyclics} that arises from grouping the chains $C_\bullet=\cdots<C_{k-1}<C_k=M$ by to their penultimate element. The chains with $C_{k-1}=A$ fixed are in bijection with chains of the lattice of cyclic flats of $A$. We can therefore write
\begin{equation*}
    g_M(t)=(-1)^{\cc(M)-1} \sum_{\substack{A\in\LCF_M\\ A<M}} (-\mu_{\LCF_M}(A,M)) \sum_{\substack{C_\bullet\in\chains(\LCF_A)\\ C_\bullet< 1_{\chains}}} \lambda_{C_\bullet} g_{\SM{C_\bullet<M}}(t)
\end{equation*}
where we used \Cref{lem:MF(chain)} and we denote by $C_\bullet<M$ the extension of the chain $C_\bullet$ (a chain of cyclic flats ending in $A$) by the element $M$. In order to exploit this structure, we need a recursion for the Speyer polynomial of the Schubert matroid $\SM{C_\bullet<M}$ of this extended chain in terms of data associated with the shorter chain $C_\bullet$.
\begin{definition}
    For a lattice path $P$ from $(0,0)$ to $(\loops,r)$, let $\admsq{P}\subset\Z^2$ denote the set of pairs $(x,y)$ with $1\leq x<\loops$ and $1\leq y<r$, that lie strictly below the path $P$. Define
    \begin{equation*}
        \FP_i^{<k}(\LPM{P})
        \defas \abs{\left\{(x_1,y_1),\ldots,(x_i,y_i)\in\admsq{P}\colon x_1<\ldots<x_i\ \text{and}\ y_1<\ldots<y_i<k\right\}}
    \end{equation*}
    for all integers $i,k\geq 0$. For a Schubert matroid, define $\FP_i^{<k}(\SM{C_\bullet})$ via its lattice path representation \eqref{eq:path-from-cyc}. For an arbitrary matroid without loops or coloops, set
    \begin{equation*}
        \bar{g}_{M}^{<k}(t) \defas
        \sum_{\substack{C_\bullet\in\chains(\LCF_M) \\ C_\bullet\neq 1_\chains}}
        \Big(-\mu_{\chains(\LCF_M)}(C_\bullet,1_\chains)\Big)
        \sum_{i=0}^{\rk(M)-1} t^i
        \FP_i^{<k}(\SM{C_\bullet}).
    \end{equation*}
\end{definition}
\begin{remark}
    For a lattice path matroid $M=\LPM{P}$ on $\{1,\ldots,n\}$ with rank $r$, i.e.\ a path $P$ with $r$ north steps and $n$ steps in total, we can interpret $\FP_i^{<k}(M)$ for $k<r$ in terms of the $(r-k)$-fold \emph{truncation} matroid $\trM^{r-k}(M)$. This is the matroid whose independent sets are those independent sets of $M$ that have size at most $k$. Deleting the last $r-k$ elements of the ground set of the truncation matroid amounts to chopping off all squares above $y=k$ in the lattice path picture. Therefore,
    \begin{equation*}
        \FP_i^{<k}(M)=\FP_i\big((\trM^{r-k}(M))|_{\{1,\ldots,n-r+k\}}\big).
    \end{equation*}
\end{remark}
The numbers $\FP_i^{<k}(M)$ count the Delannoy paths with $i$ diagonal steps under the additional constraint that only steps with $y_i$ below $k$ are allowed. This extra condition becomes vacuous for $k\geq \rk(M)$, hence by \eqref{eq:FP-squares} we recover the coefficients
\begin{equation*}
    \FP_i^{<\rk(M)}(M)=\FP_i(M)
\end{equation*}
of the Speyer polynomial of a Schubert matroid in the expansion \eqref{eq:gexpand1}. Therefore, \Cref{thm:Speyer-from-cyclics} says that for any matroid $M$ without loops or coloops,
\begin{equation*}
    g_M(t)=(-1)^{\cc(M)-1} \cdot t \cdot \bar{g}_M^{<\rk(M)}(1+t).
\end{equation*}
The point of introducing the additional parameter $k$ into $\bar{g}_M^{<k}(t)$ is that it tracks enough information to allow for a recursive computation, see \Cref{alg:gRec}. For this recursion, it is convenient to abbreviate the following polynomials:
\begin{equation*}
    Q^{(1)}_{r,l}(t) = \sum_{i=0}^{\min\{r,l\}} \binom{r}{i}\binom{l}{i} t^i \quad\text{and}\quad
    Q^{(2)}_{r,l}(t) = \sum_{i=1}^{\min\{r+1,l\}} \binom{r}{i-1}\binom{l}{i} t^i.
\end{equation*}
\begin{algorithm}
    \caption{gRecursive($M$, $k$)}%
    \label{alg:gRec}%
    \KwIn{non-empty matroid $M$ without loops or coloops, and integer $k$}
    \KwOut{polynomial $g_M^{<k}(t)\in\Z[t]$}
    $acc \gets  -\mu_{\LCF_M}(\emptyset,M) \cdot Q^{(1)}_{k-1,\loops(M)-1}$\;
    \ForEach{$A\in\LCF_M\setminus\{\emptyset,M\}$}{
        \eIf{$k\leq \rk(A)$}{
            $acc \gets acc -\mu_{\LCF_M}(A,M)\cdot \gRec{A,k}$\;
        }
        {
            $acc \gets acc -\mu_{\LCF_M}(A,M)\cdot \gRec{A,$\rk(A)$}\cdot Q^{(1)}_{k-\rk(A),\loops(M)-\loops(A)-1}$\;
        }
        \For{$k'=1$ \KwTo $\min\{k-1,\rk(A)-1\}$}{
            $acc \gets acc -\mu_{\LCF_M}(A,M)\cdot \gRec{A,k'}\cdot Q^{(2)}_{k-1-k',\loops(M)-\loops(A)}$\;
        }
    }
    \Return{$acc$}
\end{algorithm}
\begin{proposition}
    \Cref{alg:gRec} computes the function $\bar{g}_M^{<k}(t)$.
\end{proposition}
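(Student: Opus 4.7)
The plan is induction on $\abs{\LCF_M}$, with base case the matroid whose only cyclic flats are $\emptyset$ and $M$, so that $\SM{\emptyset<M}\cong\UM{\abs{M}}{\rk(M)}$. In that base case one checks directly that $\sum_i t^i\FP_i^{<k}(\UM{\abs{M}}{\rk(M)})=Q^{(1)}_{k-1,\loops(M)-1}(t)$ whenever $k\le\rk(M)$, matching the initialization of $acc$; the assumption $k\le\rk(M)$ is an invariant of the recursive calls. For the inductive step, I would unpack $\bar{g}_M^{<k}(t)$ as a sum over chains $C_\bullet\in\chains(\LCF_M)\setminus\{1_\chains\}$ and group chains by their penultimate element $A=C_{r-1}$. \Cref{lem:MF(chain)}, together with the lattice isomorphism $[\emptyset,A]_{\LCF_M}\cong\LCF_A$ of \Cref{lem:cyc-interval}, factors
\begin{equation*}
    -\mu_{\chains(\LCF_M)}(C_\bullet,1_\chains)=\bigl(-\mu_{\LCF_M}(A,M)\bigr)\cdot\bigl(-\mu_{\chains(\LCF_A)}(C'_\bullet,1_\chains)\bigr)
\end{equation*}
for the truncated chain $C'_\bullet$ ending at $A$, which is exactly the structure that the outer loop over $A$ and the recursive call $\gRec{A,\cdot}$ exploit.

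The heart of the argument is a combinatorial identity relating $\FP_i^{<k}(\SM{C'_\bullet<M})$ to $\FP_{i'}^{<k'}(\SM{C'_\bullet})$. The lattice path $P_M$ of $\SM{C'_\bullet<M}$ is obtained from $P_A$ by appending $\PU^{\rk(M)-\rk(A)}\PR^{\loops(M)-\loops(A)}$, and I would decompose the admissible set $\admsq{P_M}$ into three strips: (i) the original $\admsq{P_A}$ in columns $x<\loops(A)$; (ii) a ``bottleneck'' column $x=\loops(A)$, in which the appended north segment forces strictly-below-the-path to mean $y\le\rk(A)-1$; and (iii) the main extension columns $\loops(A)<x<\loops(M)$, in which every $y\in[1,\rk(M)-1]$ is admitted. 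Writing a configuration as $i=i_0+i_1+i_2$ with $i_1\in\{0,1\}$ and the strip-(i) subsequence ending at $y$-value $b_0$, a direct count gives a product formula $N_{i_0,b_0}(\SM{C'_\bullet})\cdot t^{i_0}\cdot T_0(b_0)$, whose ``extension factor'' $T_0(b_0)$ is a binomial sum built from $Q^{(1)}$.

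Summing over $C'_\bullet$ weighted by $\lambda_{C'_\bullet}$ converts $\sum_{C'_\bullet}\lambda_{C'_\bullet}N_{i_0,b_0}(\SM{C'_\bullet})$ into the telescoping differences $\bar{g}_A^{<b_0+1}(t)-\bar{g}_A^{<b_0}(t)$, and an Abel summation on $b_0$ rewrites the whole contribution as
\begin{equation*}
    T_0(K)\,\bar{g}_A^{<k}(t)+\sum_{b_0=1}^{K}\bigl(T_0(b_0-1)-T_0(b_0)\bigr)\,\bar{g}_A^{<b_0}(t),\qquad K=k-1.
\end{equation*}
A short calculation using Pascal's rule yields $T_0(K)=1$ and, with $L=\loops(M)-\loops(A)$, shows that $T_0(b-1)-T_0(b)=Q^{(2)}_{K-b,L}(t)$ for $1\le b\le\rk(A)-1$ while $T_0(b-1)-T_0(b)=Q^{(2)}_{K-b,L-1}(t)$ for $\rk(A)\le b\le K$. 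For $k\le\rk(A)$ only the first regime occurs and the ``if'' branch of \Cref{alg:gRec} is reproduced exactly. For $k>\rk(A)$ the terms in the second regime share the common factor $\bar{g}_A^{<b_0}=\bar{g}_A^{<\rk(A)}$ by stabilization of $\bar{g}_A^{<\cdot}$, and the hockey-stick identity $\sum_{c=0}^{C-1}Q^{(2)}_{c,L-1}(t)=Q^{(1)}_{C,L-1}(t)-1$ collapses this coefficient to $Q^{(1)}_{k-\rk(A),L-1}(t)$, matching the ``else'' branch.

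The main obstacle I anticipate is precisely the correct identification of the bottleneck column $x=\loops(A)$: a naive reading that treats the appended rectangle as fully admissible on all of $[\loops(A),\loops(M)-1]\times[1,\rk(M)-1]$ produces the coefficient $Q^{(1)}_{k-\rk(A),L}$ instead of $Q^{(1)}_{k-\rk(A),L-1}$, an error already visible on the wheel with three spokes ($K_4$), where the correct value $\bar{g}_{K_4}^{<3}(t)=1+t^2$ would be replaced by $1+4t+t^2$. Isolating this single column via the parameter $i_1\in\{0,1\}$ in the configuration and the accompanying summation over $y^*$ in the definition of $T_0(b_0)$ is what makes the second regime $Q^{(2)}_{K-b,L-1}$ appear and yields the algorithm's coefficient exactly.
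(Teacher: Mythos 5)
Your proposal follows essentially the same route as the paper: group the chains by their penultimate cyclic flat $A$, factor the M\"obius coefficient via \Cref{lem:MF(chain)} together with the interval isomorphism of \Cref{lem:cyc-interval}, decompose $\admsq{P}$ into the old strip $\admsq{P'}$ plus a rectangle with one ``bottleneck'' column at $x=\loops(A)$, and count configurations accordingly. In fact you go a step further than the paper, which stops at stating the $\FP^{<k}_i(\LPM{P})$-in-terms-of-$\FP^{<k'}_{i'}(\LPM{P'})$ recursion and then asserts that the algorithm ``is nothing but the implementation of this recursion''; your proposal actually carries out the summation-by-parts and the hockey-stick collapse $\sum_{c=0}^{C-1}Q^{(2)}_{c,L-1}(t)=Q^{(1)}_{C,L-1}(t)-1$ that reduce the $k'\ge\rk(A)$ terms (where $\bar{g}_A^{<k'}$ stabilizes at $\bar{g}_A^{<\rk(A)}$) together with the leading $\bar{g}_A^{<k}$ term to the single factor $Q^{(1)}_{k-\rk(A),\loops(M)-\loops(A)-1}$ appearing in the else-branch of \Cref{alg:gRec}. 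Your bookkeeping parametrizes the $\admsq{P'}$-part of a configuration by its maximal $y$-coordinate $b_0$ and telescopes $N_{i_0,b_0}=\FP_{i_0}^{<b_0+1}-\FP_{i_0}^{<b_0}$, whereas the paper parametrizes the complement by the minimal $y$-coordinate $k'$; these are related by $b_0<k'$ and amount to the same count. Your $K_4$ check of the bottleneck effect ($\bar{g}_{K_4}^{<3}=1+t^2$ vs.\ the naive $1+4t+t^2$) is correct, and is exactly the subtlety that distinguishes the $L-1$ from $L$ in the $Q^{(1)}$ factor; getting this right is the crux of the argument.

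One small caution about the Abel summation boundary term: written as $\sum_{b_0}T_0(b_0)\bigl(\bar{g}_A^{<b_0+1}-\bar{g}_A^{<b_0}\bigr)$, the telescoping produces a boundary contribution at $b_0=0$ involving $\bar{g}_A^{<0}$, and one must check that $T_0(0)\bar{g}_A^{<0}$ combines correctly with the $S'=\emptyset$ configurations rather than double-counting them. This is routine, but it is exactly the sort of edge case that goes wrong silently, so it would need to be spelled out in a final write-up.
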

\begin{figure}
    \centering
    \includegraphics[scale=1]{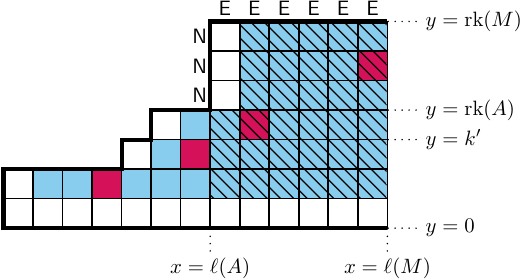}
    \caption{The lattice path $P=\PU^2\PR^4\PU\PR\PU\PR^2\PU^3\PR^6$ with its admissible squares $\admsq{P}$ highlighted in blue. The shaded region indicates $\admsq{P}\setminus\admsq{P'}$. The red squares illustrate a configuration $S$ that contributes to $\FP_4(\LPM{P})$.}%
    \label{fig:latticepath-truncation}%
\end{figure}
\begin{proof}
Let $P$ denote the lattice path corresponding to a cyclic flat $C_\bullet$ ending in $M$ and with penultimate element $A$. Then by \eqref{eq:path-from-cyc}, $P=P'\PU^{\rk(M)-\rk(A)}\PR^{\loops(M)-\loops(A)}$ where $P'$ denotes the lattice path corresponding to the truncated chain stopping already at $A$. The admissible squares (internal lattice points) of $P$ consist of those of $P'$ and an additional rectangle---except for the lattice points on the north steps---see \cref{fig:latticepath-truncation}:
\begin{align*}
    \admsq{P}
    =\admsq{P'} &\sqcup \{\loops(A),\ldots,\loops(M)-1\}\times\{1,\ldots,\rk(A)-1\} \\
    &\sqcup \{\loops(A)+1,\ldots,\loops(M)-1\}\times\{\rk(A),\ldots,\rk(M)-1\}.
\end{align*}
Therefore, we can partition every collection $S=\{(x_1,y_1),\ldots,(x_i,y_i)\}\subseteq \admsq{P}$ of lattice points contributing to $\FP_i^{<k}(\LPM{P})$ into two subsets $S'=S\cap\admsq{P'}$ and $S''=S\setminus\admsq{P'}$. The cases with $S''=\emptyset$ are counted by $\FP_i^{<k}(\LPM{P'})$. In the other cases, let $k'$ denote the smallest $y$-coordinate of the points in $S''$. All $(x_j,y_j)\in S'$ are to the left of $S''$, and thus, by definition of $\FP_i^{<k}(\LPM{P})$, also below $S''$, i.e.\ $y_j<k'$. For $k\leq\rk(A)$, we thus have
\begin{multline*}
    \FP^{<k}_i(\LPM{P})
    =\FP^{<k}_i(\LPM{P'})+ \sum_{\substack{i=i'+i''\\ i''>0}} \FP^{<k'}_{i'}(\LPM{P'}) \\[-5mm]
    \times \sum_{k'=1}^k \binom{k-1-k'}{i''-1}\binom{\loops(M)-\loops(A)}{i''}
\end{multline*}
where the binomials count the possibilities for choosing the $i''-1$ remaining $y$-coordinates (apart from $k'$) and the $i''$ many $x$-coordinates of the points in $S''$. When $k>\rk(A)$, almost the same relation holds; the only difference is that for summands with $k'>\rk(A)$, the $x$-coordinate of the point with $y_j=k'$ in $S''$ must be $x_j\geq \loops(A)+1$, because the point $(\loops(A),k')$ lies on the path $P$ hence does not belong to $\admsq{P}$. Therefore, when $k'>\rk(A)$, the second binomial factor is to be replaced by $\binom{\loops(M)-\loops(A)-1}{i''}$, and also we then have
\begin{equation*}
    \FP_{i'}^{<k'}(\LPM{P'})=\FP_{i'}^{<\rk(A)}(\LPM{P'}).
\end{equation*}
The \Cref{alg:gRec} is nothing but the implementation of this recursion, by grouping the chains of cyclic flats of $M$ by their penultimate element $A$, expressed in terms of the polynomials $\bar{g}_M^{<k}(t)$.

The first line of the algorithm accounts for the contribution where $A=\emptyset$, that is, the trivial chain $C_\bullet=\emptyset<M$. In this case, the Schubert matroid $\SM{C_\bullet}\cong\UM{n}{r}$ is uniform with rank $r=\rk(M)$ on $n=\abs{M}$ elements. By \Cref{eq:uniform-path}, the corresponding lattice path bounds a rectangle so that $\admsq{P}=\{1,\ldots,n-r-1\}\times\{1,\ldots,r-1\}$. Picking $i$ points $(x_j,y_j)$ that contribute to $\FP_i$ is thus equivalent to choosing their $x$- and $y$-coordinates independently. Therefore,
\begin{equation*}
    \FP_i^{<k}(\SM{\emptyset<M})=\FP_i^{<k}(\UM{n}{r})=\binom{n-r-1}{i}\binom{k-1}{i}
\end{equation*}
which gives the contribution $-\mu(\emptyset,M)\cdot Q^{(1)}_{k-1,\loops(M)-1}(t)$ to $\bar{g}_M^{<k}(t)$.
\end{proof}
\begin{example}
    For a uniform matroid $M=\UM{n}{r}$ with $2\leq r\leq n-2$, the algorithm terminates immediately with just the base case. The resulting expression is different from the formula given in \cite[Proposition~10.1]{Speyer:MatroidKtheory}, but easily confirmed to be equivalent:
    \begin{equation*}
        g_{\UM{n}{r}}(t)
        =t Q^{(1)}_{r-1,n-r-1}(1+t)
        =t\sum_{i=0}^{\min\{r-1,n-r-1\}} \binom{r-1}{i}\binom{n-r-1}{i}(1+t)^i.
    \end{equation*}
\end{example}
We implemented \Cref{alg:gRec} in the {\MapleTM} computer algebra system.{\MapleNote}
This implementation is available freely as open source, see \Cref{sec:implementation} for details.
In practice, the runtime of our implementation is dominated by the calculation of all values $\mu(A,B)$ of the M\"obius function of the lattice of cyclic flats, which amounts to ennumerating all ordered triples $A<C<B$, that is, all 2-dimensional faces of the order complex of $\LCF$.

To ensure the correctness of our implementation, we computed $g_M(t)$ for 81 graphs with $\leq 13$ edges using the {\Sage} program provided with \cite{Ferroni:SchubertDelannoySpeyer}, and found agreement with the results from our program. Furthermore, for the entire data set discussed in \cref{sec:data} (consisting of more than 3 million graphs) we checked $g'_M(0)=\beta(M)$ against the computation of $\beta(M)=t_{1,0}$ from the Tutte polynomial by {\Maple}'s \texttt{GraphTheory} package, and we also confirmed that $g''_M(-1)=-1$ in all these cases. We also confirmed up to $r\leq18$ that our code correctly reproduces \cite[Proposition~10.2]{Speyer:MatroidKtheory}, namely that
\begin{equation}\label{eq:g(wheel)}
    g_{W_r}(t)=(1+t)^r-1-t-t^2
\end{equation}
for the wheel graphs $W_r$ with $2r$ edges and $r+1$ vertices, e.g.\ $W_3\cong K_4$. The runtime for $r=18$ was less than two hours on a single cpu with 1 GHz frequency. This wheel graph has $\abs{\LCF_{W_{18}}}=24915$ cyclic flats, the Hasse diagram of this lattice has \numprint{158762} edges, and there are \numprint{7070763} comparable pairs $A<B$ of cyclic flats.

In comparison, the number of chains of cyclic flats is $\abs{\chains(\LCF_{W_{18}})}=\numprint{17696253846612}$, so that it is impossible in practice to apply the algorithm from \cite[\S5]{Ferroni:SchubertDelannoySpeyer} verbatim to compute the Schubert decomposition underlying \Cref{thm:Speyer-from-cyclics} and obtain $g_{W_{18}}(t)$ that way.

\section{Calculations and conjectures}
\label{sec:data}

Since $g_M(t)$ is invariant under series-parallel operations, and multiplicative over direct sums, we only consider graphs with minimum degree 3 that are simple (no self-loops or multiedges) and biconnected.
We used the program \texttt{geng} from {\nauty} \cite{McKayPiperno:II} to enumerate isomorphism classes of such graphs. Our main data set consists of \numprint{3293662} (non-isomorphic) graphs in total, namely all those graphs that have less than 10 vertices or less than 22 edges (see \cref{tab:dataset} for details).

With our implementation of \Cref{alg:gRec}, we then computed $g_G(t)$ for all these graphs. The results are publicly available at
\begin{center}
    \url{\dataurl}
\end{center}
In addition to this complete data set of small graphs, we also computed $g_G(t)$ for some larger graphs in special families (complete, complete bipartite, regular, and circulant graphs), see \cref{sec:data}.

In the following, we discuss a number of observations from these calculations. We state these mostly as conjectures. Our inquiry focused mainly on properties of the coefficient $\FP_2(G)$, as motivated in the introduction, and we discuss the relevance of this invariant to Feynman integrals in \cref{sec:Feynman}. However, we did notice properties of the polynomial $g_G(t)$ that do not refer only to $\FP_2(G)$, and we will mention those as well.

\subsection{\texorpdfstring{Range of $\FP_2$}{Range of N\_2}}
Among the \numprint{3293662} in the data set \cref{tab:dataset}, the invariant $\FP_2(G)$ takes only 18 different values, lying between $-14$ and $+11$. The vast majority of these graphs ($94\%$) has $\FP_2(G)\in\{-1,0,1\}$, see \cref{tab:N2stats} for the precise distribution.
\begin{table}
    \centering
    \begin{tabular}{rr@{\hspace{12mm}}rr@{\hspace{12mm}}rr}
\toprule
$\FP_2(G)$ & number of $G$ & $\FP_2(G)$ & number of $G$ & $\FP_2(G)$ & number of $G$ \\
\midrule
 $-14$ & 1 & $-2$ & \numprint{18796} & 4 & \numprint{2535} \\
 $-9$ & 9 & $-1$ & \numprint{339537} & 5 & 114 \\
 $-6$ & 42 & 0 & \numprint{2008419} & 6 & 12 \\
 $-5$ & 147 & 1 & \numprint{740184} & 7 & 3 \\
 $-4$ & 155 & 2 & \numprint{147533} & 8 & 2 \\
 $-3$ & \numprint{3344} & 3 & \numprint{32826} & 11 & 3 \\
\bottomrule
    \end{tabular}
    \caption{The number of times each value of $\FP_2(G)$ appears among the graphs in \cref{tab:dataset}.}%
    \label{tab:N2stats}%
\end{table}
\begin{remark}
This behaviour is in stark contrast to Crapo's $\beta$ invariant, which takes $855$ different values for this set of graphs, ranging between $\beta(K_4)=2$ and $\beta(K_7)=5040$.
\end{remark}
Despite this sparsity of values for $\FP_2(G)$ for small graphs, this invariant can in fact take arbitrarily large values. For example, we computed $g_G(t)$ for all complete bipartite graphs with $\leq 14$ vertices, which range between $\FP_2(K_{3,11})=-44$ and $\FP_2(K_{4,10})=39$; see \Cref{tab:bipartite}. We have strong evidence (\Cref{con:g(K3n)}) that for all $n\geq 3$,
\begin{equation}\label{eq:N2(K3n)}
    \FP_2(K_{3,n}) = -\frac{n(n-3)}{2}.
\end{equation}
At each $3\leq n\leq 7$, this graph is in fact the \emph{unique} graph that minimizes the value of $\FP_2(G)$ among all graphs with $3n$ edges that are biconnected and have minimum degree 3 or higher (this is confirmed by our calculations, since our data set includes all such graphs with $21$ edges or less). The graphs $K_{3,n}$ (and their series-parallel equivalents) are thus candidates for the minimizers of $\FP_2(G)$.
\begin{remark}
    The identity \eqref{eq:N2(K3n)} would follow from the star-triangle relation \eqref{eq:N2star-triangle}: Applied to a 3-valent vertex, the latter predicts $\FP_2(K_{3,n})=\FP_2(K_{1,1,1,n-1})-(n-2)$ where $K_{1,1,1,n-1}=K_3\vee \overline{K}_{n-1}$ is the join of a triangle with $n-1$ isolated vertices. Applying \eqref{eq:N2star-triangle} again to one of these 3-valent vertices gives $\FP_2(K_{1,1,1,n-1})=\FP_2(K_{1,1,1,n-2})-(n-3)$, because the resulting doubled triangle may be replaced by a simple triangle ($\FP_2$ is invariant under parallel-operations). This induction stops at $K_{1,1,1,1}\cong K_4$ which has $\FP_2(K_4)=1$ and thus yields $\FP_2(K_{3,n})=\FP_2(K_{1,1,1,n})=-n(n-3)/2$ for all $n\geq3$.
\end{remark}
\begin{table}
    \centering
    \begin{tabular}{r@{\hspace{9mm}}lr@{\hspace{9mm}}lr@{\hspace{9mm}}lr@{\hspace{9mm}}lr@{\hspace{9mm}}lr}
    \toprule
    $V$ & $G$ & $\FP_2$ & $G$ & $\FP_2$ & $G$ & $\FP_2$ & $G$ & $\FP_2$ & $G$ & $\FP_2$ \\
    \midrule
    $6$ &  $K_{3,3}$ &   $0$ & & & & & & & & \\
    $7$ &  $K_{3,4}$ &  $-2$ & & & & & & & & \\
    $8$ &  $K_{3,5}$ &  $-5$ &  $K_{4,4}$ &  $6$ & & & & & & \\
    $9$ &  $K_{3,6}$ &  $-9$ &  $K_{4,5}$ &  $3$ & & & & & & \\
   $10$ &  $K_{3,7}$ & $-14$ &  $K_{4,6}$ & $13$ & $K_{5,5}$ & $-10$ & & & & \\
   $11$ &  $K_{3,8}$ & $-20$ &  $K_{4,7}$ & $12$ & $K_{5,6}$ &  $-4$ & & & & \\
   $12$ &  $K_{3,9}$ & $-27$ &  $K_{4,8}$ & $24$ & $K_{5,7}$ & $-19$ & $K_{6,6}$ & $20$ & & \\
   $13$ & $K_{3,10}$ & $-35$ &  $K_{4,9}$ & $25$ & $K_{5,8}$ & $-15$ & $K_{6,7}$ &  $5$ & & \\
   $14$ & $K_{3,11}$ & $-44$ & $K_{4,10}$ & $39$ & $K_{5,9}$ & $-32$ & $K_{6,8}$ & $31$ & $K_{7,7}$ & $-28$ \\
    \bottomrule
    \end{tabular}
    \caption{The invariant $\FP_2(G)$ for complete bipartite graphs with $V(G)\leq14$ vertices. We omit $K_{2,n}$, since these are series-parallel and therefore $\FP_2(K_{2,n})=0$.}%
    \label{tab:bipartite}%
\end{table}

In the other direction, we found that the circulant graphs from \cref{fig:xladders} have exponentially growing positive values of $\FP_2(G)$.
The circulant graphs $C^n_{1,k}$ are obtained from the cycle graph on vertices $\{1,\ldots,n\}$ by adding $n$ edges $\{i,i+k\}$, where $1\leq i\leq n$ and vertex labels are identified modulo $n$. The cases $1<k<n/2$ give rise to families of 4-regular graphs, which we computed up to $n\leq 20$.
We calculated $g_G(t)$ for all these graphs with up to 20 vertices, confirming the following conjecture for $n\leq 10$:
\begin{conjecture}\label{con:N2(xladder)}
    For every integer $n\geq 5$, we have $\FP_2\big(C^{2n}_{1,n-1}\big)=2^{n-1}-n$.
\end{conjecture}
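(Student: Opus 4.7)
The strategy is to enumerate the lattice of cyclic flats of $G_n = C^{2n}_{1,n-1}$, compute the relevant values of the M\"obius function, and then extract $\FP_2(G_n)$ by running \Cref{alg:gRec} symbolically in $n$ and keeping only the coefficient of $t^2$.

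First I would exploit the \emph{twinned} structure of the graph. Labelling vertices $0, 1, \ldots, 2n-1$, one verifies directly that the vertices $i$ and $i+n$ share the same four neighbours $\{i\pm 1, i+n\pm 1\}$ (indices mod $2n$). Consequently, the $4n$ edges partition into $n$ rungs $R_0, \ldots, R_{n-1}$, where each rung $R_i$ is the $K_{2,2}$ on the four vertices $\{v_i, w_i, v_{i+1}, w_{i+1}\}$ at consecutive twin-pair positions (here $v_i = i$, $w_i = i+n$), and the graph is invariant under a dihedral action of order $4n$ that cyclically permutes rungs and simultaneously swaps twins. The cyclic flats then come in a few explicit families: (i) $\emptyset$ and the whole graph; (ii) single rungs (rank-$3$ $K_{2,2}$'s); (iii) disjoint unions of arcs of consecutive rungs whose position set omits at least one twin-pair position; and (iv) a small number of \emph{exceptional} cyclic flats built from $4$-cycles that use two long edges to straddle a shared twin-pair. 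A case analysis organised by dihedral symmetry class enumerates all cyclic flats together with their ranks and coranks.

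Next I would compute $\mu_{\LCF_{M(G_n)}}(A, B)$ for every comparable pair using \Cref{lem:cyc-interval}, which identifies the interval $[A, B]$ with the lattice of cyclic flats of $B/A$; inductively these intervals reduce to smaller members of the same family, to uniform matroids, or to direct sums of such, yielding a closed recursion for the M\"obius values. Feeding this data into \Cref{alg:gRec} at $k = \rk(G_n) = 2n - 1$ and keeping only the coefficient of $t^2$ expresses $\FP_2(G_n)$ as an alternating sum over nested pairs of cyclic flats, weighted by a polynomial in rank and corank.

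The main obstacle is showing that this large alternating sum collapses to $2^{n-1} - n$. The form of the answer is highly suggestive: the leading $2^{n-1}$ should arise as $\sum_{k=0}^{n-1}\binom{n-1}{k}$, indexed by subsets of the $n-1$ rungs that remain free after fixing one rung via the dihedral symmetry, while the correction $-n$ should come from the $n$ rotationally equivalent boundary configurations corresponding to single-rung contributions that must be subtracted. Constructing an explicit bijection, or a sign-reversing involution on the cancelling terms, that realises this identity is the combinatorial heart of the argument. A secondary obstacle is verifying that the exceptional non-arc cyclic flats of type (iv) above contribute only to coefficients $\FP_i$ with $i \neq 2$, or that their contributions to $\FP_2$ cancel internally; this should follow from a direct rank-corank count. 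At the end one cross-checks against the computed value $\FP_2(C^{18}_{1,8}) = 247 = 2^8 - 9$ from the introduction and against the data for $5 \leq n \leq 10$.
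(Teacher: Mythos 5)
This statement is advanced in the paper only as a \emph{conjecture}: the paper offers no proof, just numerical verification of $g_{C^{2n}_{1,n-1}}(t)$ for $5 \leq n \leq 10$ computed with \Cref{alg:gRec}. There is therefore no argument in the paper against which your proposal can be measured, and any correct proof would be new.

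Your structural setup is correct: the vertices $i$ and $i+n$ of $C^{2n}_{1,n-1}$ are twins (both have neighbourhood $\{i\pm1,\, i+n\pm1\}$), and the $4n$ edges do partition into $n$ rungs, each a $K_{2,2}$ on two consecutive twin-pairs. Using \Cref{lem:cyc-interval} to handle intervals and running \Cref{alg:gRec} symbolically in $n$ is also a reasonable plan in principle.

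However, as you yourself acknowledge, the writeup names but does not close the central gap: you neither construct the sign-reversing involution that would collapse the alternating sum to $2^{n-1}-n$, nor carry out the rank/corank count that would show your type-(iv) cyclic flats contribute nothing to $\FP_2$. As it stands this is a strategy, not a proof. Two concrete cautions for anyone pushing it through. First, the lattice of cyclic flats is less tame than ``a small number of exceptional'' flats suggests: besides the $n$ rungs, $C^{2n}_{1,n-1}$ contains many further induced 4-cycles, e.g.\ the one on $\{i-1,\,i,\,i+1,\,i+n\}$ that takes two edges from each of two consecutive rungs, together with the twin variant on $\{i+n-1,\,i,\,i+n+1,\,i+n\}$; disjoint unions of rungs, arcs, and such straddling cycles multiply the number of cyclic flats considerably, and each contributes to $\FP_2$ through the M\"obius sum. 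Second, extracting only $\FP_2$ from \Cref{alg:gRec} still forces you to track the whole tower $\FP_0^{<k},\FP_1^{<k},\FP_2^{<k}$ across all truncation levels $k$ of every cyclic flat involved, since the recursion couples different $k$; the heuristic $2^{n-1}=\sum_k\binom{n-1}{k}$ with $n$ boundary corrections is suggestive but at present only an Ansatz.
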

The first instance, $\FP_2(C^{10}_{1,4})=11$, is in fact the \emph{unique} graph that maximizes $\FP_2(G)$ among all graphs with 20 edges (or less); so this family of circulants (and its series-parallel descendants), with its exponentially growing value of $\FP_2$, is a candidate for the maximizers more generally. Hyperlinks into the data base \cite{HoG} for the first few of these graphs are $C^{10}_{1,4}=\HoG{45705}$, $C^{12}_{1,5}=\HoG{32806}$, $C^{14}_{1,6}=\HoG{51911}$, $C^{16}_{1,7}=\HoG{51913}$, $C^{18}_{1,8}=\HoG{51914}$, and $C^{20}_{1,9}=\HoG{51916}$.

\subsection{Closed forms}
The only formula in the literature for the $g$-polynomial of an infinite family of (non-series-parallel) graphs is \eqref{eq:g(wheel)} for the wheel graphs, from \cite[Proposition~10.2]{Speyer:MatroidKtheory}. In this section, we propose several more closed formulas, supported by our calculations, for families of graphs of particular interest.

\begin{figure}
    \centering
    $
    \Graph[0.8]{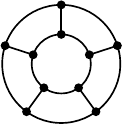} \quad \Graph[0.8]{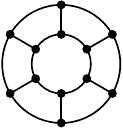} \quad \Graph[0.8]{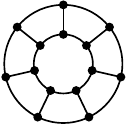}$
    \qquad\qquad
    $
    \Graph[0.8]{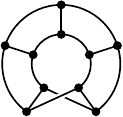} \quad \Graph[0.8]{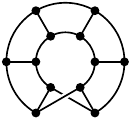} \quad \Graph[0.8]{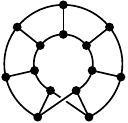}$
    \caption{The prism graphs $K_2\times C_n$ (left) and M\"obius ladders $C^{2n}_{1,n}$ (right) for $n=5,6,7$.}%
    \label{fig:moebius-ladders}%
\end{figure}
The prisms $K_2\times C_n$ and the M\"obius ladders $C^{2n}_{1,n}$ on $2n$ vertices are two well-known families of 3-regular graphs, illsutrated in \cref{fig:moebius-ladders}. Their Speyer polynomials are particularly simple: almost all coefficients $\FP_i$ vanish. Our calculations confirm the following for $n\leq 10$:
\begin{conjecture}\label{con:prism-moebius}
    For every integer $n\geq 2$, the prisms and M\"obius ladders have
    \begin{align*}
        g_{K_2\times C_n}(t) &= t\Big(1+(1+t)^2+(2^n-n-3)(1+t)^n\Big)\quad\text{and} \\
        g_{C^{2n}_{1,n}}(t) &= t\Big(1+(2^n-n-1)(1+t)^n\Big).
    \end{align*}
\end{conjecture}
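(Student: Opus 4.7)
The plan is to prove \Cref{con:prism-moebius} by direct application of \Cref{thm:Speyer-from-cyclics}, after classifying the cyclic flats of the two families. Recall that cyclic flats of a graphic matroid correspond to vertex partitions $V(G)=S_1\sqcup\cdots\sqcup S_k$ in which each induced subgraph $G[S_i]$ is either a single isolated vertex or is connected and bridgeless. Using the dihedral symmetry of the prism $K_2\times C_n$ and the M\"obius ladder $C^{2n}_{1,n}$, I would first catalogue the orbits of such partitions. I expect the proper non-empty cyclic flats to split into three classes: (a) ``small'' flats in which the unique non-singleton part is an induced 4-cycle face; (b) for the prism only, the two rim flats whose unique non-singleton part is the top or bottom $n$-cycle; and (c) a ``big'' family parametrised by contiguous arcs of singletons on the outer cycle, whose unique non-singleton part is a 2-edge-connected induced subgraph covering the remaining vertices. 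The $2^n$ growth in the final formula is consistent with a combinatorial interpretation of the ``big'' family in terms of subsets of $\{1,\ldots,n\}$.

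Next, I would compute the M\"obius function $\mu_{\LCF_M}(A,B)$ for each relevant nested pair, using the interval isomorphism $[A,B]_{\LCF_M}\cong\LCF_{B/A}$ of \Cref{lem:cyc-interval} to reduce to cyclic flat lattices of smaller prism or M\"obius ladder minors, together with standard computations for uniform matroids and lattice path matroids. A key observation to exploit is that many intervals of cyclic flats are Boolean of rank at least $2$ and thus contribute $\mu=0$; this is what should force the coefficients $\FP_3,\ldots,\FP_{n-1}$ and $\FP_{n+1},\ldots,\FP_{\rk(M)-1}$ in the $(1+t)$-expansion to vanish.

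Finally, I would feed the cyclic flat data into \Cref{alg:gRec} and organise the resulting alternating sum. The expected telescoping should be governed by a bijection between ``big'' cyclic flats and subsets of $\{1,\ldots,n\}$ of size at least $2$, giving the $2^n-n-1$ surviving contributions for the M\"obius ladder; for the prism one needs additionally to exclude the two rim flats to obtain $2^n-n-3$, while the $(1+t)^2$ term should be traced to the planar $4$-face contributions (compatible with \Cref{con:planar=1}). Induction on $n$, using deletion/contraction of a single rung or rim edge, would provide an independent consistency check on the final collapsed formula.

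The main obstacle is the cyclic flat classification, especially the ``big'' family, whose number of orbits grows with $n$ and whose intervals in $\LCF_M$ require careful bookkeeping. If this direct combinatorial approach proves intractable, a natural alternative is to bypass cyclic flats altogether and exhibit an explicit matroidal subdivision of the matroid polytope $P(M)$ whose pieces are Schubert matroids of controlled lattice-path shape. Because $g_M$ is covaluative, verifying the conjectured formula would then reduce to summing the Speyer polynomials of those Schubert matroids, for which \Cref{alg:gLPM} provides an efficient recursion.
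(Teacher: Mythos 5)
The statement you were asked to prove is stated in the paper as a \emph{conjecture}, not a theorem. The paper offers only computational verification for $n\leq 10$, plus two consistency checks: the derivative at $t=0$ matches the known closed forms for $\beta(K_2\times C_n)$ and $\beta(C^{2n}_{1,n})$, and the $\FP_2$ value agrees with \Cref{con:planar=1} for the prisms. Indeed the paper explicitly remarks, just after \Cref{con:zigzag}, that ``we currently lack graph-theoretic criteria to explain why'' so many $\FP_i$ vanish for these families. So there is no paper proof to compare against, and a complete proof would be a genuine new contribution.

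Your proposal, as written, is an outline rather than a proof: you describe what you \emph{would} catalogue and how you \emph{would} organize the alternating sum, but you flag the cyclic-flat classification as the ``main obstacle'' and do not carry it out. Beyond that, the sketch has a structural gap already. You assert that the proper non-empty cyclic flats fall into three classes, each having a \emph{unique} non-singleton part (a face $4$-cycle, a rim $n$-cycle, or a single big $2$-edge-connected induced piece). But cyclic flats of a graphic matroid are in bijection with vertex partitions in which \emph{every} block induces a bridgeless subgraph, and nothing restricts the number of nontrivial blocks. In $K_2\times C_n$ with $n\geq 4$, any two vertex-disjoint face squares already give a cyclic flat with two $4$-cycle parts, and the number of such multi-part flats grows quickly with $n$; similar multi-part flats occur in $C^{2n}_{1,n}$. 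These flats contribute chains of length $\geq 2$ through \Cref{thm:Speyer-from-cyclics}, carry nontrivial M\"obius weights (the interval below a multi-part flat is a product lattice, by \Cref{lem:cyc-interval} applied componentwise), and cannot be ignored. Until they are incorporated and the resulting alternating sum is actually collapsed to the claimed closed form, you do not have a proof. The alternative you mention -- exhibiting an explicit matroidal subdivision of $P(M)$ into Schubert pieces for the whole infinite family -- is a plausible strategy in principle (it is essentially what \cite[Conjecture~4.6]{Ferroni:SchubertDelannoySpeyer} would demand here, as the paper notes in the remark after \Cref{con:zigzag}), but it is at least as hard as the cyclic-flat route and is similarly left unexecuted.
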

In particular, we have $\FP_2(K_2\times C_n)=1$ (as required by \Cref{con:planar=1} since these graphs are planar) and $\FP_2(C^{2n}_{1,n})=0$ for $n\geq 3$. The prisms play an extremal role among 3-regular graphs (see \cref{sec:Feynman}), but not for the invariant $\FP_2$. Taking derivatives at $t=0$, our conjecture reproduces correctly $\beta(K_2\times C_n)=2^n-n-1$ and $\beta(C^{2n}_{1,n})=2^n-n$ which can be read off from the chromatic polynomials in \cite[\S4]{BiggsDamerellSands:RecFam}.
\begin{figure}
    \centering
    $\Graph[0.7]{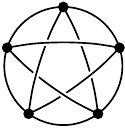}$ \quad $\Graph[0.7]{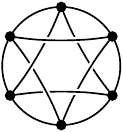}$ \quad $\Graph[0.7]{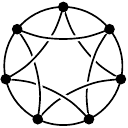}$ \quad $\Graph[0.7]{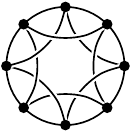}$ \quad $\Graph[0.7]{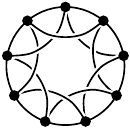}$
    \caption{The circulant graphs $C^n_{1,2}$ on $n=5,6,7,8,9$ vertices.}%
    \label{fig:zigzags}%
\end{figure}

Among 4-regular graphs, the circulants $C^n_{1,2}$ (see \cref{fig:zigzags}) play a special role (see \cref{sec:Feynman}). Our calculations confirm the following closed form for all $n\leq 20$:
\begin{conjecture}\label{con:zigzag}
    For all integers $n\geq 5$, the circulant graph $C^n_{1,2}$ has
\begin{equation*}
    g_{C^{n}_{1,2}}(t)=t+\sum_{i=1+\lfloor n/2\rfloor}^{n-2} \frac{n}{i}\binom{i}{n-i} t(1+t)^i
    +\begin{cases}
        t(1+t)^2 & \text{if $n$ is even,}\\
        0        & \text{if $n$ is odd.} \\
    \end{cases}
\end{equation*}
\end{conjecture}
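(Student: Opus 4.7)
The plan is to apply the recursive procedure of \Cref{alg:gRec} to the graphic matroid $M$ of $C^n_{1,2}$, supported by a classification of its lattice $\LCF$ of cyclic flats and a combinatorial identification of the resulting signed sum with matchings of the cycle $C_n$.

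The first step is to characterize the cyclic flats. A proper non-empty cyclic flat of $M$ corresponds to a partition of the $n$ cyclically arranged vertices into blocks, each of which is either a singleton or a cyclically consecutive arc of length $\geq 3$; the maximum $1_{\LCF}$ is the full cycle (one arc of length $n$) and the minimum $0_{\LCF}$ is the all-singletons partition. This holds because an induced subgraph of $C^n_{1,2}$ on a proper vertex subset is $2$-edge-connected if and only if its vertices form a cyclically consecutive arc of length $\geq 3$; any ``gap'' in the chosen subset creates a bridge. For a flat with arcs of lengths $a_1,\dots,a_\ell$ all in $[3,n-2]$, the rank and corank are $\sum(a_j-1)$ and $\sum(a_j-2)$ respectively, with minor corrections when an arc has length $n-1$ (one extra wrap-around distance-$2$ edge) or $n$ (the full cycle, of corank $n+1$).

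With this in hand I would run \Cref{alg:gRec} on $M$, grouping cyclic flats by their arc-length multiset and exploiting the $\Z_n$ rotational symmetry to collapse rotationally equivalent contributions. For each cyclic flat $A$ the polynomials $Q^{(1)}$ and $Q^{(2)}$ inside the algorithm count admissible Delannoy squares in the lattice-path representation \eqref{eq:path-from-cyc} of the Schubert matroid extending $A$ to $M$, so the algorithm expresses $\bar{g}_M^{<n-1}(t)$ as a symbolic sum weighted by values of the Möbius function $\mu_{\LCF}(A,M)$. The heart of the proof is then to match this symbolic sum against the claimed closed form. Since $\tfrac{n}{i}\binom{i}{n-i}$ is the number of matchings of size $n-i$ in $C_n$, the main sum in the conjecture asserts that $\FP_i(C^n_{1,2})$ equals this matching count for $\lfloor n/2\rfloor<i\leq n-2$. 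My strategy is to construct a bijection in which each matching of $C_n$ contributes $+1$ to $\FP_i$ while all remaining contributions from the algorithm cancel via $\mu_{\LCF}$: a matching is naturally encoded as a cyclic placement of ``dominoes'' (the matched edges) and ``monominoes'' (the unmatched vertices), and I expect each such placement to index a distinguished chain of cyclic flats whose Schubert summand is exactly $t(1+t)^i$. The isolated $t(1+t)^2$ correction for even $n$ I would handle separately, as an additional structural contribution arising from the $\Z_2$ splitting of the distance-$2$ edges into two disjoint $(n/2)$-cycles when $n$ is even.

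The main obstacle is making this Möbius cancellation explicit. The lattice $\LCF$ is not a simple product, and identifying which chains survive requires careful bookkeeping over the many possible arc/gap patterns around the cycle. A reasonable fallback is a transfer-matrix computation along the cycle, where each matrix entry encodes the local arc/gap structure and the accumulated contribution to $g_M(t)$, after which one can compare the resulting generating function $\sum_n g_{C^n_{1,2}}(t)z^n$ against the Lucas-type generating function for cycle matchings, with the even-$n$ correction captured via a twisted trace.
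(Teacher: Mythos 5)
Note first that the paper does not prove this statement: \Cref{con:zigzag} is a conjecture, verified only by explicit calculation for $n\leq 20$, so there is no proof in the paper for you to match. Your proposal is also not a proof but a plan --- the claimed M\"obius cancellation and the bijection to matchings of $C_n$ are described but never carried out, and the fallback transfer-matrix argument presupposes (without justification) that the $g$-polynomials of $C^n_{1,2}$ obey a fixed linear recurrence in $n$.

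More seriously, the concrete structural claim at the base of your plan is false. It is not true that the proper cyclic flats of $C^n_{1,2}$ correspond to partitions of the cyclic vertex set into singletons and arcs of length $\geq 3$. Because the graph has distance-$2$ edges, a vertex subset with several gaps of size exactly one can still induce a $2$-edge-connected (hence cyclic) flat. Take $n=7$ and $S=\{1,2,3,5,6\}$: the induced subgraph has edges $\{1,2\},\{2,3\},\{1,3\},\{3,5\},\{5,6\},\{6,1\}$, which contains the spanning cycle $1$--$2$--$3$--$5$--$6$--$1$ and is therefore $2$-edge-connected; being an induced subgraph it is also a flat. So $S$ supports a cyclic flat, but $S$ is not a cyclic arc. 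Already at $n=5$, where $C^5_{1,2}\cong K_5$, \emph{every} $3$-element vertex subset supports a cyclic flat, not only the five arcs. Since \Cref{thm:Speyer-from-cyclics} and \Cref{alg:gRec} sum over the \emph{entire} lattice $\LCF_M$ weighted by its M\"obius function, omitting a family of cyclic flats does not merely drop terms --- it changes the M\"obius values of the remaining flats and therefore the output. You would first need a correct and considerably more intricate description of $\LCF_{C^n_{1,2}}$, one that allows arbitrarily many size-one gaps bridged by distance-$2$ edges. A further wrinkle: for even $n$ the matching interpretation $\frac{n}{i}\binom{i}{n-i}$ gives $2$ at $i=n/2$ (the two perfect matchings of $C_n$), yet the conjectured formula has $\FP_{n/2}=0$; your proposed $t(1+t)^2$ correction only adjusts $\FP_2$, so this discrepancy at $i=n/2$ is an additional gap the bijection would need to close.
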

In particular, these circulants have $\FP_2(C^n_{1,2})=1$ for even $n$ (these cases are planar, so this is required by \Cref{con:planar=1}) and $\FP_2(C^n_{1,2})=0$ for odd $n$. We confirmed that the derivative at $t=0$ correctly reproduces $\beta(C^n_{1,2})=\varphi^n+(1-\varphi)^n-n-(1+(-1)^n)/2$ with $\varphi=(1+\sqrt{5})/2$. To obtain this closed form, we specialized the recurrences for the chromatic polynomial in \cite{Whitehead:ChordedCycles} to $\beta$, found $-1,1,1,\varphi,1-\varphi$ for the spectrum of the corresponding transfer matrix, and used the first five values to fix initial conditions.
\begin{remark}
    In the examples of \Cref{con:prism-moebius} and \Cref{con:zigzag}, all coefficients $\FP_i(G)$ are non-negative and in fact many of them vanish. We currently lack graph-theoretic criteria to explain why this happens. If the spanning tree polytopes of these families of graphs decompose into direct sums of series-parallel matroid polytopes, \cite[Conjecture~4.6]{Ferroni:SchubertDelannoySpeyer} would predict non-negativity. We have not investigated if this is the case.
\end{remark}
The join $G\vee H$ of two graphs is obtained from the union $G\sqcup H$ by adding all edges connecting vertices from $G$ with vertices from $H$. For example, the complete bipartite graphs are joins $K_{n,m}=\overline{K}_n\vee\overline{K}_m$ of the edgeless graphs with $n$ and $m$ vertices, and the complete graph $K_n\cong K_1\vee\ldots\vee K_1$ is an $n$-fold join of the 1-vertex graph.
\begin{figure}
    \centering
    $\Graph[0.5]{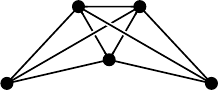}$ \quad $\Graph[0.5]{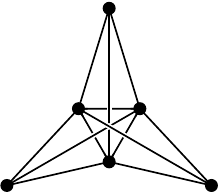}$ \quad $\Graph[0.5]{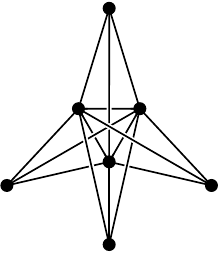}$ \quad $\Graph[0.5]{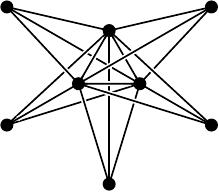}$ \quad $\Graph[0.5]{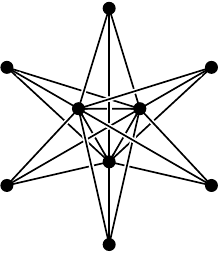}$
    \caption{The 4-partite graphs $K_{1,1,1,n}\cong K_3\vee\overline{K}_n$ from $n=2$ (left) to $n=6$ (right).}%
    \label{fig:K111n}%
\end{figure}
\begin{conjecture}\label{con:g(K3n)}
    For all integers $n\geq 3$, the complete bipartite graph $K_{3,n}=\overline{K}_3\vee\overline{K}_n$ and the complete 4-partite graph $K_{1,1,1,n}\cong K_3\vee \overline{K}_n$ (\cref{fig:K111n}) have the Speyer polynomials
\begin{equation*}\begin{aligned}
    g_{K_{3,n}}(t) &= t(nt^2+nt+t+2)(2+t)^{n-1} -3t(1+t)^{n+1} \quad\text{and}\\
    g_{K_{1,1,1,n}}(t) &= t(nt^2+nt+t+2)(2+t)^{n-1}. \\
\end{aligned}\end{equation*}
\end{conjecture}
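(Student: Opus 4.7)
The plan is to apply Theorem~\ref{thm:Speyer-from-cyclics} to expand $g_M(t)$ as a sum over chains in $\LCF_M$ and to exploit the highly symmetric structure of the cyclic flats of $M(K_{1,1,1,n})$ and $M(K_{3,n})$ to collapse this sum to the claimed closed forms by induction on $n$. Writing $A=\{a_1,a_2,a_3\}$ and $B=\{b_1,\ldots,b_n\}$, any cyclic flat corresponds to a vertex partition whose every non-trivial part induces a 2-edge-connected subgraph; since a single $A$-vertex would produce a star (with bridges) and $B$-only parts contain no edges, each such partition has at most one non-trivial part, of the form $A\cup J$ or $P\cup J$ with $P\subset A$, $|P|=2$, $J\subseteq B$. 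The proper cyclic flats of $M(K_{1,1,1,n})$ are therefore the triangle $T=E(K_3[A])$; the \emph{apex} flats $F^A_J$ for $\emptyset\ne J\subseteq B$, restricting to $M(K_{1,1,1,|J|})$; and the \emph{pair} flats $F^P_J$, restricting to the series-parallel matroid $M(K_{1,1,|J|})$. For $M(K_{3,n})$ the corresponding list has no $T$ and further requires $|J|\ge 2$ (since $K_{1,3}$ and $K_{2,1}$ have bridges), with the apex and pair flats restricting to $M(K_{3,|J|})$ and $M(K_{2,|J|})$ respectively.

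The natural symmetries permuting $A$ and $B$ reduce $\mu_{\LCF_M}$ to a few inequivalent values, and \Cref{lem:cyc-interval} identifies each interval $[F,F']_{\LCF_M}$ with $\LCF_{F'/F}$, which has the same shape as a smaller instance of the same family, driving the induction. Grouping the chains in Theorem~\ref{thm:Speyer-from-cyclics} by the types of their internal elements, any chain passing through a pair flat contributes a Schubert polynomial which can be written in closed form via the lattice-path recursion of \Cref{alg:gLPM}, whereas chains whose top internal element is an apex flat $F^A_J$ contribute (via the penultimate-element recursion underlying \Cref{alg:gRec}) a term proportional to $g_{M(K_{1,1,1,|J|})}(t)$. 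Combining the explicit pair-flat contributions with the apex contributions evaluated under the inductive hypothesis expresses $g_{K_{1,1,1,n}}(t)$ as an elementary binomial expression plus a recursive combination of the earlier $g$'s. The conjectured closed form can be rewritten as
\[
h_n(t):=g_{K_{1,1,1,n}}(t)=nt^2(1+t)(2+t)^{n-1}+t(2+t)^n,
\]
and a direct substitution $t=u-1$ reveals its $(1+t)$-expansion coefficients to be $\FP_k(K_{1,1,1,n})=(k-1)\bigl(\binom{n}{k-1}-\binom{n}{k}\bigr)$; so the inductive step reduces to the verification of the first-order linear recursion
\[
h_n(t)=(2+t)\,h_{n-1}(t)+t^2(1+t)(2+t)^{n-1},
\]
whose base case $h_1(t)=g_{K_4}(t)=t(t^2+2t+2)$ is supplied by the wheel formula \eqref{eq:g(wheel)} at $W_3=K_4$.

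The formula for $K_{3,n}$ then follows by the same chain decomposition with $T$ removed and $|J|\ge 2$ imposed, or more conceptually from the 3-sum $K_{3,n}=K_4\oplus_3 K_{1,1,1,n-1}$ together with covaluativity; the $3t(1+t)^{n+1}$ discrepancy precisely accounts for those cyclic flats of $\LCF_{M(K_{1,1,1,n})}$ which are absent from $\LCF_{M(K_{3,n})}$ (namely $T$, and the three $S_3$-symmetric towers of pair and apex flats with $|J|\le 1$) together with their M\"obius contributions, which assemble via \Cref{lem:MF(chain)} into the clean closed form $3t(1+t)^{n+1}$. The main obstacle will be the combinatorial bookkeeping in the chain decomposition: although symmetry cuts the distinct contributions to a small collection, correctly packaging the alternating sum over chains of every length---with the weights $\mu_{\chains(\LCF_M)}(C_\bullet,1_\chains)$ given by \Cref{lem:MF(chain)}---into a shape to which the inductive hypothesis applies cleanly is delicate. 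A workable shortcut is to derive the linear recursion above directly from the parallel-connection decomposition $K_{1,1,1,n}=K_4\cup_T K_{1,1,1,n-1}$ (gluing, not summing, along $T$) by applying covaluativity to a matroid-polytope subdivision isolating the single triangle $T$, which should produce the correction term $t^2(1+t)(2+t)^{n-1}$ as the $g$-polynomial of a single explicit Schubert summand.
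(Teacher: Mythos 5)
The statement you are asked to prove is a \emph{conjecture} in the paper: the author does not prove it, but verifies it computationally for $n\leq 60$ using a specialized implementation of \Cref{alg:gRec}, together with a consistency check $\partial_t g_{K_{3,n}}(0)=\beta(K_{3,n})=2^n-3$. The footnote attached to the conjecture describes the cyclic flats of $K_{3,n}$ (one copy of $K_{3,i}$ and three of $K_{2,i}$ per size-$i$ subset of $B$, $2\le i\le n$) and gives closed M\"obius values, but these serve the computation, not a proof. Your proposal is therefore attempting something strictly stronger than what the paper does, and you correctly reconstruct the cyclic-flat lattice, including the $K_{1,1,1,n}$ variant (add $T$ and allow $|J|\ge 1$). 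Your algebraic verifications are sound: the rewriting $h_n(t)=nt^2(1+t)(2+t)^{n-1}+t(2+t)^n$, the resulting coefficients $\FP_k=(k-1)\bigl(\binom{n}{k-1}-\binom{n}{k}\bigr)$, the first-order recursion $h_n=(2+t)h_{n-1}+t^2(1+t)(2+t)^{n-1}$, and the base case $h_1=g_{K_4}$ all check out.

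However, as it stands this is a proof sketch, not a proof, and you say so yourself (``should produce'', ``delicate''). Three concrete gaps remain. First, you never derive the displayed recursion from the matroid combinatorics; you only show the conjectured closed form satisfies it and has the right base case, which establishes nothing about $g_{K_{1,1,1,n}}$. Second, the ``workable shortcut'' is not workable as stated: $K_{1,1,1,n}$ is a parallel \emph{connection} of $K_4$ and $K_{1,1,1,n-1}$ along $T$, not a $2$-sum, and $g_M$ is only known to be multiplicative over $1$- and $2$-sums. There is no stated covaluative subdivision that ``isolates'' $T$ and produces the correction term $t^2(1+t)(2+t)^{n-1}$; this needs to be exhibited explicitly. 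Third, the ``more conceptual'' route via $K_{3,n}=K_4\oplus_3 K_{1,1,1,n-1}$ is explicitly ruled out by the paper: \Cref{rem:K5plusK5} notes that $g_M$ does \emph{not} factorize over $3$-sums, so you cannot read off $g_{K_{3,n}}$ from the factors. There is also a subtler issue with the claim that chains whose top internal element is an apex flat $F^A_J$ contribute ``a term proportional to $g_{M(K_{1,1,1,|J|})}(t)$'': the recursion underlying \Cref{alg:gRec} works with the truncated functions $\bar{g}^{<k}$, not with $g$ itself, so the induction would need to control $\bar{g}^{<k}_{K_{1,1,1,m}}$ for all $k<m$, not merely the full polynomial. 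Finally, the matroids $M(K_{1,1,1,n})$ and $M(K_{3,n})$ live on ground sets of different size ($3n+3$ versus $3n$), so ``the cyclic flats absent from $\LCF_{M(K_{3,n})}$'' is not a well-defined difference inside one Schubert decomposition; the bookkeeping relating the two families needs to be done on its own terms rather than by subtraction. If you carry out the penultimate-element grouping rigorously with the $\bar{g}^{<k}$ machinery, or alternatively produce the advertised polytope subdivision, this would upgrade the paper's \Cref{con:g(K3n)} from a conjecture to a theorem --- but in its current form the argument stops short of that.
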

We confirmed this conjecture for all $n\leq 60$, using a program that implements \Cref{alg:gRec} specifically for these graph families and that exploits concrete descriptions of the cyclic flats in terms of integer partitions and closed formulas for the M\"obius function.\footnote{Apart from $\emptyset$, the lattice $\LCF(K_{3,n})$ consists of one copy of $K_{3,i}$ and three copies of $K_{2,i}$ for each subset of $2\leq i\leq n$ vertices from the $K_n$ side. Boolean sublattices show $\mu(K_{2,i},K_{2,n})=\mu(K_{3,i},K_{3,n})=(-1)^{n-i}$ and product lattices yield $\mu(K_{2,i},K_{3,n})=-(-1)^{n-i}$. From the defining recursion one finds thus $\mu(\emptyset,K_{2,n})=(1-n)\cdot (-1)^n$ and $\mu(\emptyset,K_{3,n})=2\cdot(n-1)\cdot(-1)^n$.}
The derivative at $t=0$ correctly reproduces $\beta(K_{3,n})=2^n-3$, which follows from \cite{Swenson:ChromKnm}.

\Cref{con:g(K3n)} implies the quadratic growth of $\FP_2(K_{3,n})=\FP_2(K_{1,1,1,n})=-n(n-3)/2$ mentioned previously in \eqref{eq:N2(K3n)}.

All graph families considered so far are sparse. In the dense regime, the complete graphs are of particular interest. Using a dedicated implementation of \Cref{alg:gRec} that takes advantage of the description of the cyclic flats of $K_n$ in terms of partitions of the set $\{1,\ldots,n\}$ of vertices (without parts of size two), we confirmed for all $n\leq 40$:
\begin{conjecture}\label{con:g(Kn)}
    For every integer $n\geq 5$, Speyer polynomials of complete graphs satisfy
    \begin{equation*}
    g_{K_n}(t)=(n-2+t(n-3))g_{K_{n-1}}(t)+t(1+t)g'_{K_{n-1}}(t) + (1+t)(t-n+3)g_{K_{n-2}}(t)
    .
\end{equation*}
\end{conjecture}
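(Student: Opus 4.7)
The plan is to apply \Cref{alg:gRec} to $K_n$, exploiting the well-known bijection between cyclic flats of the cycle matroid of $K_n$ and set partitions $\pi$ of $\{1,\ldots,n\}$ whose blocks all have size $1$ or $\geq 3$. Under this bijection, the cyclic flat $A_\pi$ has rank $n-|\pi|$, and by \Cref{lem:cyc-interval} the M\"obius function on intervals of this sublattice of the partition lattice factorizes block-by-block. First I would invoke the $S_n$-symmetry of $K_n$ to reduce the sum in \Cref{alg:gRec} to a sum over isomorphism types of partitions, each weighted by its $S_n$-orbit size and by the now-explicit M\"obius values.

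Next, I would separate this sum by the profile (multiset of block sizes) of $\pi$. The profile $(n-1,1)$ contributes $n$ isomorphic copies of a cyclic flat whose upper interval $[A_\pi,K_n]$ is trivial, so that $-\mu(A_\pi,K_n)=1$; together these produce the term proportional to $g_{K_{n-1}}$. Analogously, the profile $(n-2,1,1)$ contributes $\binom{n}{2}$ copies and yields the $g_{K_{n-2}}$ piece, while the trivial partition $(1^n)$ corresponds to the uniform matroid base case of \Cref{alg:gRec}. The remaining profiles---those with at least one block of size strictly between $3$ and $n-1$---must collectively reproduce the derivative term $g'_{K_{n-1}}$. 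I expect this to arise via a ``marking'' construction: each cyclic flat $A$ of $K_{n-1}$ naturally gives rise to a family of cyclic flats of $K_n$ indexed by how vertex $n$ attaches to $A$, producing a weighting proportional to $\rk(A)$; after expansion in the basis $(1+t)^i$ of \eqref{eq:gexpand1main}, multiplying each coefficient by its degree index $i$ converts $g_{K_{n-1}}(t)$ into a combination of $g_{K_{n-1}}$ and $t(1+t) g_{K_{n-1}}'(t)$, matching the shape of the claimed recursion.

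The main obstacle is reconciling the truncation parameter $k$ throughout \Cref{alg:gRec}: the algorithm invokes $\bar g_A^{<k'}$ at values $k'$ that depend on the flat $A$, whereas the target recursion refers only to the untruncated polynomial $g_{K_{n-1}}(t)$ and its derivative. Packaging all truncation-dependent pieces into a single Taylor expansion at full rank, via the auxiliary polynomials $Q^{(1)}$ and $Q^{(2)}$, will require delicate binomial identities at the level of the partition lattice. A conceptually cleaner alternative would be to construct an explicit matroid polytope subdivision of the cycle matroid of $K_n$ whose pieces are cycle matroids of $K_{n-1}$ (with multiplicity) and $K_{n-2}$, together with matroids whose $g$-polynomial is known, and then invoke covaluativity; however, the presence of $g'_{K_{n-1}}$ rather than $g_{K_{n-1}}$ alone strongly suggests that any such subdivision cannot be multiplicity-free and must involve a subtler decomposition, which is presumably why the paper states this identity at the conjectural level.
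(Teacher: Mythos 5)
The statement you are trying to prove is labelled a \emph{conjecture} in the paper and is not proved there. The paper reports only that the identity has been confirmed computationally for all $n\leq 40$, using a dedicated implementation of \Cref{alg:gRec} that exploits the description of cyclic flats of the cycle matroid of $K_n$ as set partitions of $\{1,\ldots,n\}$ having no blocks of size two. There is therefore no paper proof against which your sketch can be compared.

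Your plan begins from exactly that partition-lattice description, which is the correct structural observation and matches what the paper itself uses for its numerical verification. Your identification of the profile $(n-1,1)$ (giving $n$ copies of a flat whose upper interval is a two-element chain, hence $-\mu=1$) and of the profile $(n-2,1,1)$ (giving $\binom{n}{2}$ copies) as the natural carriers of the $g_{K_{n-1}}$ and $g_{K_{n-2}}$ terms is a plausible starting point, and your remark that a derivative term cannot be produced by a simple multiplicity-free covaluative subdivision is well-aimed. But observe that even the $g_{K_{n-1}}$ coefficient in the conjectured recursion is $(n-2+t(n-3))$, not $n$, so the profile $(n-1,1)$ alone cannot account for it: in \Cref{alg:gRec}, the contribution of any flat $A$ is spread across truncated polynomials $\bar{g}_A^{<k'}$ weighted by the $Q^{(1)}$ and $Q^{(2)}$ binomial kernels, and untangling these into a clean statement about the untruncated $g_{K_{n-1}}$ and $g_{K_{n-2}}$ is the real work. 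What you have is a plan with several unfilled steps---explicit M\"obius values across general profiles, the combinatorics that turns the partition sum into the $g'_{K_{n-1}}$ term, and the reconciliation of the truncation index $k$ with the untruncated target---and you acknowledge as much yourself. That assessment is consistent with the state of the paper: the recursion remains an open conjecture, so your write-up should be framed as a proposed strategy rather than a proof.
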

With this recursion and the initial values $g_{K_3}(t)=t$ and $g_{K_4}(t)=2t+2t^2+t^3$, any $g_{K_n}(t)$ can be computed rapidly. Taking the derivative at $t=0$, the recursion becomes $\beta(K_n)=(n-1)\beta(K_{n-1})-(n-3)\beta(K_{n-2})$ which is compatible with $\beta(K_n)=(n-2)!$ from \cite[Proposition~9]{Crapo:HigherInvariant}. For the coefficients $\FP_i(K_n)$ in \eqref{eq:gexpand1}, the recursion means
\begin{equation}\label{eq:N(Kn)-rec}\begin{aligned}
    \FP_i(K_n)&=-(i-1)\FP_i(K_{n-1})+(n+i-3)\FP_{i-1}(K_{n-1})
    \\ & \quad
    -(n-2)\FP_{i-1}(K_{n-2})+\FP_{i-2}(K_{n-2}).
\end{aligned}\end{equation}
Setting $i=2$ it follows that $\FP_2(K_n)=1$ for $n$ even and $\FP_2(K_n)=0$ for $n$ odd. The higher coefficients grow rapidly, see \cref{tab:gKn}; for example $\FP_3(K_n)=(2^n-(-1)^n)/3-n-1$.
\begin{table}
    \centering
    \begin{tabular}{rrrrrrrrrr}
\toprule
$M$ & $\FP_0$ & $\FP_1$ & $\FP_2$ & $\FP_3$ & $\FP_4$ & $\FP_5$ & $\FP_6$ & $\FP_7$ & $\FP_8$ \\
\midrule
$K_{3}$ & $1$ & $0$ & & & & & & &\\
$K_{4}$ & $1$ & $0$ & $1$ & & & & & &\\
$K_{5}$ & $1$ & $0$ & $0$ & $5$ & & & & &\\
$K_{6}$ & $1$ & $0$ & $1$ & $-14$ & $36$ & & & &\\
$K_{7}$ & $1$ & $0$ & $0$ & $35$ & $-245$ & $329$ & & &\\
$K_{8}$ & $1$ & $0$ & $1$ & $-76$ & $1135$ & $-3996$ & $3655$ & &\\
$K_{9}$ & $1$ & $0$ & $0$ & $161$ & $-4410$ & $30219$ & $-68775$ & $47844$ &\\
$K_{10}$ & $1$ & $0$ & $1$ & $-330$ & $15610$ & $-182952$ & $769825$ & $-1283150$ & $721315$\\
\bottomrule
    \end{tabular}
    \caption{Coefficients of the Speyer polynomials $g_M(t)=t\sum_i \FP_i(M) (1+t)^i$ for the cycle matroids of complete graphs $K_n$ on $n$ vertices (braid matroids of rank $n-1$).}%
    \label{tab:gKn}%
\end{table}
The coefficent of $t^{\rk(M)}$ in $g_M(t)$ was coined ``$\omega$-invariant'' in \cite{FinkShawSpeyer:Omega}. For the complete graphs, \eqref{eq:N(Kn)-rec} gives the following recursion for $\omega(K_n)=\FP_{n-2}(K_n)$:
\begin{equation*}
    \omega(K_n)=(2n-5)\omega(K_{n-1})+\omega(K_{n-2}).
\end{equation*}
More information on this sequence can be found in \cite[\oeis{A278990}]{OEIS}.

\begin{remark}
    Our formulas for Speyer's polynomial of $K_n$ might be useful to address \cite[Conjecture~7.2]{FerroniFink:PolMat} via the invariants $\FP_i$. That conjecture claims that braid matroids extremize \emph{some} (co-)valuative invariant.
\end{remark}

\subsection{Planarity}
Our data set of small graphs (\cref{tab:dataset}) contains \numprint{208151} graphs that are planar and 3-connected. They all have $\FP_2(G)=1$, supporting \Cref{con:planar=1}. Further evidence comes from \Cref{con:3sum} (about the 3-sum), because already its special case \eqref{eq:N2star-triangle} (the star-triangle identity) would imply \Cref{con:planar=1}. This can be seen as follows:

The proof of Steinitz's theorem given in \cite[\S13.1]{Gruenbaum:ConvexPolytopes} shows that every 3-connected planar simple graph $G_0$ can be reduced to $G_n=K_4$ by a suitable sequence of moves $G_0\mapsto G_1\mapsto G_2\mapsto \ldots$ where each move consists of either a star-triangle operation followed by simplification of any parallel edges thus created (if any), or of a triangle-star operation followed by series reduction of any 2-valent vertices thus created (if any). The construction ensures that each $G_i$ along the sequence is itself a 3-connected, planar, simple graph. Since $\FP_2$ is invariant under the series and parallel simplifications, $\FP_2(G_0)=1$ thus follows inductively from $\FP_2(K_4)=1$ and the star-triangle identities $\FP_2(G_i)=\FP_2(G_{i+1})$. The latter are implied by \eqref{eq:N2star-triangle} because for any 3-vertex cut $S$ of $G_i$ we must have $\abs{\pi_0(G_i\setminus S)}=2$. For if $G_i\setminus S=C_1\sqcup\ldots\sqcup C_r$ would have $r\geq 3$ connected components, then contracting each of these components to a single vertex produces a complete bipartite graph $K_{3,r}$ is a minor of $G_i$---which would contradict planarity.\footnote{Each $C_i$ must have edges to each of the 3 vertices $S$, for otherwise, $G_i$ would not be 3-connected.}

The next invariant $\FP_3(G)$ is not trivial for 3-connected planar graphs. For the \numprint{208151} such graphs in our data set, it takes only few values: $\FP_3(G)\in\{-4,-3,-2,-1,0,1,2\}$.
\begin{conjecture}\label{con:N3=0}
    If $G$ is planar and 4-connected, then $\FP_3(G)=0$.
\end{conjecture}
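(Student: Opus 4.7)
The plan is to mirror the strategy sketched for Conjecture~\ref{con:planar=1}: find a structural reduction of $4$-connected planar graphs down to a small base, and show that $\FP_3$ is preserved under each move. The natural analog of Steinitz's theorem here is the well-known fact that every $4$-connected planar graph on at least seven vertices contains a \emph{contractible edge}---an edge whose contraction again yields a $4$-connected planar graph---while the unique $4$-connected planar graph on six vertices is the octahedron $K_{2,2,2}$. So I would first verify the base case $\FP_3(K_{2,2,2})=0$ by a direct application of \Cref{alg:gRec}, and then attempt to establish the invariance of $\FP_3$ under contraction of a contractible edge in a $4$-connected planar graph.

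The main obstacle is precisely this invariance step. The polynomial $g_M(t)$ is not simply related to the Speyer polynomial of a minor, and even the $\FP_2$-analogs in \Cref{con:3sum} and \Cref{con:twist} are left open in the paper. To attack it, I would first derive a closed formula for $\FP_3(\SM{C_\bullet})$ by counting ordered triples $(x_1,y_1)<(x_2,y_2)<(x_3,y_3)$ of admissible squares in the lattice-path picture of \S\ref{sec:lattice-paths}, extending \Cref{lem:N1(SM)}. Applying the closure-operator argument used to pass from \eqref{eq:N1(M)=cyc} to \eqref{eq:N1(M)=bool} should then produce an expression for $(-1)^{\cc(M)-1}\FP_3(M)$ as an alternating sum over nested triples $A\le B\le C$ of subsets of the ground set, weighted by a fixed polynomial in their ranks and coranks. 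The sought invariance becomes a purely combinatorial cancellation in this Boolean-lattice sum, which one would try to check using the face structure of a plane embedding, the local constraint that the contracted edge lies in two triangular or quadrilateral regions, and the absence of small vertex cuts.

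A complementary route, perhaps more conceptual, would be to establish the $k=3$ analog of \Cref{thm:betark-k=cc}: interpret a sum of the shape $(-1)^{k+1}\sum_A (-1)^{\loops(A)}\beta(A)\binom{\rk(A)}{3}$ as a weighted count of $3$-vertex cuts, which by definition is trivial for a $4$-connected graph. If one can then link this sum to $\FP_3(M)$ (via a higher analog of \Cref{prop:N1asbeta}, perhaps expressing $g'''_M(-1)=6\FP_2(M)-6\FP_3(M)$ as an alternating $\beta$-sum of binomials in $\rk(A)$), then for a $4$-connected graph one would automatically obtain $\FP_3(G)$ in terms of $\FP_2(G)$; combined with $\FP_2(G)=1$ from \Cref{con:planar=1}, this would force $\FP_3(G)=0$. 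The hard part in either route is the same: showing that the relevant multi-sum of ranks and coranks telescopes to a local quantity controlled by the connectivity and planarity. Since the results $\FP_0=1$, $\FP_1=0$, $\FP_2=1$, $\FP_3=0$ at successive connectivities of planar graphs form a suggestive pattern, one might ultimately hope for a unified proof that all conjectures of this form follow from a single identity expressing $g_M(t)$ near $t=-1$ in terms of the $k$-vertex-cut structure of $M$.
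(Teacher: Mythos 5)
This statement is a \emph{conjecture} in the paper, not a theorem: the paper offers no proof, only computational evidence (it reports verifying $\FP_3(G)=0$ for all \numprint{21078} four-connected planar graphs on $\le 13$ vertices, generated with \texttt{plantri}). Your proposal is likewise not a proof but a survey of possible strategies, and you are admirably candid that the crucial steps are open. Still, a few of the proposed routes have concrete obstructions worth naming.

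For the contractible-edge route, the missing ingredient is that $\FP_3$ is invariant under contracting a contractible edge of a $4$-connected planar graph. Unlike the Tutte polynomial, $g_M(t)$ does \emph{not} satisfy a deletion--contraction recursion, so there is no mechanism from which such an invariance would follow automatically; indeed the paper never proves invariance of $\FP_2$ under \emph{any} edge move beyond series--parallel operations, $2$-sums, and (conjecturally) $3$-sums and twists, and establishing this for $\FP_3$ and edge contraction would be a genuinely new result. Your base case is fine ($K_{2,2,2}$ is the unique $4$-connected planar graph on $6$ vertices, and $\FP_3(K_{2,2,2})=0$ is easily checked), but the inductive step remains the whole problem.

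The second route, seeking a $k=3$ analogue of \Cref{prop:N1asbeta} and then invoking \Cref{thm:betark-k=cc}, runs into a structural obstruction that already shows up one level down. \Cref{thm:cck=S} implies that every $3$-connected graph has $\cc_1=\cc_2=1$, yet $\FP_2(K_5)=\FP_2(K_{3,3})=0$ while $\FP_2=1$ for $3$-connected planar graphs; so $\FP_2$ is not a function of the $\cc_i$'s, and no higher analogue of \Cref{prop:N1asbeta} can express $\FP_2$ (hence, by the same token, $\FP_3$) purely through the $\beta$-binomial sums $\cc_i$. You anticipate this by allowing $\FP_2$ into the hoped-for formula, but then one needs a \emph{new} identity of the form $g'''_M(-1)=F(\cc_1,\cc_2,\cc_3,\FP_2)$ valid for all matroids (or all graphs), and nothing in the paper's machinery produces such an identity; the proof of \Cref{prop:N1asbeta} goes through the lattice of cyclic flats and there is no indication that the third derivative collapses to Boolean-lattice data in the same way. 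Your formula $g'''_M(-1)=6(\FP_2-\FP_3)$ is correct, but it merely recasts the conjecture rather than reducing it.

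In short: the paper itself treats this as an open problem supported only by data, and your sketch correctly identifies why a proof is hard, but neither of the proposed routes currently closes the gap.
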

The support for this conjecture from our data set of small graphs (\Cref{tab:dataset}) alone is weak, because this set contains merely 26 planar graphs that are 4-connected. To seriously probe \Cref{con:N3=0}, we therefore used the program \texttt{plantri} \cite{BrinkmannMcKay:plantri} to generate all 4-connected planar graphs with 13 vertices or less. There are \numprint{21078} such graphs, and for all of them, we found $\FP_3(G)=0$.

\subsection{Vertex cuts}\label{sec:vertex-identities}
For each of the small graphs (\cref{tab:dataset}), we computed all 3-sum decompositions and all twists. In each case, \Cref{con:3sum} and \Cref{con:twist} are satisfied. Let us note two consequences of these conjectures:
\begin{enumerate}
    \item[(1)] Let $G$ be a 3-connected graph and $S$ a 3-vertex cut of $G$. Then $\FP_2(G)=\FP_2(G\setminus H)$ where $H\subset E(G)$ denotes all edges of $G$ with both endpoints in $S$.
    \item[(2)] Let $G$ be a 3-connected graph and $S$ a minimal 4-vertex cut of $G$. Suppose that $G$ has an edge $e=\{v_1,v_3\}$ with both endpoints in $S=\{v_1,v_2,v_3,v_4\}$, and set $e'=\{v_2,v_4\}$. Then $\FP_2(G)=\FP_2( (G\setminus \{e\})\sqcup\{e'\})$. In particular, if $e'$ was already present in $G$, then $\FP_2(G)=\FP_2(G\setminus\{e\})$.
\end{enumerate}
Property (1) follows from \Cref{con:3sum}: Any decomposition of $G\setminus H=G_1\oplus_3 G_2$ induced by $S$ also gives a decomposition $G=(G_1\sqcup H)\oplus_3 G_2$, where $\FP_2(G_1)=\FP_2(G_1\sqcup H)$ because adding $H$ amounts to adding edges in parallel to the edges of the gluing triangle (recall that $\FP_2$ is invariant under parallel operations). For example, this explains why
\begin{equation*}
    \FP_2(K_{1,1,1,n})=\FP_2(K_{3,n})
\end{equation*}
are equal, as observed in \Cref{con:g(K3n)}.

Property (2) follows from \Cref{con:twist}, because we can assign an edge with both ends in $S$ (like $e$) to either side of a vertex bipartition $E(G)=A\sqcup B$. Twisting first with $e\in B$ (which in particular turns $e$ into $e'$), we can then apply the same twist again, only with $e'$ now considered part of $A$. After these two twists, the graph is back almost in its original configuration---only the edge $e$ got replaced by $e'$. Equivalently, we could view this operation as a single twist for the edge bipartition $A=E(G)\setminus\{e\}$ and $B=\{e\}$. If the edge $e'$ was present in $G$ to begin with, then this creates a double edge (like in \cref{fig:K5K5-twist}) and thus $\FP_2(G)=\FP_2(G\setminus \{e\})$.

This observation explains several simplifications. For example, a subgraph consisting of two adjacent squares, connected to the rest of the graph at four vertices $v_i$ with $v_1$ and $v_3$ being 3-valent, can be replaced by a single square according to
\begin{equation*}
    \Graph{boxtwist} \quad\mapsto\quad
    \Graph{boxtwist2} \quad\mapsto\quad
    \Graph{boxtwist3}
\end{equation*}
without changing $\FP_2$. Note that $v_1$ and $v_3$ become 2-valent after twisting $e=\{v_1,v_3\}$ to $e'=\{v_2,v_4\}$, hence they can be eliminated by series reductions. Applied to the prism and M\"obius ladder graphs from \cref{fig:moebius-ladders}, Property (2) thus explains why for all $n\geq 4$,
\begin{equation*}
    \FP_2(C^{2n}_{1,n})=\FP_2(C^{2n-2}_{1,n-1})
    \qquad\text{and}\qquad
    \FP_2(K_2\times C_n)=\FP_2(K_2\times C_{n-1})
\end{equation*}
are independent of $n$---in agreement with \Cref{con:prism-moebius}. Similarly, we can replace a strip of four triangles by only two consecutive triangles, which explains the identities $\FP_2(C^n_{1,2})=\FP_2(C^{n-2}_{1,2})$ for the circulants from \Cref{con:zigzag}:
\begin{align*}
    &\Graph{zztwist} \quad\mapsto\quad
    \Graph{zztwist2} \quad\mapsto\quad
    \Graph{zztwist3} \\
    \mapsto\quad&
    \Graph{zztwist4} \quad\mapsto\quad
    \Graph{zztwist5} \quad\mapsto\quad
    \Graph{zztwist6}
\end{align*}
Here we first applied Property (2) twice to move edges, then applied a parallel reduction, then a star-triangle transform, and finally another parallel reduction.

\subsection{Edge cuts}\label{sec:edge-identities}
\begin{figure}
    \centering
    $G_1\colon\ \verb|H?]RCNo| = \Graph[0.8]{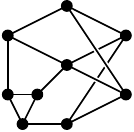}$ \quad $G_2\colon\  \verb|H_l@Gno| = \Graph[0.8]{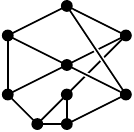}$ \quad $A\colon\  \verb|F_lv_| = \Graph[0.8]{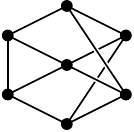}$
    \caption{The non-isomorphic graphs ($G_1$ and $G_2$) with a non-trivial 3-edge cut, leading (in both cases) to the same two graphs: $K_4$ (from the triangle side) and $A$.}%
    \label{fig:3edge-g}%
\end{figure}
We confirmed that \Cref{con:3edge} holds for all 3-edge cuts of all graphs in our data set. Furthermore, we can see that this conjecture for the linear and quadratic coefficients of $g(t)$ does not extend to further coefficients. For example,
\begin{align*}
    g_{G_1}(t) &= 8t^6+45t^5+102t^4+116t^3+66t^2+16t \\
    g_{G_2}(t) &= 7t^6+42t^5+99t^4+115t^3+66t^2+16t
\end{align*}
differ in all further coefficients, where the graphs are from \cref{fig:3edge-g}. Cutting out the triangle in $G_1$ and $G_2$ leads, in both cases, to the same pair of factor graphs: $A$ and $B=K_4$. So although this example is compatible with the conjecture as stated, since
\begin{align*}
    \frac{g_{A}(t)g_{K_4}(t)}{t} 
    &=\frac{(4t^5+19t^4+33t^3+25t^2+8t)(t^3+2t^2+2t)}{t} \\
    &= 4t^7+27t^6+79t^5+129t^4+124t^3+66t^2+16t,
\end{align*}
it also demonstrates that knowing the pair of factor graphs alone is not sufficient to determine $g(t)$ to higher orders than $t^2$.

The linear term of \Cref{con:3edge} amounts to the identity $\beta(G)=\beta(A)\beta(B)$ for Crapo's $\beta$ invariant of a graph with a 3-edge cut. This identity follows from \cite[Theorem~1]{SekineZhang:DecFlow}, which establishes the factorization
\begin{equation}\label{eq:flow-3cut}
    \Flow{G}(q)=\frac{\Flow{A}(q)\Flow{B}(q)}{(q-1)(q-2)}
\end{equation}
of the flow polynomials. The derivative of this identity at $q=1$ implies the factorization of $\beta$, since $\Flow{G}(q)=(-1)^{\loops(G)}\Tutte{G}(0,1-q)=(-1)^{\loops(G)}(1-q)\beta(G)+\asyO{(1-q)^2}$.

Turning to the twists of 4-edge cuts, again our full data set confirms \Cref{con:edgetwist}, and examples of such twist pairs in the data show that the identity does not extend to higher coefficients. For the linear coefficient, we can prove the conjecture, namely that Crapo's invariant $\beta(G_1)=\beta(G_2)$ agrees for two graphs that differ by a 4-edge twist. As before, this follows because the entire flow polynomial is preserved:
\begin{lemma}\label{lem:flow-twist}
    If two graphs $G_1$ and $G_2$ are related by a 4-edge twist (as in \cref{fig:edge-twist}), then their flow polynomials are equal: $\Flow{G_1}(q)=\Flow{G_2}(q)$.
\end{lemma}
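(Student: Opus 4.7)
The plan is to compute both flow polynomials as sums over the four cut-edge values and to exploit a $V_4$-symmetry of the side-wise boundary counts restricted to nowhere-zero data. Orient every cut edge from its $A$-endpoint to its $B$-endpoint. A nowhere-zero $\Z/q\Z$-flow on $G_j$ is then determined by its values $y = (y_1, y_2, y_3, y_4)$ on the cut edges, together with flows on $E(A)$ and $E(B)$ realising the induced boundary conditions at the cut vertices. Conservation summed over $V(A)$ forces $\sum_i y_i = 0$ and each $y_i \neq 0$, so $y$ lies in $V_q^\ast := \{y \in (\Z/q\Z \setminus \{0\})^4 : \sum_i y_i = 0\}$. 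Letting $N_A(y)$ count the nowhere-zero flows $\phi \colon E(A) \to \Z/q\Z \setminus \{0\}$ with boundary $\partial \phi = \sum_i y_i [a_i]$, and $N_B(y)$ its analogue with the $y_i$ prescribing inflows at $b_i$, one obtains
\begin{equation*}
\Flow{G_1}(q) = \sum_{y \in V_q^\ast} N_A(y)\, N_B(y), \qquad \Flow{G_2}(q) = \sum_{y \in V_q^\ast} N_A(y)\, N_B(\sigma y),
\end{equation*}
where $\sigma = (12)(34)$ is the double transposition defining the twist and $(\sigma y)_j := y_{\sigma(j)}$, since the cut edge arriving at $b_j$ in $G_2$ is $e'_{\sigma(j)}$ and thus carries value $y_{\sigma(j)}$.

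The crux is to show that $N_B$ (and, by the same argument, $N_A$) is invariant on $V_q^\ast$ under the entire Klein four-group $V_4 := \{e, (12)(34), (13)(24), (14)(23)\}$ acting on tuples by permutation of indices. Inclusion-exclusion on the zero set of $\phi$ gives
\begin{equation*}
N_B(y) = \sum_{F \subseteq E(B)} (-1)^{|F|}\, q^{d(B \setminus F)} \prod_{C \in \pi_0(B \setminus F)} \bigl[\textstyle\sum_{i \colon b_i \in C} y_i = 0\bigr],
\end{equation*}
with $d(H) := |E(H)| - |V(H)| + |\pi_0(H)|$. Grouping by the induced partition $\Pi(F)$ of $\{1,2,3,4\}$ (in which $i$ and $j$ share a part iff $b_i$ and $b_j$ lie in the same component of $B \setminus F$) turns this into an expansion $N_B(y) = \sum_\Pi c_\Pi \prod_{P \in \Pi}[\sum_{i \in P} y_i = 0]$ with graph-dependent coefficients $c_\Pi$. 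On $V_q^\ast$ the indicator $[y_i = 0]$ vanishes, so any partition containing a singleton part contributes $0$, and only partitions all of whose parts have size at least $2$ survive; on $\{1, 2, 3, 4\}$ these are precisely the trivial partition together with the three perfect matchings $M_k$. Every element of $V_4$ fixes each of these four partitions setwise, the trivial condition $\sum_i y_i = 0$ is automatic on $V_q^\ast$, and for any matching the two equations $y_i + y_j = 0$ and $y_k + y_l = 0$ coincide modulo $\sum_i y_i = 0$, so the associated indicator is $V_4$-invariant. Hence the surviving decomposition of $N_B$ is $V_4$-invariant, which is the required lemma; the argument is unaffected when the $a_i$ or $b_i$ fail to be distinct, since vertex coincidences only force $\Pi(F)$ to be coarser.

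Specialising the lemma to $\tau = \sigma$ gives $N_B(\sigma y) = N_B(y)$ on $V_q^\ast$, so the two displayed sums agree term by term and $\Flow{G_1}(q) = \Flow{G_2}(q)$ as polynomials in $q$. The main obstacle is the $V_4$-invariance of $N_B$ on $V_q^\ast$, whose decisive input is that the nowhere-zero constraint $y \in V_q^\ast$ kills precisely those partitions of $\{1, 2, 3, 4\}$ on which $V_4$ would act non-trivially, leaving exactly the four $V_4$-stabilised partitions (trivial plus three matchings) whose evaluation conditions are themselves $V_4$-invariant modulo the global sum.
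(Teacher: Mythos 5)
Your proof is correct, and it takes a genuinely different route from the paper's. The paper simply invokes Kochol's decomposition theorem (\cite{Kochol:DecFlow}) as a black box: it defines four auxiliary graphs $A_1,\ldots,A_4$ obtained by plugging in the four ways to reconnect $\{a_1,\ldots,a_4\}$ (a cone vertex and the three perfect matchings), cites the formula $\Flow{G_1}(q)=\sum_{i,j}(M_4(q)^{-1})_{ij}\Flow{A_i}(q)\Flow{B_j}(q)$, and observes that each $A_i$ is fixed by the relabelling $\sigma=(12)(34)$ of the attachment vertices, so the right-hand side is unchanged. You instead re-derive from first principles the fact that Kochol's formula encapsulates: after an inclusion--exclusion over the zero set of the partial flow on $B$, the boundary count $N_B(y)$ decomposes over partitions of $\{1,2,3,4\}$, and on $V_q^\ast$ only the trivial partition and the three matchings survive --- precisely the four $V_4$-stable configurations that label Kochol's auxiliary graphs. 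Your argument is therefore self-contained (no external citation), makes the role of the Klein four-group transparent, and shows directly where the nowhere-zero constraint enters; the paper's version is shorter because the combinatorial core is delegated to \cite{Kochol:DecFlow}. One minor remark: what you actually use is invariance under the single involution $\sigma$, and the full $V_4$-invariance, while true and pleasant, is not needed; also, the paper's version of the lemma implicitly assumes a \emph{minimal} $4$-edge cut (so that each cut edge genuinely crosses between $A$ and $B$), which your reduction $\sum_i y_i=0$ also tacitly uses and which is worth stating.
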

\begin{proof}
    Let $A\sqcup B=G_1\setminus C$ denote the two sides of the 4-edge cut $C=\{e_1,\ldots,e_4\}$. Let $a_i$ and $b_i$ denote the end points of these edges $e_i=\{a_i,b_i\}$, with $a_i$ in $A$ and $b_i$ in $B$. Construct graphs $A_i$ from $A$ (and analogously $B_i$ from $B$) for $i=1,\ldots,4$ as follows:
    \begin{align*}
        A_1&=A\cup\{\{a_1,v\},\{a_2,v\},\{a_3,v\},\{a_4,v\}\}, \\
        A_2&=A\cup\{\{a_1,a_2\},\{a_3,a_4\}\},\\
        A_3&=A\cup\{\{a_1,a_3\},\{a_2,a_4\}\},\\
        A_4&=A\cup\{\{a_1,a_4\},\{a_2,a_3\}\}.
    \end{align*}
    Note that $A_1\cong G_1/B$ is obtained by connecting a new vertex $v$ to all $a_i$, whereas $A_2,A_3,A_4$ arise from $A$ after attaching two edges. Then the flow polynomials of the pieces $A_i$ and $B_j$ already determine the flow polynomial of $G_1$, namely
    \begin{equation}\label{eq:flow-4cut}\tag{$\ast$}
        \Flow{G_1}(q) = \sum_{i,j=1}^4 \left(M_4(q)^{-1}\right)_{ij} \Flow{A_i}(q) \Flow{B_j}(q)
    \end{equation}
    for some explicit $4\times4$ matrix $M_4(q)$; see \cite[Theorem~1]{Kochol:DecFlow}. The twisted graph $G_2$ can be represented with the exact same $C$ and $B$, by merely trading $A$ for $\sigma(A)$, where $\sigma$ only swaps the labels of the $a_i$ vertices, according to $a_1\leftrightarrow a_2$ and $a_3\leftrightarrow a_4$. Since all four parts $A_i=\sigma(A_i)=\sigma(A)_i$ are invariant under this relabelling, we conclude that the right-hand sides of the decompositions \eqref{eq:flow-4cut} for $\Flow{G_1}(q)$ and $\Flow{G_2}(q)$ are equal.
\end{proof}
The above identity gives a construction of pairs of nonisomorphic, 4-connected graphs with the same flow polynomial. This gives a partial answer to \cite[Problem~2]{SekineZhang:DecFlow}. However, not every coincidence of flow polynomials between 4-connected, non-isomorphic graphs in our data set can be explained by (sequences of) 4-edge twists.

\subsection{Relation to the Tutte polynomial}
\begin{figure}
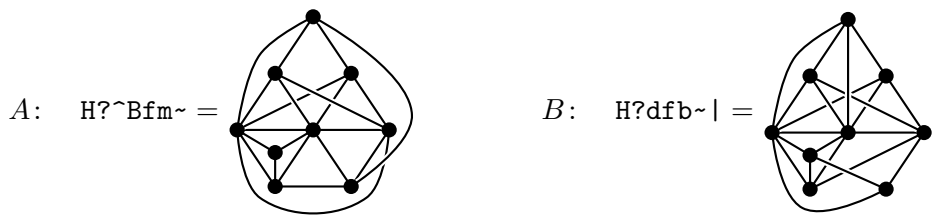

    \centering
    $A\colon \quad \verb.H?^Bfm~.=\Graph{gTutteA}$ \qquad\qquad $B\colon\quad\verb.H?dfb~|.=\Graph{gTutteB}$
    \caption{Two graphs with the same Tutte polynomial but different Speyer polynomials; as drawings and as strings in \nauty's \texttt{graph6} format \cite{McKayPiperno:II}.}%
    \label{fig:Tutte-g}%
\end{figure}
Our data set contains several pairs of graphs with the same Tutte-, but different $g$-polynomials. For example, the graphs in \cref{fig:Tutte-g} have equal Tutte polynomial, but
\begin{align*}
    g_A(t) &=78t^8+623t^7+2096t^6+3854t^5+4176t^4+2658t^3+916t^2+132t \quad\text{and}\\
    g_B(t) &=79t^8+637t^7+2143t^6+3919t^5+4216t^4+2667t^3+916t^2+132t
\end{align*}
differ. Therefore, the Tutte polynomial does not determine $g$---confirming a conjecture by Speyer \cite[p.~31]{Speyer:MatroidKtheory}.
Note that not even the leading coefficients $\omega(A)=78$ and $\omega(B)=79$ agree; indeed the only coefficient of $g$ that is determined by the Tutte polynomial is the linear coefficient $g'_M(0)=\beta(M)$ from \eqref{eq:gm10}.\footnote{The equality of the quadratic coefficient $916$ in our example is a coincidence---there are other pairs of graphs with equal Tutte polynomial for which also the quadratic coefficients of $g(t)$ differ.}
Our invariants $\FP_2(A)=1$ and $\FP_2(B)=-1$ also disagree in this example, so $\FP_2$ is not determined by the Tutte polynomial either.

Furthermore, a rank calculation with our data shows that the vector space of linear relations between coefficients of $g$ and coefficients of the Tutte polynomial has dimension three. Therefore, for biconnected graphs, there are no more linear constraints than those we know from \eqref{eq:gm10} and \Cref{thm:gprime=cc}, namely:
\begin{equation*}
    \FP_0(G)=1,\qquad
    \FP_1(G)=0,\qquad \text{and}\qquad
    \sum_i \FP_i(G)=\beta(G).
\end{equation*}
The Tutte- and $g$-invariants of biconnected graphs (let alone arbitrary connected matroids) thus appear to be essentially unrelated---apart from the fact that both track Crapo's $\beta$.

Surprisingly, we nevertheless found a new relation---by restricting our attention to a subclass of graphs. Namely, we considered cubic (3-regular) graphs. Minimal counterexamples to problems like Tutte's 5-flow conjecture or the Berge-Fulkerson conjecture are known to be cubic (if existent), so it could be useful to identify relations of graph invariants even if they only apply to cubic graphs.

Our data set includes all 587 biconnected cubic graphs with 14 vertices (21 edges) or less and all \numprint{85642} cyclically 4-connected cubic graphs with 20 vertices (30 edges) or less. This covers \numprint{86118} graphs in total.\footnote{Both sets overlap in the 111 cyclically 4-connected cubic graphs with 14 vertices or less.} We used {\Maple}'s \texttt{GraphTheory} package to compute their flow polynomials and thereby the Tutte coefficients $t_{01}$ and $t_{02}$. In each case, we confirmed \Cref{con:g-Tutte}.

Since a cubic graph has $\abs{V(G)}=2(\loops(G)-1)$ edges, and the flow polynomial is $\Flow{G}(1-y)=(-1)^{\loops(G)} (y t_{01}(G)+y^2 t_{02}(G))+\asyO{y^3}$, we can state \Cref{con:g-Tutte} also as
\begin{equation}\label{eq:g-flow}
    g_G(t)=R_G(t)+\asyO{t^3}
    \qquad\text{if we set}\qquad
    R_G(t) = \frac{\Flow{G}(1+2t)}{2\cdot (2t-1)^{\loops(G)-1}}.
\end{equation}
For the right-hand side, \Cref{lem:flow-twist} shows that $R_{G_1}(t)=R_{G_2}(t)$ for 4-edge twists, whereas \eqref{eq:flow-3cut} shows that $R_G(t)=R_A(t)R_B(t)/t$ for a 3-edge cut. \Cref{con:g-Tutte} is therefore compatible with \Cref{con:3edge} and \Cref{con:edgetwist}.

As a final piece of positive evidence, we use the flow polynomial
\begin{equation*}
    \Flow{K_2\times C_n}(q)=(q-2)^n+(q-1)(q-3)^n+(q^2-3q+1)(-1)^n
\end{equation*}
of the prism graphs, computed in \cite[\S4]{BiggsDamerellSands:RecFam}, to determine that the right-hand side is
\begin{equation*}
    R_{K_2\times C_n}(t) = t\Big(2^n-n-1\Big)+t^2\Big(n 2^n-n^2-3n+2\Big)+\asyO{t^3}.
\end{equation*}
Substituting \Cref{con:prism-moebius} into the left-hand side of \eqref{eq:g-flow}, we find that both sides agree up to $\asyO{t^3}$. In other words, \Cref{con:prism-moebius} and \Cref{con:g-Tutte} are compatible.

\subsection{Factorizations}
It was shown in \cite{Speyer:MatroidKtheory} that any decomposition of a matroid into a direct sum or 2-sum forces a factorization of the polynomial $g_M(t)$:
\begin{equation*}
    g_{M_1\oplus M_2}(t)=g_{M_1}(t)\cdot g_{M_2}(t), \qquad
    g_{M_1\oplus_2 M_2}(t)=\frac{g_{M_1}(t)\cdot g_{M_2}(t)}{t}.
\end{equation*}
These factorizations occur for disconnected graphs or for graphs that have a 1- or 2-vertex cut. For a 3-connected graph, neither decomposition applies, and in fact the presence of a 3-vertex cut does not necessarily imply any factorization (see \Cref{rem:K5plusK5}).

However, in contrast to the behaviour of the Tutte polynomial \cite{MerinoMierNoy:IrredTutte}, there are examples of 3-connected graphs for which $g_G(t)$ \emph{does} factorize non-trivially. For example, the 4-partite graph $K_{1,1,1,6}$ from \cref{fig:K111n} has
\begin{equation*}
    g_{K_{1,1,1,6}}(t)=t(3t+2)(2t+1)(t+2)^5
\end{equation*}
and more generally, $g_{K_{1,1,1,n}}(t)$ is divisible by $(t+2)^{n-1}$ according to \Cref{con:g(K3n)}.
\begin{conjecture}
    For all integers $r,s\geq 1$ the polynomial $g_{K_{2+r}\vee\overline{K}_{s+r}}(t)$ has a zero of order $s$ at $t=-1-1/r$. 
\end{conjecture}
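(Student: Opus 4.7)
The plan is to derive a closed-form expression for $g_{K_{r+2}\vee \overline{K}_{s+r}}(t)$ that exhibits $(rt+r+1)^s$ as an explicit factor, which vanishes precisely at $t=-1-1/r$. For $r=1$ the expected factor is $(t+2)^s$, matching $(t+2)^{n-1}$ in \Cref{con:g(K3n)} when $n=s+1$, so the present conjecture extends that pattern to general $r$.

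The first step is to classify the cyclic flats of $G=K_{r+2}\vee \overline{K}_{s+r}$. Writing $L=V(K_{r+2})$ and $R=V(\overline{K}_{s+r})$, the flat condition forces each connected component of a cyclic flat to be an induced subgraph. Since $L$ induces a clique and every $L$--$R$ pair is adjacent, any cyclic flat has at most one nontrivial component, which must be either $E(K_S)$ with $S\subseteq L$ and $|S|\geq 3$, or $E(K_S\vee\overline{K}_T)$ with $S\subseteq L$, $T\subseteq R$, $|S|\geq 2$ and $|T|\geq 1$. Together with the empty flat, these exhaust $\LCF_G$. The symmetry group $\mathrm{Sym}(L)\times\mathrm{Sym}(R)$ acts on $\LCF_G$ with orbits indexed by the pair $(j,i)=(|S|,|T|)$, and every interval of $\LCF_G$ sits inside a product of Boolean lattices; this yields closed expressions for $\mu_{\LCF_G}(A,B)$ and collapses \Cref{alg:gRec} to a recursion in the three integer variables $(j,i,k)$.

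The next step is to prove the factorization by induction on $s$. The base case $s=1$ reduces to verifying $g_{K_{r+2}\vee \overline{K}_{r+1}}(-1-1/r)=0$, which is a finite computation from the recursion above. For the inductive step, I would exploit the fact that $K_{r+2}\vee\overline{K}_{s+r-1}$ is itself a cyclic flat of $G$, so that chains of cyclic flats passing through it contribute a multiple of $(rt+r+1)\,g_{K_{r+2}\vee\overline{K}_{s+r-1}}(t)$ by the inductive hypothesis; the remaining chains (those passing through cyclic flats of the new type $(j,s+r)$) then have to be shown collectively to also vanish at $t=-1-1/r$ to order $s$.

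The main obstacle is controlling the alternating cancellations in this Schubert-style decomposition so that the factor $(rt+r+1)$ is preserved rather than destroyed at each step; the equivalent form $1+t=-1/r$ in the expansion \eqref{eq:gexpand1} is what should make the M\"obius arithmetic over the product-of-Boolean intervals line up. A cleaner alternative would be to exhibit a matroid-polytope decomposition of $G$ in which each piece has $g$-polynomial vanishing at $t=-1-1/r$ to order at least $s$, and conclude by the covaluative property of $g_M(t)$; finding such a decomposition adapted to the $L$--$R$ asymmetry of $G$ appears to be the most delicate part of the whole plan.
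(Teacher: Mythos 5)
The statement you are asked to prove is a \emph{conjecture} in the paper: the authors offer no proof, only numerical verification (the text records the factorizations of $g_{K_4\vee\overline{K}_7}$ and $g_{K_5\vee\overline{K}_6}$ as illustrative examples, and the surrounding discussion of $K_{3,n}$ and $K_{1,1,1,n}$ is itself conjectural, checked by a dedicated implementation of Algorithm~\ref{alg:gRec} up to $n\leq 60$). So there is no paper proof to compare against.

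Taken on its own terms, your proposal is a sensible plan but not a proof, and you say so yourself. The preliminary reductions are sound: your classification of the cyclic flats of $G=K_{r+2}\vee\overline{K}_{s+r}$ is correct (at most one nontrivial component, which is either $E(K_S)$ with $|S|\geq 3$, $S\subseteq L$, or $E(K_S\vee\overline{K}_T)$ with $|S|\geq 2$, $|T|\geq 1$), and using the $\mathrm{Sym}(L)\times\mathrm{Sym}(R)$-symmetry to index orbits by $(|S|,|T|)$ and push the M\"obius computation through products of Boolean lattices is exactly the sort of specialization the paper uses in its footnote on $\LCF(K_{3,n})$. This setup would turn Algorithm~\ref{alg:gRec} into a two-parameter recursion and is the right framework for either a proof or a large-scale check.

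The genuine gap is the one you flag: the inductive step does not go through as sketched. Passing through the cyclic flat $K_{r+2}\vee\overline{K}_{s+r-1}$ only controls a subset of the chains, and the remaining chains (those whose penultimate element has $|T|$ equal to $s+r$, or which skip the distinguished flat) have no individual reason to vanish at $t=-1-1/r$ to the required order; one needs a global identity showing the alternating sum does, and you have not supplied a mechanism for it. Your fallback idea — find a covaluative decomposition of the matroid polytope of $G$ into pieces whose $g$-polynomials each vanish to order $\geq s$ at $t=-1-1/r$ — is, if anything, the more promising route, but as stated it is still a wish rather than a construction; note also that the individual Schubert summands in~\eqref{eq:Speyer-from-cyclics} certainly do \emph{not} each have this zero (the uniform piece $\Schubert(n,\{1,\dots,r\})$ already fails), so any such decomposition would have to be different from the cyclic-flat one. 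At present the statement remains open, and your write-up correctly identifies where the difficulty lies.
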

This pattern emerged after computing many examples; e.g.\ 
\begin{align*}
g_{K_4\vee\overline{K}_7}(t) &= t(2t+3)^7(238t^4+605t^3+527t^2+177t+18), \\
g_{K_5\vee\overline{K}_6}(t) &= t(3t+4)^3(4874t^6+21834t^5+39456t^4+36451t^3+17892t^2+4320t+384).
\end{align*}
The set of small graphs (\cref{tab:dataset}) contains \numprint{2858560} graphs that are 3-connected. In most cases, $g_G(t)/t$ is irreducible; factorizations occur for only \numprint{240591} of the 3-connected graphs. The vast majority (\numprint{232906}) of factorizations is due to a zero at $t=-2$. Further rational zeros occur at $t\in\{-3/2,-4/3,-2/3,-1/2\}$. There remain \numprint{6238} graphs without rational zeroes but factorizations into higher degree polynomials, e.g.\ for joins of the path graph $P_k$ on $k$ vertices with $K_2=P_2$ we find\footnote{In the \texttt{graph6} format of the data set files, these graphs are $P_5\vee K_2=$\texttt{FDZ\~{}w} and $P_7\vee K_2=\texttt{H@IQV\~{}\~{}}$.}
\begin{align*}
    g_{P_5\vee K_2}(t) &= t(2t^2+5t+4)(4t^3+12t^2+11t+4), \\
    g_{P_7\vee K_2}(t) &= t(4t^3+14t^2+17t+8)(8t^4+32t^3+48t^2+31t+8).
\end{align*}

It would be interesting to find any graph-theoretic interpretations/explanations for any of the rational zeroes, or for higher degree factorizations. There do not seem to be any known such criteria in the literature.

\subsection{Connection to Feynman integrals}
\label{sec:Feynman}

In perturbative quantum field theory, scattering amplitudes of particles are computed as a sum over graphs (Feynman diagrams). The contribution of each graph is determined by a (Feynman) integral. In the simplest cases, these integrals evaluate to real numbers\footnote{In general, the integrals are functions that depend on masses and momenta of the particles.}, called \emph{Feynman periods} \cite{BlochEsnaultKreimer:MotivesGraphPolynomials,Schnetz:Census}.

These Feynman periods are hard to compute and they are only defined for sufficiently connected graphs (otherwise the integral diverges). For example, let $G$ be a cyclically 6-connected 4-regular graph, that is, a 4-regular graph which has no 4-edge cuts other than those that isolate one of the vertices. Then for any choice of vertex $v$, the Feynman period of the graph $G\setminus\{v\}$ is well-defined. Various formulas for these Feynman periods, which we denote $\Period(G\setminus\{v\})\in\R$, can be found in \cite{Schnetz:Census}. For example,
\begin{equation*}
    \Period(K_5\setminus\{v\})=\Period\left(\Graph[0.25]{w3A}\right) = 6\zeta(3),\quad
    \Period(C^6_{1,2}\setminus\{v\})=\Period\left(\Graph[0.3]{ws4A}\right) = 20\zeta(5).
\end{equation*}
where $\zeta(s)=\sum_{n=1}^{\infty} 1/n^s$ is the Riemann zeta function.

Crucially for the link to $g_G(t)$, the Feynman period integrals are independent of the choice of the vertex $v$. Therefore, we may view these integrals as defining a function
\begin{equation*}
    G\mapsto\PeriodC(G)\defas\Period(G\setminus\{v\})
\end{equation*}
on the set of cyclically 6-connected 4-regular graphs. This construction generalizes to cyclically 4-connected 3-regular graphs.\footnote{Feynman periods are defined for more general graphs, but we limit our attention to the cubic and quartic cases, which are the most interesting from the point of view of physics.}

The Feynman period satisfies identities for twists and $3$-sums:
\begin{equation*}
    \PeriodC(G)=\PeriodC(G')\qquad\text{and}\qquad
    \PeriodC(G_1 \oplus_3 G_2) = \PeriodC(G_1) \cdot \PeriodC(G_2)
\end{equation*}
in the notation of \Cref{con:3sum} and \Cref{con:twist}; indeed the twist was discovered in this context \cite{Schnetz:Census}.
The invariant $\FP_2(G)=-g_G''(-1)/2$ obtained from Speyer's polynomial $g_G(t)$ thus behaves similar to the Feynman period. In fact, referring to the tables of periods for 4-regular graphs \cite{PanzerSchnetz:Phi4Coaction} and 3-regular graphs \cite{BorinskySchnetz:RecursivePhi3}, we confirmed the following conjecture for all graphs whose periods are known.
\begin{conjecture}\label{con:Period-N2}
    If two cyclically 6-connected 4-regular graphs (or cyclically 4-connected 3-regular graphs) $G_1,G_2$ have equal period $\PeriodC(G_1)=\PeriodC(G_2)$, then also $\FP_2(G_1)=\FP_2(G_2)$.
\end{conjecture}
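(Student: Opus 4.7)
The natural plan is to show that $\FP_2(G\setminus v)$ and the Feynman period $\PeriodC(G)$ factor through the same equivalence relation on the set of cyclically 6-connected 4-regular (resp.\ cyclically 4-connected 3-regular) graphs. The first step is to define a completed invariant $\widetilde{\FP}_2(G) \defas \FP_2(G\setminus v)$ and prove that it is independent of the choice of $v$. This \emph{completion symmetry} mirrors the defining property of $\PeriodC$ and is the key new identity needed to make $\widetilde{\FP}_2$ a function of the completed graph alone. Concretely, one needs
\begin{equation*}
    \FP_2(G\setminus v) = \FP_2(G\setminus w)
\end{equation*}
for any two vertices $v,w$ of a completed $G$; this is a precise combinatorial statement about $g_M(t)$ that can be tested against the datasets of \cite{PanzerSchnetz:Phi4Coaction,BorinskySchnetz:RecursivePhi3} and then attacked by exhibiting an explicit bijection on admissible Delannoy-path configurations (via \Cref{sec:lattice-paths}) for the two lattice-path matroid decompositions of $G\setminus v$ and $G\setminus w$.

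The second step is to verify that $\widetilde{\FP}_2$ inherits all remaining known period identities. Removing $v$ turns the twist identity for $\PeriodC$ into \Cref{con:twist}; the 3-sum factorization of $\PeriodC$ becomes \Cref{con:3sum} (using the formula $\FP_2(G_1\oplus_3 G_2) = \FP_2(G_1) + \FP_2(G_2) - p_1 p_2$); planar duality of the period is matched by the known identity $\FP_2(G) = \FP_2(G^\star)$; and both sides are invariant under series--parallel operations. Thus, conditional on \Cref{con:twist} and \Cref{con:3sum}, all \emph{combinatorially defined} period symmetries would automatically pass to $\widetilde{\FP}_2$. Establishing those two conjectures will likely require a reformulation of $\FP_2(G)$---for instance via the valuative $\FPv_2$ of \Cref{rem:FP1-valuative} or via a decomposition of the matroid polytope compatible with the respective geometric operation on graphs.

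The main obstacle is that the collection of identities above is not known to be complete: the Feynman-period tables contain pairs of graphs with equal periods for which no combinatorial graph transformation is currently known (the ``cosmic Galois'' coincidences of \cite{PanzerSchnetz:Phi4Coaction}). Any complete proof must therefore pass through one of two routes. The first is a \emph{completeness theorem} for period symmetries, asserting that completion, twist, 3-sum, planar duality and series--parallel reduction generate all period equivalences among the graphs under consideration; this is a long-standing open problem in quantum field theory and unlikely to be settled soon. The second, which I expect to be the more productive line of attack, is to build a direct algebraic bridge between $g_M(t) \bmod t^3$ and the motivic Hopf-algebraic structure underlying $\PeriodC$, exploiting the $K$-theoretic origin of $g_M(t)$ from \cite{Speyer:MatroidKtheory,FinkSpeyer:K-classes}. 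Such a bridge would exhibit both $\PeriodC(G)$ and $\widetilde{\FP}_2(G)$ as factoring through a common motivic quotient, immediately forcing $\PeriodC(G_1) = \PeriodC(G_2) \Rightarrow \widetilde{\FP}_2(G_1) = \widetilde{\FP}_2(G_2)$ without any need for a classification of period identities.
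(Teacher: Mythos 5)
This statement is a conjecture in the paper, supported only by computational verification against the period tables of \cite{PanzerSchnetz:Phi4Coaction} and \cite{BorinskySchnetz:RecursivePhi3}; the paper offers no proof, so there is nothing to compare your strategy against. Your proposal is a research program rather than a proof, which you acknowledge, but even at that level there is a setup error: you define the target as $\widetilde{\FP}_2(G)\defas\FP_2(G\setminus v)$ and make its well-definedness (independence of $v$) the key step, whereas \Cref{con:Period-N2} is a statement about $\FP_2(G_1)$ and $\FP_2(G_2)$ of the \emph{completed} regular graphs $G_1,G_2$ themselves. That invariant is manifestly well-defined---no completion symmetry is needed---and it is exactly the quantity for which \Cref{con:twist} and \Cref{con:3sum} are formulated. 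Introducing $\FP_2(G\setminus v)$ only creates a new, unproved, and in fact unnecessary claim.

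A second concrete gap is the planar-duality step. You assert that period duality is "matched by the known identity $\FP_2(G)=\FP_2(G^\star)$", but the period identity concerns the decompleted graphs $G_i\setminus v_i$ being planar duals, while the graphs $G_i$ themselves need not be planar---the paper even spells this out explicitly when motivating the follow-up conjecture about duality "after deleting a vertex". So the listed identity does not apply, and the duality case remains an open ingredient rather than a verified one. Finally, both of your proposed routes (a completeness theorem for period symmetries, or a motivic bridge from the $K$-theoretic definition of $g_M(t)$) are stated as aspirations without a single step carried out, so neither constitutes progress toward a proof; what you have is a (partially misaligned) reformulation of the conjecture, not an argument for it.
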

This claim goes beyond \Cref{con:3sum} and \Cref{con:twist}, because there exist pairs of graphs that are not related by twists or products but which nevertheless share the same period. For example, it is known that if $G\setminus \{v\}$ is planar, then its Feynman period $\Period(G\setminus \{v\})=\Period( (G\setminus \{v\})^\star)$ equals that of the planar dual graph $(G\setminus \{v\})^\star$. Hence \Cref{con:Period-N2} implies in particular:
\begin{conjecture}
    If two cyclically 6-connected 4-regular graphs $G_1,G_2$ have vertices $v_i\in V(G_i)$ so that $G_i\setminus \{v_i\}$ are planar and dual to each other, then $\FP_2(G_1)=\FP_2(G_2)$.
\end{conjecture}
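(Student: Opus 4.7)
The plan is to deduce this statement as an immediate corollary of \Cref{con:Period-N2}, using as the only additional input the classical planar-duality property of Feynman periods: if $H\defas G\setminus\{v\}$ is a planar graph, then $\PeriodC(G)=\Period(H)=\Period(H^\star)$, where $H^\star$ denotes the planar dual of $H$. This identity is a standard fact in perturbative quantum field theory, reflecting the self-duality of the (first and second) Symanzik polynomials under matroid duality together with the fact that planar duality of a graph realizes the dual cycle matroid.

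Granting this, I would apply the identity to each side and combine with the hypothesis that $G_2\setminus\{v_2\}=(G_1\setminus\{v_1\})^\star$ to obtain
\begin{equation*}
    \PeriodC(G_1)=\Period(G_1\setminus\{v_1\})=\Period\big((G_1\setminus\{v_1\})^\star\big)=\Period(G_2\setminus\{v_2\})=\PeriodC(G_2).
\end{equation*}
A single invocation of \Cref{con:Period-N2} then yields $\FP_2(G_1)=\FP_2(G_2)$, completing the deduction.

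The obvious obstacle is that this approach only reduces one open conjecture to another. An independent proof would require a purely matroid- or graph-theoretic argument, and the main difficulty is that the matroids of $G_1$ and $G_2$ themselves are \emph{not} matroid duals of each other: duality holds only after deleting $v_1$ and $v_2$. Re-attaching each $v_i$ adds a "4-star" $K_{1,4}$ on the four neighbours, whereas the matroid-dual operation corresponds to inserting a "4-bond" structure, and it is not transparent how $\FP_2$ transforms under this asymmetric pair of operations. A natural next step would be to study how the invariant $\FP_2$ changes under gluing a $K_{1,4}$ along four prescribed vertices, and to compare this with its dual counterpart. If such a comparison can be made explicit---perhaps via a combination of the 3-sum relation of \Cref{con:3sum} and the vertex twist of \Cref{con:twist} applied to the 4-vertex cut formed by the neighbours of $v_i$---then one would obtain an unconditional proof independent of \Cref{con:Period-N2}.
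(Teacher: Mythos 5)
Your proposal matches the paper's reasoning exactly: the paper presents this statement as a consequence of \Cref{con:Period-N2}, combined with the classical fact that the Feynman period of a planar graph $G\setminus\{v\}$ equals that of its planar dual, so that $\PeriodC(G_1)=\PeriodC(G_2)$ under the stated hypothesis. Since \Cref{con:Period-N2} is itself open, the paper (like you) states this only as a conjecture rather than a theorem, and your observation that a direct matroid-theoretic argument is obstructed by the fact that the matroids of $G_1$ and $G_2$ are not themselves dual is consistent with why the authors left it conditional.
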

We stress that this identity is unrelated to \Cref{con:planar=1}, because $G_i$ need not be planar. An example of such a duality ``after deleting a vertex'' is shown in \cite[Figure~5.4]{PanzerYeats:c2Martin}.

Only a few other invariants of graphs are currently known that satisfy the identities of Feynman periods like $\FP_2$ in \Cref{con:Period-N2}: the $c_2$-invariant \cite{Schnetz:Fq}, the graph permanent \cite{CrumpDeVosYeats:Permanent,Crump:ExtendedPermanent}, the Hepp bound \cite{Panzer:HeppBound}, and the Martin invariant \cite{PanzerYeats:c2Martin}. It would be very interesting to understand how these invariants are related to $\FP_2(G)$. In particular, we expect:
\begin{conjecture}\label{con:N2-from-Martin}
    For a cyclically 6-connected 4-regular (or cyclically 4-connected 3-regular) graph $G$, the Martin sequence $\Martin(G^\bullet)$ determines the value of $\FP_2(G)$.
\end{conjecture}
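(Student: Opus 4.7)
The plan is to attack the conjecture in two complementary directions, one computational and one structural. As a first step I would gather more data: using the Martin polynomial code of \cite{PanzerYeats:c2Martin} in tandem with \Cref{alg:gRec}, tabulate both $\Martin(G^\bullet)$ and $\FP_2(G)$ for all cyclically 6-connected 4-regular graphs and all cyclically 4-connected 3-regular graphs up to, say, 18 vertices. This would (optimistically) establish a concrete map $\Martin(G^\bullet)\mapsto \FP_2(G)$ on that sample and, crucially, suggest an explicit formula, for instance a specialization or a short linear combination of Martin coefficients that reproduces $\FP_2$. Much of the progress on closely related invariants (the $c_2$-invariant, the Hepp bound) started this way.

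Second, I would try to leverage the identities that $\FP_2$ and $\Martin(G^\bullet)$ share with the Feynman period. By \Cref{con:3sum}, \Cref{con:twist}, \Cref{con:planar=1}, and the discussion in \Cref{sec:vertex-identities,sec:edge-identities}, the invariant $\FP_2$ is preserved by 4-vertex twists and planar duality, and it transforms predictably under 3-sums; the Martin sequence obeys the same operations. The plan is to show that the quotient set of cyclically 6-connected 4-regular graphs under the equivalence generated by these moves is already a refinement of (or equal to) the partition by equal Martin sequence. Concretely, one would attempt a reduction theorem: every cyclically 6-connected 4-regular graph can be transformed by a sequence of twists, 3-sum factorizations, and planar dualities into a canonical representative in a controlled class (cf.\ the use of Steinitz's theorem in the proof sketch of \Cref{con:planar=1}), and on that canonical class $\FP_2$ is manifestly determined by $\Martin(G^\bullet)$.

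A more theoretical route, which I believe is ultimately needed for a complete proof, is to find a combinatorial expression for $\FP_2(G)$ that lives naturally on the medial graph $G^\bullet$, where the Martin polynomial resides. The formula of \Cref{lem:N1(M)=cyc} expresses $\FP_1$ as a sum over pairs of cyclic flats weighted by the M\"obius function of $\LCF_M$; the first task is to derive a similar closed expression for $\FP_2$ involving triples $A\leq B\leq C$ of cyclic flats and the quantities $\rk$, $\loops$. The second task is to convert this sum, via the standard correspondence between cocycles of $G$ and cycles of $G^\bullet$, into a sum over Eulerian partitions of $G^\bullet$. If successful, this would realize $\FP_2$ as an evaluation of the Martin polynomial, mirroring the way the $c_2$-invariant is extracted from it in \cite{PanzerYeats:c2Martin}.

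The principal obstacle is the lack of any known conceptual bridge between Speyer's polynomial, which arises from $K$-theory of Grassmannians and is naturally indexed by cyclic flats, and the Martin polynomial, which is a purely combinatorial invariant of Eulerian partitions of the medial graph. Absent such a bridge, even the well-definedness of the map $\Martin(G^\bullet)\mapsto \FP_2(G)$ is mysterious. A secondary difficulty is that $\FP_2(G)$ is a single integer while $\Martin(G^\bullet)$ is a whole sequence, so the conjecture implicitly asserts that $\FP_2$ is a rather coarse function of the Martin data; pinning down which combination of Martin coefficients recovers $\FP_2$ is itself a nontrivial sub-problem, and the computational step above is meant to guess this combination before one can hope to prove it.
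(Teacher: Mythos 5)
The statement is a conjecture in the paper, and no proof is given; the only supporting evidence offered is that the Martin sequence and $\FP_2$ both obey the twist, 3-sum and duality identities of Feynman periods, together with numerical checks against the published period tables. Your response is accordingly a research plan rather than a proof, which is the honest reaction to an open problem — there is no paper proof to compare against, so I can only comment on the plan itself.

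Two points of caution. First, the framing ``moves-equivalence is a refinement of Martin-equivalence'' is a step in the wrong direction: that inclusion is automatic (the moves preserve the Martin sequence by \cite{PanzerYeats:c2Martin}) and by itself says nothing about the conjecture. What your reduction strategy actually needs is the pair of claims you list afterwards — that every graph can be brought by moves to a canonical representative, and that on the canonical class $\FP_2$ is manifestly a function of the Martin sequence — or, more strongly, the converse inclusion that graphs with the same Martin sequence are connected by moves. Neither is known, and for non-planar cyclically 6-connected 4-regular graphs there is no analogue of the Steinitz-type reduction the paper uses for planar graphs, so step (b) is itself a hard open problem and is in fact stronger than what the conjecture requires. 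Second, you correctly stress that $\FP_2$ is a scalar while the Martin data is a sequence; the paper explicitly records that the \emph{first} Martin invariant alone does not determine $\FP_2$, so any conjectured formula must draw on the higher entries — your data-fitting step should be set up with that in mind. Your third route — a triple-sum analogue of \Cref{lem:N1(M)=cyc} for $\FP_2$, translated to the medial graph via the cycle/cocycle correspondence — is the most promising structural direction, but the paper gives no formula beyond the $\FP_1$ case, so this would be genuinely new work, and you are right that the real obstacle is the absence of any conceptual bridge between the $K$-theoretic/cyclic-flat side and the Eulerian-partition side.
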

The Martin sequence is an integer sequence whose first entry, called \emph{Martin invariant}, counts partitions of $G\setminus v$ into spanning trees \cite{PanzerYeats:c2Martin}. This number alone does \emph{not} determine $\FP_2(G)$, i.e.\ we found pairs of graphs with different $\FP_2(G)$ but equal Martin invariants.

The distribution of the values of $\FP_2(G)$ for these 3- and 4-regular graphs is given in \Cref{tab:N2-phi3} and \Cref{tab:N2-phi4}.
At each even vertex number $n\leq 12$, $\FP_2(G)$ is maximized among the cyclically 6-connected 4-regular graphs by a unique graph: the circulant from \cref{fig:xladders}, whose $\FP_2$ grows exponentially (\Cref{con:N2(xladder)}).
Among the cyclically 4-connected 3-regular graphs, $\FP_2(G)$ has a unique maximizer at $n=16$ and $n=20$, given by the crossed prism graphs $W_{4,2}$ and $W_{5,2}$ illustrated in \cref{fig:cubic-maximizer}. The family of crossed prism graphs was noted in \cite{GuoMohar:LargeBip1} for not having any eigenvalues (of the adjacency matrix) in the interval $(-1,1)$.
We expect that $\FP_2(W_{n,2})=2^{n-1}-n$ for all $n\geq 3$ (confirmed by calculation for $n\leq 7$), the same value as for the graph $C^{2n}_{1,n-1}$ when $n\geq 5$.
\begin{figure}
    \centering
    $\Graph[0.6]{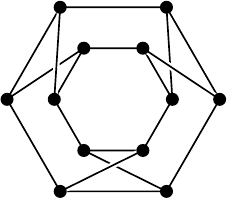} \qquad \Graph[0.6]{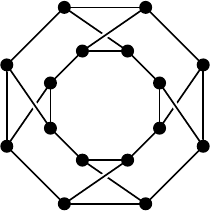} \qquad \Graph[0.6]{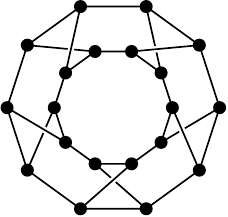} \qquad \Graph[0.6]{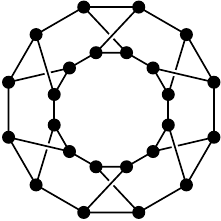}$
    \caption{The crossed prism graphs $W_{n,2}$ from \cite{GuoMohar:LargeBip1} for $n=3,4,5,6$. In the database \cite{HoG}, these are indexed $W_{3,2}=\HoG{1086}$, $W_{4,2}=\HoG{27419}$, $W_{5,2}=\HoG{36306}$, $W_{6,2}=\HoG{36323}$.}%
    \label{fig:cubic-maximizer}%
\end{figure}

\appendix

\section{Data set}
All graphs are encoded as strings in \nauty's compact \texttt{graph6} format \cite{McKayPiperno:II}, which is widely supported by mathematical software. The graphs in our data set are simple, biconnected, and have minimum degree three or higher.
The data set is openly available at
\begin{center}
    \url{\dataurl}
\end{center}

For small numbers of vertices ($v\leq 9$) or edges ($e\leq 21$), we computed complete lists of such graphs with {\nauty}'s program \texttt{geng}. The numbers of these graphs are summarized in \cref{tab:dataset}. 
The corresponding results are provided in the file \texttt{gsmall.csv}, one line per graph, as a comma separated list. The format is
\begin{equation*}
    G,v,e,\FP_2(G),g_G(t).
\end{equation*}
For example, the first line of the file,
\begin{verbatim}
    C~,4,6,1,t^3+2*t^2+2*t
\end{verbatim}
encodes a graph with 4 vertices, 6 edges, $\FP_2(G)=1$, and $g_G(t)=t^3+2t^2+2t$. There is only one biconnected graph with minimum degree three, $v=4$ and $e=6$: the complete graph $K_4=\Graph[0.12]{w3A}$, and indeed, \verb.C~. is the \texttt{graph6} encoding of $K_4$. The Speyer polynomial of this graph was computed in \cref{ex:g-path-rec}.
\begin{remark}
    All graphs were generated by \texttt{geng} in their \emph{canonical} labelling (option \verb.-l.). Therefore, any given graph can be looked up in the file by first computing its canonical label with {\nauty}, and then searching our data file for this string.
\end{remark}
\begin{table}
    \centering
    \begin{tabular}{rrrrrrrrrrrr}
    \toprule
          & $v=4$ & 5 & 6 & 7 & 8 & 9 & 10 & 11 & 12 & 13 & 14 \\
    \midrule
  $e=6$ & 1 & & & & & & & & & & \\
  7 & & & & & & & & & & & \\
  8 & & 1 & & & & & & & & & \\
  9 & & 1 & 2 & & & & & & & & \\
  10 & & 1 & 4 & & & & & & & & \\
  11 & & & 5 & 4 & & & & & & & \\
  12 & & & 4 & 17 & 5 & & & & & & \\
  13 & & & 2 & 30 & 33 & & & & & & \\
  14 & & & 1 & 34 & 133 & 25 & & & & & \\
  15 & & & 1 & 29 & 307 & 277 & 18 & & & & \\
  16 & & & & 17 & 464 & 1352 & 340 & & & & \\
  17 & & & & 9 & 505 & 3953 & 3387 & 195 & & & \\
  18 & & & & 5 & 438 & 7939 & 18439 & 4788 & 81 & & \\
  19 & & & & 2 & 310 & 11897 & 64715 & 49738 & 4111 & & \\
  20 & & & & 1 & 188 & 14131 & 163538 & 300550 & 83588 & 1853 & \\
  21 & & & & 1 & 103 & 13827 & 318940 & 1235667 & 868664 & 87666 & 480 \\
  22 & & & & & 52 & 11465 & & & & & \\
  23 & & & & & 23 & 8235 & & & & & \\
  24 & & & & & 11 & 5226 & & & & & \\
  25 & & & & & 5 & 2966 & & & & & \\
  26 & & & & & 2 & 1537 & & & & & \\
  27 & & & & & 1 & 737 & & & & & \\
  28 & & & & & 1 & 333 & & & & & \\
  29 & & & & & & 144 & & & & & \\
  30 & & & & & & 62 & & & & & \\
  31 & & & & & & 25 & & & & & \\
  32 & & & & & & 11 & & & & & \\
  33 & & & & & & 5 & & & & & \\
  34 & & & & & & 2 & & & & & \\
  35 & & & & & & 1 & & & & & \\
  36 & & & & & & 1 & & & & & \\
  \bottomrule
    \end{tabular}
    \caption{The numbers of small graphs with $v$ vertices and $e$ edges in our data set \cite{Data:gSpeyer}.}%
    \label{tab:dataset}%
\end{table}
\begin{table}
    \centering
    \begin{tabular}{rrrrrrrrrr}
    \toprule
    $\FP_2$ 
    & $n=4$ & 6 & 8 & 10 & 12 & 14 & 16 & 18 & 20 \\
    \midrule
$-1$ & & & & & & & & & 15 \\
$0$ & & 1 & 1 & 3 & 13 & 56 & 336 & 2478 & 22410 \\
$1$ & 1 & & 1 & 2 & 5 & 28 & 258 & 3445 & 52320 \\
$2$ & & & & & & & 8 & 146 & 3580 \\
$3$ & & & & & & & 4 & 28 & 450 \\
$4$ & & & & & & & 1 & 3 & 37 \\
$5$ & & & & & & & & & 4 \\
$6$ & & & & & & & & & 5 \\
$8$ & & & & & & & & & 2 \\
$11$ & & & & & & & & & 1 \\
\midrule
 total & 1 & 1 & 2 & 5 & 18 & 84 & 607 & 6100 & 78824 \\
\bottomrule
    \end{tabular}
    \caption{The number of cyclically 4-connected 3-regular graphs with $n$ vertices that take a fixed value of $\FP_2(G)$.}
    \label{tab:N2-phi3}%
\end{table}
\begin{table}
    \centering
    \begin{tabular}{rrrrrrrrrr}
    \toprule
    $\FP_2$ 
    & $n=5$ & 6 & 7 & 8 & 9 & 10 & 11 & 12 & 13 \\
    \midrule
$-4$ & & & & & & & & & 2 \\
$-3$ & & & & & & & & & 1 \\
$-2$ & & & & & & & & & 11 \\
$-1$ & & & & & & & & & 11 \\
$0$ & 1 & & 1 & 1 & 9 & 20 & 99 & 350 & 2308 \\
$1$ & & 1 & & 2 & 2 & 17 & 81 & 673 & 5615 \\
$2$ & & & & & & & 5 & 77 & 517 \\
$3$ & & & & & & 1 & 2 & 56 & 141 \\
$4$ & & & & & & 2 & 3 & 13 & 56 \\
$5$ & & & & & & & & 3 & 15 \\
$6$ & & & & 1 & & & & 1 & \\
$7$ & & & & & & & & 1 & \\
$8$ & & & & & & & & 4 & 7 \\
$11$ & & & & & & 1 & & 2 & 3 \\
$17$ & & & & & & & & 1 & \\
$26$ & & & & & & & & 1 & \\
\midrule
 total & 1  & 1 & 1 & 4 & 11 & 41 & 190 & 1182 & 8687  \\
\bottomrule
    \end{tabular}
    \caption{The number of 4-connected cyclically 6-connected 4-regular graphs with $n$ vertices for each value of $\FP_2(G)$.}
    \label{tab:N2-phi4}%
\end{table}

In addition to the complete lists of small graphs as per \cref{tab:dataset}, we also computed results for larger graphs in special families: complete graphs $K_n$, complete bipartite graphs $K_{n,m}$, and 4-regular circulant graphs $C^n_{a,b}$. These results are in the file \texttt{gspecial.csv}. In addition to \texttt{graph6} strings, these graphs are also encoded by their actual names for convenience. Note that there are many isomorphisms between circulant graphs, in particular $C^n_{a,b}\cong C^n_{ua,ub}$ for any $u$ coprime to $n$ (e.g.\ $C^8_{1,2}\cong C^8_{2,3}$ via $u=3$).

Finally, in our data set we also include larger 3- and 4-regular graphs (files \Filename{g3.csv} and \Filename{g4.csv}) that play a particular role as Feynman graphs in particle physics, see \cref{sec:Feynman}. Concretely, these are the cyclically 4-connected 3-regular graphs (i.e.\ the only 3-edge cuts are those that isolate a single vertex) and the cyclically 6-connected 4-regular graphs (the only 4-edge cuts are those isolating vertex). In the 4-regular case, we furthermore restrict to the 4-connected graphs, i.e.\ we exclude graphs with 3-vertex cuts from the data set (graphs with 3-vertex cuts are called ``product graphs'' in \cite{Schnetz:Census}).
To ease comparison with other papers on Feynman period invariants, for each graph we also give the label $P_{L,i}$ under which it appears in the tables
\begin{itemize}
    \item of 3-regular graphs in the paper \cite{BorinskySchnetz:RecursivePhi3}, supplied there as a file \Filename{PeriodsPhi3.txt},
    \item of 4-regular graphs in the paper \cite{Schnetz:Census}, available from the file \Filename{Periods} of \cite{PanzerSchnetz:Phi4Coaction}.
\end{itemize}
The first index $L$ is called \emph{loop number} in physics. It refers not to the corank of the regular graph itself, but rather to the corank of any of the graphs obtained after deleting a single vertex. This amounts to the relations $(v,e)=(2L+2,3L+3)$ for 3-regular graphs and $(v,e)=(L+2,2L+4)$ for 4-regular graphs. For example, the first line
\begin{verbatim}
    D~{,5,10,"P[3,1]",0,5*t^4+15*t^3+15*t^2+6*t
\end{verbatim}
of \Filename{g4.txt} encodes the complete graph $K_5$, which is called $P_{3,1}$ in \cite{Schnetz:Census}.
The distribution of the values of $\FP_2(G)$ for these 3- and 4-regular graphs is given in \Cref{tab:N2-phi3} and \Cref{tab:N2-phi4}.

\section{Software and implementation details}\label{sec:implementation}

Our software is available from the repository
\begin{center}
    \url{\programurl}
\end{center}
The key routines provided by this package are:
\begin{itemize}
    \item \texttt{gSpeyer($E$)} computes $g_M(t)$ for the cycle matroid $M$ of a graph given as an edge list $E=[e_1,\ldots,e_n]$ with each $e_i=\{u_i,v_i\}$ given as a pair of vertex labels. This implements \Cref{alg:gRec}.
    \item \texttt{SchubertDecomposition($E$)} computes the (unique) formal linear combination $\sum_P \lambda_P \LPM{P}$ of lattice path matroids that represents the matroid polytope of the cycle matroid $M$ of the graph $E$ in the valuative group; see \cite[\S3.1]{FerroniFink:PolMat} where the vector $\lambda_\bullet$ is denoted $p_{[M]}$. This implements the decomposition underlying \Cref{thm:Speyer-from-cyclics} from \cite[\S5]{Ferroni:SchubertDelannoySpeyer}, using \Cref{lem:MF(chain)} to compute $\lambda_{C_\bullet}$. Then $\lambda_P$ is computed as the sum of all $\lambda_{C_\bullet}$ with $\SM{C_\bullet}\cong \LPM{P}$.
    \item \texttt{gSchubert($n,I$)} computes the $g$-polynomial of the lattice path matroid $\LPM{P(I)}$ on the ground set $\{1,\ldots,n\}$ where the path $P(S)\in\{\PU,\PR\}^n$ is encoded by the set of positions $S=\{i_1<\ldots<i_r\}\subset\{1,\ldots,n\}$ of the north steps (see \Cref{sec:lattice-paths}). This implements \Cref{alg:gLPM}.
    \item \texttt{CyclicFlats($E$)} computes the cyclic flats of the graph with edge list $E$. We implement a bottom-up traversal of the lattice of flats, with early pruning to filter out the cyclic flats. For efficient computation of the Hasse diagram (covering relations), we employ a linear time algorithm \cite{GGIK:2edge} for enumeration of 2-edge cuts.
\end{itemize}

\subsection{Examples}
To compute the $g$-polynomial of the complete graph on 8 vertices using \Cref{alg:gRec}:
\begin{MapleInput}
K8:=[seq(seq({i,j},j=i+1..8),i=1..7)]:
gSpeyer(K8);
\end{MapleInput}
\begin{MapleMath}
    3655t^7+17934t^6+35980t^5+37604t^4+21448t^3+6264t^2+720t
\end{MapleMath}
which reproduces the result from \Cref{tab:gKn}.

The decompositions into Schubert matroids (represented as lattice path matroids) from \Cref{ex:g(wheel)=LPM} of the 3- and 4-spoke wheel graphs can be reproduced as
\begin{MapleInput}
W3:=[{1,2},{2,3},{1,3},{1,4},{2,4},{3,4}]:
SchubertDecomposition(W3);
\end{MapleInput}
\begin{MapleMath}
    -3 \Schubert(6, \{1, 2, 3\}) + 4 \Schubert(6, \{1, 2, 4\})
\end{MapleMath}
\begin{MapleInput}
W4:=[{1,2},{2,3},{3,4},{1,4},{1,5},{2,5},{3,5},{4,5}]:
SchubertDecomposition(W4);
\end{MapleInput}
\begin{MapleMath}
    \Schubert(8, \{1, 2, 3, 5\}) - 4 \Schubert(8, \{1, 2, 3, 6\}) - 4 \Schubert(8, \{1, 2, 4, 5\}) \\
    + 8 \Schubert(8, \{1, 2, 4, 6\})
\end{MapleMath}
Schubert matroids are encoded by the set of north steps of corresponding lattice paths, e.g.\ 
\begin{align*}
    \Schubert(6, \{1, 2, 4\})&=\LPM{\Graph[0.3]{UURURR}}
    \quad\text{and}\\
    \Schubert(8, \{1, 2, 3, 6\})&=\LPM{\Graph[0.3]{UUURRURR}}.
\end{align*}

The Speyer $g$-polynomials of a lattice path matroid can be computed with
\begin{MapleInput}
gSchubert(6,{1,2,4});
\end{MapleInput}
\begin{MapleMath}
t^3+5t^2+5t
\end{MapleMath}
which reproduces a calculation from \Cref{ex:g-path-rec}.

Combining the routines \verb|SchubertDecomposition| and \verb|gSchubert| gives another way to compute the $g$-polynomial of any graph with our code. In fact, this amounts to the original algorithm from \cite{Ferroni:SchubertDelannoySpeyer}. For example,
\begin{MapleInput}
W3:=[{1,2},{2,3},{1,3},{1,4},{2,4},{3,4}]:
eval(SchubertDecomposition(W3),Schubert=gSchubert);
\end{MapleInput}
\begin{MapleMath}
    t^3+2t^2+2t
\end{MapleMath}
reproduces the result for the complete graph $K_4$ (wheel with 3 spokes). However, especially for large graphs, it is much more efficient to compute the polynomial $g$ directly using \verb|gSpeyer| (i.e.\ \Cref{alg:gRec}), which avoids the intermediate step of explicitly computating the Schubert decomposition.

\subsection{Implementation details}
Our implementation is tailored towards graphs, but the code can be extended to other matroids by replacing/extending the function \texttt{CyclicFlats}. Repeated calculations of the same quantity are avoided by storing intermediate results in tables for subsequent lookup from memory. Thus storing the $g_A^{<k}(t)$ in memory for all cyclic flats $A\in\LCF_M$, the primary driver of the runtime of \Cref{alg:gRec} are:
\begin{enumerate}
    \item[(I)]\label{i:Z} computing the lattice $\LCF$, i.e.\ its elements and covering relations,
    \item[(II)]\label{i:mu} computing the M\"obius function $\mu_{\LCF}(A,B)$ for all pairs $A<B$,
    \item[(III)]\label{i:sum} computing for each $B$ and $k$ the sum over $A<B$ and $k'$.
\end{enumerate}
A direct implementation of \Cref{alg:gRec} would use $\asyO{\abs{\{(A,B)\in\LCF_M^2\colon A\leq B\}}\cdot \rk(M)^2}$ polynomial additions and multiplications for (III), but the sum over $k'$ does not depend on the rank of $A$. Hence we first sum $\mu_\LCF(A,B)\cdot\text{\gRec{$A,k'$}}$ over all $A$, grouped by the pairs $(k',\loops(A))$. Then for each $B$, there are at most $\rk(B)$ values of $k'$ and $\loops(B)$ values of $\loops(A)$ to consider. Due to this implementation, we reduced the complexity for (III) to $\asyO{\abs{\{(A,B)\in\LCF_M^2\colon A\leq B\}}\cdot \rk(M)+\abs{\LCF_M}\cdot\rk(M)\cdot\loops(M)}$.

Asymptotically, the most time consuming parts of our implementation are therefore steps (I) and (II). We implemented (II) by computing once, for each pair $A<B$, the defininig recursion $\mu(A,B)=-\sum_{A\leq C<B} \mu(A,C)$. Hence we can estimate the time for step (II) by the number
\begin{equation*}
    \abs{\{(A,C,B)\in\LCF_M^3\colon A<C<B\}}
\end{equation*}
of 2-simplices in the order complex of $\LCF$. Asymptotic improvements would thus require a faster method for M\"obius inversion.
However, the integer additions for $\mu$ are much quicker than the calculations with polynomials \gRec{$A$,$k'$} in step (III), such that in practice we often find that the time spent on steps (II) and (III) is comparable.

For very large graphs, also step (I) takes significant time, because our implementation effectively enumerates the lattice of all flats (and then filters out the cyclic ones), which can be exponentially larger than the lattice of only cyclic flats. In comparison, however, for the largest graphs we calculated, the runtime is dominated by step (II).

\bibliography{refs}

\providecommand{\href}[2]{#2}\providecommand{\eprintlink}[2]{\href{#1}{#2}}\begingroup\begin{thebibliography}{10}

\bibitem{BergetFink:ActiPair}
A.~Berget and A.~Fink,
  \href{\detokenize{http://arxiv.org/abs/2412.11759}}{``The external activity
  complex of a pair of matroids.''} preprint, Dec., 2024,
  \eprintlink{http://arxiv.org/abs/2412.11759}{arXiv:2412.11759 [math.CO]}.

\bibitem{BiggsDamerellSands:RecFam}
N.~L. Biggs, R.~M. Damerell and D.~A. Sands,
  \href{\detokenize{http://dx.doi.org/10.1016/0095-8956(72)90016-0}}{\textit{Recursive
  families of graphs}},
  \href{\detokenize{http://dx.doi.org/10.1016/0095-8956(72)90016-0}}{\emph{J.
  Comb. Theory, Ser. B} \textbf{12} (1972), no.~2 }pp.~123--131.

\bibitem{BlochEsnaultKreimer:MotivesGraphPolynomials}
S.~Bloch, H.~Esnault and D.~Kreimer,
  \href{\detokenize{http://dx.doi.org/10.1007/s00220-006-0040-2}}{\textit{On
  motives associated to graph polynomials}},
  \href{\detokenize{http://dx.doi.org/10.1007/s00220-006-0040-2}}{\emph{Commun.
  Math. Phys.} \textbf{267} (2006), no.~1 }pp.~181--225,
  \eprintlink{http://arxiv.org/abs/math/0510011}{arXiv:math/0510011}.

\bibitem{BoninMier:LatticePath}
J.~E. Bonin and A.~{de Mier},
  \href{\detokenize{http://dx.doi.org/10.1016/j.ejc.2005.01.008}}{\textit{Lattice
  path matroids: Structural properties}},
  \href{\detokenize{http://dx.doi.org/10.1016/j.ejc.2005.01.008}}{\emph{Eur. J.
  Comb.} \textbf{27} (2006), no.~5 }pp.~701--738,
  \eprintlink{http://arxiv.org/abs/math/0403337}{arXiv:math/0403337 [math.CO]}.

\bibitem{BoninMier:CyclicFlats}
J.~E. Bonin and A.~{de Mier},
  \href{\detokenize{http://dx.doi.org/10.1007/s00026-008-0344-3}}{\textit{The
  lattice of cyclic flats of a matroid}},
  \href{\detokenize{http://dx.doi.org/10.1007/s00026-008-0344-3}}{\emph{Annals
  of Combinatorics} \textbf{12} (July, 2008) }pp.~155--170,
  \eprintlink{http://arxiv.org/abs/math/0505689}{arXiv:math/0505689 [math.CO]}.

\bibitem{BorinskySchnetz:RecursivePhi3}
M.~Borinsky and O.~Schnetz,
  \href{\detokenize{http://dx.doi.org/10.1007/JHEP08(2022)291}}{\textit{Recursive
  computation of {F}eynman periods}},
  \href{\detokenize{http://dx.doi.org/10.1007/JHEP08(2022)291}}{\emph{JHEP}
  \textbf{2022} (Aug., 2022) }p.~291,
  \eprintlink{http://arxiv.org/abs/2206.10460}{arXiv:2206.10460 [hep-th]}.

\bibitem{BrinkmannMcKay:plantri}
G.~Brinkmann and B.~D. McKay, {\textit{Fast generation of planar graphs}},
  {\emph{MATCH Commun. Math. Comput. Chem.} \textbf{58} (2007), no.~2
  }pp.~323--357. Program available at \url{http://cs.anu.edu.au/~bdm/plantri}.

\bibitem{BrylawskiOxley:TutteApp}
T.~Brylawski and J.~Oxley,
  \href{\detokenize{http://dx.doi.org/10.1017/CBO9780511662041.007}}{\textit{The
  {Tutte} polynomial and its applications}},  in \emph{Matroid applications},
  Encyclopedia of Mathematics and its Applications, pp.~123--225.
\newblock Cambridge University Press, 1992.

\bibitem{Chopra:STpolyhedron}
S.~Chopra,
  \href{\detokenize{http://dx.doi.org/10.1016/0167-6377(89)90029-1}}{\textit{On
  the spanning tree polyhedron}},
  \href{\detokenize{http://dx.doi.org/10.1016/0167-6377(89)90029-1}}{\emph{Operations
  Research Letters} \textbf{8} (1989), no.~1 }pp.~25--29.

\bibitem{HoG}
K.~Coolsaet, S.~{D'hondt} and J.~Goedgebeur,
  \href{\detokenize{http://dx.doi.org/10.1016/j.dam.2022.10.013}}{\textit{House
  of {G}raphs 2.0: A database of interesting graphs and more}},
  \href{\detokenize{http://dx.doi.org/10.1016/j.dam.2022.10.013}}{\emph{Discrete
  Applied Mathematics} \textbf{325} (2023) }pp.~97--107. Available at
  \url{https://houseofgraphs.org/}.

\bibitem{Crapo:HigherInvariant}
H.~H. Crapo,
  \href{\detokenize{http://dx.doi.org/10.1016/S0021-9800(67)80051-6}}{\textit{A
  higher invariant for matroids}},
  \href{\detokenize{http://dx.doi.org/10.1016/S0021-9800(67)80051-6}}{\emph{Journal
  of Combinatorial Theory} \textbf{2} (1967), no.~4 }pp.~406--417.

\bibitem{Crapo:MoeInvLat}
H.~H. Crapo,
  \href{\detokenize{http://dx.doi.org/10.1007/BF01899388}}{\textit{M{\"o}bius
  inversion in lattices}},
  \href{\detokenize{http://dx.doi.org/10.1007/BF01899388}}{\emph{Arch. Math.}
  \textbf{19} (1969) }pp.~595--607.

\bibitem{Crump:ExtendedPermanent}
I.~{Crump},
  \href{\detokenize{http://dx.doi.org/10.4310/CNTP.2017.v11.n4.a2}}{\textit{Properties
  of the extended graph permanent}},
  \href{\detokenize{http://dx.doi.org/10.4310/CNTP.2017.v11.n4.a2}}{\emph{Commun.
  Num. Theor. Phys.} \textbf{11} (2017), no.~4 }pp.~791--836,
  \eprintlink{http://arxiv.org/abs/1608.01414}{arXiv:1608.01414 [math.CO]}.

\bibitem{CrumpDeVosYeats:Permanent}
I.~Crump, M.~{DeVos} and K.~Yeats,
  \href{\detokenize{http://dx.doi.org/10.4171/AIHPD/35}}{\textit{Period
  preserving properties of an invariant from the permanent of signed incidence
  matrices}},
  \href{\detokenize{http://dx.doi.org/10.4171/AIHPD/35}}{\emph{Ann. Inst. H.
  Poincar\'{e} D} \textbf{3} (2016), no.~4 }pp.~429--454,
  \eprintlink{http://arxiv.org/abs/1505.06987}{arXiv:1505.06987 [math.CO]}.

\bibitem{CunninghamEdmonds:Decomposition}
W.~H. Cunningham and J.~Edmonds,
  \href{\detokenize{http://dx.doi.org/10.4153/CJM-1980-057-7}}{\textit{A
  combinatorial decomposition theory}},
  \href{\detokenize{http://dx.doi.org/10.4153/CJM-1980-057-7}}{\emph{Canadian
  J. Math.} \textbf{32} (June, 1980) }pp.~734--765.

\bibitem{DBHLMNVW:Elser}
G.~Dorpalen-Barry, C.~Hettle, D.~C. Livingston, J.~L. Martin, G.~D. Nasr,
  J.~Vega and H.~Whitlatch,
  \href{\detokenize{http://dx.doi.org/10.1016/j.jcta.2020.105364}}{\textit{A
  positivity phenomenon in {E}lser's {G}aussian-cluster percolation model}},
  \href{\detokenize{http://dx.doi.org/10.1016/j.jcta.2020.105364}}{\emph{J.
  Comb. Theory, Ser. A} \textbf{179} (2021) }p.~105364,
  \eprintlink{http://arxiv.org/abs/1905.11330}{arXiv:1905.11330 [math.CO]}.

\bibitem{Eberhardt:TutteCycFlat}
J.~N. Eberhardt,
  \href{\detokenize{http://dx.doi.org/10.37236/4610}}{\textit{Computing the
  {T}utte polynomial of a matroid from its lattice of cyclic flats}},
  \href{\detokenize{http://dx.doi.org/10.37236/4610}}{\emph{Electron. J. Comb.}
  \textbf{21} (2014), no.~3 }p.~P3.47,
  \eprintlink{http://arxiv.org/abs/1407.6666}{arXiv:1407.6666 [math.CO]}.

\bibitem{Elser:ClusterSAW}
V.~Elser,
  \href{\detokenize{http://dx.doi.org/10.1088/0305-4470/17/7/019}}{\textit{Gaussian-cluster
  models of percolation and self-avoiding walks}},
  \href{\detokenize{http://dx.doi.org/10.1088/0305-4470/17/7/019}}{\emph{Journal
  of Physics A: Mathematical and General} \textbf{17} (May, 1984)
  }pp.~1515--1523.

\bibitem{Ferroni:SchubertDelannoySpeyer}
L.~Ferroni,
  \href{\detokenize{http://dx.doi.org/10.5070/C63362796}}{\textit{Schubert
  matroids, {Delannoy} paths, and {Speyer}'s invariant}},
  \href{\detokenize{http://dx.doi.org/10.5070/C63362796}}{\emph{Comb. Theory}
  \textbf{3} (2023), no.~3 }p.~13,
  \eprintlink{http://arxiv.org/abs/2311.01397}{arXiv:2311.01397 [math.CO]}.

\bibitem{FerroniFink:PolMat}
L.~Ferroni and A.~Fink,
  \href{\detokenize{http://arxiv.org/abs/2502.20157}}{``The polytope of all
  matroids.''} preprint, Feb., 2025,
  \eprintlink{http://arxiv.org/abs/2502.20157}{arXiv:2502.20157 [math.CO]}.

\bibitem{FerroniSchroeter:ValInv}
L.~Ferroni and B.~Schr{\"o}ter,
  \href{\detokenize{http://dx.doi.org/10.1112/jlms.12984}}{\textit{Valuative
  invariants for large classes of matroids}},
  \href{\detokenize{http://dx.doi.org/10.1112/jlms.12984}}{\emph{J. Lond. Math.
  Soc., II. Ser.} \textbf{110} (2024), no.~3 }p.~e12984,
  \eprintlink{http://arxiv.org/abs/2208.04893}{arXiv:2208.04893 [math.CO]}.

\bibitem{FinkShawSpeyer:Omega}
A.~Fink, K.~Shaw and D.~E. Speyer,
  \href{\detokenize{http://arxiv.org/abs/2411.19521}}{``The omega invariant of
  a matroid.''} preprint, Nov., 2024,
  \eprintlink{http://arxiv.org/abs/2411.19521}{arXiv:2411.19521 [math.CO]}.

\bibitem{FinkSpeyer:K-classes}
A.~Fink and D.~E. Speyer,
  \href{\detokenize{http://dx.doi.org/10.1215/00127094-1813296}}{\textit{{$K$}-classes
  for matroids and equivariant localization}},
  \href{\detokenize{http://dx.doi.org/10.1215/00127094-1813296}}{\emph{Duke
  Math. J.} \textbf{161} (Nov., 2012) }pp.~2699--2723,
  \eprintlink{http://arxiv.org/abs/1004.2403}{arXiv:1004.2403 [math.CO]}.

\bibitem{FHGHW:CycBin}
R.~Freij-Hollanti, M.~Grezet, C.~Hollanti and T.~Westerb{\"a}ck,
  \href{\detokenize{http://dx.doi.org/10.1016/j.aam.2021.102165}}{\textit{Cyclic
  flats of binary matroids}},
  \href{\detokenize{http://dx.doi.org/10.1016/j.aam.2021.102165}}{\emph{Adv.
  Appl. Math.} \textbf{127} (2021) }p.~102165,
  \eprintlink{http://arxiv.org/abs/1906.10936}{arXiv:1906.10936 [math.CO]}.

\bibitem{GGIK:2edge}
L.~Georgiadis, K.~Giannis, G.~F. Italiano and E.~Kosinas,
  \href{\detokenize{http://dx.doi.org/10.4230/LIPIcs.SEA.2021.20}}{\textit{Computing
  vertex-edge cut-pairs and 2-edge cuts in practice}},  in \emph{19th
  International Symposium on Experimental Algorithms (SEA 2021)} (D.~Coudert
  and E.~Natale, eds.), vol.~190 of \emph{Leibniz International Proceedings in
  Informatics (LIPIcs)}, (Dagstuhl, Germany), pp.~20:1--19, Schloss Dagstuhl --
  Leibniz-Zentrum f{\"u}r Informatik, 2021.

\bibitem{Grinberg:Elser}
D.~Grinberg,
  \href{\detokenize{http://dx.doi.org/10.46298/dmtcs.7012}}{\textit{The {E}lser
  nuclei sum revisited}},
  \href{\detokenize{http://dx.doi.org/10.46298/dmtcs.7012}}{\emph{Discrete
  Math. Theor. Comput. Sci.} \textbf{23} (2021), no.~1 }p.~\#15,
  \eprintlink{http://arxiv.org/abs/2009.11527}{arXiv:2009.11527 [math.CO]}.

\bibitem{Gruenbaum:ConvexPolytopes}
B.~Gr\"unbaum, {\emph{Convex polytopes}}, vol.~Vol. 16 of \emph{Pure and
  Applied Mathematics}.
\newblock Interscience Publishers John Wiley \& Sons, Inc., New York, 1967.
\newblock With the cooperation of Victor Klee, M. A. Perles and G. C. Shephard.

\bibitem{GuoMohar:LargeBip1}
K.~Guo and B.~Mohar,
  \href{\detokenize{http://dx.doi.org/10.1016/j.laa.2014.02.016}}{\textit{Large
  regular bipartite graphs with median eigenvalue 1}},
  \href{\detokenize{http://dx.doi.org/10.1016/j.laa.2014.02.016}}{\emph{Linear
  Algebra Appl.} \textbf{449} (2014) }pp.~68--75,
  \eprintlink{http://arxiv.org/abs/1309.7025}{arXiv:1309.7025 [math.CO]}.

\bibitem{Hampe:IntersectionRing}
S.~Hampe,
  \href{\detokenize{http://dx.doi.org/10.1016/j.jctb.2016.08.004}}{\textit{The
  intersection ring of matroids}},
  \href{\detokenize{http://dx.doi.org/10.1016/j.jctb.2016.08.004}}{\emph{J.
  Comb. Theory, Ser. B} \textbf{122} (2017) }pp.~578--614,
  \eprintlink{http://arxiv.org/abs/1602.07167}{arXiv:1602.07167 [math.CO]}.

\bibitem{Harary:Elementary}
F.~Harary, \href{\detokenize{http://dx.doi.org/10.2307/2308754}}{\textit{An
  elementary theorem on graphs}},
  \href{\detokenize{http://dx.doi.org/10.2307/2308754}}{\emph{Am. Math. Mon.}
  \textbf{66} (1959), no.~5 }pp.~405--407.

\bibitem{Jurrius:RelMoeCob}
R.~Jurrius,
  \href{\detokenize{http://dx.doi.org/10.1007/s11786-012-0117-6}}{\textit{Relations
  between {M}{\"o}bius and coboundary polynomials}},
  \href{\detokenize{http://dx.doi.org/10.1007/s11786-012-0117-6}}{\emph{Math.
  Comput. Sci.} \textbf{6} (2012), no.~2 }pp.~109--120,
  \eprintlink{http://arxiv.org/abs/1202.3303}{arXiv:1202.3303 [math.CO]}.

\bibitem{Kochol:DecFlow}
M.~Kochol,
  \href{\detokenize{http://dx.doi.org/10.1016/j.ejc.2004.05.004}}{\textit{Decomposition
  formulas for the flow polynomial}},
  \href{\detokenize{http://dx.doi.org/10.1016/j.ejc.2004.05.004}}{\emph{Eur. J.
  Comb.} \textbf{26} (2005), no.~7 }pp.~1086--1093.

\bibitem{KungRotaYan:RotaWay}
J.~P.~S. Kung, G.-C. Rota and C.~H. Yan,
  \href{\detokenize{http://dx.doi.org/10.1017/CBO9780511803895}}{\emph{Combinatorics:
  The {R}ota Way}}.
\newblock Cambridge Mathematical Library. Cambridge University Press, 2009.

\bibitem{McKayPiperno:II}
B.~D. {McKay} and A.~{Piperno},
  \href{\detokenize{http://dx.doi.org/10.1016/j.jsc.2013.09.003}}{\textit{Practical
  graph isomorphism, {II}}},
  \href{\detokenize{http://dx.doi.org/10.1016/j.jsc.2013.09.003}}{\emph{J.
  Symb. Comput.} \textbf{60} (Jan., 2014) }pp.~94--112,
  \eprintlink{http://arxiv.org/abs/1301.1493}{arXiv:1301.1493 [cs.DM]}. Program
  website: \url{http://pallini.di.uniroma1.it/}.

\bibitem{MerinoMierNoy:IrredTutte}
C.~Merino, A.~de~Mier and M.~Noy,
  \href{\detokenize{http://dx.doi.org/10.1006/jctb.2001.2058}}{\textit{Irreducibility
  of the {T}utte polynomial of a connected matroid}},
  \href{\detokenize{http://dx.doi.org/10.1006/jctb.2001.2058}}{\emph{J. Comb.
  Theory, Ser. B} \textbf{83} (2001), no.~2 }pp.~298--304.

\bibitem{OEIS}
{OEIS Foundation Inc.}, {\textit{The {O}n-{L}ine {E}ncyclopedia of {I}nteger
  {S}equences}},  2025.
\newblock Published electronically at \url{http://oeis.org}.

\bibitem{Oxley:MatroidTheory}
J.~G. Oxley,
  \href{\detokenize{http://dx.doi.org/10.1093/acprof:oso/9780198566946.001.0001}}{\emph{Matroid
  theory}}, vol.~21 of \emph{Oxford Graduate Texts in Mathematics}.
\newblock Oxford University Press, 2nd~ed., 2011.

\bibitem{Panzer:HeppBound}
E.~Panzer,
  \href{\detokenize{http://dx.doi.org/10.4171/AIHPD/126}}{\textit{{H}epp's
  bound for {F}eynman graphs and matroids}},
  \href{\detokenize{http://dx.doi.org/10.4171/AIHPD/126}}{\emph{Ann. Inst. H.
  Poincar\'{e} D} \textbf{10} (2023), no.~1 }pp.~31--119,
  \eprintlink{http://arxiv.org/abs/1908.09820}{arXiv:1908.09820 [math-ph]}.

\bibitem{Data:gSpeyer}
E.~Panzer, {\textit{Speyer polynomials of graphs}},  2025.
\newblock data set, available at
  \url{https://people.maths.ox.ac.uk/panzer/data/g.tar.gz}.

\bibitem{PanzerSchnetz:Phi4Coaction}
E.~Panzer and O.~Schnetz,
  \href{\detokenize{http://dx.doi.org/10.4310/CNTP.2017.v11.n3.a3}}{\textit{The
  {G}alois coaction on {$\phi^4$} periods}},
  \href{\detokenize{http://dx.doi.org/10.4310/CNTP.2017.v11.n3.a3}}{\emph{Commun.
  Num. Theor. Phys.} \textbf{11} (2017), no.~3 }pp.~657--705,
  \eprintlink{http://arxiv.org/abs/1603.04289}{arXiv:1603.04289 [hep-th]}.

\bibitem{PanzerYeats:c2Martin}
E.~Panzer and K.~Yeats,
  \href{\detokenize{http://dx.doi.org/10.5070/C65165021}}{\textit{Feynman
  symmetries of the {M}artin and {$c_2$} invariants of regular graphs}},
  \href{\detokenize{http://dx.doi.org/10.5070/C65165021}}{\emph{Combinatorial
  Theory} \textbf{5} (2025), no.~1 }p.~\#10,
  \eprintlink{http://arxiv.org/abs/2304.05299}{arXiv:2304.05299 [math.CO]}.

\bibitem{Schnetz:Census}
O.~Schnetz,
  \href{\detokenize{http://dx.doi.org/10.4310/CNTP.2010.v4.n1.a1}}{\textit{Quantum
  periods: {A} {Census} of {$\phi^4$}-transcendentals}},
  \href{\detokenize{http://dx.doi.org/10.4310/CNTP.2010.v4.n1.a1}}{\emph{Commun.
  Num. Theor. Phys.} \textbf{4} (2010), no.~1 }pp.~1--47,
  \eprintlink{http://arxiv.org/abs/0801.2856}{arXiv:0801.2856 [hep-th]}.

\bibitem{Schnetz:Fq}
O.~Schnetz, \href{\detokenize{http://arxiv.org/abs/0909.0905}}{\textit{Quantum
  field theory over {$\mathbb{F}_q$}}},
  \href{\detokenize{http://arxiv.org/abs/0909.0905}}{\emph{Electron. J.
  Combin.} \textbf{18} (May, 2011) }p.~P102,
  \eprintlink{http://arxiv.org/abs/0909.0905}{arXiv:0909.0905 [math.CO]}.

\bibitem{SekineZhang:DecFlow}
K.~Sekine and C.~Q. Zhang,
  \href{\detokenize{http://dx.doi.org/10.1007/BF03352995}}{\textit{Decomposition
  of the flow polynomial}},
  \href{\detokenize{http://dx.doi.org/10.1007/BF03352995}}{\emph{Graphs and
  Combinatorics} \textbf{13} (June, 1997) }pp.~189--196.

\bibitem{Seymour:DecompositionRegular}
P.~D. Seymour,
  \href{\detokenize{http://dx.doi.org/10.1016/0095-8956(80)90075-1}}{\textit{Decomposition
  of regular matroids}},
  \href{\detokenize{http://dx.doi.org/10.1016/0095-8956(80)90075-1}}{\emph{J.
  Comb. Theory, Ser. B} \textbf{28} (June, 1980) }pp.~305--359.

\bibitem{Shapley:Cores}
L.~S. Shapley,
  \href{\detokenize{http://dx.doi.org/10.1007/BF01753431}}{\textit{Cores of
  convex games}},
  \href{\detokenize{http://dx.doi.org/10.1007/BF01753431}}{\emph{Int. J. Game
  Theory} \textbf{1} (1971) }pp.~11--26.

\bibitem{Speyer:MatroidKtheory}
D.~E. Speyer,
  \href{\detokenize{http://dx.doi.org/10.1016/j.aim.2009.01.010}}{\textit{A
  matroid invariant via the {$K$}-theory of the {G}rassmannian}},
  \href{\detokenize{http://dx.doi.org/10.1016/j.aim.2009.01.010}}{\emph{Adv.
  Math.} \textbf{221} (2009), no.~3 }pp.~882--913,
  \eprintlink{http://arxiv.org/abs/math/0603551}{arXiv:math/0603551 [math.AG]}.

\bibitem{Stanley:EC1}
R.~P. Stanley,
  \href{\detokenize{http://dx.doi.org/10.1017/CBO9781139058520}}{\emph{Enumerative
  Combinatorics, Volume 1}}.
\newblock Cambridge Studies in Advanced Mathematics. Cambridge University
  Press, 2~ed., 2011.

\bibitem{Swenson:ChromKnm}
J.~R. Swenson,
  \href{\detokenize{http://dx.doi.org/10.1080/00029890.1973.11993374}}{\textit{The
  chromatic polynomial of a complete bipartite graph}},
  \href{\detokenize{http://dx.doi.org/10.1080/00029890.1973.11993374}}{\emph{Am.
  Math. Mon.} \textbf{80} (1973), no.~7 }pp.~797--798.

\bibitem{sagemath}
{The Sage Developers},
  \href{\detokenize{http://dx.doi.org/10.5281/zenodo.8042260}}{\emph{{S}age{M}ath,
  the {S}age {M}athematics {S}oftware {S}ystem}}, 2024.
\newblock \url{http://www.sagemath.org/}.

\bibitem{Wakefield:ChainTutte}
M.~Wakefield, \href{\detokenize{http://arxiv.org/abs/2305.02874}}{``Chain
  {T}utte polynomials.''} preprint, 2023,
  \eprintlink{http://arxiv.org/abs/2305.02874}{arXiv:2305.02874 [math.CO]}.

\bibitem{Whitehead:ChordedCycles}
E.~G. Whitehead, {\textit{Chromatic polynomials for chorded cycles}},  in
  \emph{Proceedings of the {S}ixth {S}outheastern {C}onference on
  {C}ombinatorics, {G}raph {T}heory and {C}omputing ({F}lorida {A}tlantic
  {U}niv., {B}oca {R}aton, {F}la., 1975)}, vol.~XIV of \emph{Congress. Numer.},
  pp.~619--625, Utilitas Math., Winnipeg, MB, 1975.

\end{thebibliography}\endgroup

\end{document}